\documentclass[11pt]{article}
\usepackage[a4paper,margin=29mm]{geometry}
\usepackage{amsmath,amssymb,amsthm,mathtools,microtype}
\usepackage[hidelinks]{hyperref}
\usepackage{booktabs,array}
\usepackage[T1]{fontenc}

\newtheorem{theorem}{Theorem}[section]
\newtheorem{lemma}[theorem]{Lemma}
\newtheorem{proposition}[theorem]{Proposition}
\newtheorem{corollary}[theorem]{Corollary}
\newtheorem{remark}[theorem]{Remark}

\newcommand{\SU}{\mathrm{SU}}
\newcommand{\SO}{\mathrm{SO}}
\newcommand{\Sp}{\mathrm{Sp}}
\newcommand{\Isom}{\operatorname{Isom}}
\newcommand{\Diff}{\operatorname{Diff}}
\newcommand{\Lie}{\operatorname{Lie}}

\title{\textbf{New Homogeneous Einstein Metrics from Detection}}
\author{Anna Siffert}
\date{}
\begin{document}
\maketitle
\begin{abstract}
We use a detection method for invariant Einstein equations to construct and
distinguish asymmetric homogeneous Einstein metrics on several families of
compact homogeneous spaces.  The method retains a geometrically meaningful
symmetry-breaking parameter, eliminates the remaining Einstein equations,
and then reconstructs positive metrics from the resulting detector.

We prove a sharp threshold in a two-factor symplectic family, construct two
asymmetric Einstein metrics on each admissible space
$\SU(n)^3/\Delta\SO(n)$, obtain a sharp threshold for
$\SU(2n)^3/\Delta\Sp(n)$, and construct asymmetric metrics on three
exceptional three-factor families.  The four-factor case exhibits a further
phenomenon.  On
\[
 M_9=\SU(18)^4/\Delta\Sp(9)
\]
the two symmetry types $3+1$ and $2+2$ are both realized: up to homothety
and internal block permutations, the corresponding fixed families contain
exactly three asymmetric Einstein metrics, two with three equal horizontal
scales and one distinct scale (type $3+1$), and one with two equal pairs of
horizontal scales (type $2+2$).  They are pairwise non-isometric,
Riemannian irreducible, and not naturally reductive.  The $2+2$ metric is
globally unique in the full positive $2+2$ fixed family up to block
interchange.
\end{abstract}
\medskip
\noindent\textbf{2020 Mathematics Subject Classification.}
Primary 53C25, 53C30; Secondary 53C35.
\smallskip
\noindent\textbf{Keywords.}
Homogeneous Einstein metrics, invariant metrics, symmetry breaking,
symmetric spaces, exact elimination.
\section{Introduction}
The invariant Einstein equation is finite dimensional, but its solutions can
be hidden by an ansatz that identifies geometrically different invariant
directions too early.  We use a detection strategy: retain one such
distinction, eliminate the remaining equations, and reconstruct every
positive solution from a one-variable detector.

The purpose here is concrete.  We establish sharp existence and
nonexistence statements in two- and three-factor families and then show that
the four-factor space
\[
 \SU(18)^4/\Delta\Sp(9)
\]
already carries asymmetric Einstein metrics of two different
symmetry-breaking types.  The response and bifurcation theory explaining why
these types occur is developed in a companion manuscript; none of the
existence or uniqueness results below depends on that theory.

The main conclusions can be summarized as follows.  In the first four parts,
``asymmetric'' refers to the indicated detected branch; in the four-factor
statement the scope is the two fixed families specified below.

\begin{theorem}[Main results]\label{thm:main-results-intro}
The following hold.
\begin{enumerate}
\item Let
\[
 H=\Sp(p+q),\qquad K=\Sp(p)\times\Sp(q),\qquad
 M_{p,q}=(H\times H)/\Delta K,
\]
with $1\le p<q$.  The asymmetric two-factor family obtained by allowing the two horizontal
scales to vary independently contains an Einstein metric if and only if
\[
 q\ge15\quad (p=1),\qquad
 q\ge8p^2+5p+1\quad (p\ge2).
\]
For every admissible pair it is unique in that branch up to homothety and
interchange of the two horizontal factors.

\item For every $n=3$ or $n\ge5$, the space
\[
 \SU(n)^3/\Delta\SO(n)
\]
has exactly two Einstein metrics in the detected asymmetric branch up to
homothety.  They are pairwise non-isometric; neither is naturally reductive
with respect to any connected transitive group of isometries.

\item For
\[
 \SU(2n)^3/\Delta\Sp(n),\qquad n\ge2,
\]
there is a sharp threshold at $n=22$: the detected asymmetric branch contains
no positive Einstein metric for $2\le n\le21$ and exactly two up to homothety
for $n\ge22$.  For $n\ge22$ the two metrics are pairwise non-isometric.

\item Each of
\[
 E_6^3/\Delta\Sp(4),\qquad
 E_7^3/\Delta\SU(8),\qquad
 E_8^3/\Delta\SO(16)
\]
has exactly two Einstein metrics in the detected asymmetric branch up to
homothety.  In each case the twto metrics are pairwise non-isometric and are
not naturally reductive with respect to any connected transitive group of
isometries.

\item On
\[
 M_9=\SU(18)^4/\Delta\Sp(9),
\]
the $3+1$ and $2+2$ fixed families studied in this paper contain, up to
homothety and their internal block permutations, exactly three asymmetric
Einstein metrics: two of type $3+1$ and one of type $2+2$.  The three metrics
are pairwise non-isometric under arbitrary Riemannian isometries, are
Riemannian irreducible, and are not naturally reductive with respect to any
connected transitive group of isometries.  The $2+2$ metric is globally
unique in the positive $2+2$ fixed family up to homothety and interchange of
the two blocks.
\end{enumerate}
\end{theorem}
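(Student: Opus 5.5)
The plan is to treat all five parts by one uniform mechanism and then specialize it. Each space is of the form $H^m/\Delta K$ with $K\subset H$ such that $H/K$ is an irreducible symmetric space, so $\mathfrak h=\mathfrak k\oplus\mathfrak p$. The $\Ad(\Delta K)$-isotropy representation of $H^m/\Delta K$ then splits into two parts. The first is $m-1$ copies of $\mathfrak k$, complementary to the diagonal. The second is $m$ copies of $\mathfrak p$, the ``horizontal'' directions.

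We restrict to the invariant metrics that are diagonal with respect to a fixed decomposition. These carry a scale $x_i$ on each $\mathfrak p$-summand and scales on a chosen basis of the $\mathfrak k$-complement. The Ricci components are then rational functions of these scales. Their structure constants are determined by $\dim K$, $\dim H/K$, and the Casimir constant $c$ of $K$ on $\mathfrak p$ relative to the Killing form of $H$; these are the only data entering. For $\Sp(p)\times\Sp(q)\subset\Sp(p+q)$, $\SO(n)\subset\SU(n)$, $\Sp(n)\subset\SU(2n)$, $\Sp(4)\subset E_6$, $\SU(8)\subset E_7$ and $\SO(16)\subset E_8$ I will tabulate these constants first.

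The symmetric ansatz sets all $x_i$ equal. The detected branch keeps one symmetry-breaking parameter, for instance $t=x_1/x_2$ in the two-factor case, or the ratio of the distinct scale to the three equal ones in type $3+1$. I will normalize by homothety and use the remaining Einstein equations to eliminate the $\mathfrak k$-scales, solving the linear or quadratic relations successively. This produces a single polynomial detector $D(t)$. The trivial factor $(t-1)$ is divided out, and the involution $t\mapsto 1/t$ (block interchange) is quotiented by the substitution $s=t+1/t$. The resulting polynomial is then analysed, together with the positivity of the reconstructed $\mathfrak k$-scales.

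For part (1) the detector reduces to a quadratic in $s$ whose discriminant and root location depend on $(p,q)$. Existence of a root $s>2$ giving positive metrics should become equivalent to the stated inequalities: $q\ge 8p^2+5p+1$, and, with a separate check, $q\ge 15$ for $p=1$. Uniqueness follows because only one root is admissible. For parts (2)--(4) the detector has degree two in the relevant variable after removing the trivial factors. Hence there are at most two solutions, and positivity of the discriminant and of the reconstructed scales gives exactly two. For $\SU(2n)^3/\Delta\Sp(n)$ the sign of the discriminant changes between $n=21$ and $n=22$; I will verify this by an exact integer evaluation and a monotonicity argument in $n$.

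For non-isometry I will compare scale-invariant quantities of the normalized metrics, such as the normalized scalar curvature $S\cdot V^{2/\dim M}$, which is an isometry invariant. Non-natural-reductivity will come from the criterion of D'Atri--Ziller type applied to the connected isometry group. That group equals $H^m$ up to finite extension, since the metrics are irreducible and not symmetric. Irreducibility follows because the metric is not a product: the $\mathfrak k$-part couples all factors.

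For part (5) I will carry out the same elimination in the $3+1$ and $2+2$ fixed families of $\SU(18)^4/\Delta\Sp(9)$. This yields two univariate detectors with integer coefficients. I will isolate their real roots exactly with Sturm sequences and check positivity, which gives two admissible roots for $3+1$ and one for $2+2$. Global uniqueness in the $2+2$ family requires that the elimination be complete there, with no discarded components. To secure this I will use a resultant or Gröbner computation over $\mathbb Q$ and show that the extraneous factors have no positive zeros.

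The main obstacle I expect is this complete and certified elimination in the four-factor case. The difficulty is ensuring that no positive solutions are lost when dividing by factors during elimination, together with rigorous isolation of the roots. I would first attempt the computation with exact resultants and interval-certified root isolation.
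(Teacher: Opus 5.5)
Your proposal has two genuine gaps.

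\textbf{Gap 1: the structure of the detectors and the root counting.} Your counting in parts (1)--(4) rests on a structural guess that is false, so the step ``degree two, hence at most two solutions'' fails. In parts (2)--(4) the eliminant in the vertical ratio $u$ of the family $(x,x,z,u,1)$ has degree ten after removing $u^8(u-1)^2$ (these are $P_n$, $Q_n$, $P_{E_i}$). Generically every root lifts to a complex solution, so changing variables will not produce a quadratic. There is also no $t\mapsto1/t$ involution to quotient by in these $2+1$ families: no automorphism exchanges $\mathbb Rv_1$ and $\mathbb Rv_2$.

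What is actually needed has three parts. Existence comes from the signs of the detector at $u=1/6$ and $u=1$ together with the positive leading coefficient. Exactly one root in each of $(1/6,1)$ and $(1,\infty)$ follows from Descartes' rule after the substitutions $u=(1+6t)/(6(1+t))$ and $u=1+t$, with coefficient-sign certificates uniform in $n$, or from exact Sturm chains. Each root must then lift to a unique positive solution:
\begin{itemize}
\item the remainder of $R_n$ modulo $E_2$ is $A_n(u)x+B_n(u)$, and $\operatorname{Res}_u(P_n,A_n)\ne0$;
\item the common $z$-root of $E_1,E_3$ is real and unique, because $E_1$ has a linear term in $z$ and $E_3$ has none;
\item positivity follows from $x>(3u+1)/12$.
\end{itemize}

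The $n=22$ threshold is not a discriminant sign change. It is the sign change of $F(n)$ in $Q_n(1)=-64(n-1)^2(7n-3)F(n)$. Since $Q_n(1)>0$ does not exclude a pair of roots, nonexistence for $2\le n\le21$ needs an exact Sturm count for each such $n$.

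In part (1) the eliminant is the cubic $C_{p,q}(z)$ in $z=x_3$, together with the admissibility condition $R_{p,q}(z)<0$. The bounds $8p^2+5p+1$ and $15$ come from the unique positive crossing of the factor $H(p,q)$ of $\operatorname{Res}_z(C,R)$, which fixes $\operatorname{sgn}C(\rho)$ at the positive root $\rho$ of $R$. Uniqueness needs the monotonicity of $C$ on $(0,\rho)$, plus a separate argument for $p=1$, $q\ge15$.

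Two further points:
\begin{itemize}
\item The horizontal copies (and the $\mathfrak k$-copies) are mutually equivalent isotropy modules, so equal diagonal Ricci eigenvalues do not by themselves give an Einstein metric. You must realize each family as the fixed set of the Cartan involutions and block permutations, with the vertical basis adapted to them, and invoke symmetric criticality.
\item In the $2+2$ case, dividing out $(t-1)$ discards the locus $p=q$, $A=B\ne1$. That locus has to be excluded separately, which the paper does by a dedicated elimination.
\end{itemize}

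\textbf{Gap 2: the identification $\Isom_0=H^m$.} ``Irreducible and not symmetric'' does not imply it: a Berger metric on $\SU(2)$ is irreducible and non-symmetric, yet its isometry group is $\mathrm{U}(2)$. These manifolds also do carry larger transitive actions, for example from $H^m/\Delta K\simeq H^{m-1}\times H/K$.

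Every ``any connected transitive group'' claim depends on this identification. The transvection group of a naturally reductive metric is a transitive normal subgroup of $\Isom_0$, hence by dimension all of $H^m$. Only then does the invariant-form test for $H^m$ finish the argument: weights $\lambda_iQ$ induce a quotient metric on the multiplicity space forcing, for example, $x=u>1/3$, which is incompatible with $E_2(u,u)\propto u^2(3u-1)$.

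The paper proves $\Isom_0=H^m$, modulo the finite kernel, in three steps:
\begin{itemize}
\item $N_{\mathfrak h^m}(\Delta\mathfrak k)=\Delta\mathfrak k$;
\item Onishchik's classification of factorizations of compact simple Lie algebras, which shows that no primitive enlargement of an $\mathfrak h$-ideal can carry the standard $\mathfrak k$ in its intersection;
\item a Goursat argument reducing any other enlargement to an $H^{m+1}$-action with a Ledger--Obata factor, which is incompatible with the pairwise orthogonality of the horizontal copies.
\end{itemize}

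Your normalized-scalar-curvature comparison is a legitimate alternative for non-isometry that bypasses $\Isom_0$. It works only once you certify that the values at the algebraic roots actually differ, and it does not address natural reductivity.
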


The individual statements, including the exact reconstruction and full
isometry results, are proved in the corresponding sections below.

The paper is organized as follows.  Section~\ref{sec:detection-framework}
formulates the invariant Einstein equations and the detection reduction.
Section~\ref{sec:twofactor-symplectic} proves the sharp threshold and
uniqueness theorem for the two-factor symplectic family.
Section~\ref{sec:AI-threefactor} treats the $AI$ family
$\SU(n)^3/\Delta\SO(n)$, proving existence, uniqueness, and geometric
distinction of the two asymmetric metrics.  Section~\ref{sec:AII-threefactor}
proves the sharp $n=22$ threshold for $\SU(2n)^3/\Delta\Sp(n)$ and
determines the isometry groups of the resulting metrics.
Section~\ref{sec:predictor} derives a representation-theoretic criterion
that predicts the three-factor crossing before elimination.
Section~\ref{sec:exceptional-threefactor} applies the method to the three
exceptional families and then develops the four-factor analysis, including
the $3+1$ and $2+2$ metrics on $\SU(18)^4/\Delta\Sp(9)$ and their
geometric distinction.  Appendix~\ref{app:H4balanced-coefficients} records
the exact coefficient system for the $2+2$ branch, and
Appendix~\ref{app:H4balanced-n9-elimination} gives its exact elimination
certificate.

\subsection*{Acknowledgements}

The author used OpenAI's ChatGPT as an interactive assistant during the
preparation of this manuscript for exposition, organization, creating the pictures,
proofreading, and suggestions for presenting and checking arguments.
All these contributions were independently verified by the author,
who takes full responsibility for the mathematical content and
conclusions of the paper.

\section{What do we actually solve?}\label{sec:detection-framework}

Before introducing the detector, let us make the underlying Einstein problem
explicit.  Let
\[
 M=G/K
\]
be a compact homogeneous space, choose a reductive decomposition
\[
 \mathfrak g=\mathfrak k\oplus\mathfrak m,
\]
and fix an $\operatorname{Ad}(G)$-invariant inner product $Q$ on
$\mathfrak g$.  We use a $Q$-orthogonal decomposition into
$\operatorname{Ad}(K)$-invariant irreducible summands,
\[
 \mathfrak m=\mathfrak m_1\oplus\cdots\oplus\mathfrak m_r.
\]
In a diagonal invariant subfamily for which symmetry forces the Ricci tensor
to have no off-diagonal terms between these summands, an invariant metric has
the form
\[
 g=x_1Q|_{\mathfrak m_1}+\cdots+x_rQ|_{\mathfrak m_r},
 \qquad x_i>0.
\]
This is the situation in the first families below.  Geometrically, the
numbers $x_i$ are the relative sizes assigned to the different invariant
directions.  Multiplying all $x_i$ by the same constant changes only the
overall scale, so one scale may be normalized.

There is one qualification that will matter later.  If equivalent isotropy
summands occur, a general invariant metric need not be diagonal in a fixed
decomposition, and the invariant Ricci tensor may contain off-diagonal
components.  In the $H^s/\Delta K$ part of the paper these equivalent
directions are therefore encoded by a positive matrix
$P\in\operatorname{Sym}^+(V_s)$ rather than by one scalar for each copy.
The scalar discussion in the present section is the simplest model of the
same detection mechanism, not a restriction imposed on the later
multi-factor analysis.

For a diagonal metric as above, let $\operatorname{Ric}_g^\sharp$ be the Ricci
endomorphism defined by
\[
 g(\operatorname{Ric}_g^\sharp X,Y)=\operatorname{Ric}_g(X,Y).
\]
Under the stated diagonal hypothesis,
\[
 \operatorname{Ric}_g^\sharp|_{\mathfrak m_i}
 =r_i\,\mathrm{Id}_{\mathfrak m_i}.
\]
The metric is Einstein precisely when all these eigenvalues agree:
\begin{equation}\label{eq:Einstein-eigenvalue-equality}
 r_1=r_2=\cdots=r_r.
\end{equation}
Thus, after fixing the overall scale, the diagonal homogeneous Einstein
problem becomes a finite system of algebraic equations for positive numbers
$x_i$.  For example, one may solve
\begin{equation}\label{eq:Einstein-differences-general}
 r_1-r_r=0,\qquad\ldots,\qquad r_{r-1}-r_r=0.
\end{equation}
In a non-diagonal invariant family the same principle holds, but these scalar
equalities are supplemented by the vanishing of the relevant off-diagonal
Ricci components.  This is exactly what the matrix-valued response blocks
later record.

For reference, in the diagonal setting the $r_i$ are given by the standard
homogeneous Ricci formula; see, for example, \cite[Chapter~7]{Besse}.  Put
$d_i=\dim\mathfrak m_i$, write
\[
 B|_{\mathfrak m_i}=-b_iQ|_{\mathfrak m_i},
\]
where $B$ is the Killing form, and define
\[
 [ijk]=
 \sum_{\alpha,\beta,\gamma}
 Q([e_\alpha^i,e_\beta^j],e_\gamma^k)^2
\]
for $Q$-orthonormal bases of the corresponding summands.  Then
\begin{equation}\label{eq:homogeneous-Ricci-formula}
 r_i=
 \frac{b_i}{2x_i}
 -\frac{1}{2d_i}\sum_{j,k}[ijk]\frac{x_k}{x_i x_j}
 +\frac{1}{4d_i}\sum_{j,k}[ijk]\frac{x_i}{x_jx_k}.
\end{equation}
Nothing in the detection viewpoint changes this formula or the Einstein
equations.  The difference is how we organize the equations and which
symmetry-breaking variable we insist on keeping visible.

\subsection*{Where detection enters}

Suppose a more symmetric ansatz has required two geometrically meaningful
scales to be equal, say
\[
 x_1=x_2.
\]
To look for metrics missed by that ansatz, the obvious first step is to allow
$x_1$ and $x_2$ to differ.  The role of detection begins after this
enlargement: it organizes the larger system while keeping the restored
distinction visible.

Instead of treating all remaining variables on an equal footing, we keep the
restored distinction visible.  In the simplest scalar situation one may use
the ratio
\[
 u=\frac{x_1}{x_2}.
\]
Then
\[
 u=1
 \quad\Longleftrightarrow\quad
 x_1=x_2,
\]
while $u\ne1$ records the asymmetry we are looking for.  We now ask a very
specific question:

\begin{quote}
\emph{Do the Einstein equations force $u=1$, or do they allow $u\ne1$?}
\end{quote}

This is the detection question.  We simplify the other Einstein equations
while keeping $u$ visible.  In favorable examples elimination and exact
reconstruction reduce the problem to a scalar necessary condition
\[
 D(u)=0.
\]
We call such a scalar condition a \emph{detector}.  A zero with $u\ne1$ is a
candidate asymmetric Einstein metric, not yet a solution: elimination can
introduce extraneous roots, and the remaining metric variables must be
reconstructed, checked against the original Einstein equations, and proved
positive.  Whenever a later theorem counts detector roots as Einstein
metrics, those reconstruction and positivity steps are part of the proof.

The ratio $u$ is only the simplest example of a hidden variable.  In the
multi-factor problem the hidden directions form representation spaces, and
the detector becomes a scalar determinant or a representation-resolved
response block.  The underlying question is unchanged: does the Einstein
response remain invertible in the hidden direction, or does it lose rank?

A useful detector need not exist for every homogeneous Einstein problem.
Elimination may become too large, may introduce extraneous algebraic
components, or may fail to leave a manageable one-variable condition.  The
point is therefore not a universal algorithm.  It is a way of choosing
\emph{which geometric distinction to preserve} while simplifying the
Einstein equations.

Several outcomes can occur.  Detector roots may persist as a structural
parameter varies, appear or disappear at a threshold, or fail to produce an
admissible reconstructed metric.  A branch may also terminate away from the
symmetric state when two detector roots merge.  We introduce the more precise
threshold and bifurcation language only when these phenomena occur in the
examples.  In particular, a zero of an detector is not by itself called a
bifurcation point; that terminology is used only after the corresponding
linearized and nonlinear hypotheses have been verified.

\subsection*{How to read the proofs}

Each family below follows the same three questions:
\begin{enumerate}
\item \textbf{Expose:} which equality or symmetry has hidden the distinction
we want to test?
\item \textbf{Detect:} do the Einstein equations force the restored
distinction to disappear, or do they allow it to persist?
\item \textbf{Reconstruct:} do the detected candidates satisfy the original
Einstein equations with all metric variables positive?
\end{enumerate}

The first example makes all three steps concrete and also shows why this
organization can be much simpler than attacking the full Einstein system at
once.  Later sections use the same logic with matrix-valued hidden variables
and representation-resolved response channels.

\section{A first threshold example: a two-factor symplectic family}\label{sec:twofactor-symplectic}

The first example shows the detection mechanism in its simplest threshold
form.  Rather than eliminate the full enlarged Einstein system at once, we
keep the restored distinction
\[
 x_1\stackrel{?}{=}x_2
\]
visible.  The Ricci difference for these two scales separates the symmetric
locus from an asymmetric branch; on that branch the remaining equations reduce
to one polynomial condition and one admissibility inequality.  Their
interaction gives the sharp threshold.

\subsection{What is known}
Let $H=\Sp(p+q)$, $K=\Sp(p)\times\Sp(q)$, and
$M_{p,q}=(H\times H)/\Delta K$.  Interchanging the two factors of $K$
identifies the pairs with parameters $(p,q)$ and $(q,p)$, so throughout this
section we order the unequal factors and assume
\[
 1\le p<q.
\]
Here $H/K$ is the irreducible symmetric
space of Cartan type CII; for the standard terminology and classification of compact symmetric spaces, see \cite{Helgason}.  Lauret--Will study the general Einstein problem on
$(H\times H)/\Delta K$ and classify the irreducible symmetric case when $K$
is simple \cite{LWHH}.  Our family lies outside that subclass because $K$
has two simple factors.

For $p\ne q$ these factors have different Killing constants,
\[
 B_{\mathfrak{sp}(p)}
 =\frac{p+1}{p+q+1}B_{\mathfrak h}|_{\mathfrak{sp}(p)},\qquad
 B_{\mathfrak{sp}(q)}
 =\frac{q+1}{p+q+1}B_{\mathfrak h}|_{\mathfrak{sp}(q)}.
\]
This is the distinction retained below.  Related aligned-space work gives
broader context \cite{LWaligned,LWtwo}.  Our result concerns an asymmetric diagonal branch for unequal $p,q$ and its
sharp existence threshold.  The results cited above do not contain this branch
or the threshold established below.

\subsection{The hidden parameter}
Let $H=\Sp(p+q)$ and $K=\Sp(p)\times\Sp(q)$.  For
$M=(H\times H)/\Delta K$, the reductive tangent space has the splitting
\[
 \mathfrak m=\mathfrak q_1\oplus\mathfrak q_2\oplus
 \mathfrak{sp}(p)^-\oplus\mathfrak{sp}(q)^-.
\]
The last two summands are inequivalent when $p\ne q$.  There is also a
canonical reason for taking the two horizontal copies to be orthogonal.  Let
$\sigma$ be the Cartan involution of the symmetric pair $H/K$.  The
automorphism $(\sigma,\operatorname{id})$ of $H\times H$ preserves
$\Delta K$, acts by $-1$ on $\mathfrak q_1$ and by $+1$ on
$\mathfrak q_2\oplus\mathfrak k^-$, and hence kills every mixed term
between $\mathfrak q_1$ and the other summands.  Applying the analogous
involution in the second factor kills the remaining horizontal mixed terms.
Thus the following diagonal family is the fixed-point family of a finite group
of automorphisms.  In particular its Ricci tensor has the same block form, and
by symmetric criticality a solution of the scalar Einstein equations below is
a genuine $H\times H$-invariant Einstein metric, not merely a critical point
of an arbitrary diagonal ansatz.

We therefore consider
\begin{equation}\label{TF-metric}
 g=x_1Q|_{\mathfrak q_1}+x_2Q|_{\mathfrak q_2}
   +x_3Q|_{\mathfrak{sp}(p)^-}+x_4Q|_{\mathfrak{sp}(q)^-},
 \qquad x_i>0,
\end{equation}
where $Q=-B_H$ on each $H$-factor with the usual induced normalization.
Up to homothety set $x_4=1$.

The dimensions of $\mathfrak q_i$, $\mathfrak{sp}(p)^-$, and
$\mathfrak{sp}(q)^-$ are, respectively,
\[
 d=4pq,\qquad e=p(2p+1),\qquad f=q(2q+1).
\]
The bracket coefficients $[ijk]$ from
\eqref{eq:homogeneous-Ricci-formula} that enter the calculation are encoded by
\[
 A=\frac{pq(2p+1)}{2(p+q+1)},\qquad
 B=\frac{pq(2q+1)}{2(p+q+1)}.
\]
Substitution into~\eqref{eq:homogeneous-Ricci-formula} gives

\[
 r_1=\frac{dx_1-Ax_3-Bx_4}{2dx_1^2},\qquad
 r_2=\frac{dx_2-Ax_3-Bx_4}{2dx_2^2},
\]
\[
 r_3=\frac1{2x_3}-\frac{A}{ex_3}
 +\frac{Ax_3}{4e}\left(x_1^{-2}+x_2^{-2}\right),
\]
\[
 r_4=\frac1{2x_4}-\frac{B}{fx_4}
 +\frac{Bx_4}{4f}\left(x_1^{-2}+x_2^{-2}\right).
\]

Thus the Einstein problem in this symmetry-fixed family is completely explicit.  After fixing
$x_4=1$, one must find $x_1,x_2,x_3>0$ such that
\begin{equation}\label{eq:TF-direct-Einstein-system}
 r_1-r_2=0,\qquad
 r_2-r_3=0,\qquad
 r_3-r_4=0.
\end{equation}
Equivalently, one may solve $r_1=r_2=r_3=r_4$ and recover the common value as
the Einstein constant.  This is the direct route: three coupled nonlinear
rational equations in the three remaining metric variables.

The first equation in \eqref{eq:TF-direct-Einstein-system} is the Ricci
difference associated with the restored distinction $x_1\ne x_2$.  Its
factorization separates the symmetric locus from the asymmetric branch.  On
the latter branch, the remaining Einstein equations reduce to a one-variable
polynomial condition together with a positivity condition, from which the
sharp threshold follows.

\subsection{The detection reduction}
The first response factors:
\begin{equation}\label{TF-detector}
 r_1-r_2=-\frac{x_1-x_2}{2dx_1^2x_2^2}
 \{dx_1x_2-(x_1+x_2)(Ax_3+Bx_4)\}.
\end{equation}
Thus an Einstein metric is either on the symmetric locus $x_1=x_2$ or on a hidden branch.  Put
\[
 z=x_3,\qquad s=x_1+x_2,\qquad u=x_1x_2.
\]
On the hidden branch,
\begin{equation}\label{TF-u}
 u=\frac{s((2p+1)z+2q+1)}{8(p+q+1)}.
\end{equation}
Moreover $r_1=r_2=1/(2s)$.  Eliminating $s$ from $r_3=r_4=1/(2s)$ gives
\begin{equation}\label{TF-cubic}
 C_{p,q}(z)=c_3z^3+c_2z^2+c_1z+c_0=0,
\end{equation}
where
\[
 c_3=(2p+1)(q+1)(2p+4q+1),
\]
\[
 c_2=-4p^3-16p^2q-8p^2-24pq^2-20pq-p-8q^3-16q^2-6q+1,
\]
\[
 c_1=8p^3+24p^2q+16p^2+16pq^2+20pq+6p+4q^3+8q^2+q-1,
\]
\[
 c_0=-(p+1)(2q+1)(4p+2q+1).
\]
In particular $c_3,c_1>0$ and $c_2,c_0<0$.

On the hidden branch the reconstruction formulas give $s>0$ and $u>0$.
Hence the roots $x_1,x_2$ of $X^2-sX+u$ are distinct positive reals exactly
when its discriminant is positive.  Direct substitution of the Einstein
expression for $s$ gives
\[
 s^2-4u=-\Lambda_{p,q}(z)R_{p,q}(z),
 \qquad \Lambda_{p,q}(z)>0\quad(z>0),
\]
where all factors in $\Lambda_{p,q}$ are positive in the present parameter
range.  Thus positive distinct reconstruction is equivalent to
\begin{equation}\label{TF-admiss}
 R_{p,q}(z)<0,
\end{equation}
where
\[
 R_{p,q}(z)=(2p+1)^2(q+1)z^2+B_{p,q}z+D_{p,q},
\]
\[
 B_{p,q}=-4p^3-8p^2q-10p^2+4pq^2-4p+2q^2+2q,
\]
\[
 D_{p,q}=8p^3+12p^2q+14p^2+4pq+4p-2q^2-3q-1.
\]
Hence the four-variable Einstein problem has become
\[
 C_{p,q}(z)=0,\qquad R_{p,q}(z)<0,\qquad z>0.
\]

\subsection{The detection resultant and the sharp threshold}
The resultant factors as
\begin{equation}\label{TF-resultant}
 \operatorname{Res}_z(C,R)
 =-64(2p+1)^2(q+1)(p+q+1)^4L_{p,q}H(p,q),
\end{equation}
where $L_{p,q}=p^2+3pq+3p+q^2+2q+1>0$ and
\begin{align}\label{TF-H}
H(p,q)={}&4(2p-1)q^4-4(16p^3+2p^2-5p+1)q^3\\
&-(2p-1)(4p^2+2p-1)q^2
+2p(2p+1)(2p^2+4p-1)q-p^2(2p+1)^2.\nonumber
\end{align}

\begin{lemma}[root ordering]\label{TF-rootorder}
Whenever $R_{p,q}<0$ somewherer on $(0,\infty)$, it has a unique positive zero $\rho$, and
\[
 R(z)<0\iff0<z<\rho.
\]
For $p\ge2$, the cubic $C_{p,q}$ is strictly increasing on $(0,\rho)$.
For $p=1$ the same monotonicity holds throughout the finite subthreshold
range $q\le14$.  The superthreshold range $q\ge15$ is treated separately
in the proof of Theorem~\ref{TF-main}.
\end{lemma}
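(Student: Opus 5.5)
Both claims reduce to elementary sign arguments on explicit polynomials, and I will handle them in order.

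\emph{The zero of $R_{p,q}$.} The leading coefficient $(2p+1)^2(q+1)$ of $R_{p,q}$ is positive, so $R_{p,q}$ is an upward parabola. First I show $D_{p,q}=R_{p,q}(0)<0$ in the range where $R$ can be negative on $(0,\infty)$. Write $D_{p,q}=-2q^2+(12p^2+4p-3)q+(8p^3+14p^2+4p-1)$. If $D_{p,q}<0$, then $R(0)<0$ and $R(z)\to+\infty$, so there is exactly one positive zero $\rho$ and $R<0$ precisely on $[0,\rho)$. If instead $D_{p,q}\ge0$, the product of the roots is nonnegative. Both roots are then either positive or nonpositive. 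They are positive exactly when $B_{p,q}<0$ and the discriminant $B_{p,q}^2-4(2p+1)^2(q+1)D_{p,q}>0$. To exclude this case I note that $D_{p,q}\ge0$ forces $q\lesssim 6p^2$. In that range I check $B_{p,q}\ge0$: $B_{p,q}=(4p+2)q^2+(2-8p^2)q-4p^3-10p^2-4p$ is increasing in $q$ beyond its vertex, and $B_{p,q}>0$ for $q\gtrsim 2p$. The only residual window is small $q$ relative to $p$ with $q>p$. There I would either show the discriminant is negative or, better, show directly that $R>0$ on $(0,\infty)$ by bounding $R(z)\ge (2p+1)^2(q+1)z^2 - |B|z + D$ with $D$ large. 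Then $R\ge0$ on $(0,\infty)$ whenever $D_{p,q}\ge0$, and the hypothesis ``$R<0$ somewhere'' forces $D_{p,q}<0$, giving the first claim. If the case split becomes messy, an alternative is to compute the discriminant of $R$ as a polynomial in $(p,q)$ and compare it with $D_{p,q}$ via substitution $q=p+t$, $t\ge1$, expanding into polynomials with sign-definite coefficients.

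\emph{Monotonicity of $C_{p,q}$.} We have $C'(z)=3c_3z^2+2c_2z+c_1$, with $c_3,c_1>0$ and $c_2<0$, so $C'$ is positive near $0$. It suffices to show that either $C'$ has no real zeros ($c_2^2<3c_1c_3$), or its smaller zero $z_-$ satisfies $z_-\ge\rho$. Since $C'$ is an upward parabola, the latter is equivalent to $C'(\rho)\ge0$ together with $\rho\le -c_2/(3c_3)$. I would avoid computing $\rho$ explicitly. Instead I would use a clean upper bound: find an explicit $z_0$, for instance a simple rational function of $p,q$, with $R(z_0)\ge0$, so $\rho\le z_0$. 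Then I verify $C'>0$ on $[0,z_0]$ by writing $C'(tz_0)$ for $t\in[0,1]$ and bounding. A more robust route is to compute $\operatorname{Res}_z(C',R)$ and the sign of $C'$ at a single point of $(0,\rho)$. If this resultant has no zeros for $p\ge2$, $q>p$, then $C'$ and $R$ never share a root, so $C'(\rho)$ has constant sign over the connected parameter region. Combined with the location of the vertex of $C'$ relative to $\rho$ and one numerical sample, this gives $C'>0$ on $(0,\rho)$.

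\emph{Main obstacle.} I expect the resultant-sign step to be hardest: showing that $\operatorname{Res}_z(C',R)$ is sign-definite on the integer region $p\ge2$, $q\ge p+1$. I would substitute $p=2+a$, $q=p+1+b$ with $a,b\ge0$ and hope all coefficients become positive, or split off finitely many small cases. For $p=1$, $q\in\{2,\dots,14\}$ is a finite list: for each $q$ one computes $\rho$ and the critical points of $C_{1,q}$ exactly (or with certified rational bounds) and checks $C'>0$ on $(0,\rho)$ case by case. For $q\ge15$ the ordering presumably fails, which is why the statement defers that range to the proof of Theorem~\ref{TF-main}.
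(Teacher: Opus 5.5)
Your overall plan is the paper's: first force $R(0)<0$, then bound $\rho$ above by an explicit $z_0$ on which $C'>0$. But neither step is carried out, and the second lacks its key idea.

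For the first claim, the case split on $D_{p,q}$ is unnecessary. Your estimate ``$B_{p,q}>0$ for $q\gtrsim 2p$'' is also off: $B_{p,q}$ changes sign near $q\approx(1+\sqrt2)p$. So your residual window is left open. The clean argument shows that $R$ never has two positive roots. Put $y=q-p+1$. Then $B_{p,q}=2(2p+1)(y^2-y-2p^2-p)$, and $\operatorname{disc}R>0\iff y^2-4py-4p^2>0$. Two positive roots would need $B_{p,q}<0$, i.e.\ $y^2-y<2p^2+p$, together with the discriminant inequality. The latter gives $y^2-y>(4p-1)y+4p^2\ge2p^2+p$, a contradiction. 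So the comparison you mention at the end should be between $\operatorname{disc}R$ and $B_{p,q}$, not $D_{p,q}$.

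The genuine gap is the monotonicity claim: you never commit to a $z_0$, so it is not established. The missing idea is to take $z_0=c_1/(-2c_2)$, the zero of the linear part of $C'$. Since $c_3>0$ and $c_2<0$,
\[
 C'(z)=3c_3z^2+(2c_2z+c_1)>0\qquad(0<z<z_0)
\]
comes for free. Everything then reduces to the single inequality $R(z_0)>0$, i.e.\ positivity of $4c_2^2R(z_0)$.
\begin{itemize}
\item For $p\ge2$, all its coefficients in $t=q-p$ are positive.
\item For $p=1$, it is an explicit degree-eight polynomial in $q$, positive for $q\le14$. This replaces your case-by-case check. Also, for $p=1$, $q\le8$ the hypothesis is vacuous, since $R_{1,q}(0)=-2q^2+13q+25\ge0$.
\end{itemize}
The choice is essentially sharp: as $q\to\infty$, $z_0\to1/4$ while $\rho\to1/(2p+1)$. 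That is exactly why it works for $p\ge2$ and not for $p=1$ with large $q$.

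Your resultant route would not close the gap by itself. Nonvanishing of $\operatorname{Res}_z(C',R)$ only prevents a zero of $C'$ from crossing $\rho$ as $(p,q)$ varies. A pair of zeros of $C'$ can still appear inside $(0,\rho)$ through a double root. You would therefore still need $\rho\le-c_2/(3c_3)$, an inequality of exactly the same type as $R(z_0)>0$. Moreover, the continuity argument must run over the connected real region where $\rho$ exists ($D_{p,q}<0$), not over the integer lattice $q\ge p+1$.
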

\begin{proof}
The discriminant of $R$ is
\[
4(2p+1)^2(p+q+1)^2
\bigl(p^2-6pq-6p+q^2+2q+1\bigr).
\]
We first show that an admissible positive interval forces $R(0)<0$.
Write $q=p+t$ and $y=t+1>0$.  The linear coefficient of $R$ becomes
\[
 B_{p,q}=2(2p+1)\bigl(y^2-y-2p^2-p\bigr),
\]
whereas positivity of the discriminant of $R$ is equivalent to
\[
 y^2-4py-4p^2>0.
\]
If $R(0)\ge0$ and the upward-opening quadratic $R$ were negative somewhere
on $(0,\infty)$, then it would have two positive roots, so necessarily
$B_{p,q}<0$ and $\operatorname{disc}R>0$.  The first inequality gives
$y^2-y<2p^2+p$, while the second gives
$y^2-y>4py+4p^2-y\ge2p^2+p$, a contradiction.  Hence $R(0)<0$, and there is
exactly one positive zero $\rho$.
For the cubic set $z_0=c_1/(-2c_2)$.  Then
\[
 C'(z)=3c_3z^2+2c_2z+c_1>0\qquad(0<z<z_0).
\]
After substitution, $4c_2^2R(z_0)$ is a polynomial in $t=q-p$ all of whose coefficients are strictly positive for $p\ge2$.  Hence $R(z_0)>0$ and $\rho<z_0$.  For $p=1$, the corresponding numerator is
\[
-128q^8+2192q^7+35856q^6+170408q^5+369496q^4
+410937q^3+240819q^2+71883q+9441,
\]
which is positive for $1\le q\le14$ since the first two terms already have positive sum there and all remaining terms are positive.
\end{proof}

Since $C(z)<0$ for $z<0$, evaluation of the resultant at the two roots of $R$ gives
\begin{equation}\label{TF-sign}
 \operatorname{sgn}C(\rho)=\operatorname{sgn}H(p,q).
\end{equation}
Together with $C(0)<0$ and Lemma~\ref{TF-rootorder}, this converts existence into the sign of $H$.

\begin{theorem}[sharp symplectic threshold]\label{TF-main}
The family \eqref{TF-metric} contains an Einstein metric with $x_1\ne x_2$ if and only if
\[
 q\ge15\quad(p=1),\qquad
 q\ge8p^2+5p+1\quad(p\ge2).
\]
For every admissible pair $(p,q)$ the admissible zero of $C_{p,q}$ is unique.  Hence the metric is unique in this asymmetric branch up to homothety and interchange of $x_1,x_2$.
\end{theorem}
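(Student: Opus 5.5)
The reduction of the preceding subsections converts the statement into a one-variable problem. By \eqref{TF-detector}, an Einstein metric in \eqref{TF-metric} with $x_1\ne x_2$ must lie on the hidden branch. There $u$ is given by \eqref{TF-u}, $s$ is determined by $z$, and $z$ satisfies $C_{p,q}(z)=0$. Conversely, every $z>0$ with $C_{p,q}(z)=0$ and $R_{p,q}(z)<0$ reconstructs $s>0$, $u>0$ and two distinct positive roots $x_1\ne x_2$ of $X^2-sX+u$. These satisfy $r_1=r_2=r_3=r_4$, so they give a genuine Einstein metric, and by the symmetric-criticality remark a genuinely invariant one. Interchanging the two roots is exactly the interchange of $x_1,x_2$. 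So the theorem is equivalent to the following claim: the number of zeros of $C$ in the admissible set $\{z>0: R(z)<0\}$ is $1$ for the listed $(p,q)$ and $0$ otherwise.

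\emph{Case $p\ge2$.} If $R\ge0$ on $(0,\infty)$ there is nothing to count. Otherwise Lemma~\ref{TF-rootorder} gives an admissible interval $(0,\rho)$ on which $C$ is strictly increasing. Since $C(0)=c_0<0$, $C$ has a zero in $(0,\rho)$ iff $C(\rho)>0$, and that zero is unique by monotonicity. By \eqref{TF-sign} this condition is $H(p,q)>0$. If $R\ge0$ on $(0,\infty)$ I would check directly that $H\le0$; one route is that the discriminant of $R$ is then nonpositive, which forces $q$ small relative to $p$. It then remains to show that, for integers $q>p\ge2$, $H(p,q)>0$ iff $q\ge 8p^2+5p+1$.

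The leading behaviour is $4(2p-1)q^4-64p^3q^3$, so the sign change occurs near $q\approx 16p^3/(2p-1)\approx 8p^2+4p+2$. I would substitute $q=8p^2+5p+1+k$ and verify that $H$ is a polynomial in $(p,k)$ with positive coefficients for $k\ge0$, $p\ge2$. Similarly, substituting $q=8p^2+5p-k$ with $k\ge0$, I would show $H<0$ for $p<q\le 8p^2+5p$. For this second part I expect to factor $H/q$ as the quartic part plus lower terms and bound it using the dominance of $-64p^3q^3$ over $8pq^4$ below the root, in the style of the argument in Lemma~\ref{TF-rootorder}. Equivalently, one could locate the unique real root of $H(p,\cdot)$ exceeding $p$ strictly between $8p^2+5p$ and $8p^2+5p+1$ by evaluating $H$ at these two integers and using Descartes' rule for uniqueness. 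I expect this sign bookkeeping to be the main obstacle, because the margin is only $O(p^{\,\cdot})$ relative to the huge leading terms. I would first try the Descartes route: $H(p,\cdot)$ has coefficient signs $+,-,-,+,-$, which allows three or one positive roots. The small roots lie below $p$, and I would exclude them by checking $H(p,p)<0$ together with the behaviour of $H$ near $q=p$.

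\emph{Case $p=1$.} Here $H(1,q)=4q^4-60q^3-5q^2+30q-9$. For $q\le14$ we have $H(1,q)<0$, since $4q^4\le56q^3$, and Lemma~\ref{TF-rootorder} applies, so no admissible zero exists. For $q\ge15$ we have $H(1,q)>0$, so $C(\rho)>0$ and there is at least one admissible zero. Uniqueness needs a separate argument because monotonicity on $(0,\rho)$ is no longer supplied by the lemma. I would show that $C$ has at most one zero in $(0,\rho)$ as follows. All three roots of $C$ are positive by the sign pattern $+,-,+,-$, and $C(0)<0<C(\rho)$, so an odd number of them lie in $(0,\rho)$. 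It suffices to exclude three, for instance by showing $\rho$ is smaller than the critical point of $C$, or that $C'(z)>0$ wherever $C(z)=0$ with $z<\rho$. Concretely, I would repeat the $z_0$ computation with $p=1$ and $q\ge15$ and see whether $R(z_0)>0$ still holds after a different choice of comparison point such as $z_0'=-c_2/(3c_3)$. Failing that, I would check $C(z_1)$ at the local maximum $z_1<\rho$ directly to show only one crossing occurs.
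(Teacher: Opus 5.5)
Your reduction matches the paper's. Off the symmetric locus, everything comes down to zeros of $C_{p,q}$ on $\{R_{p,q}<0\}$, and wherever Lemma~\ref{TF-rootorder} applies, to the sign of $H(p,q)$ via \eqref{TF-sign}.

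\textbf{Gap 1: locating the threshold for $p\ge2$.} This is the step that decides the theorem, and it is not established.
\begin{itemize}
\item Descartes' rule applied to $H(p,\cdot)$ in the variable $q$ (signs $+,-,-,+,-$) allows three positive roots.
\item $H(p,p)<0$ together with $H\to+\infty$ only gives an \emph{odd} number of roots in $(p,\infty)$. Nothing you state rules out three roots above $p$, and ``the behaviour of $H$ near $q=p$'' is not an argument.
\item Your other route (substituting $q=8p^2+5p-k$ and invoking dominance of $-64p^3q^3$) is left unexecuted exactly where the margin is smallest, since the crossing lies between two consecutive integers.
\end{itemize}
The missing idea is to expand at the bottom of the range. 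With $q=p+t$, every coefficient of $H(p,p+t)$ except the leading one $8p-4$ is negative; the constant term is $H(p,p)=-4p^2(4p^2-1)^2$. Descartes then gives exactly one root $t>0$. The two evaluations $H(p,8p^2+5p)<0<H(p,8p^2+5p+1)$ (the latter checked after writing $p=2+s$) place that root between those integers, which gives both directions at once.

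\textbf{Nonemptiness of the admissible interval.} For existence you must also know that $\{R<0\}$ is nonempty, and your suggested justification is wrong. $R\ge0$ on $(0,\infty)$ does not force a nonpositive discriminant: for $p=2$, $q=11$, $R$ has two negative roots and discriminant $78400>0$. The correct observation is that $R(0)=D_{p,q}<0$ for $q\ge8p^2+5p+1$. Likewise $R_{1,q}(0)=-2q^2+13q+25<0$ for $q\ge15$; your $p=1$ existence step needs this too but omits it.

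\textbf{Gap 2: uniqueness for $p=1$, $q\ge15$.} Your plan here is a list of attempts, and the first one fails. The $p=1$ numerator of $4c_2^2R(z_0)$ has leading term $-128q^8$. Indeed $z_0\to1/4$ while $\rho\to1/3$, so $\rho<z_0$ breaks down for large $q$.

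What closes the argument is placing $\rho$ before the local \emph{minimum} of $C$:
\begin{itemize}
\item $R_{1,q}(0)<0<R_{1,q}(1)=4(q+2)^2$ together with $R_{1,q}'>0$ gives $\rho<1$.
\item $C'_{1,q}(0)>0>C'_{1,q}(1)=-4(q+1)(q+2)(3q-4)$ gives critical points $\alpha<1<\beta$.
\item If $\rho\le\alpha$, then $C$ is increasing on $(0,\rho)$. If $\alpha<\rho<\beta$, then $C(\alpha)\ge C(\rho)>0$. In both cases there is exactly one admissible zero.
\end{itemize}
Your inflection-point variant also works once you add $\rho<1$. For $q\ge15$ one has $-c_2/(3c_3)\sim2q/9>1>\rho$, and three zeros in $(0,\rho)$ would force the unique inflection point into $(0,\rho)$.

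\textbf{Small slips.}
\begin{itemize}
\item $H(1,q)=4q^4-56q^3-5q^2+30q-9$, not $-60q^3$.
\item The sign pattern of $C$ only shows that its \emph{real} roots are positive, not that all three roots are real.
\end{itemize}
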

\begin{proof}
Write $q=p+t$.  The polynomial $H(p,p+t)$ has positive leading coefficient and all remaining coefficients negative:
\begin{align*}
H(p,p+t)={}&(8p-4)t^4+(-64p^3+24p^2+4p-4)t^3\\
&+(-192p^4+16p^3+36p^2-8p-1)t^2\\
&+(-192p^5+64p^3-4p)t-64p^6+32p^4-4p^2.
\end{align*}
Descartes' rule gives at most one positive zero.  Since $H(p,p)<0$ and the leading coefficient is positive, $H(p,p+t)\to+\infty$ as $t\to\infty$; hence there is exactly one positive zero.  For $p\ge2$, put $Q_p=8p^2+5p$.  Then
\[
H(p,Q_p)=-4p^2(2p+1)^2(32p^3+132p^2+84p+9)<0,
\]
whereas
\[
H(p,Q_p+1)=3584p^7+3008p^6-1216p^5-3104p^4-2032p^3-680p^2-120p-9>0.
\]
The last positivity follows, for example, by writing $p=2+t$, when all coefficients are positive.  Hence the unique crossing lies between these consecutive integers.  Equation \eqref{TF-sign} and Lemma~\ref{TF-rootorder} then give existence and uniqueness for $p\ge2$.

For $p=1$, direct evaluation gives $H(1,14)<0<H(1,15)$, and the same
one-crossing argument shows
\[
 H(1,q)<0\quad(q\le14),\qquad H(1,q)>0\quad(q\ge15).
\]
For $q\le14$, Lemma~\ref{TF-rootorder} and \eqref{TF-sign} exclude an
admissible zero.  For $q\ge15$, one has
\[
 R_{1,q}(0)=-2q^2+13q+25<0,
\]
so the admissible interval is nonempty; since \eqref{TF-sign} gives
$C_{1,q}(\rho)>0$ while $C_{1,q}(0)<0$, the intermediate value theorem
gives an admissible zero.

It remains only to record uniqueness on the superthreshold branch $p=1$, $q\ge15$.
Here
\[
 C'_{1,q}(1)=-4(q+1)(q+2)(3q-4)<0,
 \qquad
 C'_{1,q}(0)=4q^3+24q^2+45q+29>0.
\]
Since $C'_{1,q}$ is an upward-opening quadratic, it has two positive zeros
\[
 0<\alpha<1<\beta,
\]
and therefore $C_{1,q}$ is strictly increasing on $(0,\alpha)$ and strictly
decreasing on $(\alpha,1)$.

On the reconstruction side,
\[
 R_{1,q}(0)=-2q^2+13q+25<0,\qquad
 R_{1,q}(1)=4(q+2)^2>0,
\]
and
\[
 R'_{1,q}(z)=18(q+1)z+6(q^2-q-3)>0
 \qquad(z>0,\ q\ge15).
\]
Hence the unique positive zero $\rho$ of $R_{1,q}$ satisfies
\[
 0<\rho<1.
\]
The resultant sign relation and $H(1,q)>0$ for $q\ge15$ give
\[
 C_{1,q}(\rho)>0.
\]
If $\rho\le\alpha$, strict increase on $(0,\rho)$ together with
$C_{1,q}(0)<0<C_{1,q}(\rho)$ gives exactly one zero.  If
$\alpha<\rho<1$, then $C_{1,q}(\alpha)\ge C_{1,q}(\rho)>0$, so there is
exactly one zero in $(0,\alpha)$ and none in $[\alpha,\rho)$.  Thus the
admissible zero is unique also for $p=1$.
\end{proof}

The threshold polynomial $H(p,q)$ therefore records the compatibility of
the asymmetric branch with positive reconstruction.

\subsection{Orthogonal contrast}\label{subsec:orthogonal-contrast}
For $2\le p<q$, consider
\[
 \frac{\SO(p+q)\times\SO(p+q)}{\Delta(\SO(p)\times\SO(q))}.
\]
(The case $p=1$ has a trivial $\SO(1)$ factor and is not the same four-scale
comparison.)  The same branch response exposes a formal asymmetric branch.
Put
\[
 a=p-1,\qquad b=q-1=a+c,\qquad c\ge1.
\]
On that branch the discriminant of the reconstructed pair $(x_1,x_2)$ has the sign opposite to a quadratic $Q_{a,b}(z)$.  Its leading and constant coefficients are positive.  The linear coefficient, after division by the positive factor $a$, is
\[
 \ell_{a,c}=c^2-2c-2a^2-2a.
\]
We claim that
\[
 Q_{a,b}(z)>0\qquad(z>0).
\]
There are two cases.  If $\ell_{a,c}\ge0$, then all three coefficients of $Q_{a,b}$ are non-negative and the constant coefficient is strictly positive, so the claim is immediate.

Assume now that $\ell_{a,c}<0$.  A direct calculation gives
\[
 \operatorname{disc}Q_{a,b}=a^2(a+b)^2F_{a,c},
 \qquad
 F_{a,c}=c^2-4ac-4a^2-8c+8.
\]
Using the definition of $\ell_{a,c}$,
\[
 F_{a,c}=\ell_{a,c}-4ac-2a^2-6c+2a+8.
\]
Set
\[
 f(a,c):=-4ac-2a^2-6c+2a+8.
\]
On $a\ge1$ and $c\ge1$,
\[
 \partial_a f=-4c-4a+2<0,
 \qquad
 \partial_c f=-4a-6<0.
\]
Thus $f$ is decreasing in both variables on this region, and therefore
\[
 f(a,c)\le f(1,1)=-2.
\]
Since $\ell_{a,c}<0$, it follows that $F_{a,c}<0$.  Thus $Q_{a,b}$ has negative discriminant and positive leading coefficient, so it is positive on all of $\mathbb R$.  This proves the claim in both cases.

Consequently the asymmetric orthogonal branch has negative reconstruction discriminant and cannot produce a Riemannian metric.  The contrast is useful conceptually: the same detection scheme that reveals a genuine branch in the symplectic case detects and rejects the corresponding formal branch in the orthogonal case.

\subsection{Full isometry group}

\label{TF-sec:isometry}
Write $n=p+q$, $H=\Sp(n)$ and $G=H\times H$.  The action of $G$ on
\[
 M_{p,q}=G/\Delta K
\]
has finite ineffective kernel $\Delta Z(H)$.  We first determine the identity component of the full isometry group of a detected asymmetric metric.

\begin{lemma}[normalizer and centralizer]\label{TF-lem:normalizer}
For a Lie subalgebra $\mathfrak l\subset\mathfrak g$, write
\[
 N_{\mathfrak g}(\mathfrak l)
 =\{X\in\mathfrak g:[X,\mathfrak l]\subset\mathfrak l\}
\]
for its Lie-algebra normalizer.  Then
\[
 N_{\mathfrak h\oplus\mathfrak h}(\Delta\mathfrak k)=\Delta\mathfrak k.
\]
Consequently the connected centralizer of the effective $G$-action on $M_{p,q}$ is trivial.
\end{lemma}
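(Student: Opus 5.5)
We have to show that $N_{\mathfrak h\oplus\mathfrak h}(\Delta\mathfrak k)=\Delta\mathfrak k$ for $\mathfrak h=\mathfrak{sp}(p+q)$ and $\mathfrak k=\mathfrak{sp}(p)\oplus\mathfrak{sp}(q)$, and then deduce the statement about the centralizer. The plan is to pass through the decomposition of $\mathfrak h\oplus\mathfrak h$ as a $\Delta\mathfrak k$-module.

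\emph{Module decomposition.} We write $\mathfrak h=\mathfrak k\oplus\mathfrak p$, where $\mathfrak p$ is the isotropy module of the symmetric space CII. As a $\mathfrak k$-module, $\mathfrak p\cong\mathbb C^{2p}\otimes\mathbb C^{2q}$ (with its real structure) is irreducible and nontrivial. We then write
\[
\mathfrak h\oplus\mathfrak h=\Delta\mathfrak k\oplus\mathfrak k^-\oplus\mathfrak q_1\oplus\mathfrak q_2 .
\]
Here $\mathfrak k^-=\{(X,-X)\}$ is isomorphic to the adjoint representation, namely $\mathfrak{sp}(p)\oplus\mathfrak{sp}(q)$, and $\mathfrak q_i$ are the copies of $\mathfrak p$. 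The normalizer $N$ is a $\Delta\mathfrak k$-submodule containing $\Delta\mathfrak k$. Hence $N=\Delta\mathfrak k\oplus(N\cap\mathfrak m)$, where $N\cap\mathfrak m$ is the set of $X\in\mathfrak m$ with $[X,\Delta\mathfrak k]\subset\Delta\mathfrak k$. Since $\mathfrak m$ is $\operatorname{ad}(\Delta\mathfrak k)$-invariant, we have $[\Delta\mathfrak k,X]\subset\mathfrak m\cap\Delta\mathfrak k=0$. So $N\cap\mathfrak m$ is exactly the space of $\Delta\mathfrak k$-fixed vectors in $\mathfrak m$.

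\emph{No fixed vectors.} We show that $\mathfrak m$ has no nonzero $\Delta\mathfrak k$-invariants. Each $\mathfrak q_i$ is irreducible and nontrivial, so it has none. On $\mathfrak k^-$ the action is the adjoint action of $\mathfrak k$. Its invariants are the center of $\mathfrak k$, which is zero because $\mathfrak{sp}(p)$ and $\mathfrak{sp}(q)$ are semisimple (for $p\ge1$). Hence $N=\Delta\mathfrak k$. This is the main point, and it is short; the only care needed is to confirm that $\mathfrak p$ has no trivial summand. That follows because $\mathfrak k$ acts on $\mathfrak p$ through the tensor product of the two standard representations, so already $\mathfrak{sp}(p)$ acts nontrivially.

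\emph{The centralizer.} The centralizer of the $G$-action in $\Diff(M)$ is identified with $N_G(\Delta K)/\Delta K$, acting on the right. Its identity component has Lie algebra $N_{\mathfrak g}(\Delta\mathfrak k)/\Delta\mathfrak k=0$. Therefore the identity component is trivial, so the connected centralizer is trivial. Passing to the effective quotient by the finite kernel $\Delta Z(H)$ does not change the Lie algebras, and the conclusion persists.
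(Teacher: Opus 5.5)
Your proof is correct and is essentially the paper's argument. The paper shows that for $(X,Y)$ in the normalizer, $X-Y$ centralizes $\mathfrak k$ and hence vanishes, and then invokes self-normalization of $\mathfrak k$ in $\mathfrak h$; this is the same content as your observation that $N_{\mathfrak h\oplus\mathfrak h}(\Delta\mathfrak k)/\Delta\mathfrak k$ is the space of $\Delta\mathfrak k$-invariants in $\mathfrak k^-\oplus\mathfrak q_1\oplus\mathfrak q_2$, except that you verify the vanishing of these invariants directly instead of citing the two standard facts, and your deduction of the trivial connected centralizer via $N_G(\Delta K)/\Delta K$ is identical to the paper's.
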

\begin{proof}
The symmetric subalgebra
\[
 \mathfrak k=\mathfrak{sp}(p)\oplus\mathfrak{sp}(q)
\]
is self-normalizing in $\mathfrak h=\mathfrak{sp}(n)$ and has zero Lie-algebra centralizer.  If $(X,Y)$ normalizes $\Delta\mathfrak k$, then for every $A\in\mathfrak k$,
\[
 ([X,A],[Y,A])\in\Delta\mathfrak k.
\]
Hence $[X-Y,A]=0$ for all $A$, so $X=Y$; self-normalization then gives $X=Y\in\mathfrak k$.  The standard description of the centralizer of a transitive action by $N_G(K)/K$ gives the final assertion.
\end{proof}

\begin{lemma}[right stabilizer of the free fibre]\label{TF-lem:rightfibre}
Consider either free $H$-orbit obtained from one simple factor of
$G=H\times H$.  The induced metric is left invariant on $H$ and, with
respect to the $Q=-B_H$ decomposition
\[
 \mathfrak h=\mathfrak q\oplus\mathfrak{sp}(p)\oplus\mathfrak{sp}(q),
\]
its metric endomorphism is scalar on each of the three displayed summands.
For a detected asymmetric metric one has $x_3\ne x_4$.  Consequently the
identity component of the subgroup of right translations preserving the
induced fibre metric is contained in $K=\Sp(p)\times\Sp(q)$.
\end{lemma}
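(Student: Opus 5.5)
Three assertions have to be checked: that the induced fibre metric has scalar blocks on $\mathfrak q$, $\mathfrak{sp}(p)$ and $\mathfrak{sp}(q)$; that $x_3\ne x_4$ on the detected branch; and that right translations preserving this metric have identity component inside $K$.

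\emph{The induced metric.} I take the free orbit through the base point of the first factor $H\times\{e\}$. The orbit map $h\mapsto(h,e)\Delta K$ is equivariant for left translation by the first factor, so the induced metric is left invariant on $H$. I compute it at the identity by projecting $(X,0)\in\mathfrak h\oplus\mathfrak h$ to $\mathfrak m$ along $\Delta\mathfrak k$. For $X\in\mathfrak q$ the vector $(X,0)$ already lies in $\mathfrak q_1$, so its length squared is $x_1Q(X,X)$. For $X\in\mathfrak{sp}(p)$ we have $(X,0)=\tfrac12(X,X)+\tfrac12(X,-X)$, and the $\mathfrak m$-component is $\tfrac12(X,-X)\in\mathfrak{sp}(p)^-$. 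With the normalization of $Q$ on the anti-diagonal this is a fixed positive multiple of $x_3Q(X,X)$, and the same constant with $x_4$ appears on $\mathfrak{sp}(q)$. So the metric endomorphism is $\operatorname{diag}(x_1,\,c x_3,\,c x_4)$ with one common constant $c>0$. The second factor gives the same result with $x_2$ in place of $x_1$.

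\emph{The inequality $x_3\ne x_4$.} Suppose $x_3=z=1=x_4$ on the hidden branch. Then $z=1$ would be a root of both $C_{p,q}$ and $R_{p,q}$. But $R_{p,q}(1)$ is a sum of squares times positive factors: for $p=1$ it equals $4(q+2)^2$, and in general I expect a similar closed form. This contradicts the admissibility condition $R_{p,q}(1)<0$ of \eqref{TF-admiss}. If needed, I would instead evaluate $C_{p,q}(1)=c_3+c_2+c_1+c_0$ and show it is nonzero by expanding. I expect this to be the main (small) obstacle: computing $R_{p,q}(1)$ in closed form and factoring it.

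\emph{Right translations.} Let $\mathfrak r$ be the Lie algebra of right translations preserving the left-invariant metric $g_0$ with endomorphism $P$. An element $Y\in\mathfrak h$ lies in $\mathfrak r$ exactly when $\operatorname{ad}_Y$ is $g_0$-skew, which is equivalent to $[\operatorname{ad}_Y,P]=0$. So $\operatorname{ad}_Y$ must preserve each eigenspace of $P$. Since $c x_3\ne c x_4$, the spaces $\mathfrak{sp}(p)$ and $\mathfrak{sp}(q)$ lie in different eigenspaces, even if $x_1$ happens to equal one of $cx_3$ or $cx_4$.

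Write $Y=Y_{\mathfrak q}+Y_{\mathfrak k}$. Take $A\in\mathfrak{sp}(p)$. Then $[Y,A]$ has $\mathfrak q$-component $[Y_{\mathfrak q},A]$, which must vanish unless $\mathfrak q$ lies in the same eigenspace as $\mathfrak{sp}(p)$. A symmetric argument applies with $\mathfrak{sp}(q)$. At least one of the two factors, say $\mathfrak{sp}(q)$, lies in an eigenspace not containing $\mathfrak q$, so $[Y_{\mathfrak q},\mathfrak{sp}(q)]=0$. As a $K$-module, $\mathfrak q\cong\mathbb H^p\otimes\mathbb H^q$, and $\mathfrak{sp}(q)$ acts on it without nonzero fixed vectors. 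Hence $Y_{\mathfrak q}=0$ and $\mathfrak r\subset\mathfrak k$.

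It follows that the identity component of the subgroup of right translations preserving the fibre metric is contained in $K$.
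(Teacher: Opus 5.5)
Your proposal is correct and follows the paper's argument. You require $\operatorname{ad}_Y$ to commute with the metric endomorphism $\operatorname{diag}(x_1,cx_3,cx_4)$, isolate one of $\mathfrak{sp}(p),\mathfrak{sp}(q)$ as an eigenspace, and then use that $\mathfrak q$ has no nonzero fixed vectors for that factor; this last step is precisely the fact the paper quotes, that the normalizer of that factor in $\mathfrak{sp}(p+q)$ is $\mathfrak k$. For $x_3\ne x_4$, the paper uses $C_{p,q}(1)=4(p-q)(p+q+1)^2\ne0$ in Proposition~\ref{TF-prop:fulliso}, which is your fallback; your primary route also closes, since $R_{p,q}(1)=4p(p+q+1)^2>0$ contradicts the strict admissibility condition $R_{p,q}(z)<0$ of \eqref{TF-admiss} (a detected $z$ satisfies this inequality, not $R_{p,q}(z)=0$ as you first wrote).
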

\begin{proof}
A right translation by $a\in H$ preserves a left-invariant metric precisely
when $\operatorname{Ad}(a)$ preserves its metric endomorphism.  Infinitesimally, if
$X\in\mathfrak h$ generates such right isometries, then $\operatorname{ad}_X$ commutes with
that endomorphism.  Since the two eigenvalues on
$\mathfrak{sp}(p)$ and $\mathfrak{sp}(q)$ are different, at least one of these
two simple summands is an isolated eigenspace (even if the horizontal
eigenvalue coincides with the other vertical eigenvalue).  Hence $\operatorname{ad}_X$
preserves that isolated simple summand.  Its Lie-algebra normalizer in
$\mathfrak{sp}(p+q)$ is
$\mathfrak{sp}(p)\oplus\mathfrak{sp}(q)=\mathfrak k$, so $X\in\mathfrak k$.
\end{proof}

\begin{lemma}[diagonal peeling for $C_n$]\label{TF-lem:diagonal-peeling}
Let $\mathfrak h=C_n$ with $n\ge4$, let $r\ge1$, and let
$\Delta\mathfrak h\subset\mathfrak h^r$ be a diagonal copy.  If a compact
reductive subalgebra $\mathfrak b\subset\mathfrak h^r$ satisfies
\[
 \mathfrak h^r=\Delta\mathfrak h+\mathfrak b,
\]
then, after permuting the simple factors and applying automorphisms of them,
\[
 \mathfrak b=\mathfrak h^{r-1}\oplus\mathfrak a
\]
for a compact subalgebra $\mathfrak a\subset\mathfrak h$; the case
$\mathfrak a=\mathfrak h$ is allowed.
\end{lemma}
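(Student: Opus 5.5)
The plan is to use Onishchik's classification of factorizations of compact (semisimple) Lie algebras together with a dimension count and an induction on $r$.

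\emph{Reduction to a single factor.} For each $i$ let $\pi_i:\mathfrak h^r\to\mathfrak h$ be the projection onto the $i$-th factor, and let $\mathfrak b_i=\mathfrak b\cap\mathfrak h_i$ be the ideal of $\mathfrak b$ lying in that factor. Since $\mathfrak b$ is compact reductive, it splits as the direct sum of its simple ideals and its centre. Each simple ideal projects injectively or trivially to each factor.

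\emph{The factorization condition.} The identity $\mathfrak h^r=\Delta\mathfrak h+\mathfrak b$ is equivalent to transitivity of the corresponding group $B$ on $H^r/\Delta H\cong H^{r-1}$. Hence
\[
\dim\mathfrak b-\dim(\mathfrak b\cap\Delta\mathfrak h)=(r-1)\dim\mathfrak h .
\]

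\emph{Base case $r=2$.} Here $\mathfrak h^2=\Delta\mathfrak h+\mathfrak b$ means that $B$ acts transitively on $H\cong(H\times H)/\Delta H$ by $(a,b)\cdot x=axb^{-1}$. Onishchik's theorem on transitive actions on compact simple groups says that either $\mathfrak b$ contains one full factor, giving $\mathfrak b=\mathfrak h\oplus\mathfrak a$, or the projections $\mathfrak b_1,\mathfrak b_2$ give a factorization $\mathfrak h=\pi_1(\mathfrak b)+\pi_2(\mathfrak b)$ with proper subalgebras. Onishchik's list of factorizations of simple compact Lie algebras contains, for $C_n=\mathfrak{sp}(n)$, only
\[
\mathfrak{sp}(n)=\mathfrak{sp}(n-1)\oplus\mathfrak{sp}(1)+\mathfrak{su}(2n)\quad\text{(and variants)},
\]
and $\mathfrak{su}(2n)\not\subset\mathfrak{sp}(n)$. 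So $C_n$ admits no factorization into proper subalgebras; the only factorizations of $\mathfrak{sp}(n)$ are the trivial ones. This is the key input from Onishchik, and I would also use that $\mathfrak b$ cannot be a twisted diagonal of dimension $\dim\mathfrak h$, because then $\Delta\mathfrak h+\mathfrak b$ is too small unless the two intersect trivially. That leaves the case where $\mathfrak b$ is the graph of an automorphism. Then $B$ acts on $H$ via $x\mapsto ax\varphi(a)^{-1}$, whose orbits are twisted conjugacy classes of dimension less than $\dim H$ (they have rank-many dimensions of stabilizer), so such $\mathfrak b$ is not transitive. Combining these gives the conclusion for $r=2$ after applying an automorphism.

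\emph{Induction step.} For general $r$, project the factorization modulo the last factor. Consider $\mathfrak b'=\pi_{1..r-1}(\mathfrak b)\subset\mathfrak h^{r-1}$. Comparing coordinates shows that pairs of factors satisfy the $r=2$ factorization. Apply the base case to each pair $(i,j)$ using the projections $\pi_{ij}(\mathfrak b)$, which must satisfy $\mathfrak h^2=\Delta\mathfrak h+\pi_{ij}(\mathfrak b)$. Hence in each pair one projection contains a full factor. A simple ideal of $\mathfrak b$ isomorphic to $\mathfrak h$ could be a diagonal across several factors, so I would combine the pair conclusions with the dimension identity to force $\dim\mathfrak b\ge(r-1)\dim\mathfrak h$. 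Simple ideals sitting diagonally in two or more factors waste dimension, and a counting argument then shows that $\mathfrak b$ contains $r-1$ full factors $\mathfrak h_i$. The remaining part $\mathfrak a$ is then arbitrary, since any $\mathfrak a$ works.

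\emph{Main obstacle.} The hard step is this counting/gluing argument: ruling out partial diagonals (for instance $\Delta\mathfrak h\subset\mathfrak h_1\oplus\mathfrak h_2$ together with something in the other factors) while keeping precise track of the intersection with $\Delta\mathfrak h$. I would handle it by induction on $r$, quotienting by a full factor once it is found. The hypothesis $n\ge4$ is used only to invoke Onishchik's list cleanly, excluding the low-rank coincidences.
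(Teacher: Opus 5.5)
\textbf{Verdict: genuine gap.} Your approach is viable and differs from the paper's, but the step that makes it a proof is missing.

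\textbf{What the pairwise step gives.} It is sound and supplies the key structural input. Project $\mathfrak h^r=\Delta\mathfrak h+\mathfrak b$ to coordinates $i,j$ and subtract; this gives $\mathfrak h=\pi_i(\mathfrak b)+\pi_j(\mathfrak b)$. Since $C_n$ has no factorization by two proper subalgebras, at most one index $i_0$ has $\pi_{i_0}(\mathfrak b)\ne\mathfrak h$.

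\textbf{The gap.} You never pass from this to a coordinate factor $\mathfrak h_i$ lying in $\mathfrak b$ itself, which is the heart of the lemma. You only assert that ``a counting argument then shows'' it. Your plan to induct ``by quotienting by a full factor once it is found'' presupposes exactly the factor that has to be produced. Also, the inequality $\dim\mathfrak b\ge(r-1)\dim\mathfrak h$ that you propose to ``force'' is immediate from the dimension identity. What is needed is an \emph{upper} bound on $\dim\mathfrak b$ when $\mathfrak b$ contains no full factor. The paper gets the full factor differently: if $\mathfrak b$ contained no simple ideal of $\mathfrak h^r$, Onishchik's reduction theorems would make the decomposition effective, irreducible and hence primitive, which is impossible for type $C_n$. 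It then peels that factor off and inducts.

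\textbf{How to close it.} This completion uses only the simple-algebra fact about $C_n$. Put $F=\{i:\mathfrak h_i\subset\mathfrak b\}$. Then $\mathfrak b=\mathfrak h^F\oplus(\mathfrak b\cap\mathfrak h^{F^c})$, and projecting gives $\mathfrak h^{F^c}=\Delta\mathfrak h+(\mathfrak b\cap\mathfrak h^{F^c})$. So it suffices to rule out $r\ge2$ with no $\mathfrak h_i\subset\mathfrak b$.
\begin{itemize}
\item For $i\in I=\{i:\pi_i(\mathfrak b)=\mathfrak h\}$, reductivity of $\mathfrak b$ and simplicity of $\mathfrak h$ give exactly one simple ideal $\mathfrak s\cong\mathfrak h$ of $\mathfrak b$ mapping onto $\mathfrak h_i$. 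The centre and all other simple ideals map to $0$ there.
\item Such an $\mathfrak s$ has zero projection to $\mathfrak h_{i_0}$, since an injective image would make $\pi_{i_0}$ onto.
\item It is not supported on a single index, since otherwise $\mathfrak s=\mathfrak h_i\subset\mathfrak b$. So it is a twisted diagonal over at least two indices of $I$, and distinct such ideals have disjoint supports.
\item The complementary ideal projects to zero on $I$, so it is $0$ or a proper subalgebra of $\mathfrak h_{i_0}$.
\end{itemize}
If $i_0$ exists and $r\ge3$, this gives $\dim\mathfrak b<(\lfloor\tfrac{r-1}2\rfloor+1)\dim\mathfrak h\le(r-1)\dim\mathfrak h$. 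If $i_0$ exists and $r=2$, there is no room for $\mathfrak s$ at all. If $i_0$ does not exist, $\dim\mathfrak b\le\tfrac r2\dim\mathfrak h$. Each case contradicts the dimension identity, except $r=2$ with $\mathfrak b$ the graph of an automorphism meeting $\Delta\mathfrak h$ trivially. Your twisted-conjugacy argument excludes that case, because every automorphism of $C_n$ is inner and so fixes a Cartan subalgebra.

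\textbf{Two repairs in your base case.}
\begin{itemize}
\item The displayed ``factorization of $\mathfrak{sp}(n)$'' is not one, since $\mathfrak{su}(2n)\not\subset\mathfrak{sp}(n)$. In Onishchik's list $C_n$ occurs only as a factor ($A_{2n-1}=C_n+A_{2n-2}$, $D_{2n}=C_n+B_{2n-1}$ and variants), never as the ambient algebra. That absence is the fact you actually need.
\item Your dichotomy omits the graph case, which Goursat's lemma supplies. If $\pi_1(\mathfrak b)=\mathfrak h$, then $\mathfrak b\cap\mathfrak h_1$ is an ideal of $\mathfrak h$. So either $\mathfrak h_1\subset\mathfrak b$, or $\pi_2|_{\mathfrak b}$ is injective and $\mathfrak b$ is the graph of an automorphism.
\end{itemize}
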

\begin{proof}
We argue by induction on $r$.  The assertion is tautological for $r=1$.
Assume $r\ge2$.  Since the ambient algebra $\mathfrak h^r$ and
$\Delta\mathfrak h$ are strongly semisimple, Theorem~3.3 of
\cite{Onishchik1969} replaces $\mathfrak b$ by its maximal strongly
semisimple subalgebra without changing the equality
$\mathfrak h^r=\Delta\mathfrak h+\mathfrak b$ at the strongly semisimple
level.

Suppose first that $\mathfrak b$ contains no simple ideal of the ambient
product.  The decomposition is then effective.  It is also irreducible:
a nontrivial splitting of the ambient product into two ideals would force
the simple diagonal algebra $\Delta\mathfrak h$, which projects
nontrivially to every ambient factor, to split as a direct sum.  Hence
Theorem~4.3 of \cite{Onishchik1969} makes the decomposition primitive.
But the compact simple factorization list contains no nontrivial proper
factorization whose ambient simple algebra is of type $C_n$.  This is a
contradiction.

Thus $\mathfrak b$ contains an ambient simple ideal, say the first
$\mathfrak h$ factor.  Write
\[
 \mathfrak h^r=\mathfrak h_1\oplus\widehat{\mathfrak h}.
\]
Because $\mathfrak h_1\subset\mathfrak b$, subtraction of the
$\mathfrak h_1$ component shows
\[
 \mathfrak b=\mathfrak h_1\oplus
 (\mathfrak b\cap\widehat{\mathfrak h}).
\]
Projecting the factorization to $\widehat{\mathfrak h}\simeq
\mathfrak h^{r-1}$ gives
\[
 \widehat{\mathfrak h}
 =
 \Delta\mathfrak h+
 (\mathfrak b\cap\widehat{\mathfrak h}),
\]
where the projected copy of $\Delta\mathfrak h$ is again diagonal.
The induction hypothesis now yields
\[
 \mathfrak b
 =
 \mathfrak h^{r-1}\oplus\mathfrak a
\]
after permuting factors and applying automorphisms.  This proves the lemma.
\end{proof}

\begin{theorem}[connected full isometry group]
\label{TF-thm:isom0}
Let $1\le p<q$ and let $g$ be an asymmetric Einstein metric of
Theorem~\ref{TF-main}.  Then
\[
 \operatorname{Isom}_0(M_{p,q},g)
 =\frac{\Sp(n)\times\Sp(n)}{\Delta Z(\Sp(n))},
 \qquad n=p+q.
\]
\end{theorem}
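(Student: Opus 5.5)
The plan is to bound the identity component $L=\Isom_0(M_{p,q},g)$ from both sides. The lower bound is immediate: $G=\Sp(n)\times\Sp(n)$ acts isometrically with ineffective kernel $\Delta Z(\Sp(n))$, so $G/\Delta Z(H)\subset L$. For the upper bound we compare Lie algebras. Let $\mathfrak l=\Lie(L)$, which is compact, and let $\mathfrak l_p$ be the isotropy algebra at the base point. Since $G$ is transitive, $\mathfrak l=(\mathfrak h\oplus\mathfrak h)+\mathfrak l_p$. The task is then to show that $\mathfrak h\oplus\mathfrak h$ is already all of $\mathfrak l$.

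\textbf{Structure of $\mathfrak l$.} First we use the standard structure theory of isometry groups of compact homogeneous spaces (Onishchik, Gordon). The semisimple part $\mathfrak h\oplus\mathfrak h$ sits in the semisimple part of $\mathfrak l$, and the centre of $\mathfrak l$ commutes with the $G$-action. By Lemma~\ref{TF-lem:normalizer} the connected centralizer of $G$ is trivial, so any abelian or normalizing extra piece of $\mathfrak l$ commuting with $\mathfrak g$ vanishes. This reduces us to a transitive factorization
\[
 \mathfrak l'=(\mathfrak h\oplus\mathfrak h)+\mathfrak l'_p
\]
of compact semisimple algebras. We then invoke Onishchik's classification of such factorizations, exactly as in Lemma~\ref{TF-lem:diagonal-peeling}. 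Type $C_n$ admits essentially no nontrivial enlargements, apart from $\Sp(n)\subset\SU(2n)\subset\SO(4n)$-type situations. Dimension considerations, together with the fact that $G$ acts almost effectively on $M$ of dimension $\dim\mathfrak h+\dim\mathfrak q$, leave only two possibilities:
\begin{itemize}
\item[(a)] $\mathfrak l=\mathfrak h\oplus\mathfrak h$;
\item[(b)] $\mathfrak l\supset\mathfrak h\oplus\mathfrak h\oplus\mathfrak c$, with $\mathfrak c$ acting via right translations on fibres.
\end{itemize}
Case (b) also covers the case in which one factor of $\mathfrak l$ contains $\Delta$-type pieces.

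\textbf{Excluding case (b).} Extra isometries commuting with $G$ are ruled out by Lemma~\ref{TF-lem:normalizer}. Isometries not commuting with $G$ but normalizing one factor would restrict to each free $H$-orbit (the fibres of the projection to $H/K$ of one factor). There they act as right translations preserving the induced left-invariant metric. By Lemma~\ref{TF-lem:rightfibre} these lie in $K$, since $x_3\ne x_4$ on the detected branch. Such elements therefore already belong to $\mathfrak l_p\cap\Delta\mathfrak k$ and add nothing new to $\mathfrak l$.

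\textbf{Main obstacle.} The hard step is excluding a genuinely larger simple $\mathfrak l$ in which $\mathfrak h\oplus\mathfrak h$ is not an ideal, for example $\mathfrak l$ simple containing $\mathfrak{sp}(n)\oplus\mathfrak{sp}(n)$ as in $\mathfrak{sp}(2n)$ or $\mathfrak{so}(4n)$. For this we would use Onishchik's list of irreducible factorizations $\mathfrak l=\mathfrak a+\mathfrak b$. The only entries containing $\mathfrak{sp}(n)\oplus\mathfrak{sp}(n)$-type subalgebras are of the form $\SO(4n)=\Sp(n)\Sp(1)\cdot\ldots$ or $\SU(2n)/\Sp(n)$. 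We would show that these correspond to homogeneous spaces of the wrong dimension, or to normal (naturally reductive) metrics. Our metric is not of these forms because $x_1\ne x_2$: the asymmetric scales break the swap of the two factors and prevent the metric from being the normal metric of any such enlargement.

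Combining the two bounds gives $\mathfrak l=\mathfrak h\oplus\mathfrak h$. Passing to connected groups and quotienting by the effective kernel yields $\Isom_0(M_{p,q},g)=(\Sp(n)\times\Sp(n))/\Delta Z(\Sp(n))$.
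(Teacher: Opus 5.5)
Your frame is the right one: the lower bound from the $G$-action, zero centre via Lemma~\ref{TF-lem:normalizer}, then Onishchik's factorization theory. Excluding the product-type enlargement with Lemma~\ref{TF-lem:rightfibre}, which uses $x_3\ne x_4$, is also a workable substitute for the paper's step. The paper instead observes that every $H^3$-invariant metric in the presentation $(H\times H)/\Delta K\simeq H\times(H/K)$ has a nonzero $\mathfrak q_1$--$\mathfrak q_2$ cross term, whereas $g(\mathfrak q_1,\mathfrak q_2)=0$. However, the two steps that carry the proof are missing or would fail.

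First, the dichotomy (a)/(b) is only asserted (``dimension considerations \ldots\ leave only two possibilities''). Proving it is most of the paper's argument:
\begin{enumerate}
\item Every simple ideal of the isometry algebra met by $\mathfrak g$ is a copy of $C_n$ met by exactly one original factor, so the isometry algebra is $\mathfrak h^{r_1}\oplus\mathfrak h^{r_2}$.
\item Goursat's lemma applied to the projected isotropies gives a common quotient of dimension $a_1+a_2-c<2\dim\mathfrak h$, factorized by the images of the two intersections with $\mathfrak g_1,\mathfrak g_2$.
\item The peeling Lemma~\ref{TF-lem:diagonal-peeling} then gives $r_j\le2$.
\item The case $(2,2)$ is excluded because $C_n$ has no factorization by two proper subalgebras.
\item The case $(1,2)$ is forced to be isometry algebra $\mathfrak h^3$ with isotropy $\Delta\mathfrak h\oplus\mathfrak k$.
\end{enumerate}
Only after this do you know that an enlargement contains a simple ideal centralizing one original factor and acting by right translations on its free orbits. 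Note that $\mathfrak g$ is then not an ideal, since one original factor sits diagonally in $\mathfrak h\oplus\mathfrak h$. At that point your fibre argument can finish: that ideal would inject into the right translations preserving the fibre metric, which Lemma~\ref{TF-lem:rightfibre} confines to $\mathfrak k$, a dimension contradiction.

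Second, your plan for primitive enlargements would not work. The enlargements to exclude are of a single $C_n$ ideal of $\mathfrak g$ inside a simple ideal of the isometry algebra, via $A_{2n-1}=C_n+A_{2n-2}$ or $D_{2n}=C_n+B_{2n-1}$. They are not enlargements of $\mathfrak{sp}(n)\oplus\mathfrak{sp}(n)$ into $\mathfrak{sp}(2n)$ or $\mathfrak{so}(4n)$. More seriously, a metric invariant under a larger group need not be normal for it, so showing that $g$ is not a normal metric excludes nothing. Also, $x_1\ne x_2$ plays no role in the real obstruction; the paper's proof never uses it.

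That obstruction is Lie-theoretic. For $p,q\ge2$, Onishchik's strongly semisimple reduction carries $\Delta(C_p\oplus C_q)$ into the intersection of the factorization with $C_n$, which is $C_{n-1}$. This is impossible since $\operatorname{rk}(C_p\oplus C_q)=n$.

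For $p=1$ (admissible once $q\ge15$), $C_1\simeq A_1$ is not strongly semisimple and may be lost in the reduction, so a separate argument is needed. The other original factor commutes with the enlarged one, hence projects into its centralizer in the primitive component, which is at most $A_1$. Since every homomorphism $C_n\to A_1$ vanishes, this projection is zero. The partner intersection must then contain all of $C_1\oplus C_{n-1}$, which Onishchik's table rules out. The same vanishing of $C_n\to A_1$ is also what excludes $A_1$ ideals of the isometry algebra, which you need before Onishchik's theory applies at all.
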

\begin{proof}
Let $I_0=\operatorname{Isom}_0(M_{p,q},g)$, let $\mathfrak i$ be its Lie
algebra, and let $\mathfrak l$ be the isotropy algebra at the origin.  Put
$\mathfrak g=C_n\oplus C_n$.  Since the effective image of $G$ is transitive,
\[
 \mathfrak i=\mathfrak g+\mathfrak l,
 \qquad
 \mathfrak g\cap\mathfrak l=\Delta(C_p\oplus C_q).
\]
By Lemma~\ref{TF-lem:normalizer}, $\mathfrak i$ has zero centre.  It has no
simple ideal of type $A_1$: every homomorphism $C_n\to A_1$ is zero for
$n\ge4$, so such an ideal would centralize $\mathfrak g$, contrary to the same
lemma.  Thus $\mathfrak i$ is strongly semisimple.  We exclude primitive simple
enlargements in two cases.

If $p,q\ge2$, the isotropy $C_p\oplus C_q$ is strongly semisimple, and
Onishchik's strongly semisimple reduction retains
\[
 \mathfrak g\cap\mathfrak l_s=\Delta(C_p\oplus C_q)
\]
for the maximal strongly semisimple subalgebra
$\mathfrak l_s\subset\mathfrak l$.  A primitive chain issuing from either
$C_n$ ideal of $\mathfrak g$ could only end in one of the two proper simple
compact factorizations in which $C_n$ occurs,
\[
 A_{2n-1}=C_n+A_{2n-2},\qquad
 D_{2n}=C_n+B_{2n-1}.
\]
In both cases the intersection with $C_n$ is $C_{n-1}$.  Along the strongly
semisimple reduction the full-rank algebra $C_p\oplus C_q$ is carried in the
intersection, but
\[
 \operatorname{rk}(C_p\oplus C_q)=p+q=n>n-1=\operatorname{rk}C_{n-1}.
\]
Hence no primitive simple enlargement is possible.

It remains to justify the exceptional case $p=1$.  Here
\[
 \mathfrak k=C_1\oplus C_{n-1},\qquad n=q+1\ge16.
\]
Consider an effective primitive component containing one of the two
$C_n$ ideals, say $\mathfrak g_1$.  The other ideal $\mathfrak g_2$ commutes
with $\mathfrak g_1$.  In the $A_{2n-1}$ primitive overgroup the centralizer
of the standard $C_n$ is zero, while in the $D_{2n}$ overgroup it is at most
an $A_1$.  Since $n\ge16$, every homomorphism $C_n\to A_1$ is zero; therefore
$\mathfrak g_2$ has zero projection to this primitive simple component.
Consequently, for every
$X\in C_1\oplus C_{n-1}$ the isotropy element $(X,X)\in\Delta\mathfrak k$
projects to the elemente $X\in\mathfrak g_1\simeq C_n$.  Thus the partner
subalgebra in the primitive factorization must intersect the embedded $C_n$
in a subalgebra containing the full
\[
 C_1\oplus C_{n-1}.
\]
Onishchik's compact factorization table rules this out.  In the $A$-case the
largest possible intersection for a factorization with the $C_n$ factor fixed
is $C_{n-1}\oplus\mathfrak u(1)$ (the unextended case gives just
$C_{n-1}$), and in the $D$-case it is $C_{n-1}$.  Neither contains
$C_1\oplus C_{n-1}$.  The additional $C_1$ occurring in the normal extension
$C_n\oplus C_1\subset D_{2n}$ lies on the $C_n$ side of the factorization,
not in the partner intersection, and hence does not alter this conclusion.
Thus no primitive simple enlargement occurs also when $p=1$.

It remains to exclude reducible diagonal replication.  After the preceding
primitive exclusions and the usual peeling of common ideals, every simple
ideal of $\mathfrak i$ met by $\mathfrak g$ is another copy of $C_n$.
Hence a nonzero projection
from either simple $C_n$ ideal of $\mathfrak g$ to it is an isomorphism.
The two ideals of $\mathfrak g$ commute, so they cannot both project
nontrivially onto the same nonabelian simple ambient ideal.  On the other
hand, if an ambient simple ideal received zero projection from both, it would
centralize $\mathfrak g$, contrary to Lemma~\ref{TF-lem:normalizer}.
Thus every ambient simple ideal is met by exactly one of the two factors.
After reordering the simple ideals,
\[
 \mathfrak i=\mathfrak i_1\oplus\mathfrak i_2,
 \qquad
 \mathfrak i_j\simeq\mathfrak h^{r_j},
\]
where the $j$-th $C_n$ ideal of $\mathfrak g$ is embedded diagonally in
$\mathfrak i_j$.

Let $\mathfrak l_j$ be the projection of $\mathfrak l$ to $\mathfrak i_j$ and
let
\[
 \mathfrak n_j=\mathfrak l\cap\mathfrak i_j.
\]
Goursat's lemma for Lie algebras gives a common compact reductive quotient
\[
 \mathfrak q=\mathfrak l_1/\mathfrak n_1
 \simeq\mathfrak l_2/\mathfrak n_2.
\]
Write $d=\dim\mathfrak h$, $c=\dim\mathfrak k$, and
\[
 a_j=\dim(\mathfrak g_j\cap\mathfrak l_j).
\]
Because $\mathfrak i_j=\mathfrak g_j+\mathfrak l_j$,
\[
 \dim\mathfrak l_j=(r_j-1)d+a_j.
\]
Also
\[
 \dim\mathfrak l=(r_1+r_2-2)d+c,
\]
while Goursat gives
\[
 \dim\mathfrak l=\dim\mathfrak l_1+\dim\mathfrak l_2-\dim\mathfrak q.
\]
Therefore
\begin{equation}\label{TF-Qdimension}
 \dim\mathfrak q=a_1+a_2-c<2d.
\end{equation}
More precisely, $\mathfrak n_j\cap\mathfrak g_j=0$, because
$\mathfrak g\cap\mathfrak l=\Delta\mathfrak k$.  Hence the quotient maps
inject $\mathfrak g_j\cap\mathfrak l_j$ into $\mathfrak q$; denote the images
by $\mathfrak a_j$.  Goursat's matching condition identifies
$\mathfrak g\cap\mathfrak l$ with
$\mathfrak a_1\cap\mathfrak a_2$, so
\[
 \dim(\mathfrak a_1\cap\mathfrak a_2)=c.
\]
Together with \eqref{TF-Qdimension}, this yields the actual factorization
\begin{equation}\label{TF-Qfactor}
 \mathfrak q=\mathfrak a_1+\mathfrak a_2.
\end{equation}

Apply Lemma~\ref{TF-lem:diagonal-peeling} to
$\mathfrak i_j=\mathfrak g_j+\mathfrak l_j$.  If
$\mathfrak g_j\cap\mathfrak l_j$ is proper in $\mathfrak h$, then, after
reordering the $r_j$ factors,
\[
 \mathfrak l_j=\mathfrak h^{r_j-1}\oplus\mathfrak b_j.
\]
The isotropy of an effective action contains no nonzero ideal of
$\mathfrak i$, so $\mathfrak n_j$ cannot contain any of the displayed
$\mathfrak h$ ideals.  They therefore survive in the quotient $\mathfrak q$,
and
\[
 (r_j-1)d\le\dim\mathfrak q<2d.
\]
Thus $r_j\le2$.  If instead
$\mathfrak g_j\cap\mathfrak l_j=\mathfrak h$, then
$\mathfrak l_j=\mathfrak i_j$; effectivity forces $\mathfrak n_j=0$, so
$r_jd=\dim\mathfrak q<2d$ and hence $r_j=1$.
Consequently
\[
 (r_1,r_2)\in\{(1,1),(1,2),(2,1),(2,2)\}.
\]

The case $(2,2)$ is impossible.  Here the peeling lemma gives
$\mathfrak l_j=\mathfrak h\oplus\mathfrak b_j$ with
$\mathfrak b_j\subsetneq\mathfrak h$.  The displayed $\mathfrak h$ factor is an ideal of
$\mathfrak l_j=\mathfrak h\oplus\mathfrak b_j$.  Since
$\mathfrak n_j$ is an ideal of $\mathfrak l_j$ and does not contain this
ambient $\mathfrak h$ factor, its intersection with that factor is zero.
Hence the factor injects into $\mathfrak q$.  Since
$\dim\mathfrak q<2d$, $\mathfrak q$ contains at most one simple ideal
isomorphic to $C_n$, so the two copies are identified by the Goursat
isomorphism.  Moreover an ideal of the direct sum
$\mathfrak h\oplus\mathfrak b_j$ with zero intersection with the first
factor is contained in $\mathfrak b_j$.  Thus, after projecting
\eqref{TF-Qfactor} to the unique $C_n$ ideal of $\mathfrak q$, the image
of $\mathfrak a_j$ is precisely $\mathfrak b_j$ (up to automorphism).
Consequently
\[
 C_n=\mathfrak b_1+\mathfrak b_2,
\]
a factorization of $C_n$ by two proper compact subalgebras.  No such factorization occurs in Onishchik's simple compact
factorization list.  Hence $(2,2)$ cannot occur.

Suppose $(r_1,r_2)=(1,2)$; the other case is symmetric.  Since
$\mathfrak q$ contains the surviving $C_n$ ideal from the second block,
$\dim\mathfrak q\ge d$.  But $\mathfrak l_1\subset\mathfrak i_1=\mathfrak h$
shows $\dim\mathfrak q\le a_1\le d$.  Hence
\[
 \mathfrak q\simeq\mathfrak h,\qquad a_1=d,
 \qquad a_2=c.
\]
Thus $\mathfrak l_1=\mathfrak h$ and $\mathfrak n_1=0$.
For the second block the peeling lemma gives
$\mathfrak l_2=\mathfrak h\oplus\mathfrak b_2$.  Since the projection of
the global intersection
$\mathfrak g\cap\mathfrak l=\Delta\mathfrak k$ to $\mathfrak g_2$ is the
standard $\mathfrak k$, while
$\dim(\mathfrak g_2\cap\mathfrak l_2)=a_2=c=\dim\mathfrak k$, one has
$\mathfrak g_2\cap\mathfrak l_2=\mathfrak k$.  In the peeled coordinates
this intersection is the diagonal copy of $\mathfrak b_2$, so after an
automorphism $\mathfrak b_2=\mathfrak k$.  Since
$\dim\mathfrak q=d$, Goursat then gives
\[
 \mathfrak l_2=\mathfrak h\oplus\mathfrak k,
 \qquad
 \mathfrak n_2=\mathfrak k,
\]
and identifies $\mathfrak l_1/\mathfrak n_1=\mathfrak h$ with the first
$\mathfrak h$ factor of
$\mathfrak l_2/\mathfrak n_2=\mathfrak h$.  Therefore
\[
 \mathfrak i\simeq\mathfrak h^3,
 \qquad
 \mathfrak l\simeq\Delta\mathfrak h\oplus\mathfrak k,
\]
which is precisely the canonical $H^3$-transitive presentation coming from
\[
 (H\times H)/\Delta K\simeq H\times(H/K).
\]
But an $H^3$-invariant metric in this presentation pulls back with a nonzero
$\mathfrak q_1$--$\mathfrak q_2$ cross term, whereas the detected metric has
$g(\mathfrak q_1,\mathfrak q_2)=0$.  Hence this final proper enlargement is
not isometric.  The only remaining case is $(r_1,r_2)=(1,1)$, so
$\mathfrak i=\mathfrak g$.  Dividing by the finite ineffective kernel gives
the asserted identity component.
\end{proof}

There is one topological enlargement worth mentioning explicitly.  The diffeomorphism
\[
 (H\times H)/\Delta K\simeq H\times(H/K),
 \qquad [(a,b)]\mapsto(ab^{-1},bK),
\]
exhibits an $H^3$-transitive action on the underlying manifold.  The proof of
Theorem~\ref{TF-thm:isom0} shows that this is in fact the only
proper reducible diagonal enlargement that survives the compact
factorization reduction.  It does not preserve the asymmetric Einstein
metrics above.  Indeed, an $H^3$-invariant metric on the product has the form
\[
 \alpha Q|_{\mathfrak h}+\beta Q|_{\mathfrak q}.
\]
Under the displayed identification, vectors in the original $\mathfrak q_1$ and $\mathfrak q_2$ directions map as
\[
 (X,0)\mapsto(X,0),\qquad (0,Y)\mapsto(-Y,Y),
\]
so their cross term is $-\alpha Q(X,Y)$, which is nonzero in general.  Our diagonal family has
\[
 g(\mathfrak q_1,\mathfrak q_2)=0,
\]
so the product enlargement is metrically excluded.

\begin{proposition}[arbitrary isometries]\label{TF-prop:fulliso}
For every admissible pair
\[
 q\ge15\quad(p=1),\qquad
 q\ge8p^2+5p+1\quad(p\ge2),
\]
Theorem~\ref{TF-main} produces exactly one asymmetric Einstein metric up to
homothety and arbitrary Riemannian isometry.  It is not homothetic, under
any Riemannian isometry, to an Einstein metric in the unsplit
$H\times H$-invariant family discussed above.
\end{proposition}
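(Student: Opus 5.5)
Theorem~\ref{TF-main} gives uniqueness of the admissible zero of $C_{p,q}$ up to homothety and the interchange $x_1\leftrightarrow x_2$. The first task is therefore to show that this interchange is realized by a Riemannian isometry, and that no further identifications matter. The swap $(a,b)\mapsto(b,a)$ of the two $H$-factors preserves $\Delta K$ and descends to a diffeomorphism of $M_{p,q}$. It maps $\mathfrak q_1$ to $\mathfrak q_2$ and fixes $\mathfrak{sp}(p)^-$ and $\mathfrak{sp}(q)^-$ up to sign. Hence it carries the metric $(x_1,x_2,x_3,1)$ isometrically to $(x_2,x_1,x_3,1)$, and the two orderings give one isometry class. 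This yields ``exactly one up to homothety and arbitrary isometry'': at most one by Theorem~\ref{TF-main}, at least one by existence.

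For the second assertion, suppose $\phi$ is an isometry, up to scale, from the asymmetric metric $g$ to an Einstein metric $g'$ in the unsplit family $x_1=x_2$. I would argue through isometry groups. By Theorem~\ref{TF-thm:isom0}, $\operatorname{Isom}_0(M,g)$ is the effective image of $G=H\times H$. Conjugation by $\phi$ gives $\operatorname{Isom}_0(M,g')=\phi\,\operatorname{Isom}_0(M,g)\,\phi^{-1}$. Since $g'$ is $G$-invariant, $\operatorname{Isom}_0(M,g')$ contains $G$. Comparing dimensions, and using that the conjugate is a connected group isomorphic to $G$ and containing it, gives $\operatorname{Isom}_0(M,g')=G$ as well. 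Then $\phi$ normalizes the transitive group $G$, so after composing with an element of $G$ we may assume $\phi$ fixes the origin. Conjugation by $\phi$ is then an automorphism $\theta$ of $G$ preserving $\Delta K$, and $d\phi_o$ is induced by $\theta$ on $\mathfrak m$.

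Next I would classify such automorphisms. Automorphisms of $\mathfrak{sp}(n)\oplus\mathfrak{sp}(n)$ are inner automorphisms on each factor, possibly composed with the factor swap, since $C_n$ has no outer automorphisms. Preserving $\Delta\mathfrak k$ forces the two inner components to agree modulo the centralizer of $\mathfrak k$, up to normalizer elements. By Lemma~\ref{TF-lem:normalizer}, the normalizer of $\mathfrak k$ in $\mathfrak h$ is $\mathfrak k$ at the Lie-algebra level. Up to $\operatorname{Ad}(K)$ and finite components of $N_H(K)$, $\theta$ is therefore either the identity or the swap, possibly composed with a finite normalizer element. For $p\ne q$, no element of $N_H(K)$ exchanges $\mathfrak{sp}(p)$ and $\mathfrak{sp}(q)$. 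Hence $\theta$ preserves each of $\mathfrak q_1\oplus\mathfrak q_2$, $\mathfrak{sp}(p)^-$ and $\mathfrak{sp}(q)^-$ setwise, and acts on the horizontal pair either trivially or by exchange. Pulling back $g'$ (with $x_1'=x_2'$) along such a $\theta$ gives a metric that is still equal on $\mathfrak q_1$ and $\mathfrak q_2$. This contradicts $x_1\ne x_2$ for $g$.

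The main obstacle is the case in which $g'$ has a \emph{larger} connected isometry group than $G$, for instance the $H^3$-enlargement. An unsplit metric might admit that enlargement, and then the dimension comparison must be run in the other direction. I would handle it by noting that the conjugate of $\operatorname{Isom}_0(M,g)$ has dimension $\dim G$ and is transitive. If $\operatorname{Isom}_0(M,g')$ is strictly larger, then $\phi^{-1}$ conjugates it to a strictly larger connected group of isometries of $g$, contradicting Theorem~\ref{TF-thm:isom0}. So the equality $\operatorname{Isom}_0(M,g')=G$ holds in all cases. The remaining delicate point is that $\phi G\phi^{-1}=G$ as subgroups of $\Diff(M)$, not merely as abstract groups. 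This follows because both are the full identity component $\operatorname{Isom}_0(M,g')$.
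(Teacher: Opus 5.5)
You compare with the wrong family, so the second assertion is not proved as the paper intends it. The first assertion is fine. On two points you are more careful than the paper. You check explicitly that the factor swap realizes $x_1\leftrightarrow x_2$ by an isometry. You also get $\operatorname{Isom}_0(M,g')=G_{\rm eff}$ (the effective image of $G$) for the target metric from $\phi\,\operatorname{Isom}_0(M,g)\,\phi^{-1}=\operatorname{Isom}_0(M,g')\supseteq G_{\rm eff}$. The paper only speaks of isometries between two detected metrics.

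The statement does not spell out what ``unsplit'' means, but the paper's proof fixes it. It is the family in which the vertical module $\mathfrak k^-=\mathfrak{sp}(p)^-\oplus\mathfrak{sp}(q)^-$ carries a single scale, $x_3=x_4$, with no condition on $x_1,x_2$. Your argument only excludes isometry to metrics with $x_1=x_2$. A metric in the unsplit family may well have $x_1\neq x_2$, so the horizontal asymmetry of $g$ cannot by itself separate the two.

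Your automorphism classification already supplies most of the repair. For $p\neq q$, the induced $\theta$ preserves $\mathfrak{sp}(p)^-$ and $\mathfrak{sp}(q)^-$ individually and is $Q$-orthogonal. These two modules are inequivalent to each other and to $\mathfrak q$, so every invariant metric has well-defined scalars $x_3,x_4$ on them. Hence $x_3/x_4$ is an invariant of homothety and arbitrary isometry.

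What you still need is $x_3\neq x_4$ for the detected metric, and nothing in your proposal gives it. It comes from the detector. After normalizing $x_4=1$, $x_3=z$ is a root of $C_{p,q}$, and
\[
 C_{p,q}(1)=4(p-q)(p+q+1)^2\neq0,
\]
so $z\neq1$. This evaluation, together with the invariance of $\{x_1,x_2\}$, $x_3$, $x_4$ up to common scale (which you essentially establish), is exactly how the paper concludes.
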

\begin{proof}
An isometry conjugates identity components of full isometry groups.  By
Theorem~\ref{TF-thm:isom0}, any isometry between two detected metrics
therefore conjugates the effective $G$-actions.  After composing with a
$G$-translation, it fixes the origin and induces an automorphism of the
homogeneous pair $(G,\Delta K)$.

In the threshold range $q>p$, the two simple ideals
$\mathfrak{sp}(p)$ and $\mathfrak{sp}(q)$ are non-isomorphic and are
preserved individually.  Modulo inner automorphisms, the only relevant
symmetry of the two ambient simple $H$-factors interchanges $x_1$ and $x_2$.
Thus, up to common scale, the data
\[
 \{x_1,x_2\},\qquad x_3,\qquad x_4
\]
are invariants of arbitrary isometry.  Moreover
\[
 C_{p,q}(1)=4(p-q)(p+q+1)^2\ne0,
\]
so every detected root has $x_3\ne x_4$.  Hence it cannot lie in the
unsplit family.  Uniqueness follows from Theorem~\ref{TF-main}.
\end{proof}

\begin{corollary}\label{TF-cor:new}
For every admissible pair
\[
 q\ge15\quad(p=1),\qquad
 q\ge8p^2+5p+1\quad(p\ge2),
\]
the manifold $M_{p,q}$ carries a unique asymmetric homogeneous Einstein
metric, up to homothety and arbitrary Riemannian isometry, and this metric
cannot be transformed into an Einstein metric in the unsplit homogeneous
family discussed above.
\end{corollary}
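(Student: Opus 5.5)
The corollary repackages Theorem~\ref{TF-main} and Proposition~\ref{TF-prop:fulliso}, so the plan is to assemble those two results rather than prove anything new. There are three steps: existence, uniqueness up to homothety and arbitrary isometry, and non-membership in the unsplit family.

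\emph{Existence.} Fix an admissible pair $(p,q)$. Theorem~\ref{TF-main} gives a root $z=x_3\in(0,\rho)$ of $C_{p,q}$, where $\rho$ is the positive zero of $R_{p,q}$. From this root I reconstruct $s$ through $r_3=r_4=1/(2s)$, then $u$ through \eqref{TF-u}. Since $R_{p,q}(z)<0$, the quadratic $X^2-sX+u$ has two distinct positive roots; these are $x_1\ne x_2$. We normalized $x_4=1$, and the factorization \eqref{TF-detector} then gives $r_1=r_2=r_3=r_4$, so the diagonal metric \eqref{TF-metric} is Einstein for the Ricci formula restricted to the fixed-point family. The symmetric-criticality remark in the hidden-parameter subsection (the family is the fixed set of the finite group generated by $(\sigma,\mathrm{id})$ and $(\mathrm{id},\sigma)$) upgrades this to a genuine $H\times H$-invariant Einstein metric on $M_{p,q}$. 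Because $x_1\ne x_2$, the metric is asymmetric in the stated sense.

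\emph{Uniqueness up to homothety and arbitrary isometry.} Theorem~\ref{TF-main} already says the admissible zero of $C_{p,q}$ is unique. Hence, within the branch and up to homothety, the metric is determined up to swapping $x_1$ and $x_2$. To pass to arbitrary Riemannian isometries I invoke Proposition~\ref{TF-prop:fulliso}. By Theorem~\ref{TF-thm:isom0}, any isometry between two detected metrics conjugates the effective $G$-actions. It therefore induces an automorphism of $(G,\Delta K)$, which preserves the scale data $\{x_1,x_2\},x_3,x_4$. So isometric detected metrics come from the same detector root.

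\emph{Non-membership in the unsplit family.} I use $C_{p,q}(1)=4(p-q)(p+q+1)^2\ne0$, which shows that every detected root has $x_3\ne x_4$. An unsplit metric has equal vertical scales. Since isometries preserve the vertical scale data, they preserve this distinction, so the detected metric cannot be transformed into an unsplit Einstein metric even after homothety.

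The only delicate point is the reduction of an arbitrary isometry to an automorphism of the homogeneous pair, and this is handled entirely by Theorem~\ref{TF-thm:isom0} and Proposition~\ref{TF-prop:fulliso}. The corollary therefore follows by citing them in the order above.
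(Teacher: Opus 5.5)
Your proposal is correct and follows the paper's route. The paper states the corollary without a separate proof, as the combination of Theorem~\ref{TF-main} (a unique admissible root with positive distinct reconstruction) and Proposition~\ref{TF-prop:fulliso}: via Theorem~\ref{TF-thm:isom0}, the data $\{x_1,x_2\},x_3,x_4$ are isometry invariants up to scale, and $C_{p,q}(1)\ne0$ forces $x_3\ne x_4$.

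Two small points should be made explicit. First, uniqueness up to isometry needs the converse of what you state: the swap $x_1\leftrightarrow x_2$ must be realized by an isometry, namely the factor interchange of $H\times H$, which preserves $\Delta K$. Second, in the unsplit comparison, the target metric's connected isometry group is also the effective image of $G$, because it contains that image and is conjugate to it under the isometry; only then does the scale-data argument apply.
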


\begin{corollary}[the three-factor asymmetric family]
\label{cor:three-factor-Cartan-fixed}
Let $H/K$ be an irreducible compact symmetric pair with $H$ and $K$ simple,
and let
\[
 M=H^3/\Delta K.
\]
Set
\[
 \Gamma_{3,2}=T_3\rtimes(S_2\times S_1).
\]
In the standard anti-diagonal basis
\[
 v_1=\frac1{\sqrt2}(1,-1,0),\qquad
 v_2=\frac1{\sqrt6}(1,1,-2),
\]
the $\Gamma_{3,2}$-fixed invariant metrics are exactly
\[
 (x,x,z,u,v),\qquad x,z,u,v>0.
\]
After homothety, $v=1$, so the fixed family is precisely
\[
 (x,x,z,u,1).
\]
Every solution of the Einstein equations in this family is therefore a
genuine $H^3$-invariant Einstein metric on $M$, not merely a critical point
inside a diagonal ansatz.
\end{corollary}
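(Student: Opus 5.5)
The plan is to identify the $\Gamma_{3,2}$-fixed subspace of $\operatorname{Ad}(\Delta K)$-invariant inner products on $\mathfrak m$ by Schur's lemma. Then I would transport the symmetric-criticality argument already used for the two-factor family in Section~\ref{sec:twofactor-symplectic}. Throughout I read $T_3\simeq(\mathbb Z/2)^3$ as generated by the three automorphisms $\sigma_i$ of $H^3$ that apply the Cartan involution $\sigma$ of $H/K$ in the $i$-th factor only. The factor $S_2\times S_1$ permutes the first two factors and fixes the third. All of these automorphisms preserve $\Delta K$, so they act on $\mathfrak m$ and on the space of invariant metrics.

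\textbf{Step 1: isotropy decomposition.} With $\mathfrak h=\mathfrak k\oplus\mathfrak q$, the reductive complement splits as
\[
 \mathfrak m=\mathfrak q_1\oplus\mathfrak q_2\oplus\mathfrak q_3\oplus(\mathfrak k\otimes V),
 \qquad V=\{a\in\mathbb R^3:\textstyle\sum a_i=0\}=\operatorname{span}(v_1,v_2).
\]
The $\mathfrak q_i$ are mutually equivalent irreducible $K$-modules. The vertical part is $\mathfrak k\otimes V$, where $\mathfrak k$ carries the adjoint representation, which is irreducible because $K$ is simple. The involution $\sigma_i$ acts by $-1$ on $\mathfrak q_i$ and by $+1$ on every other summand. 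Hence any $T_3$-invariant inner product has no mixed terms between $\mathfrak q_i$ and $\mathfrak q_j$ for $j\ne i$, and none between $\mathfrak q_i$ and $\mathfrak k\otimes V$. Its restriction to each $\mathfrak q_i$ is $\operatorname{Ad}(K)$-invariant and symmetric. Since the commutant of an irreducible real representation is $\mathbb R$, $\mathbb C$ or $\mathbb H$, and its symmetric part is $\mathbb R$ in each case, this restriction is a scalar $x_i Q$.

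\textbf{Step 2: the vertical block.} The adjoint representation of a compact simple $\mathfrak k$ is of real type: its complexification is irreducible. So $\operatorname{Hom}_K(\mathfrak k\otimes V,\mathfrak k\otimes V)=\operatorname{End}(V)$. Consequently the invariant inner products on $\mathfrak k\otimes V$ are exactly $Q|_{\mathfrak k}\otimes P$ with $P\in\operatorname{Sym}^+(V)$, which is three parameters. The transposition $\tau\in S_2$ swaps $\mathfrak q_1\leftrightarrow\mathfrak q_2$, which forces $x_1=x_2=x$; write $z=x_3$. It acts on $V$ by $v_1\mapsto-v_1$ and $v_2\mapsto v_2$. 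So $\tau$-invariance of $P$ kills the off-diagonal entry $P(v_1,v_2)$, leaving $P=\operatorname{diag}(u,v)$ in the basis $(v_1,v_2)$. Conversely, every metric $(x,x,z,u,v)$ is visibly fixed by all generators. This proves the exact description, and homothety normalizes $v=1$.

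\textbf{Step 3: genuine Einstein metrics.} The Ricci map is natural under automorphisms of $(H^3,\Delta K)$, so $\operatorname{Ric}(\gamma^*g)=\gamma^*\operatorname{Ric}(g)$. Hence for a $\Gamma_{3,2}$-fixed metric $g$, the tensor $\operatorname{Ric}(g)$ is a $\Gamma_{3,2}$-fixed invariant symmetric form. By Steps~1--2 it therefore has the same block shape: scalars on $\mathfrak q_1=\mathfrak q_2$ (equal), on $\mathfrak q_3$, and a diagonal form in the $(v_1,v_2)$ basis on $\mathfrak k\otimes V$. So all off-diagonal Ricci components vanish automatically. The scalar Einstein equations $r_x=r_z=r_u=r_v$ within the family are then equivalent to the full equation $\operatorname{Ric}=\lambda g$. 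This is the same mechanism as symmetric criticality (Palais) for the scalar-curvature functional used in the two-factor case.

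The main point needing care is Step~2. One must verify the real type of the adjoint module, so that no extra symmetric equivariant endomorphisms appear beyond $\operatorname{End}(V)$. One must also check that the complement $\mathfrak k\otimes V$ is mapped to itself with the stated action on $V$, i.e.\ that $\tau$ induces exactly the signed permutation $(v_1,v_2)\mapsto(-v_1,v_2)$. Both are short once $\mathfrak k\otimes V$ is identified with $\{(X_1,X_2,X_3)\in\mathfrak k^3:\sum X_i=0\}$.
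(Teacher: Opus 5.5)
Your proposal is correct and follows the paper's proof essentially step by step. The paper likewise uses the factorwise Cartan involutions (via Theorem~\ref{thm:Cartan-fixed-point}) to kill all horizontal cross terms and reduce the vertical block to $(-B_{\mathfrak k})\otimes P$, then the transposition to force $x_1=x_2$ and make $P$ diagonal in $(v_1,v_2)$; the only difference is that you deduce that solutions are genuine Einstein metrics from Ricci equivariance, so off-diagonal Ricci components vanish automatically, rather than citing symmetric criticality for $\Gamma_{3,2}$, and this is the same mechanism.
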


\begin{proof}
Theorem~\ref{thm:Cartan-fixed-point} forces the three horizontal copies to be
mutually orthogonal.  The transposition of the first two factors then forces
their scales to agree.  On
\[
 V_3=\{(t_1,t_2,t_3):t_1+t_2+t_3=0\},
\]
the same transposition acts by $-1$ on $\mathbb Rv_1$ and by $+1$ on
$\mathbb Rv_2$.  These are inequivalent one-dimensional representations, so
the invariant vertical metric is diagonal with independent scales $u$ and
$v$.  The last assertion follows directly from symmetric criticality for the finite
automorphism group $\Gamma_{3,2}$.
\end{proof}

\section{Existence throughout the range: $\SU(n)^3/\Delta\SO(n)$}\label{sec:AI-threefactor}

This section treats the three-factor spaces
$\SU(n)^3/\Delta\SO(n)$.  The asymmetric family used in the detector is the
fixed-point family of the automorphism group $\Gamma_{3,2}$ from
Corollary~\ref{cor:three-factor-Cartan-fixed}.  We derive its reduced
Einstein equations, prove that it contains exactly two genuine invariant
Einstein metrics for every admissible $n$, and determine their full
connected isometry groups.

\subsection{What is known about the space}
For $M_n=\SU(n)^3/\Delta\SO(n)$, the pair $\SU(n)/\SO(n)$ is the irreducible
symmetric space AI.  The space belongs to the aligned framework
\cite{LWaligned}, which supplies the structural decomposition and Ricci
formulas; the detailed classification in \cite{LWtwo} concerns two simple
factors.

The three-factor question is whether the two-dimensional anti-diagonal
multiplicity space produces additional Einstein metrics when its two scales
are kept distinct.  We prove that it does, with two positive asymmetric
metrics throughout the admissible range.  This branch is not contained in the cited two-factor classifications.

\subsection{Geometric setup}
Let
\[
 H=\SU(n),\qquad K=\SO(n),\qquad M_n=H^3/\Delta K.
\]
Write the symmetric decomposition of one copy of $\mathfrak h$ as
\[
 \mathfrak h=\mathfrak k\oplus\mathfrak q.
\]
For $n=3$ or $n\ge5$,
\[
 d:=\dim\mathfrak q=\frac{(n-1)(n+2)}2,
 \qquad
 e:=\dim\mathfrak k=\frac{n(n-1)}2.
\]
With $Q=-B_{\mathfrak h}$, the Killing restriction is
\[
 B_{\mathfrak k}=\frac{n-2}{2n}B_{\mathfrak h}|_{\mathfrak k}.
\]
Hence, writing
\[
 S=e\left(1-\frac{n-2}{2n}\right)=\frac d2,
 \qquad
 K_0=e\frac{n-2}{2n}=\frac{(n-1)(n-2)}4,
\]
the bracket coefficients $[ijk]$ entering the homogeneous Ricci formula are
determined by $d/2$ and $K_0$.

Choose the orthonormal vectors
\[
 v_1=\frac1{\sqrt2}(1,-1,0),\qquad
 v_2=\frac1{\sqrt6}(1,1,-2)
\]
in the orthogonal complement of the diagonal in $\mathbb R^3$.  The tangent module decomposes as
\[
 \mathfrak m=
 \mathfrak q_1\oplus\mathfrak q_2\oplus\mathfrak q_3
 \oplus(v_1\otimes\mathfrak k)\oplus(v_2\otimes\mathfrak k).
\]
By Corollary~\ref{cor:three-factor-Cartan-fixed}, the metrics fixed by
$\Gamma_{3,2}=T_3\rtimes(S_2\times S_1)$ are exactly
\[
 g=xQ|_{\mathfrak q_1}+xQ|_{\mathfrak q_2}
 +zQ|_{\mathfrak q_3}
 +uQ|_{v_1\otimes\mathfrak k}
 +vQ|_{v_2\otimes\mathfrak k}.
\]
After homothety we set $v=1$, so the detection branch is precisely the full
automorphism-fixed family
\[
 (x,x,z,u,1),\qquad x,z,u>0.
\]
Thus solving the reduced Einstein equations below produces genuine invariant
Einstein metrics on $M_n$; no additional off-diagonal Ricci equations are
being suppressed.
The condition $u\ne1$ distinguishes the branch from the more symmetric locus on which the two anti-diagonal $K$-channels have the same scale.

\subsection{Ricci equations on the detected branch}
For the five modules
\[
 \mathfrak q_1,\ \mathfrak q_2,\ \mathfrak q_3,
 \ v_1\otimes\mathfrak k,\ v_2\otimes\mathfrak k,
\]
the nonzero unordered bracket constants needed by the homogeneous Ricci formula are
\[
 [114]=[224]=\frac S2,\qquad
 [115]=[225]=\frac S6,\qquad
 [335]=\frac{2S}{3},
\]
\[
 [445]=[555]=\frac{K_0}{6},
\]
where the labels $4,5$ denote the two anti-diagonal $\mathfrak k$-channels.
Substitution into the standard diagonal homogeneous Ricci formula gives
\[
 r_1=r_2=\frac{-3u+12x-1}{24x^2},
 \qquad
 r_3=\frac{3z-1}{6z^2}.
\]
The remaining two components are
\[
 r_4=
 \frac{3nu^3+6nux^2-nx^2+6u^3-12ux^2+2x^2}
 {24nu^2x^2},
\]
and
\[
 r_5=
 \frac{
 9nu^2x^2z^2+4nu^2x^2+2nu^2z^2+nx^2z^2
 -18u^2x^2z^2+8u^2x^2+4u^2z^2-2x^2z^2
 }{48nu^2x^2z^2}.
\]
Thus the Einstein equation is equivalent, for positive $(x,z,u)$, to
\[
 E_1=E_2=E_3=0,
\]
where
\begin{align}
E_1={}&3uz^2+12x^2z-4x^2-12xz^2+z^2,\label{AI-E1}\\
E_2={}&(n-2)(6u-1)x^2-12nu^2x
 +u^2\bigl(6(n+1)u+n\bigr),\label{AI-E2}\\
E_3={}&6nu^3z^2+9nu^2x^2z^2+4nu^2x^2
 -24nu^2xz^2+4nu^2z^2+nx^2z^2\notag\\
&\quad -18u^2x^2z^2+8u^2x^2+4u^2z^2-2x^2z^2.
\label{AI-E3}
\end{align}
Here $E_1=0$ is $r_3=r_1$, $E_2=0$ is $r_4=r_1$, and $E_3=0$ is $r_5=r_1$ after multiplication by positive denominators.

\subsection{The one-variable detector}
Let
\[
 R_n(x,u):=\frac{1}{16x^4}\operatorname{Res}_z(E_1,E_3).
\]
A direct resultant computation gives
\begin{equation}\label{AI-res2}
 \operatorname{Res}_x(E_2,R_n)
 =81u^8(n-2)(u-1)^2P_n(u),
\end{equation}
where $P_n$ is a polynomial of degree $10$ in $u$.
We do not need its full expansion for the existence argument.  The relevant exact data are
\begin{align}
P_n(0)&=16(n-2)^3(n+1)^4,\label{AI-P0}\\
P_n\!\left(\frac16\right)
&=\frac{(9n-2)
\left(272n^6+160n^5-408n^4+200n^3+649n^2+140n+4\right)}{576},\label{AI-P16}\\
P_n(1)&=-64(n+2)^2(7n+6)\notag\\
&\qquad\times
\left(139n^4+6640n^3+25800n^2+36160n+17968\right),\label{AI-P1}
\end{align}
and the leading coefficient is
\begin{equation}\label{AI-PLC}
 [u^{10}]P_n=104976(n-2)(n+1)^4(5n+6)^2.
\end{equation}
For $n\ge3$, all factors in \eqref{AI-P16} are positive: indeed
\[
272n^6-408n^4=136n^4(2n^2-3)>0,
\]
and all remaining terms are positive.
Consequently
\[
 P_n(1/6)>0,
 \qquad
 P_n(1)<0,
 \qquad
 P_n(u)\longrightarrow+\infty\quad(u\to+\infty).
\]
Hence there exist roots
\[
 u_-\in(1/6,1),
 \qquad
 u_+\in(1,\infty).
\]
The remaining issue is to prove that these resultant roots lift to positive solutions $(x,z)$ of the original Einstein system.  This is the key lifting lemma below.

\subsection{Lifting detector roots to Einstein metrics}
\begin{lemma}[positive $x$-roots]\label{AI-lem:x}
Let $n\ge3$ and $u>1/6$.  Then equation \eqref{AI-E2}, regarded as a quadratic in $x$, has two distinct positive real roots.
Moreover every such root satisfies
\[
 12x>3u+1.
\]
\end{lemma}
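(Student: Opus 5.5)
The plan is to read \eqref{AI-E2} as a quadratic $f(x)=ax^2+bx+c$ and settle both claims by elementary sign arguments: the root count comes from the discriminant together with the coefficient signs, and the bound $12x>3u+1$ comes from locating the point $x_0=(3u+1)/12$ to the left of both roots.

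\textbf{Step 1: coefficient signs.} The coefficients are
\[
 a=(n-2)(6u-1),\qquad b=-12nu^2,\qquad c=u^2\bigl(6(n+1)u+n\bigr).
\]
For $n\ge3$ and $u>1/6$ we have $a>0$, $b<0$ and $c>0$. So the product of the roots $c/a$ is positive and the sum $-b/a$ is positive. Hence any real roots are both positive, and it suffices to show the discriminant is positive.

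\textbf{Step 2: discriminant.} Expanding
\[
 (6u-1)\bigl(6(n+1)u+n\bigr)=36(n+1)u^2-6u-n,
\]
I expect
\[
 b^2-4ac=4u^2\bigl(36(n+2)u^2+6(n-2)u+n(n-2)\bigr).
\]
The bracket is visibly positive for $n\ge3$ and $u>0$. This gives two distinct positive real roots.

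\textbf{Step 3: the bound $12x>3u+1$.} Since $a>0$, both roots exceed $x_0=(3u+1)/12$ as soon as two conditions hold:
\begin{itemize}
\item[(i)] $f(x_0)>0$;
\item[(ii)] $x_0$ lies to the left of the vertex $-b/(2a)=6nu^2/\bigl((n-2)(6u-1)\bigr)$.
\end{itemize}

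For (ii), the condition is equivalent to
\[
 (n-2)(6u-1)(3u+1)<72nu^2.
\]
The left side equals $(n-2)(18u^2+3u-1)$, which is less than $(n-2)\,36u^2$ because $3u<18u^2$ for $u>1/6$. This is in turn at most $72nu^2$, so (ii) holds.

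For (i), write
\[
 f(x_0)=a x_0^2+\bigl(bx_0+c\bigr).
\]
The term $ax_0^2$ is positive. For the rest,
\[
 bx_0+c=-nu^2(3u+1)+u^2\bigl(6(n+1)u+n\bigr)=3(n+2)u^3,
\]
which is also positive. Hence $f(x_0)>0$, so both roots satisfy $12x>3u+1$.

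No step is a serious obstacle. The only place needing care is the algebra in the discriminant expansion and the cancellation in $bx_0+c$. I would double-check both, for instance by substituting a sample value such as $n=3$, $u=1$.
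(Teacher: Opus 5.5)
Your proposal is correct and follows the paper's proof: coefficient signs plus the discriminant $4u^2\bigl(36(n+2)u^2+6(n-2)u+n(n-2)\bigr)$, then $f(x_0)>0$ together with $x_0$ lying left of the vertex. The only difference is in checking the two inequalities. You use the split $f(x_0)=ax_0^2+3(n+2)u^3$ and the bound $3u<18u^2$, where the paper evaluates the corresponding polynomials at $u=1/6$ and shows their $u$-derivatives are positive; your check is shorter, but the argument is the same.
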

\begin{proof}
The coefficient of $x^2$ is
\[
 A=(n-2)(6u-1)>0.
\]
Its discriminant is
\[
 \Delta_x
 =4u^2\Bigl(n(n-2)+6(n-2)u+36(n+2)u^2\Bigr)>0.
\]
The sum and product of the two roots are positive, so both roots are positive.

Set
\[
 x_0=\frac{3u+1}{12}.
\]
Let $f(x)$ denote the left-hand side of \eqref{AI-E2}.  A direct substitution gives
\[
 144f(x_0)
 =486nu^3+27nu^2-n+756u^3-54u^2+2.
\]
At $u=1/6$ this equals $2(n+2)>0$, whle its $u$-derivative is
\[
 54u\bigl(27nu+n+42u-2\bigr)>0
 \qquad (u\ge1/6,n\ge3).
\]
Thus $f(x_0)>0$.

The vertex of the upward-opening quadratic is
\[
 x_v=\frac{6nu^2}{(n-2)(6u-1)}.
\]
Furthermore
\[
 12(n-2)(6u-1)(x_v-x_0)
 =54nu^2-3nu+n+36u^2+6u-2.
\]
At $u=1/6$ the right-hand side is $2n>0$, and its derivative is
\[
 3(36nu-n+24u+2)>0
 \qquad (u\ge1/6,n\ge3).
\]
Hence $x_v>x_0$.  Since $f(x_0)>0$ and $x_0$ lies to the left of the vertex, $x_0$ lies to the left of the smaller root.  Therefore both roots satisfy $x>x_0$, which is the asserted inequality.
\end{proof}

\begin{lemma}[reality and positivity of the reconstructed $z$]\label{AI-lem:z}
Let $n\ge3$, $u>1/6$, and suppose $x>0$ satisfies
\[
 E_2(x,u)=0,
 \qquad
 R_n(x,u)=0.
\]
Then there exists $z>1/3$ such that
\[
 E_1(x,z,u)=E_3(x,z,u)=0.
\]
\end{lemma}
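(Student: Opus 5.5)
The plan is to treat $E_1$ and $E_3$ as polynomials in $z$ and use their shapes: $E_1$ is an honest quadratic in $z$, and $E_3$ is even in $z$. Collecting terms gives
\[
 E_1=-a\,z^2+12x^2z-4x^2,\qquad a:=12x-3u-1,
\]
\[
 E_3=C(x,u)\,z^2+(4n+8)u^2x^2,
\]
where
\[
 C=6nu^3+9nu^2x^2-24nu^2x+4nu^2+nx^2-18u^2x^2+4u^2-2x^2 .
\]
Since $E_2(x,u)=0$ with $u>1/6$, Lemma~\ref{AI-lem:x} gives $12x>3u+1$, that is, $a>0$. So $E_1$ is a genuine downward-opening quadratic in $z$.

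\textbf{Step 1: a common root exists.} The hypothesis $R_n(x,u)=0$ means $\operatorname{Res}_z(E_1,E_3)=0$, because $x>0$ and so the normalization $1/(16x^4)$ is harmless. Since the leading coefficient $-a$ of $E_1$ is nonzero, I expect vanishing of the resultant to force a common complex root. The one degenerate case to rule out is $C=0$. Then $E_3$ is the nonzero constant $(4n+8)u^2x^2$, and the resultant equals a nonzero power of that constant times a power of $-a$, which is not zero. Hence $C\neq0$, and $E_1,E_3$ share a root $z_*\in\mathbb C$.

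\textbf{Step 2: the common root is real.} Suppose $z_*\notin\mathbb R$. Both polynomials have real coefficients, so $\bar z_*$ is also a common root. Then $E_1$ and $E_3$ are quadratics with the same two roots, so they are proportional. This is impossible: $E_3$ has no linear term, while the linear coefficient of $E_1$ is $12x^2\neq0$. Hence $z_*$ is real.

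\textbf{Step 3: $z_*>1/3$.} The sum and product of the roots of $E_1$ are $12x^2/a>0$ and $4x^2/a>0$. So both real roots are positive, and in particular $z_*>0$. Moreover
\[
 E_1(0)=-4x^2<0,\qquad E_1(1/3)=-a/9<0 .
\]
For a downward parabola that is negative at both ends of $[0,1/3]$, the two roots either both lie in $(0,1/3)$ or both lie in $(1/3,\infty)$. The first alternative would force the product of the roots to be below $1/9$, that is, $36x^2<12x-3u-1$. This can be rewritten as $(6x-1)^2+3u<0$, which is absurd. Therefore $z_*>1/3$, and $E_1=E_3=0$ holds at $(x,z_*,u)$.

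\textbf{Main obstacle.} The delicate point is the algebraic bookkeeping behind Step 1: checking that $E_3$ really has no odd powers of $z$, and that the resultant is taken with respect to the full formal degrees. I will verify the structure of $E_3$ directly from \eqref{AI-E3}. The parity argument of Step 2 then replaces any discriminant estimate, which is why reality follows without further sign computations.
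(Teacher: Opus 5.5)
Your proposal is correct and follows the paper's proof step for step. The nonzero leading coefficient $3u+1-12x<0$ of $E_1$, from Lemma~\ref{AI-lem:x}, turns vanishing of the resultant into a finite common root, and the missing linear term in $E_3$, set against the coefficient $12x^2$ in $E_1$, excludes a conjugate pair of common roots.

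The only variation is the last step. The paper reads off $z>1/3$ directly from $r_3=r_1>0$ with $r_3=(3z-1)/(6z^2)$, which is equivalent to $4x^2(3z-1)=az^2>0$ at any real root. Your Vieta and interval argument with $(6x-1)^2+3u>0$ reaches the same conclusion a little more indirectly. Your explicit check that $C\neq0$ is a small addition that the paper leaves implicit in this section.
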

\begin{proof}
Since $R_n(x,u)=0$ and $x>0$, the resultant of $E_1$ and $E_3$ in $z$ vanishes.  We first exclude a spurious common root at infinity caused by a simultaneous loss of the quadratic terms.  The coefficient of $z^2$ in $E_1$ is
\[
 3u+1-12x.
\]
By Lemma~\ref{AI-lem:x} this is strictly negative.  Hence $E_1$ has degree two in $z$.  Therefore vanishing of the resultant gives a genuine finite common root of $E_1$ and $E_3$ over $\mathbb C$.

We next show that a common root must be real.  If it were nonreal, its complex conjugate would also be common.  Since both polynomials have degree two in $z$, they would then be proportional.  This is impossible: the coefficient of $z$ in $E_1$ is $12x^2\ne0$, whereas $E_3$ has no term linear in $z$.
Thus the common root $z$ is real.

By Lemma~\ref{AI-lem:x},
\[
 r_1=\frac{12x-3u-1}{24x^2}>0.
\]
The equation $E_1=0$ is precisely $r_3=r_1$, and
\[
 r_3=\frac{3z-1}{6z^2}.
\]
Since $r_3>0$ and $z$ is real, necessarily
\[
 z>\frac13.
\]
This proves the claim.
\end{proof}

\begin{theorem}[existence on both asymmetric sides]\label{AI-main}
For every integer $n=3$ or $n\ge5$, the homogeneous space
\[
 M_n=\SU(n)^3/\Delta\SO(n)
\]
carries a diagonal $\SU(n)^3$-invariant Einstein metric in the detected
asymmetric family
\[
 (x,x,z,u,1)
\]
with
\[
 1/6<u<1,
\]
and another with
\[
 u>1.
\]
In particular, the two metrics have unequal anti-diagonal isotropy scales
$u\ne1$.  Their uniqueness in the detected asymmetric branch is proved in
Theorem~\ref{AI-thm:unique}.
\end{theorem}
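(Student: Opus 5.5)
The plan is to take the two detector roots $u_-\in(1/6,1)$ and $u_+\in(1,\infty)$ already obtained from $P_n(1/6)>0$, $P_n(1)<0$ and the positive leading coefficient \eqref{AI-PLC}, and lift each one to a positive solution of $E_1=E_2=E_3=0$. By \eqref{AI-res2} and $u_\pm\ne0,1$, $n\ne2$, the resultant $\operatorname{Res}_x(E_2,R_n)$ vanishes at $u=u_\pm$. The leading coefficient of $E_2$ in $x$ is $(n-2)(6u-1)>0$ for $u>1/6$, so this vanishing gives a genuine common root $x$ of $E_2(\cdot,u_\pm)$ and $R_n(\cdot,u_\pm)$ over $\mathbb C$. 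If $R_n$ could drop degree in $x$ at $u_\pm$, I would fall back on the standard fact that the resultant vanishes only when there is a common root or both leading coefficients vanish. Since the first leading coefficient is nonzero, a finite common root exists anyway.

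Next I would check that this common root is real and positive. Lemma~\ref{AI-lem:x} says both roots of $E_2(\cdot,u)$ are real, distinct and positive for $u>1/6$, so any common root is one of them. That gives $x>0$ together with $12x>3u+1$.

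Then Lemma~\ref{AI-lem:z} applies directly to $(x,u_\pm)$. It gives a real $z>1/3$ with $E_1=E_3=0$, and $E_2=0$ holds by construction. Because $x,z,u$ are positive and $E_1,E_2,E_3$ are the Einstein conditions $r_3=r_1$, $r_4=r_1$, $r_5=r_1$ multiplied by positive denominators, we get $r_1=\dots=r_5$. We also need $r_1=r_2$, which holds automatically since the metric is $(x,x,z,u,1)$. Corollary~\ref{cor:three-factor-Cartan-fixed}, via symmetric criticality for $\Gamma_{3,2}$, then turns each solution into a genuine $\SU(n)^3$-invariant Einstein metric. The resulting metrics have $u_-\in(1/6,1)$ and $u_+>1$, so $u\neq 1$ in both cases. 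Finally, the restriction $n=3$ or $n\ge5$ is only what is needed for the dimension data $d,e$ and the bracket constants to hold, because $\SO(4)$ is not simple. All the sign estimates used hold for every $n\ge3$.

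The main obstacle I expect is the first lifting step: making sure the vanishing of $\operatorname{Res}_x(E_2,R_n)$ really produces a common root rather than an artefact. One issue is the factor $16x^4$ used in defining $R_n$. Since $x>0$ by Lemma~\ref{AI-lem:x}, dividing by it causes no harm. The other issue is possible degeneration of $R_n$'s leading $x$-coefficient, which is handled by the nonvanishing leading coefficient of $E_2$ as above. Both issues are covered by the lemmas already proved, so the proof should be short.
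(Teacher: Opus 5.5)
Your proposal is correct and follows essentially the same route as the paper: take the roots $u_-\in(1/6,1)$ and $u_+\in(1,\infty)$ of $P_n$, use \eqref{AI-res2} to obtain a common root $x$ of $E_2$ and $R_n$, note that Lemma~\ref{AI-lem:x} makes this root real and positive, and lift to $z>1/3$ via Lemma~\ref{AI-lem:z}. Your extra remarks only make explicit what the paper leaves implicit: the nonvanishing leading coefficient $(n-2)(6u-1)$ ensures the specialized resultant gives a genuine common root, and the $\Gamma_{3,2}$ fixed-point corollary (symmetric criticality) ensures the solutions are genuine invariant Einstein metrics.
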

\begin{proof}
By \eqref{AI-P16}--\eqref{AI-PLC}, choose roots
\[
 u_-\in(1/6,1),\qquad u_+\in(1,\infty)
\]
of $P_n$.
For either root, $u\ne0,1$ and $n>2$.  Equation \eqref{AI-res2} therefore implies
\[
 \operatorname{Res}_x(E_2,R_n)=0.
\]
Thus $E_2$ and $R_n$ have a common root $x$ over $\mathbb C$.  By Lemma~\ref{AI-lem:x}, all roots of $E_2$ are real and positive, so this common root satisfies $x>0$.
Lemma~\ref{AI-lem:z} then produces $z>1/3$ satisfying $E_1=E_3=0$.
Hence all four distinct Ricci components agree, and $(x,x,z,u,1)$ is a positive Einstein metric.
The two metrics are distinct in the normalized family because their $u$-parameters lie on opposite sides of $1$.
\end{proof}

\subsection{Uniqueness of the two detected branches}
\label{AI-sec:uniqueness}

We now prove that the two metrics found above are the only metrics in the detected
asymmetric branch.  There are two separate points: uniqueness of the detector root
$u$, and uniqueness of its lift to $(x,z)$.

\subsubsection{Exactly one detector root in each interval}

For the lower interval put
\[
 u=\frac{1+6t}{6(1+t)},\qquad t>0.
\]
This maps $(0,\infty)$ bijectively onto $(1/6,1)$.  Define
\[
 \widetilde P_n(t)
 =[6(1+t)]^{10}
 P_n\!\left(\frac{1+6t}{6(1+t)}\right).
\]
The factor in front is positive, so positive roots of $\widetilde P_n$ are in
one-to-one correspondence with roots of $P_n$ in $(1/6,1)$.

Order the coefficients of $\widetilde P_n$ from $t^{10}$ down to $t^0$.
Their signs are
\[
\begin{array}{c|c}
 n & \operatorname{sgn}\operatorname{coeff}(\widetilde P_n)\\ \hline
 3 & ----+++++++\\
 5,6 & ---++++++++\\
 n\ge7 & --+++++++++
\end{array}
\]
and hence there is exactly one sign change in every admissible case.
Here is an exact certificate for this table.  Apart from positive numerical
factors, the only coefficients whose sign moves with $n$ are the coefficients
of $t^8$ and $t^7$:
\[
\begin{aligned}
 L_8(n)={}&5579373n^7+9546778n^6-134608156n^5-738398520n^4\\
 &-1701243792n^3-2083519264n^2-1330486080n-347735680,\\
 L_7(n)={}&1428373n^7+7252870n^6+419220n^5-83421736n^4\\
 &-264631440n^3-378662880n^2-267185216n-75202432.
\end{aligned}
\]
Their coefficient-sign strings, in descending powers of $n$, are respectively
\[
 ++------,\qquad +++-----,
\]
so Descartes' rule gives exactly one positive zero for each polynomial.
Moreover
\[
 L_8(6)<0<L_8(7),\qquad
 L_7(3)<0<L_7(4).
\]
Thus $L_8$ is negative at the admissible values $n=3,5,6$ and positive for
$n\ge7$, whereas $L_7$ is negative at $n=3$ and positive for every admissible
$n\ge5$.  The coefficients of $t^{10}$ and $t^9$ are manifestly negative
from their factorizations, while after writing $n=3+m$ the normalized
coefficients of $t^6,\ldots,t^0$ have strictly positive coefficients as
polynomials in $m$.  This proves the displayed sign table without numerical
root finding.  Descartes' rule therefore gives at most one positive root.
Existence was proved above, so there is exactly one:
\[
 \#\{u\in(1/6,1):P_n(u)=0\}=1.
\]

For the upper interval put
\[
 u=1+t,\qquad t>0,
\]
and write
\[
 \widehat P_n(t)=P_n(1+t).
\]
Again order coefficients from $t^{10}$ down to $t^0$.  Their signs are
\[
\begin{array}{c|c}
 n & \operatorname{sgn}\operatorname{coeff}(\widehat P_n)\\ \hline
 3 & ++---------\\
 5\le n\le7 & ++++-------\\
 8\le n\le13 & +++++------\\
 14\le n\le31 & ++++++-----\\
 32\le n\le2172 & +++++++----\\
 n\ge2173 & ++++++++---
\end{array}
\]
so once again there is exactly one sign change.  For an exact certificate,
divide the coefficients of $t^8,\ldots,t^3$ by their positive numerical
contents and denote the resulting degree-seven polynomials by
$U_8,\ldots,U_3$.  They are
\[
\begin{aligned}
U_8={}&9743n^7+32764n^6-40909n^5-380754n^4-807000n^3\\
&-833424n^2-435856n-92576,\\
U_7={}&50395n^7+111457n^6-695894n^5-3662448n^4-7319072n^3\\
&-7572016n^2-4039904n-880768,\\
U_6={}&1266345n^7-49462n^6-41517076n^5-176525352n^4\\
&-341710096n^3-354973856n^2-192431296n-42790272,\\
U_5={}&596837n^7-3132128n^6-45940164n^5-176357360n^4\\
&-335894992n^3-350914688n^2-192796096n-43558656,\\
U_4={}&1153863n^7-26365910n^6-261819604n^5-954423480n^4\\
&-1805907120n^3-1899470368n^2-1055994304n-241829504,\\
U_3={}&13175n^7-28516302n^6-243965092n^5-867852152n^4\\
&-1641596208n^3-1739863456n^2-977829568n-226639488.
\end{aligned}
\]
The coefficient-sign strings of $U_8,U_7$ are $++------$, and those of
$U_6,U_5,U_4,U_3$ are $+-------$.  Hence every $U_j$ has exactly one
positive zero.  Direct exact evaluation brackets these zeros by
\[
\begin{array}{c|cccccc}
j&8&7&6&5&4&3\\ \hline
\text{bracket}&(3,4)&(4,5)&(7,8)&(13,14)&(31,32)&(2172,2173).
\end{array}
\]
The coefficient of $t^{10}$ is positive.  The coefficient of $t^9$ is also
positive for $n\ge3$ (after writing $n=3+m$ its remaining factor has positive
coefficients), while the coefficients of $t^2,t,1$ are negative for
$n\ge3$.  This proves the upper sign table exactly and without numerical
root isolation.  Therefore
\[
 \#\{u>1:P_n(u)=0\}=1.
\]

\subsubsection{A detector root has a unique lift}

It remains to show that a fixed detector root $u\ne1$ cannot lift to two different
Einstein metrics.  Divide $R_n(x,u)$ by the quadratic $E_2(x,u)$ as a polynomial
in $x$.  The remainder has the form
\[
 \frac{9u^4(u-1)}{(n-2)^2(6u-1)^3}
 \bigl(A_n(u)x+B_n(u)\bigr).
\]
Thus any common root of $E_2$ and $R_n$ must satisfy
\[
 A_n(u)x+B_n(u)=0.
\]
We claim that $A_n(u)\ne0$ at every zero of $P_n$ in the two intervals above.
An exact resultant computation gives
\[
\begin{split}
 \operatorname{Res}_u(P_n,A_n)
 ={}&C\,(n-2)^{15}(n+1)^4(n+2)^{14}(5n+6)^2(9n-2)^4\\
 &\qquad\times G(n)^2,
\end{split}
\]
where $C>0$ is an integer and
\[
\begin{aligned}
G(n)={}&74876n^{18}+346044n^{17}+124151n^{16}
-2198832n^{15}-4679126n^{14}\\
&+748650n^{13}+15267804n^{12}+20282656n^{11}
-1253188n^{10}-33373184n^9\\
&-44601392n^8-30511104n^7-11697280n^6-1779200n^5
+714240n^4\\
&+630784n^3+240640n^2+49152n+4096.
\end{aligned}
\]
Writing $n=3+m$, every coefficient of $G(3+m)$ is strictly positive.
Hence $G(n)>0$ for $n\ge3$, and the displayed resultant never vanishes in our
range.  Therefore $P_n$ and $A_n$ have no common zero, so a detector root determines
a unique $x$.

Finally, for this $(u,x)$ the two equations $E_1=E_3=0$ have exactly one common
$z$.  Indeed $E_1$ and $E_3$ are both quadratic in $z$.  If they had two common
roots they would be proportional, but $E_1$ has the nonzero linear coefficient
$12x^2$ while $E_3$ has no linear term.  Thus their greatest common divisor has
degree one.  Lemma~\ref{AI-lem:z} already shows that the common root is real and
positive.

\begin{theorem}[uniqueness in the detected asymmetric branch]
\label{AI-thm:unique}
For every $n=3$ or $n\ge5$, there is exactly one asymmetric Einstein metric with
\[
 \frac16<u<1
\]
and exactly one asymmetric Einstein metric with
\[
 u>1,
\]
after the normalization of the second anti-diagonal scale to $1$.
Equivalently, the asymmetric branch contains exactly two Einstein metrics
up to the chosen normalization.
\end{theorem}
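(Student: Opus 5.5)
The plan is to assemble Theorem~\ref{AI-thm:unique} from the existence result (Theorem~\ref{AI-main}) and the two uniqueness steps just carried out: one detector root in each interval, and a unique lift of each root.

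First, fix an Einstein metric $(x,x,z,u,1)$ with $x,z,u>0$ and $u\ne1$. Then $E_1=E_2=E_3=0$ holds. Eliminating $z$ gives $R_n(x,u)=0$, since $x>0$ lets us divide by $16x^4$. Eliminating $x$ then gives $\operatorname{Res}_x(E_2,R_n)=0$. By \eqref{AI-res2}, and because $u\ne0,1$ and $n>2$, this forces $P_n(u)=0$.

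Second, we confine $u$ to the two intervals. The claim is that every positive solution has $u>1/6$. From $E_2$,
\[
 (n-2)(6u-1)x^2=12nu^2x-u^2\bigl(6(n+1)u+n\bigr).
\]
This alone does not settle the sign, so I would use $r_4=r_1$ and $r_1=r_3$ instead. The positivity step runs as follows.
\begin{itemize}
\item The Einstein constant is positive on a compact homogeneous space that is not a torus (Bochner), so $r_1>0$.
\item $r_1>0$ means $12x>3u+1$.
\item Now suppose $u\le1/6$. Then the coefficient $(n-2)(6u-1)$ is nonpositive, so $E_2$ is at most linear-decreasing in $x$ minus a concave part. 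Combined with $x>(3u+1)/12$, I expect a contradiction after substituting $x=(3u+1)/12+s$ with $s>0$ and checking that $E_2$ has constant sign.
\end{itemize}
If that direct check is messy, there is a fallback. Use $P_n(0)>0$ and show, via a Descartes count on $P_n(u)$ for $u\in(0,1/6]$, that $P_n$ has no roots there. This is the step I expect to be the main obstacle, since the paper has so far only bracketed roots inside $(1/6,\infty)$.

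Third, granting $u\in(1/6,1)\cup(1,\infty)$, the counts from the sign tables of $\widetilde P_n$ and $\widehat P_n$ show that $u$ is one of exactly two values, $u_-$ or $u_+$. For each such $u$, the remainder computation shows that any common root $x$ of $E_2$ and $R_n$ satisfies $A_n(u)x+B_n(u)=0$. The nonvanishing of $\operatorname{Res}_u(P_n,A_n)$, which follows from the positivity of $G(3+m)$, gives $A_n(u)\ne0$, so $x$ is uniquely determined. The gcd argument, using the linear coefficient $12x^2\ne0$ of $E_1$ versus no linear term in $E_3$, then fixes $z$ uniquely. Each detector root therefore yields at most one metric.

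Finally, Theorem~\ref{AI-main} supplies at least one metric with $u\in(1/6,1)$ and at least one with $u>1$. The two metrics are distinct because their $u$-values lie on opposite sides of $1$. Hence there is exactly one metric on each side, and two in total under the normalization $v=1$.
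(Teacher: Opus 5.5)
Your treatment of the two intervals matches the paper's: Descartes on $\widetilde P_n$ and $\widehat P_n$, the linear remainder $A_n(u)x+B_n(u)$ with $\operatorname{Res}_u(P_n,A_n)\ne0$ via $G(3+m)>0$, non-proportionality of $E_1,E_3$ for $z$, and existence from Theorem~\ref{AI-main}. The paper stops there; it only counts roots of $P_n$ in $(1/6,1)$ and $(1,\infty)$. You are right that the ``equivalently, exactly two'' clause additionally needs every positive solution to have $u>1/6$.

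The route you propose for that step, however, fails. For $u\le1/6$, $E_2$ is decreasing in $x$ on $x>0$. So a root with $12x>3u+1$ exists exactly when $E_2(x_0)>0$ at $x_0=(3u+1)/12$. The paper's formula gives $144E_2(x_0)=(486n+756)u^3+(27n-54)u^2-(n-2)$, which equals $2(n+2)>0$ at $u=1/6$. Concretely, at $u=1/6$ the equation $E_2=0$ gives $x=(2n+1)/(12n)>1/8=x_0$. Hence $E_2$ together with $r_1>0$ has solutions for all $u$ slightly below $1/6$, and no contradiction can come from these two equations. Your fallback is also unsafe. For $u<1/6$ the quadratic $E_2$ has a negative root, so $P_n$ may well vanish in $(0,1/6)$ through non-geometric common roots, and the paper gives no data on $P_n$ there.

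The missing idea is to use $r_3=r_5$. With $w=1/z$ one has $r_3=\frac w2-\frac{w^2}6$. From the displayed formula,
\[
 r_5=\frac{3(n-2)}{16n}+\frac{n+2}{12n}w^2+\frac{n+2}{24nx^2}+\frac{n-2}{48nu^2}.
\]
Dropping the positive $x$-term and minimizing over $w$ gives
\[
 r_5-r_3>\frac{3(n-2)}{16n}+\frac{n-2}{48nu^2}+\frac{3n+2}{12n}w^2-\frac w2
 \ge\frac{3(n-2)}{16n}+\frac{n-2}{48nu^2}-\frac{3n}{4(3n+2)}.
\]
If $u\le1/6$, this is at least $\frac{15(n-2)}{16n}-\frac{3n}{4(3n+2)}>\frac5{16}-\frac14>0$ for every $n\ge3$. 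That contradicts $r_3=r_5$. With this confinement in place, your assembly proves the full statement, including the ``exactly two'' reformulation.
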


\subsection{Connected full isometry group}

\label{AI-sec:fullisom}

We now close the presentation issue for the identity component of the full
isometry group.  This is independent of the Einstein calculation except for
the fact that the metric belongs to the detected diagonal family.

Let
\[
 I_0=\Isom_0(M_n,g),\qquad G=\SU(n)^3,
\]
and let $G_0$ denote the effective image of $G$ in $I_0$.  The ineffective
kernel is
\[
 Z_0=\Delta\bigl(Z(\SU(n))\cap\SO(n)\bigr),
\]
so
\[
 G_0=G/Z_0 .
\]

\subsubsection{Two elementary preliminary lemmas}

\begin{lemma}\label{AI-lem:normalizer}
For $n=3$ or $n\ge5$,
\[
 N_{\mathfrak{su}(n)^3}(\Delta\mathfrak{so}(n))
 =\Delta\mathfrak{so}(n).
\]
Consequently the connected centralizer of the $G_0$-action in
$\Diff(M_n)$, and hence in $\Isom(M_n,g)$, is trivial.
\end{lemma}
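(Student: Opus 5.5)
The argument mirrors Lemma~\ref{TF-lem:normalizer}, now with three factors.

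\textbf{Step 1: reduce to a single factor.} Let $(X_1,X_2,X_3)\in\mathfrak{su}(n)^3$ normalize $\Delta\mathfrak{so}(n)$. Then for every $A\in\mathfrak k=\mathfrak{so}(n)$ the bracket $([X_1,A],[X_2,A],[X_3,A])$ lies in $\Delta\mathfrak k$, so the three components agree. This gives
\[
 [X_i-X_j,A]=0\qquad(A\in\mathfrak k,\ 1\le i<j\le3),
\]
so each difference $X_i-X_j$ lies in the centralizer $\mathfrak z_{\mathfrak{su}(n)}(\mathfrak{so}(n))$. Also $[X_1,A]\in\mathfrak k$, so $X_1$ lies in the normalizer $N_{\mathfrak{su}(n)}(\mathfrak{so}(n))$.

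\textbf{Step 2: the centralizer and normalizer of $\mathfrak{so}(n)$ in $\mathfrak{su}(n)$.} Decompose $\mathfrak{su}(n)=\mathfrak{so}(n)\oplus\mathfrak q$, where $\mathfrak q=i\,\mathrm{Sym}_0(n)$ is the traceless symmetric part. Under $\mathfrak{so}(n)$:
\begin{itemize}
\item $\mathfrak q\cong S^2_0\mathbb R^n$ is irreducible and nontrivial for $n\ge3$ (for $n=3$ it is the $5$-dimensional representation of $\mathfrak{so}(3)$), so it contains no invariant vectors. Hence the centralizer component in $\mathfrak q$ vanishes.
\item $\mathfrak{so}(n)$ has trivial centre for $n=3$ and $n\ge5$; this is exactly where $n=4$ fails, since $\mathfrak{so}(4)=\mathfrak{su}(2)\oplus\mathfrak{su}(2)$ has a different structure. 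Hence the centralizer component in $\mathfrak k$ also vanishes.
\end{itemize}
So $\mathfrak z_{\mathfrak{su}(n)}(\mathfrak{so}(n))=0$, and Step~1 gives $X_1=X_2=X_3$.

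For the normalizer, let $X=A_0+Y$ with $A_0\in\mathfrak k$ and $Y\in\mathfrak q$. Since $[\mathfrak k,\mathfrak q]\subset\mathfrak q$, the condition $[X,\mathfrak k]\subset\mathfrak k$ forces $[Y,\mathfrak k]=0$. Then $Y$ is $\mathfrak k$-invariant in $\mathfrak q$, so $Y=0$ by the first bullet. Thus $X\in\mathfrak k$, and therefore $(X_1,X_2,X_3)\in\Delta\mathfrak k$. This proves the displayed equality.

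\textbf{Step 3: the centralizer consequence.} The centralizer of the transitive $G$-action in $\Diff(M_n)$ is $N_G(\Delta K)/\Delta K$, by the standard identification already used in Lemma~\ref{TF-lem:normalizer}. Its Lie algebra is $N_{\mathfrak g}(\Delta\mathfrak k)/\Delta\mathfrak k=0$. Hence its identity component is trivial, and the same holds inside $\Isom(M_n,g)$.

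\textbf{Main obstacle.} The step needing most care is the irreducibility of $\mathfrak q$ as an $\mathfrak{so}(n)$-module, including the low case $n=3$. For $n\ge5$ this is the standard irreducibility of $S^2_0$ for the orthogonal algebra. For $n=3$ one checks that it is the spin-$2$ representation. In both cases only the absence of a trivial summand is used, so irreducibility is more than needed.
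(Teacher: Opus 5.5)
Your proof is correct and follows the paper's argument. The differences $X_i-X_j$ centralize $\mathfrak k$, the centralizer of $\mathfrak{so}(n)$ in $\mathfrak{su}(n)$ vanishes, and self-normalization of the symmetric subalgebra finishes. The only variation is that you kill the centralizer via the absence of $\mathfrak k$-invariants in $\mathfrak q$ together with $Z(\mathfrak k)=0$, whereas the paper uses Schur's lemma on $\mathbb C^n$; you also prove the self-normalization that the paper simply quotes.

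One side remark is wrong, though it does not affect the proof. $\mathfrak{so}(4)\cong\mathfrak{su}(2)\oplus\mathfrak{su}(2)$ is semisimple with trivial centre, so nothing fails at $n=4$ in this step, and the normalizer identity holds there too. The paper excludes $n=4$ only because $\mathfrak{so}(4)$ is not simple.
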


\begin{proof}
Let $(X_1,X_2,X_3)$ normalize the diagonal copy of
$\mathfrak k=\mathfrak{so}(n)$.  For every $A\in\mathfrak k$ the three
brackets $[X_i,A]$ are equal.  Hence $X_i-X_j$ centralizes
$\mathfrak k$ in $\mathfrak{su}(n)$.  The standard real representation of
$\mathfrak{so}(n)$ on $\mathbb C^n$ is irreducible, and Schur's lemma gives
\[
 C_{\mathfrak{su}(n)}(\mathfrak{so}(n))=0.
\]
Thus $X_1=X_2=X_3=:X$.  Since the symmetric subalgebra
$\mathfrak{so}(n)\subset\mathfrak{su}(n)$ is self-normalizing, $X\in
\mathfrak{so}(n)$.

For a transitive $G$-space $G/K$, the Lie algebra of the centralizer of
the left $G$-action is
$N_{\mathfrak g}(\mathfrak k)/\mathfrak k$.  The first assertion therefore
gives the second.
\end{proof}

Following Onishchik, call a compact Lie algebra \emph{strongly
semisimple} if it has zero centre and no simple ideal of type $A_1$.

\begin{lemma}\label{AI-lem:strong}
The compact Lie algebra $\mathfrak i=\Lie(I_0)$ is strongly semisimple:
it has zero centre and no simple ideal of type $A_1$.
\end{lemma}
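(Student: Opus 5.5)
The plan is to establish the two properties separately, using Lemma~\ref{AI-lem:normalizer} for the centre and a representation-theoretic argument for the $A_1$ ideals. This parallels the corresponding step in the proof of Theorem~\ref{TF-thm:isom0}. Throughout, $\mathfrak g=\mathfrak{su}(n)^3$ is viewed as the Lie subalgebra of $\mathfrak i$ generated by the effective transitive action of $G_0$. Since $\mathfrak g$ is semisimple, its image in $\mathfrak i$ is isomorphic to $\mathfrak g$.

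\emph{Zero centre.}  Let $\mathfrak z$ be the centre of $\mathfrak i$. Every $Z\in\mathfrak z$ commutes with $\mathfrak g$, so the corresponding one-parameter group of isometries commutes with the $G_0$-action. Hence $\mathfrak z$ lies in the Lie algebra of the centralizer of $G_0$ in $\Isom(M_n,g)$. By Lemma~\ref{AI-lem:normalizer}, the connected component of that centralizer is trivial, so its Lie algebra is $N_{\mathfrak g}(\Delta\mathfrak k)/\Delta\mathfrak k=0$. Therefore $\mathfrak z=0$.

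\emph{No $A_1$ ideals.}  Suppose $\mathfrak s\simeq\mathfrak{su}(2)$ is a simple ideal of $\mathfrak i$, and let $\pi_{\mathfrak s}\colon\mathfrak i\to\mathfrak s$ be the projection along the complementary ideal. Restricting $\pi_{\mathfrak s}$ to each simple factor $\mathfrak{su}(n)$ of $\mathfrak g$ gives a Lie algebra homomorphism $\mathfrak{su}(n)\to\mathfrak{su}(2)$. Such a homomorphism is either zero or injective. For $n\ge3$ we have $\dim\mathfrak{su}(n)=n^2-1>3$, so injectivity is impossible and every such homomorphism is zero. Consequently $\pi_{\mathfrak s}(\mathfrak g)=0$, which means $\mathfrak g$ lies in the complementary ideal and therefore commutes with $\mathfrak s$. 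Then $\mathfrak s$ lies in the Lie algebra of the centralizer of $G_0$, and by Lemma~\ref{AI-lem:normalizer} that Lie algebra is zero, a contradiction.

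The only delicate point is the identification of the Lie algebra of the centralizer of a transitive action with $N_{\mathfrak g}(\mathfrak k)/\mathfrak k$ for the subgroup $I_0$. This holds because any Killing field commuting with all of $\mathfrak g$ is determined by its value at the origin, and that value must be fixed by the isotropy action of $\Delta\mathfrak k$. Lemma~\ref{AI-lem:normalizer} already records this identification in $\Diff(M_n)$, so the argument is short and I do not expect a genuine obstacle.
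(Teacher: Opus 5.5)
Your proof is correct and follows essentially the same route as the paper. The centre and any $A_1$ ideal both centralize $\mathfrak g$ (the latter because every homomorphism $\mathfrak{su}(n)\to\mathfrak{su}(2)$ vanishes for $n\ge3$), and Lemma~\ref{AI-lem:normalizer} rules out a nonzero centralizer; you simply spell out the projection and centralizer identifications that the paper leaves implicit.
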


\begin{proof}
The centre of $\mathfrak i$ centralizes $\mathfrak g$ and hence vanishes
by Lemma~\ref{AI-lem:normalizer}.  If $\mathfrak a\simeq A_1$ were a simple
ideal of $\mathfrak i$, then every homomorphism
$A_{n-1}\to\mathfrak a$ would be zero for $n=3$ or $n\ge5$.
Hence $[\mathfrak a,\mathfrak g]=0$, again contradicting
Lemma~\ref{AI-lem:normalizer}.
\end{proof}

Let $\mathfrak l=\Lie((I_0)_o)$ be the isotropy algebra at the base point
and let $\mathfrak l_s$ be its maximal strongly semisimple subalgebra.
Since
\[
 \mathfrak i=\mathfrak g+\mathfrak l
\]
is a compact reductive decomposition, Onishchik's reduction theorem for
reductive decompositions gives
\begin{equation}\label{AI-strongred}
 \mathfrak i=\mathfrak g+\mathfrak l_s .
\end{equation}
See \cite[Theorem~3.3]{Onishchik1969}.  Moreover $\mathfrak l$ contains
no nonzero ideal of $\mathfrak i$: a connected normal subgroup contained
in the isotropy of an effective transitive action fixes every point.

\subsubsection{The case $n\ge5$}

For $n\ge5$ the algebra $\mathfrak k=\mathfrak{so}(n)$ is strongly
semisimple.  Since
\[
 \Delta\mathfrak k=\mathfrak g\cap\mathfrak l,
\]
the general property of maximal strongly semisimple subalgebras used in
Onishchik's reduction implies
\[
 \Delta\mathfrak k\subset\mathfrak l_s.
\]
Thus
\begin{equation}\label{AI-intersection}
 \mathfrak g\cap\mathfrak l_s=\Delta\mathfrak{so}(n).
\end{equation}

We need one direct consequence of the compact factorization
classification.

\begin{lemma}[factorization obstruction]\label{AI-lem:factorobs}
Let $n\ge5$.  There is no proper simple compact factorization
\[
 \mathfrak s=\mathfrak a+\mathfrak b
\]
in which $\mathfrak a\simeq A_{n-1}$ and
$\mathfrak a\cap\mathfrak b$ contains the standard symmetric subalgebra
$\mathfrak{so}(n)\subset\mathfrak{su}(n)$.
\end{lemma}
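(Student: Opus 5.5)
The plan is to read the statement directly off Onishchik's classification of proper factorizations $\mathfrak s=\mathfrak a+\mathfrak b$ of compact simple Lie algebras into two proper subalgebras \cite{Onishchik1969}.  Up to automorphism and interchange of the factors, the list is
\[
 A_{2m-1}=C_m+A_{2m-2},\qquad
 D_{m+1}=B_m+A_m,\qquad
 D_{2m}=B_{2m-1}+C_m,\qquad
 B_3=G_2+D_3,\qquad D_4=B_3+B_3,\qquad
 D_8=B_7+B_4,
\]
together with the variants in which a factor is enlarged by an extra $\mathfrak u(1)$ or $C_1$, such as $\mathfrak u(m)$ or $\mathfrak{sp}(m)\oplus\mathfrak{sp}(1)$.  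In each case the intersection $\mathfrak a\cap\mathfrak b$ is recorded explicitly.  The first step is to locate every entry in which one factor is isomorphic to $A_{n-1}$ with $n\ge5$.  The only candidates are $D_{n}=B_{n-1}+A_{n-1}$ (strictly $D_{m+1}$ with $m=n-1$, where $A_{n-1}=\mathfrak{su}(n)\subset\mathfrak{so}(2n)$) and its $\mathfrak u(n)$-variant.  Indeed, $A_{2m-2}$ in the first entry can equal $A_{n-1}$ only for odd $n$, so I also include $A_{n}=C_{(n+1)/2}+A_{n-1}$ for odd $n$.  Low-rank coincidences such as $A_3\simeq D_3$ are excluded because $n\ge5$.

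The second step is to compute, in each surviving case, the intersection $\mathfrak a\cap\mathfrak b$ as a subalgebra of $\mathfrak a=\mathfrak{su}(n)$.  For $\mathfrak{so}(2n)=\mathfrak{so}(2n-1)+\mathfrak{su}(n)$ the intersection is the stabilizer in $\mathfrak{su}(n)$ of a unit vector of $\mathbb C^n=\mathbb R^{2n}$, namely $\mathfrak{su}(n-1)$.  In the $\mathfrak u(n)$-variant it is $\mathfrak u(n-1)$ embedded in $\mathfrak u(n)$, whose projection to $\mathfrak{su}(n)$ lies in $\mathfrak s(\mathfrak u(n-1)\oplus\mathfrak u(1))$.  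For $\mathfrak{su}(n+1)=\mathfrak{sp}((n+1)/2)+\mathfrak{su}(n)$ the intersection is $\mathfrak{sp}((n-1)/2)$, possibly extended by a $\mathfrak u(1)$.  The same holds when the $\mathfrak{sp}$ side carries the extra $\mathfrak u(1)$ or $C_1$: the table gives intersection at most $\mathfrak{sp}((n-1)/2)\oplus\mathfrak u(1)$ inside $\mathfrak{su}(n)$.

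The third step is a dimension and rank comparison ruling out containment of $\mathfrak{so}(n)$.  In every case the intersection lies in a subalgebra fixing a line of $\mathbb C^n$ (a maximal parabolic type reduction), or has dimension smaller than $n(n-1)/2$.  For instance, $\dim\mathfrak{sp}((n-1)/2)\oplus\mathfrak u(1)=(n-1)n/2+1$ exceeds this bound, so there I argue instead by representation theory.  The standard $\mathfrak{so}(n)$ acts irreducibly on $\mathbb C^n$, since $n\ge3$ and the representation is real of dimension $n\ne2$.  Hence $\mathfrak{so}(n)$ cannot lie in $\mathfrak s(\mathfrak u(n-1)\oplus\mathfrak u(1))$, which preserves a line.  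Likewise $\mathfrak{sp}((n-1)/2)\oplus\mathfrak u(1)\subset\mathfrak{su}(n)$ preserves the decomposition $\mathbb C^{n-1}\oplus\mathbb C$, so it cannot contain an irreducible $\mathfrak{so}(n)$ either.  A simpler alternative is that it contains no simple ideal of type $D/B$ of rank $\lfloor n/2\rfloor$ of the right dimension.

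The main obstacle is bookkeeping: making sure the cited table is applied to all embeddings up to automorphism, including the outer automorphism of $\mathfrak{su}(n)$.  That automorphism maps $\mathfrak{so}(n)$ to a conjugate, so irreducibility on $\mathbb C^n$ is preserved.  The enlarged variants must also be handled.  In each of them the intersection still preserves a proper subspace of $\mathbb C^n$, so the irreducibility argument covers them uniformly.
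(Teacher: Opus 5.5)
Your proposal is correct and follows the paper's proof. You locate the only two Onishchik families with an $A_{n-1}$ summand for $n\ge5$, namely $D_n=B_{n-1}+A_{n-1}$ and, for odd $n$, $A_n=C_{(n+1)/2}+A_{n-1}$. You note that in each case the intersection preserves a line in $\mathbb C^n$, and you conclude from the irreducibility of the standard $\mathfrak{so}(n)$ on $\mathbb C^n$; your extra care with the $\mathfrak u(1)$-variants and the outer automorphism is a harmless refinement.

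Drop the ``simpler alternative'' based on isomorphism type, however. At $n=5$ the intersection $C_2=\mathfrak{sp}(2)$ is isomorphic to $\mathfrak{so}(5)$ and has the same dimension, so only the irreducibility argument excludes it.
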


\begin{proof}
Inspecting Onishchik's complete list of proper factorizations of compact
simple Lie algebras \cite[Theorem~4.1 and Table~7]{Onishchik}, the
only families with an $A_{n-1}$ summand (apart from the excluded
$A_3=D_3$ low-rank coincidences) are
\[
 D_n=B_{n-1}+A_{n-1},
 \qquad
 A_n=C_{(n+1)/2}+A_{n-1}\quad(n\ {\rm odd}).
\]
Their intersections with the $A_{n-1}$ summand are respectively
\[
 A_{n-2},
 \qquad
 C_{(n-1)/2}.
\]
In the standard $n$-dimensional complex representation of $A_{n-1}$,
both of these subalgebras fix a nonzero complex line: the first is the
usual block $A_{n-2}$, and the second acts symplectically on an
$(n-1)$-dimensional block and fixes the complementary line.  By contrast,
the standard $\mathfrak{so}(n)\subset\mathfrak{su}(n)$ acts irreducibly
on $\mathbb C^n$ and fixes no nonzero vector.  It therefore cannot be
contained in either intersection.
\end{proof}

\begin{lemma}[primitive enlargements are excluded]\label{AI-lem:primitive-excluded}
Let $n=3$ or $n\ge5$.  In the compact reductive decomposition
\[
 \mathfrak i=\mathfrak g+\mathfrak l_s,
 \qquad \mathfrak g=A_{n-1}^{\,3},
\]
no effective irreducible component of Onishchik's reduction can contain a
proper enlargement of one of the three $A_{n-1}$ ideals of $\mathfrak g$.
\end{lemma}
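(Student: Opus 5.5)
The plan is to mirror the argument used for the symplectic family in Theorem~\ref{TF-thm:isom0}. Suppose an effective irreducible component $\mathfrak s$ of Onishchik's reduction of $\mathfrak i=\mathfrak g+\mathfrak l_s$ properly contains one of the three ideals, say $\mathfrak g_1\simeq A_{n-1}$. By the primitivity theorem \cite[Theorem~4.3]{Onishchik1969}, such a component yields a proper simple compact factorization $\mathfrak s=\mathfrak a+\mathfrak b$ with $\mathfrak a\simeq A_{n-1}$ the image of $\mathfrak g_1$, and with $\mathfrak b$ the projection of $\mathfrak l_s$ to $\mathfrak s$. The aim is to show that $\mathfrak a\cap\mathfrak b$ contains the standard $\mathfrak{so}(n)$. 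For $n\ge5$ this is then forbidden by Lemma~\ref{AI-lem:factorobs}; for $n=3$ a separate table check is needed.

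\textbf{Step 1: the other ideals do not reach $\mathfrak s$.} I first show that $\mathfrak g_2$ and $\mathfrak g_3$ project to zero in $\mathfrak s$. Both commute with $\mathfrak g_1$, so their projections lie in the centralizer of $\mathfrak a$ in $\mathfrak s$. In the factorization list this centralizer is zero in the relevant cases: for $D_n\supset A_{n-1}$ it is at most a $\mathfrak u(1)$, and for $A_n\supset A_{n-1}$ it is a $\mathfrak u(1)$ or zero. A nonzero homomorphism from the simple algebra $A_{n-1}$ into an abelian or trivial centralizer is impossible, so both projections vanish.

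\textbf{Step 2: the isotropy lands in $\mathfrak a\cap\mathfrak b$.} Take $X\in\mathfrak{so}(n)$. The element $(X,X,X)\in\Delta\mathfrak k\subset\mathfrak l_s$ (using \eqref{AI-intersection}) projects to $X\in\mathfrak a$, because the second and third coordinates vanish by Step~1. The same element, viewed in $\mathfrak l_s$, projects into $\mathfrak b$. Hence $\mathfrak a\cap\mathfrak b\supseteq\mathfrak{so}(n)$, embedded in the standard way. For $n\ge5$, Lemma~\ref{AI-lem:factorobs} now gives a contradiction.

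\textbf{Step 3: the case $n=3$.} Here $\mathfrak k=\mathfrak{so}(3)\simeq A_1$ is not strongly semisimple, so \eqref{AI-intersection} is not available from the reduction. Instead I would work directly with $\mathfrak l$ and the ambient $A_2$. The relevant list entries are $A_3=C_2+A_2$ (with the $D_3$ coincidence) and $G_2=A_2+B_2$-type factorizations involving $\mathfrak{su}(3)$; I would check the factorization table for all proper factorizations with an $A_2$ summand. The intersections with $\mathfrak{su}(3)$ in these cases are $\mathfrak{su}(2)$ or $\mathfrak u(2)$ acting reducibly on $\mathbb C^3$, whereas $\mathfrak{so}(3)$ acts irreducibly. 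This is the same fixed-line argument as in Lemma~\ref{AI-lem:factorobs}. The projection argument of Step~2 still applies once $\Delta\mathfrak{so}(3)$ is retained in the reduced isotropy, and I would verify this retention by the same intersection bookkeeping.

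\textbf{Expected obstacle.} I expect the main difficulty to be justifying, carefully, that the isotropy projection into the primitive component is exactly the partner $\mathfrak b$. This requires tracking the strongly semisimple reduction and, for $n=3$, possible extensions by normal $A_1$ or central pieces. I would handle this as in the $p=1$ case of Theorem~\ref{TF-thm:isom0}: any extra normal $A_1$ lies on the $\mathfrak a$ side of the factorization, not in the partner intersection, so it does not affect the conclusion.
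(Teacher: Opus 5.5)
For $n\ge5$ your Steps 1--2 coincide with the paper's argument. They make explicit why the standard $\mathfrak{so}(n)$ ``is carried in the intersection along the chain,'' and that case is fine.

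The gap is $n=3$. Your Step 3 rests on the conditional ``once $\Delta\mathfrak{so}(3)$ is retained in the reduced isotropy.'' That retention is exactly what is not automatic here. $\Delta\mathfrak{so}(3)\cong A_1$ can sit inside, or project isomorphically onto, an $A_1$ ideal of $\mathfrak l$, and such ideals are discarded when one passes to $\mathfrak l_s$. So a priori $\mathfrak g\cap\mathfrak l_s$ may be a proper subalgebra of $\Delta\mathfrak{so}(3)$, even zero. In that case Step~2 says nothing about $\mathfrak a\cap\mathfrak b$, and the fixed-line argument has nothing to act on. ``Intersection bookkeeping'' does not supply the missing inclusion. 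Your appeal to the $p=1$ case of Theorem~\ref{TF-thm:isom0} addresses a different issue: an extra $C_1$ on the $C_n$ side of $D_{2n}=C_n+B_{2n-1}$, not the loss of the isotropy itself from $\mathfrak l_s$. Your list of factorizations is also off. $G_2$ has no subalgebra of type $B_2$, and the only proper factorization with an $A_2$ summand is $A_3=C_2+A_2$ (equivalently $D_3=B_2+A_2$), with $C_2\cap A_2=C_1$.

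The paper never needs retention. It uses only the opposite inclusion $\mathfrak g\cap\mathfrak l_s\subset\mathfrak g\cap\mathfrak l=\Delta A_1$, and argues as follows.
\begin{enumerate}
\item Every enlargement of an $A_2$ factor needs a $C_2$ partner, which must come from a simple $C_2$ ideal of $\mathfrak l_s$.
\item If that ideal has support $S\subset\{1,2,3\}$, it contributes an $A_1$ supported on $S$ to $\mathfrak g\cap\mathfrak l_s$. The inclusion above then forces $S=\{1,2,3\}$.
\item A single diagonal $C_2$ is too small, since $3\cdot 8+10-3=31<45$, and two distinct $C_2$ ideals cannot both project onto the same partner.
\end{enumerate}

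Within your own setup you could also close the gap by replacing $\mathfrak b=\pi_{\mathfrak s}(\mathfrak l_s)$ with $\pi_{\mathfrak s}(\mathfrak l)$. This algebra contains $\mathfrak b\cong\mathfrak{sp}(2)$, which is maximal in $\mathfrak{su}(4)$. By your Step~1 it also contains the standard $\mathfrak{so}(3)\subset\mathfrak a$. There are then two cases.
\begin{enumerate}
\item If $\pi_{\mathfrak s}(\mathfrak l)=\mathfrak{sp}(2)$, then $\mathfrak{so}(3)\subset\mathfrak{sp}(2)\cap\mathfrak{su}(3)=\mathfrak{sp}(1)$. But $\mathfrak{sp}(1)$ fixes a line in $\mathbb C^3$ while $\mathfrak{so}(3)$ acts irreducibly, a contradiction.
\item If $\pi_{\mathfrak s}(\mathfrak l)=\mathfrak{su}(4)$, some simple ideal of $\mathfrak l$ maps isomorphically onto $A_3$. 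That ideal is strongly semisimple, so it lies in $\mathfrak l_s$. This forces $\mathfrak b=\mathfrak s$, contradicting properness of the factorization.
\end{enumerate}
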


\begin{proof}
Assume first $n\ge5$.  If an effective irreducible component occurred,
Theorem~4.3 of \cite{Onishchik1969} would make it primitive.  Following a
primitive chain from one $A_{n-1}$ ideal of $\mathfrak g$ reaches a proper
simple compact factorization
\[
 \mathfrak s=A_{n-1}+\mathfrak b .
\]
By \eqref{AI-intersection}, the same standard
$\mathfrak{so}(n)\subset\mathfrak{su}(n)$ is carried in the intersection
along the chain, so it would be contained in
$A_{n-1}\cap\mathfrak b$.  This contradicts
Lemma~\ref{AI-lem:factorobs}.

For $n=3$, the only proper simple compact factorization in Onishchik's list
having an $A_2$ summand is
\[
 A_3=C_2+A_2,\qquad C_2\cap A_2=C_1=A_1.
\]
Thus every primitive enlargement of an $A_2$ factor requires a $C_2$
partner.  A simple $C_2$ factor of the strongly semisimple isotropy that
projects onto a set $S\subset\{1,2,3\}$ of enlarged $A_3$ factors contributes
to $\mathfrak g\cap\mathfrak l_s$ the diagonal $A_1$ on the coordinates in
$S$ and zero on the complement.  Sinece
\[
 \mathfrak g\cap\mathfrak l_s
 \subset \mathfrak g\cap\mathfrak l=\Delta A_1,
\]
one must have $S=\{1,2,3\}$.  A single diagonal $C_2$ then cannot span all
three enlarged factors:
\[
 \dim(A_2^3+C_2^\Delta)=3\cdot8+10-3=31
 <45=3\cdot15.
\]
Two independent $C_2$ factors cannot both project surjectively to the same
simple $C_2$ partner, while a factor supported on a proper subset has just
been excluded.  Hence no effective primitive enlargement exists for $n=3$
either.
\end{proof}

\begin{lemma}[a proper reducible enlargement contains a minimal
four-factor extension]\label{AI-lem:minimal-H4}
Assume that the primitive enlargements of
Lemma~\ref{AI-lem:primitive-excluded} have been excluded.  If
$\mathfrak i\ne\mathfrak g$, then $I_0$ contains a connected transitive
subgroup $J$ with
\[
 \Lie(J)\simeq\mathfrak h^4,\qquad
 \mathfrak h=\mathfrak{su}(n),
\]
which contains the original $\mathfrak h^3$ action.  For the isotropy
$\mathfrak m=\Lie(J_o)$, one simple $\mathfrak h$ ideal of $\mathfrak m$
is diagonally embedded in one factor arising from a replicated original
$\mathfrak h$ and in a nonempty subset of the other two original
$\mathfrak h$ factors.
\end{lemma}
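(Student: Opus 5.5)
The plan is to reuse the structural argument from the proof of Theorem~\ref{TF-thm:isom0}, now with three original factors instead of two. First, after the primitive exclusions of Lemma~\ref{AI-lem:primitive-excluded}, I would show that every simple ideal of $\mathfrak i$ met nontrivially by $\mathfrak g=\mathfrak h^3$ is a copy of $\mathfrak h=\mathfrak{su}(n)$. By Lemma~\ref{AI-lem:strong} and Lemma~\ref{AI-lem:normalizer}, no simple ideal of $\mathfrak i$ can centralize $\mathfrak g$, so every simple ideal receives a nonzero projection from some $\mathfrak g_j$. The three ideals of $\mathfrak g$ commute pairwise, so at most one of them projects nontrivially onto a given simple ideal. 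Grouping the simple ideals according to which $\mathfrak g_j$ meets them therefore gives
\[
 \mathfrak i=\mathfrak i_1\oplus\mathfrak i_2\oplus\mathfrak i_3,
 \qquad
 \mathfrak i_j\simeq\mathfrak h^{r_j},
\]
where $\mathfrak g_j$ sits diagonally in $\mathfrak i_j$. If $\mathfrak i\ne\mathfrak g$, then some $r_j\ge2$; after relabelling, take $r_1\ge2$.

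Next I would analyse the isotropy. Let $\mathfrak l_j$ be the projection of $\mathfrak l$ to $\mathfrak i_j$. Since $\mathfrak i_j=\mathfrak g_j+\mathfrak l_j$, the analogue of the diagonal peeling Lemma~\ref{TF-lem:diagonal-peeling} for type $A_{n-1}$ applies. I would prove it by the same Onishchik induction, using the fact that no proper simple factorization of $A_{n-1}$ carries $\mathfrak{so}(n)$ in its intersection, as shown in Lemma~\ref{AI-lem:factorobs}. Peeling then gives
\[
 \mathfrak l_1\supset\mathfrak h^{r_1-1},
\]
a copy of $\mathfrak h$ that is an ideal of $\mathfrak l_1$. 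Because $\mathfrak l$ contains no ideal of $\mathfrak i$, this copy is not contained in $\mathfrak l\cap\mathfrak i_1$. By the Goursat argument it therefore survives in the quotient $\mathfrak q$, so it is matched by a simple $\mathfrak h$-ideal $\mathfrak s$ of $\mathfrak l$ that projects isomorphically onto this peeled factor of $\mathfrak i_1$ and nontrivially to at least one other block $\mathfrak i_k$ with $k\ne1$. Composing the projection of $\mathfrak s$ to that block with the block's own peeling identifies it with a factor of $\mathfrak i_k$, and this is either the original $\mathfrak g_k$ direction or a replicated factor.

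Then I would take
\[
 \mathfrak j=\mathfrak g+\mathfrak s'\simeq\mathfrak h^4,
\]
where $\mathfrak s'$ is the single replicated $\mathfrak h$ factor of $\mathfrak i_1$ paired with $\mathfrak s$. This should be a subalgebra, namely the sum of $\mathfrak g$ with one extra ideal of $\mathfrak i$. It contains $\mathfrak g$, so the corresponding connected subgroup $J$ is transitive. Its isotropy $\mathfrak j\cap\mathfrak l$ contains $\Delta\mathfrak{so}(n)$ together with the part of $\mathfrak s$ lying in $\mathfrak j$. The dimension count
\[
 \dim(\mathfrak j\cap\mathfrak l)=\dim\mathfrak h+\dim\mathfrak{so}(n)
\]
would confirm that a simple $\mathfrak h$ ideal of $\mathfrak m$ is diagonal in $\mathfrak s'$ and in the nonempty subset $S\subset\{2,3\}$ of original factors onto which $\mathfrak s$ projects.

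The main obstacle is the last step. I need to verify that truncating $\mathfrak s$ to $\mathfrak j$ still yields an ideal of the isotropy of $\mathfrak j$ rather than merely a graph subalgebra. I also need to rule out the possibility that $\mathfrak s$ pairs only with further replicated factors, with no original factor involved. For the second point I would use the fact that $\mathfrak s$ cannot lie entirely inside $\mathfrak i_1$: its projection to $\mathfrak i_1$ is a peeled factor, and $\mathfrak s$ is not contained in $\mathfrak l\cap\mathfrak i_1$. If $\mathfrak s$ meets only replicated factors of the other blocks, I would pass to a minimal chain of such pairings and iterate until an original factor appears. Finiteness of the $r_j$ together with $\mathfrak g\cap\mathfrak l=\Delta\mathfrak{so}(n)$ should force this to terminate.
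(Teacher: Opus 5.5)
Your first step and your choice of $\mathfrak j$ are fine. In fact $\mathfrak j=\mathfrak g+\mathfrak s'$ is exactly the paper's algebra: since $\mathfrak s'$ is an ideal of $\mathfrak i$, one has $\mathfrak g_1+\mathfrak s'=\mathfrak s'\oplus\widehat{\mathfrak h}$, with $\widehat{\mathfrak h}$ the diagonal in the remaining $r_1-1$ factors of $\mathfrak i_1$, so $\mathfrak j=\mathfrak s'\oplus\widehat{\mathfrak h}\oplus\mathfrak g_2\oplus\mathfrak g_3$.

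The gap is in the second half of the statement, the structure of $\mathfrak m=\mathfrak j\cap\mathfrak l$. You try to get its $\mathfrak h$-ideal by truncating a simple ideal $\mathfrak s$ of the big isotropy $\mathfrak l$. But $\mathfrak j$ meets the other blocks only diagonally: $\mathfrak j\cap\mathfrak i_k=\mathfrak g_k$. Suppose $r_2\ge2$ and $\mathfrak s$ is the graph coupling $\mathfrak s'$ with a single factor of $\mathfrak i_2$. Then $\mathfrak s\cap\mathfrak j=0$, and there is nothing to truncate. Your proposed iteration over ``chains of pairings'' is not an argument and does not produce an ideal of $\mathfrak m$.

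The peeling step feeding into this is also unsupported. Lemma~\ref{AI-lem:factorobs} concerns $A_{n-1}$ as a proper summand of a larger simple algebra. The induction of Lemma~\ref{TF-lem:diagonal-peeling}, however, needs factorizations whose \emph{ambient} algebra is $A_{n-1}$, and these exist for even $n$. For example, $\mathfrak{su}(2m)=\mathfrak{sp}(m)+\mathfrak{su}(2m-1)$ gives $\mathfrak{su}(2m)^2=\Delta\mathfrak{su}(2m)+(\mathfrak{sp}(m)\oplus\mathfrak{su}(2m-1))$ with no peeled factor. So a type-$A$ peeling lemma would have to carry the $\Delta\mathfrak{so}(n)$ constraint through the whole induction, which you have not done.

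The missing idea is that none of this is needed, because $\mathfrak m$ can be analysed directly inside $\mathfrak j$; this is the paper's argument.
\begin{itemize}
\item Transitivity gives $\dim\mathfrak m=4\dim\mathfrak h-\dim M_n=\dim\mathfrak h+\dim\mathfrak k$.
\item Let $\mathfrak b$ be the projection of $\mathfrak m$ to $\mathfrak s'\oplus\widehat{\mathfrak h}\simeq\mathfrak h^2$. Then $\mathfrak h^2=\Delta\mathfrak h+\mathfrak b$ and $\Delta\mathfrak k\subset\Delta\mathfrak h\cap\mathfrak b$. Hence $\dim\mathfrak h+\dim\mathfrak k\le\dim\mathfrak b\le\dim\mathfrak m$, and $\mathfrak m\to\mathfrak b$ is an isomorphism.
\item Because $\mathfrak q$ is an irreducible $\mathfrak k$-module, $\mathfrak k$ is maximal in $\mathfrak h$. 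So each factor projection of $\mathfrak b$ is $\mathfrak k$ or $\mathfrak h$.
\item The two projections are not both $\mathfrak k$, by dimension. Nor is $\mathfrak b$ a graph, since a graph would have dimension only $\dim\mathfrak h$. Goursat therefore gives $\mathfrak b=\mathfrak h\oplus\mathfrak k$ up to interchanging the factors, i.e.\ $\mathfrak m\simeq\mathfrak h\oplus\mathfrak k$.
\item The $\mathfrak h$-ideal of $\mathfrak m$ projects to $\mathfrak g_2$ and $\mathfrak g_3$ by zero or isomorphisms. If both projections vanished, it would equal $\mathfrak s'$ or $\widehat{\mathfrak h}$. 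That would be an ideal of $\mathfrak j$ inside the isotropy of the effective $J$-action, which is impossible.
\end{itemize}
This avoids any peeling lemma and any analysis of $\mathfrak l$ itself. It also settles your worry about $\mathfrak s$ coupling only to replicated factors, since $\mathfrak j$ sees the other blocks only through $\mathfrak g_2$ and $\mathfrak g_3$.
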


\begin{proof}
By Lemma~\ref{lem:H4-normalizer}, the connected centralizer of the transitive
$\mathfrak g$-action is trivial.  The centre of the compact isometry algebra
$\mathfrak i$ centralizes $\mathfrak g$, hence
\[
 Z(\mathfrak i)=0.
\]
Since a compact Lie algebra is reductive, $\mathfrak i$ is therefore
semisimple and may be decomposed into simple ideals.  A nonzero projection of one
simple ideal $\mathfrak g_j\simeq\mathfrak h$ to an ambient simple ideal
$\mathfrak s$ is injective.  If its image were proper in $\mathfrak s$, the
corresponding effective irreducible component of the compact factorization
would give precisely a primitive simple enlargement, excluded by
Lemma~\ref{AI-lem:primitive-excluded}.  Hence every nonzero projection is
onto and the ambient ideal is another copy of $\mathfrak h$.  Two commuting
simple ideals of $\mathfrak g$ cannot both project nontrivially onto the same
nonabelian simple ambient ideal.  Finally, an ambient simple ideal receiving
zero projection from all three factors would centralize $\mathfrak g$,
contrary to Lemma~\ref{AI-lem:normalizer}.  Therefore, after reordering,
\[
 \mathfrak i=\mathfrak i_1\oplus\mathfrak i_2\oplus\mathfrak i_3,
 \qquad
 \mathfrak i_j\simeq\mathfrak h^{r_j},
\]
and the $j$-th ideal of $\mathfrak g$ is diagonal in $\mathfrak i_j$.
If $\mathfrak i\ne\mathfrak g$, then some $r_j\ge2$.

Choose one simple factor $\mathfrak s\simeq\mathfrak h$ in
$\mathfrak i_j$, and let $\widehat{\mathfrak h}$ be the diagonal copy of
$\mathfrak h$ in the remaining $r_j-1$ factors.  Then
\[
 \mathfrak j_j:=\mathfrak s\oplus\widehat{\mathfrak h}\simeq\mathfrak h^2
\]
contains the original $j$-th $\mathfrak h$ diagonally.  Set
\[
 \mathfrak j=
 \mathfrak j_j\oplus\!\!\bigoplus_{k\ne j}\mathfrak g_k
 \simeq\mathfrak h^4.
\]
The corresponding subgroup $J$ contains the transitive group $G$, hence is
itself transitive.  Its isotropy $\mathfrak m=\mathfrak j\cap\mathfrak l$
has
\[
 \dim\mathfrak m
 =4\dim\mathfrak h-\dim M_n
 =\dim\mathfrak h+\dim\mathfrak k.
\]

Let $\mathfrak b$ be the projection of $\mathfrak m$ to
$\mathfrak j_j\simeq\mathfrak h^2$.  Transitivity gives
\[
 \mathfrak h^2=\Delta\mathfrak h+\mathfrak b,
\]
and the projection of
$\mathfrak g\cap\mathfrak m=\Delta\mathfrak k$ shows that
$\Delta\mathfrak k\subset\Delta\mathfrak h\cap\mathfrak b$.
Each projection of $\mathfrak b$ to an ambient $\mathfrak h$ contains
the projected $\mathfrak k=\mathfrak{so}(n)$.  For the irreducible symmetric
decomposition
\[
 \mathfrak h=\mathfrak k\oplus\mathfrak q,
\]
the $\mathfrak k$-module $\mathfrak q$ is irreducible.  Hence any Lie
subalgebra strictly larger than $\mathfrak k$ and containing $\mathfrak k$
contains a nonzero vector of $\mathfrak q$, then all of $\mathfrak q$ by
$\mathfrak k$-invariance, and therefore equals $\mathfrak h$.  Thus each
projection is either $\mathfrak k$ or $\mathfrak h$.  Both projections cannot be
proper: otherwise
\[
 \dim\mathfrak b\le2\dim\mathfrak k
 <\dim\mathfrak h+\dim\mathfrak k
 \le\dim\mathfrak b.
\]
Here the last inequality follows from
\[
 \dim\mathfrak b
 =\dim\mathfrak h+\dim(\Delta\mathfrak h\cap\mathfrak b)
 \ge\dim\mathfrak h+\dim\mathfrak k.
\]
Thus one projection is onto $\mathfrak h$.

Goursat's lemma now leaves two possibilities.  A graph of an automorphism has
dimension $\dim\mathfrak h$ and cannot contain the positive-dimensional
intersection $\Delta\mathfrak k$.  Hence, after exchanging the two factors
and applying an automorphism,
\[
 \mathfrak b=\mathfrak h\oplus\mathfrak a .
\]
Since $\mathfrak a$ contains the standard $\mathfrak k$ and
$\dim\mathfrak b\le\dim\mathfrak m
=\dim\mathfrak h+\dim\mathfrak k$, one gets
\[
 \mathfrak a=\mathfrak k,\qquad
 \mathfrak b=\mathfrak h\oplus\mathfrak k.
\]
The projection $\mathfrak m\to\mathfrak b$ is therefore an isomorphism, so
\[
 \mathfrak m\simeq\mathfrak h\oplus\mathfrak k.
\]

Consider the simple $\mathfrak h$ ideal of $\mathfrak m$.  Its projection
to either of the other two original $\mathfrak h$ factors is either zero or
an automorphism.  It cannot vanish on both: otherwise this simple ideal
would be an ideal of $\mathfrak j$ contained in the isotropy of the
effective transitive $J$-action.  Hence it projects isomorphically onto a
nonempty subset of those two factors.  This is the asserted diagonal
coupling.
\end{proof}

\begin{lemma}[horizontal orthogonality excludes the minimal extension]
\label{AI-lem:H4-metric}
No metric in the detected diagonal family
\[
 (x,x,z,u,1)
\]
is invariant under a proper four-factor extension $J$ supplied by
Lemma~\ref{AI-lem:minimal-H4}.
\end{lemma}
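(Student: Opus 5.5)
Our approach mirrors the $H^3$ exclusion at the end of the proof of Theorem~\ref{TF-thm:isom0}: identify the metrics on $M_n$ that a minimal four-factor extension $J$ is able to preserve, and then show that every such metric couples two horizontal copies $\mathfrak q_i,\mathfrak q_j$ by a nonzero cross term. The detected family has $g(\mathfrak q_i,\mathfrak q_j)=0$ for $i\ne j$, so it cannot be preserved.

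\textbf{Step 1 (normal form of the extension).} By Lemma~\ref{AI-lem:minimal-H4}, $\mathfrak j=\mathfrak s\oplus\widehat{\mathfrak h}\oplus\mathfrak g_k\oplus\mathfrak g_l\simeq\mathfrak h^4$. The isotropy is $\mathfrak m\simeq\mathfrak h\oplus\mathfrak k$. Its simple $\mathfrak h$ ideal sits diagonally in one replicated factor, say $\widehat{\mathfrak h}$, and in a nonempty subset $S\subset\{k,l\}$. Its $\mathfrak k$ summand is carried in the remaining factor $\mathfrak s$, where $\mathfrak b=\mathfrak h\oplus\mathfrak k$. After applying automorphisms of the factors, we may take all identifications to be the identity. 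Then $J/J_o$ is an explicit coset space, and $M_n$ is identified with it through an explicit diffeomorphism of the type $(H\times H)/\Delta K\simeq H\times(H/K)$ used earlier. For example, when $S=\{k,l\}$ one obtains $M_n\simeq H^2\times(H/K)$ up to reordering.

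\textbf{Step 2 ($J$-invariant metrics at the origin).} The tangent space of $J/J_o$ is $\mathfrak j/\mathfrak m$. As an $\operatorname{Ad}(J_o)$-module it contains copies of $\mathfrak h$ acted on by the diagonal $\mathfrak h$ ideal of $\mathfrak m$, together with one copy of $\mathfrak q$ coming from $\mathfrak s/\mathfrak k$. Restricting to $\Delta\mathfrak k\subset\mathfrak m$ and pulling back to the original $\mathfrak m=\bigoplus\mathfrak q_i\oplus(V_3\otimes\mathfrak k)$, every horizontal $\mathfrak q_i$ is seen to lie inside a summand that the diagonal $\mathfrak h$ ideal rotates into an $\mathfrak h$-module. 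The key point is that $\mathfrak h=\mathfrak k\oplus\mathfrak q$ is $\mathfrak h$-irreducible. Hence a $J_o$-invariant inner product is a scalar multiple of $Q$ on each $\mathfrak h$-isotypic block, up to an intertwiner between equivalent copies. In particular its restriction to a given $\mathfrak h$-copy cannot be split into independent $\mathfrak k$ and $\mathfrak q$ scales.

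\textbf{Step 3 (cross term).} Next we compute how the original horizontal vectors $(X,0,0)$, $(0,X,0)$, $(0,0,X)$ with $X\in\mathfrak q$ map into $\mathfrak j/\mathfrak m$. The diagonal $\mathfrak h$ ideal couples the replicated factor $j$ to the factors in $S$. As in the computation $(0,Y)\mapsto(-Y,Y)$ following Theorem~\ref{TF-thm:isom0}, the images of $\mathfrak q_j$ and of $\mathfrak q_i$ for $i\in S$ will have nonzero components in a common $\mathfrak h$-block, on which the $J$-invariant metric equals $\alpha Q$ with $\alpha>0$. This yields $g(\mathfrak q_j,\mathfrak q_i)=-\alpha Q\ne0$ on suitable pairs, which contradicts the horizontal orthogonality in $(x,x,z,u,1)$.

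\textbf{Main obstacle.} The hardest part is Step 3 when $S$ is a single factor, and also making sure that no choice of intertwiners between equivalent $\mathfrak h$-copies in Step 2 can cancel the cross term. Here we would first try a positivity argument: $\alpha>0$ on the relevant block, and the mixed vectors have nonzero projection there. If that fails, we would fall back on an alternative. The $J$-invariant metrics form a family whose Ricci or metric restriction to $\mathfrak q_i\oplus\mathfrak q_j$ is a non-diagonal $2\times2$ block for every positive choice of parameters, and we would verify this directly on the coefficients.
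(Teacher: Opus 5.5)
Your overall route matches the paper's. You normalize the isotropy of $J$ to $\Delta_{\{A\}\cup S}\mathfrak h\oplus\Delta_{\{B\}\cup S^c}\mathfrak k$ and split $J/J_o$ into a Ledger--Obata factor $H^{m}/\Delta H$ ($m=1+|S|$) and a factor $H^{1+|S^c|}/\Delta K$. You then show that the original horizontal copies cannot be mutually orthogonal inside the Ledger--Obata factor.

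\textbf{The gap is in Step~3 when $|S|=2$.} There the Ledger--Obata tangent space is $V_3\otimes\mathfrak h$ with $\dim V_3=2$. The $J$-invariant metrics on it are $P\otimes Q$ with $P$ an \emph{arbitrary} inner product on $V_3$, not $\alpha Q$ on a single block. The images of $\mathfrak q_j$ and $\mathfrak q_i$ ($i\in S$) are $\bar e_j\otimes\mathfrak q$ and $\bar e_i\otimes\mathfrak q$, with $\bar e_j,\bar e_i$ linearly independent. Choosing $P$ so that $\bar e_j,\bar e_i$ are orthonormal makes $g(\mathfrak q_j,\mathfrak q_i)=0$. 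So ``$\alpha>0$ on the block plus nonzero projections'' yields no cross term for any given pair. Your fallback of checking that a fixed $2\times2$ block is non-diagonal for all parameters fails for the same reason. The intertwiner freedom you were worried about really can cancel any single cross term.

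\textbf{The missing idea} is to impose all orthogonality conditions simultaneously and use the linear relation $\bar e_j+\sum_{i\in S}\bar e_i=0$ in $V_m$. If $P(\bar e_j,\bar e_i)=0$ for every $i\in S$, then
\[
P(\bar e_j,\bar e_j)=-\sum_{i\in S}P(\bar e_j,\bar e_i)=0,
\]
which contradicts $\bar e_j\ne0$. The paper states this as: nonzero vectors $\bar e_i$ that are pairwise orthogonal and sum to zero cannot exist.

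The case you flagged as hardest, $|S|=1$, is actually the easy one. There $V_2$ is one-dimensional, so the metric on that factor really is $\alpha Q$. Since $\bar e_i=-\bar e_j$, the cross term is $-\alpha\|\bar e_j\|^2Q$, exactly as in the $(0,Y)\mapsto(-Y,Y)$ computation.

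Two smaller points should also be fixed.
\begin{itemize}
\item The image of $\mathfrak q_j$ has components in \emph{both} product factors. To localize the cross term in the Ledger--Obata factor, you must note that the two factors are $g$-orthogonal for every $J$-invariant metric. This holds because they share no $J_o$-irreducible constituent: the $\mathfrak h$-ideal of the isotropy acts trivially on the second factor and nontrivially on the first.
\item Your Step~2 claim that every $\mathfrak q_i$ lies in a module rotated by the $\mathfrak h$-ideal is false when $|S|=1$. The copy indexed outside $S$ sits in the $H^2/\Delta K$ factor. That copy is, however, not needed for the contradiction.
\end{itemize}
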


\begin{proof}
Use the notation from the proof of Lemma~\ref{AI-lem:minimal-H4}.  After
permuting the four ambient simple factors and applying automorphisms, write
\[
 \mathfrak j=\mathfrak h_A\oplus\mathfrak h_B
 \oplus\mathfrak h_C\oplus\mathfrak h_D,
\]
with the first original factor of
$\mathfrak g=\mathfrak h^3$ embedded diagonally in
$\mathfrak h_A\oplus\mathfrak h_B$, and the other two in
$\mathfrak h_C$ and $\mathfrak h_D$.

The proof of Lemma~\ref{AI-lem:minimal-H4} gives
\[
 \mathfrak m\simeq\mathfrak h\oplus\mathfrak k.
\]
Normalize the projection to the $A,B$ block so that the
$\mathfrak h$-ideal projects onto $\mathfrak h_A$ and the
$\mathfrak k$-ideal onto the standard $\mathfrak k\subset\mathfrak h_B$.
A projection of the simple $\mathfrak h$-ideal to either
$\mathfrak h_C$ or $\mathfrak h_D$ is either zero or an automorphism.
Let
\[
 S\subset\{C,D\}
\]
be the set of factors on which it is nonzero.  The set $S$ is nonempty:
otherwise the ambient ideal $\mathfrak h_A$ would lie entirely in the
isotropy, contradicting effectivity.

The condition
\[
 \mathfrak g\cap\mathfrak m=\Delta\mathfrak k
\]
then determines the remaining projections.  On a factor in $S$ the
$\mathfrak h$-ideal is onto, so the commuting $\mathfrak k$-ideal has zero
projection there.  On a factor not in $S$, the diagonal intersection forces
the $\mathfrak k$-ideal to project as the standard $\mathfrak k$.  Thus,
up to the chosen automorphisms,
\[
 \mathfrak m=
 \Delta_{\{A\}\cup S}\mathfrak h
 \oplus
 \Delta_{\{B\}\cup S^c}\mathfrak k .
\]
Consequently
\[
 J/J_o
 \simeq
 \frac{H^{\,1+|S|}}{\Delta H}
 \times
 \frac{H^{\,1+|S^c|}}{\Delta K}.
\]

Consider the first factor.  Its tangent multiplicity space is
\[
 V_m=\{(t_1,\dots,t_m)\in\mathbb R^m:\textstyle\sum_i t_i=0\},
 \qquad m=1+|S|\in\{2,3\}.
\]
Every $J$-invariant metric restricts there as
\[
 \langle\, ,\,\rangle_V\otimes Q
\]
for a positive definite inner product on $V_m$.  Let
\[
 \bar e_i=e_i-\frac1m(1,\dots,1)\in V_m.
\]

The original horizontal direction from the first $\mathfrak h$-factor has
an $A$-component in this Ledger--Obata factor; for each element of $S$, the
corresponding one of the other two original horizontal directions has the
matching $C$- or $D$-coordinate component.  None of those latter directions
has any component in the second product factor.  Hence their pairwise inner
products are exactly
\[
 \langle\bar e_i,\bar e_j\rangle_V\,Q
 \qquad(i\ne j).
\]
The detected metric makes the three original horizontal copies
$\mathfrak q_1,\mathfrak q_2,\mathfrak q_3$ pairwise orthogonal.  Therefore
all coordinate classes belonging to the $m$ selected factors are pairwise
orthogonal:
\[
 \langle\bar e_i,\bar e_j\rangle_V=0
 \qquad(i\ne j).
\]
But
\[
 \bar e_1+\cdots+\bar e_m=0,
\]
with every $\bar e_i\ne0$.  Positive definiteness gives the contradiction
\[
 0=\Bigl\|\sum_i\bar e_i\Bigr\|_V^2
   =\sum_i\|\bar e_i\|_V^2>0.
\]
Thus no proper four-factor extension preserves the detected metric.
\end{proof}

\begin{theorem}[connected full isometry group]\label{AI-thm:fullisom}
Let $n=3$ or $n\ge5$, and let $g$ be either asymmetric Einstein metric
constructed in Theorem~\ref{AI-main}.  Then
\[
 \Isom_0(M_n,g)
 =
 \SU(n)^3/
 \Delta\bigl(Z(\SU(n))\cap\SO(n)\bigr).
\]
Equivalently, the displayed $\SU(n)^3$ action is already the connected
full isometry group, modulo its finite ineffective kernel.
\end{theorem}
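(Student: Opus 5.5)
The plan is to assemble the preceding lemmas into the standard ``no enlargement'' argument, following the template of Theorem~\ref{TF-thm:isom0}. Let $\mathfrak i=\Lie(I_0)$, let $\mathfrak l$ be the isotropy algebra at the origin and $\mathfrak g=\mathfrak{su}(n)^3$. Since the effective image $G_0$ acts transitively by isometries, $G_0\subset I_0$ and
\[
 \mathfrak i=\mathfrak g+\mathfrak l,\qquad \mathfrak g\cap\mathfrak l=\Delta\mathfrak{so}(n).
\]
By Lemma~\ref{AI-lem:strong}, $\mathfrak i$ is strongly semisimple, so Onishchik's reduction \eqref{AI-strongred} applies and we may work with $\mathfrak i=\mathfrak g+\mathfrak l_s$. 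For $n\ge5$ the intersection identity \eqref{AI-intersection} holds. For $n=3$ we would argue directly with $\mathfrak g\cap\mathfrak l_s\subset\Delta A_1$, exactly as in the second half of the proof of Lemma~\ref{AI-lem:primitive-excluded}.

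The first step is to decompose the reductive decomposition into effective irreducible components and exclude every primitive simple enlargement of an $A_{n-1}$ ideal. This is precisely Lemma~\ref{AI-lem:primitive-excluded}. For $n\ge5$ it rests on Lemma~\ref{AI-lem:factorobs}, because the standard $\mathfrak{so}(n)$ acts irreducibly on $\mathbb C^n$ while the relevant partner intersections fix a line. For $n=3$ it rests on the dimension count with the $C_2$ partner.

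Once primitive enlargements are excluded, suppose $\mathfrak i\ne\mathfrak g$. Lemma~\ref{AI-lem:minimal-H4} then supplies a connected transitive subgroup $J\subset I_0$ with $\Lie(J)\simeq\mathfrak h^4$ containing the original action, together with the explicit form of its isotropy. Because $J\subset I_0$, the metric $g$ would be $J$-invariant. Lemma~\ref{AI-lem:H4-metric} shows this is impossible for any metric of the form $(x,x,z,u,1)$: the pairwise orthogonality of $\mathfrak q_1,\mathfrak q_2,\mathfrak q_3$ is incompatible with a positive definite inner product on the Ledger--Obata multiplicity space $V_m$. Hence $\mathfrak i=\mathfrak g$, so $I_0=G_0$ because both groups are connected. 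Finally we identify the kernel $Z_0=\Delta\bigl(Z(\SU(n))\cap\SO(n)\bigr)$: an element $(a,b,c)$ acts trivially iff it lies in every conjugate of $\Delta K$, which forces $a=b=c$ to be central in $\SU(n)$ and to lie in $\SO(n)$.

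The main obstacle is the passage from ``$\mathfrak i\ne\mathfrak g$'' to the existence of the minimal four-factor subgroup. This requires checking that, after the primitive exclusions, every simple ideal of $\mathfrak i$ is a copy of $\mathfrak h$ met by exactly one original factor. It also requires that the subalgebra $\mathfrak j$ built from one replicated block integrates to a closed connected subgroup of $I_0$; we would take the connected subgroup generated by it and then its closure, noting that the closure still preserves $g$ and is transitive, so Lemma~\ref{AI-lem:H4-metric} still applies. For $n=3$, where $\mathfrak{so}(3)=A_1$ is not strongly semisimple, we would verify separately that Onishchik's reduction does not lose the information needed. We expect this to go through because Lemma~\ref{AI-lem:minimal-H4} uses only the intersection $\mathfrak g\cap\mathfrak l=\Delta\mathfrak k$, not $\mathfrak l_s$.
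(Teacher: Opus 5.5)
Your proposal is correct and is essentially the paper's own proof: exclude primitive enlargements by Lemma~\ref{AI-lem:primitive-excluded}, reduce any proper enlargement to the four-factor subgroup of Lemma~\ref{AI-lem:minimal-H4}, rule that subgroup out by the horizontal-orthogonality argument of Lemma~\ref{AI-lem:H4-metric}, and read off the finite kernel $\Delta\bigl(Z(\SU(n))\cap\SO(n)\bigr)$. Your closure worry is harmless, but passing to the closure is not the right fix, since the closure could a priori have a larger Lie algebra; the correct observation is that $\Lie(J)\simeq\mathfrak{su}(n)^4$ is compact semisimple, so by Weyl's theorem $J$ is the image of a compact simply connected group and is therefore already closed.
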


\begin{proof}
Lemma~\ref{AI-lem:primitive-excluded} excludes every effective primitive
simple enlargement in Onishchik's reduction.  If a proper reducible
enlargement remained, Lemma~\ref{AI-lem:minimal-H4} would produce a
transitive four-factor subgroup $J$, but
Lemma~\ref{AI-lem:H4-metric} shows that no metric in the detected diagonal
family can be $J$-invariant.  Hence $\mathfrak i=\mathfrak g$.
Both groups are connected and the displayed action has precisely the finite
kernel
\[
 Z_0=\Delta\bigl(Z(\SU(n))\cap\SO(n)\bigr).
\]
\end{proof}

\subsection{Pairwise distinction and natural reductivity}

The connected full-isometry theorem also reduces arbitrary isometries between
the two detected metrics to automorphisms of the homogeneous pair.  This gives
a useful intrinsic distinction between the two detector roots.

\begin{proposition}[pairwise non-isometry]\label{AI-prop:pairwise}
For every $n=3$ or $n\ge5$, the two asymmetric Einstein metrics of
Theorem~\ref{AI-thm:unique} are pairwise non-homothetic and non-isometric.
\end{proposition}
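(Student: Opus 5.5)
The plan follows the pattern of Proposition~\ref{TF-prop:fulliso}: first reduce an arbitrary isometry to an automorphism of the homogeneous pair, then compare invariants that such automorphisms preserve. Let $\phi\colon(M_n,g_-)\to(M_n,\lambda g_+)$ be an isometry, where $g_\pm$ are the two metrics of Theorem~\ref{AI-thm:unique} normalized with $v=1$ and $\lambda>0$ allows for homothety. Then $\phi$ conjugates $\Isom_0(M_n,g_-)$ onto $\Isom_0(M_n,g_+)$. By Theorem~\ref{AI-thm:fullisom}, both groups equal the effective image $G_0$ of $\SU(n)^3$. So $\phi$ normalizes $G_0$ inside $\Diff(M_n)$. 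After composing with a $G$-translation, $\phi$ fixes the origin. Conjugation by $\phi$ then induces an automorphism $\theta$ of $\mathfrak g=\mathfrak{su}(n)^3$ preserving $\Delta\mathfrak{so}(n)$, and $d\phi_o$ is the map induced by $\theta$ on $\mathfrak m$.

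Next I will describe these automorphisms. Any automorphism of $\mathfrak{su}(n)^3$ permutes the three simple ideals and acts on each by an automorphism $\theta_i$ of $\mathfrak{su}(n)$. Preservation of the diagonal $\Delta\mathfrak{so}(n)$ forces all $\theta_i$ to agree on $\mathfrak{so}(n)$. Then $\theta_i\theta_j^{-1}$ fixes $\mathfrak{so}(n)$ pointwise. By the zero-centralizer argument of Lemma~\ref{AI-lem:normalizer}, such an automorphism is trivial modulo a finite group, and it preserves $\mathfrak k$ and $\mathfrak q$ in each factor. Consequently $\theta$ is, up to automorphisms preserving each of $\mathfrak q_i$ and $\Delta\mathfrak k$ and acting isometrically for $Q$, a permutation $\pi\in S_3$ of the factors. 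Such a permutation acts on the anti-diagonal multiplicity space $V_3$ by the standard representation. Hence $d\phi_o$ maps $\mathfrak q_i$ to $\mathfrak q_{\pi(i)}$ preserving $Q$, and acts on $V_3\otimes\mathfrak k$ as $\pi\otimes\mathrm{id}$ followed by a $Q$-isometry.

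Now compare invariants. Pulling back $\lambda g_+$ must give $g_-$. The horizontal scales therefore satisfy $\{x_-,x_-,z_-\}=\lambda\{x_+,x_+,z_+\}$ as multisets. The vertical inner products on $V_3$, namely $\mathrm{diag}(u_-,1)$ and $\lambda\,\mathrm{diag}(u_+,1)$ in the basis $v_1,v_2$, must be conjugate under an orthogonal permutation matrix. In particular their eigenvalue multisets agree: $\{u_-,1\}=\lambda\{u_+,1\}$. There are two possibilities. If $\lambda=1$, then $u_-=u_+$, which contradicts $u_-<1<u_+$. Otherwise $u_-=\lambda$ and $\lambda u_+=1$, so $u_-u_+=1$.

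The main obstacle is excluding this reciprocal case $u_-=1/u_+$. I expect it to reduce to checking that $P_n(u)$ and its reciprocal $u^{10}P_n(1/u)$ have no common root in $(1/6,1)$. I would try this first by computing the resultant of $P_n(u)$ and $u^{10}P_n(1/u)$ and showing its relevant factor has no zero for $n\ge3$, with a positivity check after $n=3+m$ as in the proof of Theorem~\ref{AI-thm:unique}. If that resultant is unwieldy, I would use the horizontal data instead. Eigenvalue matching forces the homothety factor $\lambda=u_-$ and also $\{x_-,z_-\}=u_-\{x_+,z_+\}$ with multiplicities. Moreover, a permutation sending $\mathrm{diag}(u_-,1)$ to $\lambda\,\mathrm{diag}(u_+,1)$ must swap $v_1,v_2$ up to sign, and no element of $S_3$ acts on $V_3$ that way while preserving the pattern $(x,x,z)$ beyond the transposition fixing $v_1,v_2$. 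That pattern constraint would give the contradiction directly: the transposition fixing the pattern acts by $\pm1$ on $v_1,v_2$ and cannot exchange the two scales. This second route avoids computation entirely, so I would verify it first.
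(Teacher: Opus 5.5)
Your final route is essentially the paper's proof. You reduce a homothety-isometry to an automorphism of $(\mathfrak{su}(n)^3,\Delta\mathfrak{so}(n))$ via Theorem~\ref{AI-thm:fullisom}. You use the horizontal pattern $(x,x,z)$ to restrict the factor permutation to the identity or $(12)$, observe that both preserve $\mathbb Rv_1$ and $\mathbb Rv_2$ separately, and compare coefficients on the two lines.

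Two remarks.

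First, the detour is unnecessary. This covers the eigenvalue multisets, the reciprocal case $u_-u_+=1$, and the resultant of $P_n(u)$ with $u^{10}P_n(1/u)$. Once the permutation preserves both lines, comparison on $\mathbb Rv_2$ gives $\lambda=1$ at once, and comparison on $\mathbb Rv_1$ then gives $u_-=u_+$.

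Second, the one fact you leave unverified is $z\ne x$ for the detected metrics. Your pattern argument genuinely needs it: if $z=x$, every element of $S_3$ would preserve the horizontal data. The paper supplies it by substituting $z=x$ into \eqref{AI-E1}. This gives $3x^2(u-1)=0$, which is impossible since $u\ne1$.

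Alternatively, you can avoid the horizontal data entirely, as the paper does in Proposition~\ref{AII-prop:pairwise}. The $S_3$-orbit of $\mathbb Rv_1=\mathbb R(e_1-e_2)$ consists of the three root lines $\mathbb R(e_i-e_j)$, and none of them is $\mathbb Rv_2=\mathbb R(1,1,-2)$. So no automorphism of the pair can exchange the two vertical eigenlines, and that exchange is exactly what your reciprocal case would require.
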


\begin{proof}
Let $g_-$ and $g_+$ denote the normalized metrics with
\[
 \frac16<u_-<1<u_+.
\]
By Theorem~\ref{AI-thm:fullisom}, an isometry between homothetic copies of
these metrics conjugates their effective $\SU(n)^3$ actions.  After composing
with a translation, it fixes the base point and induces an automorphism of
\[
 \bigl(\mathfrak{su}(n)^3,\Delta\mathfrak{so}(n)\bigr).
\]

On the three horizontal copies the metric eigenvalues are $(x,x,z)$, and
\[
 z\ne x:
\]
indeed, substituting $z=x$ in \eqref{AI-E1} gives
\[
 3x^2(u-1)=0,
\]
contrary to $u\ne1$.  Hence a factor permutation carrying one detected metric
to a homothetic copy of the other must preserve the unique horizontal factor
with scale $z$; after relabelling it can only interchange the first two
factors.

On
\[
 V_3=\{(t_1,t_2,t_3):t_1+t_2+t_3=0\}
\]
that transposition preserves the two lines
\[
 \mathbb Rv_1,\qquad \mathbb Rv_2
\]
separately.  Automorphisms of the individual simple factors act on the
$\mathfrak{so}(n)$ factor, not on the multiplicity coordinate, so they do not
interchange these lines either.  If
\[
 \phi^*g_+=c\,g_-
\]
for some $c>0$, comparison on the $v_2$ line, whose normalized coefficient is
$1$ for both metrics, gives $c=1$.  Comparison on the $v_1$ line then gives
$u_+=u_-$, contradicting $u_-<1<u_+$.  Thus no homothety-isometry exists.
\end{proof}

The same geometry gives a correct natural-reductivity test.  One must be
slightly careful here: the standard reductive complement used in the Ricci
calculation need not be the complement associated with a hypothetical
naturally reductive presentation.  We therefore use the invariant-form
characterization instead of applying the natural-reductivity identity to a
fixed complement.

\begin{proposition}[not naturally reductive]\label{AI-prop:notNR}
Neither asymmetric Einstein metric is naturally reductive with respect to any
connected transitive group of isometries.
\end{proposition}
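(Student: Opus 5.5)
By Theorem~\ref{AI-thm:fullisom}, any connected transitive group of isometries $L\subset I_0$ has Lie algebra $\mathfrak l'\subset\mathfrak i=\mathfrak g=\mathfrak{su}(n)^3$. A transitive subalgebra of $\mathfrak{su}(n)^3$ containing the isotropy directions must in fact be all of $\mathfrak g$, since an $\mathfrak h^3$ action has no proper transitive subgroups here. This follows from the compact factorization obstruction already used (Lemmas~\ref{AI-lem:factorobs} and \ref{AI-lem:primitive-excluded}), together with the diagonal peeling argument: a proper subalgebra $\mathfrak b$ with $\mathfrak g=\mathfrak b+\Delta\mathfrak k$ would have to contain two full factors and a subalgebra $\mathfrak a\subset\mathfrak h$ with $\mathfrak h=\mathfrak a+\mathfrak k$, and no proper $\mathfrak a$ of this kind exists for the symmetric pair $(\mathfrak{su}(n),\mathfrak{so}(n))$. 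Hence it suffices to test natural reductivity with respect to $G_0$ itself, but over \emph{all} reductive complements.

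I would use Kostant's criterion in its invariant-form version. A $G$-invariant metric on $G/K$ is naturally reductive with respect to $G$ if and only if there is an ideal $\mathfrak g'\subset\mathfrak g$ that is transitive, together with an $\operatorname{Ad}$-invariant nondegenerate symmetric form $\beta$ on $\mathfrak g'$ that is nondegenerate on $\mathfrak k'=\mathfrak g'\cap\mathfrak k$, such that the metric is $\beta$ restricted to $\mathfrak m'=\mathfrak k'^{\perp_\beta}$. For $\mathfrak g=\mathfrak h^3$ with simple $\mathfrak h$, transitivity forces $\mathfrak g'=\mathfrak g$. Invariant forms are then $\beta=\sum_i\lambda_iQ_i$ with $\lambda_i\ne0$. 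The $\beta$-orthogonal complement of $\Delta\mathfrak k$ is
\[
 \mathfrak m_\beta=\mathfrak q_1\oplus\mathfrak q_2\oplus\mathfrak q_3\oplus
 \Bigl\{(\lambda_1^{-1}a_1,\lambda_2^{-1}a_2,\lambda_3^{-1}a_3)\otimes\mathfrak k:\ \textstyle\sum a_i=0\Bigr\},
\]
with $\sum_i\lambda_i^{-1}\neq0$ for nondegeneracy.

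The next step is to transport $\beta|_{\mathfrak m_\beta}$ to the standard complement $\mathfrak m$ through the isotropy-equivariant identification $\mathfrak m_\beta\cong\mathfrak g/\Delta\mathfrak k\cong\mathfrak m$ and compare with $(x,x,z,u,1)$. On the horizontal summands, $\mathfrak q_i$ lies in both complements, so $x=\lambda_1=\lambda_2$ and $z=\lambda_3$. On the vertical part, projecting modulo $\Delta\mathfrak k$ identifies the $\mathfrak k$-part with $\mathbb R^3/\mathbb R(1,1,1)\cong V_3$. The induced inner product on $V_3$ is then the quotient of $\operatorname{diag}(\lambda_i)$, which is the inner product whose dual form is $\sum\lambda_i^{-1}t_i^2$ restricted appropriately. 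Computing this in the basis $v_1,v_2$ with $\lambda=(x,x,z)$ gives explicit values $u=u(x,z)$ and $v=v(x,z)$. The $v_1$ coefficient comes out as $x$ (or $1/$something), so after normalizing $v=1$ one obtains two algebraic relations between $x$, $z$, and $u$; in particular, $u$ is determined by $x/z$.

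Finally, I would show these relations contradict the Einstein equations. The expected obstacle is verifying incompatibility without numerics. My first attempt is to substitute the naturally reductive relations into $E_1$. Since $E_1$ is exactly the condition $r_1=r_3$ and a naturally reductive metric of this Ledger--Obata type should force $u=1$-type symmetry, $E_1$ should factor through $(u-1)$ or through $x=z$, which is already excluded because $z\ne x$ (from $E_1$ and $u\ne1$, as in Proposition~\ref{AI-prop:pairwise}). If $E_1$ does not suffice, I would add $E_2$ and take a resultant in $x$. The resulting polynomial in $u$ should have no roots in $(1/6,1)\cup(1,\infty)$, or more cheaply should be coprime to $P_n$, which can be checked by a resultant with $P_n$ as in the uniqueness section. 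Negative $\lambda_i$ also need a brief check: positivity of the metric on $\mathfrak q_i$ forces all $\lambda_i>0$.
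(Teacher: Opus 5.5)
Your argument is correct in outline. It reaches the conclusion by a different route from the paper at two points.

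\textbf{Reduction to $G_0$.} The paper gets from an arbitrary connected transitive group to the displayed group via a structural fact: the transvection group of a compact naturally reductive metric is a transitive \emph{normal} subgroup of $\Isom_0$. By Theorem~\ref{AI-thm:fullisom} its Lie algebra is then a sum of simple ideals of $\mathfrak{su}(n)^3$, and a dimension count leaves only $\mathfrak g$. You instead show that $G_0$ has no proper connected transitive subgroup at all. This avoids the normality theorem, but the justification you give cites the wrong tools.
\begin{itemize}
\item The peeling lemma is stated for type $C_n$ against $\Delta\mathfrak h$, and it genuinely fails for $A_{2m-1}$: $\mathfrak{su}(2m)=\mathfrak{sp}(m)+\mathfrak{su}(2m-1)$ gives $\mathfrak h^2=\Delta\mathfrak h+(\mathfrak{sp}(m)\oplus\mathfrak{su}(2m-1))$.
\item Lemmas~\ref{AI-lem:factorobs} and~\ref{AI-lem:primitive-excluded} concern enlargements of $A_{n-1}$, not factorizations of it.
\end{itemize}
What you actually need is that Onishchik's list has no factorization $\mathfrak{su}(n)=\mathfrak a+\mathfrak{so}(n)$ with $\mathfrak a$ proper; the only factors that occur are $\mathfrak{sp}(n/2)$ and $\mathfrak{su}(n-1)$, possibly $\oplus\,\mathfrak u(1)$. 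Then the argument closes with Goursat's lemma:
\begin{itemize}
\item From $\mathfrak g=\mathfrak b+\Delta\mathfrak k$, every one-factor projection of $\mathfrak b$ is onto.
\item Every two-factor projection is onto, because the graph of an automorphism has dimension $\dim\mathfrak h$, and adding the projected $\Delta\mathfrak k$ cannot reach $2\dim\mathfrak h$.
\item Goursat then gives $\mathfrak b=\mathfrak h^3$: a surjection $\mathfrak h^2\to\mathfrak h$ kills one factor, so a proper $\mathfrak b$ would have some two-factor projection equal to a graph.
\end{itemize}
The paper's route is a two-line dimension count that leans on a theorem about naturally reductive spaces. Yours needs only the factorization list and Goursat's lemma. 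Your explicit use of Kostant's transvection ideal $\mathfrak g'$, and the remark that transitivity forces $\mathfrak g'=\mathfrak g$, is also slightly more careful than the paper's formulation.

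\textbf{The contradiction.} Your transport computation, after normalizing $v=1$, yields exactly $u=x$ and $3xz=2x+z$. The $v_1$ coefficient is precisely $\lambda_1$, not a reciprocal. Your first attempt via $E_1$ does succeed: on this locus $E_1=36x^3(x-1)/(3x-1)^2$. This forces $x=z=u=1$, the bi-invariant metric, which is excluded by $u\neq1$. No resultant with $P_n$ is needed.

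The paper's proof is shorter still, using only $u=x$: $E_2(u,u)=-2u^2(3u-1)$ forces $u=1/3$, whereas $u=(2a+b)/(3b)>1/3$.

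One small slip: nondegeneracy of $\beta$ on $\Delta\mathfrak k$ is the condition $\sum_i\lambda_i\neq0$, not $\sum_i\lambda_i^{-1}\neq0$. This is moot once positivity on the $\mathfrak q_i$ gives $\lambda_i>0$.
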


\begin{proof}
For a compact semisimple transitive group, the standard invariant-form
characterization of natural reductivity (see, e.g., \cite[Chapter~7]{Besse})
gives an $\operatorname{Ad}(G)$-invariant nondegenerate symmetric bilinear form on
$\mathfrak g$ whose orthogonal complement to the isotropy is the naturally
reductive complement.  On
\[
 \mathfrak g=\mathfrak h_1\oplus\mathfrak h_2\oplus\mathfrak h_3,
 \qquad \mathfrak h_i\simeq\mathfrak{su}(n),
\]
such a form has the shape
\[
 \lambda_1Q\oplus\lambda_2Q\oplus\lambda_3Q.
\]
Each horizontal module $\mathfrak q_i$ is automatically orthogonal to
$\Delta\mathfrak k$, so positivity of the Riemannian metric forces
$\lambda_i>0$.  Thus in the present setting a naturally reductive metric for
the displayed group is normal homogeneous for some positive bi-invariant
form of this type.

For a metric in our diagonal family the first two horizontal scales agree,
hence $\lambda_1=\lambda_2=:a>0$; write $\lambda_3=b>0$.  On the
anti-diagonal multiplicity space with
\[
 v_1=\frac1{\sqrt2}(1,-1,0),\qquad
 v_2=\frac1{\sqrt6}(1,1,-2),
\]
the quotient metric induced by
$\operatorname{diag}(a,a,b)$ has eigenvalues
\[
 a,\qquad \frac{3ab}{2a+b}.
\]
After normalizing the $v_2$ coefficient to $1$, the horizontal coefficient
$x$ and the $v_1$ coefficient $u$ therefore satisfy
\[
 x=u=\frac{2a+b}{3b}>\frac13.
\]
But the Einstein equation \eqref{AI-E2} gives, after setting $x=u$,
\[
 E_2(u,u)=-2u^2(3u-1).
\]
Thus an Einstein metric with $x=u$ must have $u=1/3$, contradicting the
strict inequality above.  Hence neither detected Einstein metric is
naturally reductive for the displayed group.

It remains to exclude a naturally reductive presentation by a different
connected transitive subgroup.  For a compact naturally reductive metric,
the connected transvection group is a transitive normal subgroup of the
connected full isometry group.  By Theorem~\ref{AI-thm:fullisom}, its Lie
algebra is therefore a normal subalgebra of $\mathfrak h^3$, hence a sum of
some of the three simple ideals.  A proper such subgroup has dimension at
most $2\dim H$, whereas
\[
 \dim M=3\dim H-\dim K>2\dim H
\]
because $K\subsetneq H$.  Thus no proper normal subgroup can be transitive,
so the transvection group would have to be all of $H^3$.  This has just been
excluded.  Hence no connected transitive naturally reductive presentation
exists.
\end{proof}

\subsection{Comparison with known constructions}

By Theorem~\ref{AI-thm:fullisom}, any metric isometric to one of the
asymmetric Einstein metrics above has connected full isometry group locally
$\SU(n)^3$.  After conjugating this action, its isotropy is
$\Delta\SO(n)$.  Comparison with known homogeneous constructions therefore
reduces to this homogeneous pair and its finite automorphisms.

The existence and classification problem for invariant Einstein metrics on compact homogeneous spaces has a substantial literature.  Foundational existence and nonexistence results include \cite{BWZ,WangZiller86}; for a recent survey of homogeneous Einstein manifolds, see \cite{Jablonski23}.  

\subsubsection{Aligned homogeneous spaces}

The aligned-space framework of Lauret--Will \cite{LWaligned} allows an
arbitrary number of compact simple transitive factors and explicitly contains
\[
 H^s/\Delta K.
\]
In particular, it records the diffeomorphism
\[
 H^s/\Delta K\simeq (H/K)\times H^{s-1}
\]
and supplies general structural and Ricci-curvature formulas.  That general
paper also studies several Einstein problems in the aligned setting, so it
should not be described as merely a two-factor theory.  By contrast, the
companion classification \cite{LWtwo} is explicitly a two-simple-factor
classification, and the earlier paper on $H\times H/\Delta K$ is likewise a
two-factor problem.

Accordingly, what is established here is more specific: the exact
$\Gamma_{3,2}$-fixed five-parameter problem on the three-factor homogeneous
pair
\[
 \SU(n)^3/\Delta\SO(n)
\]
has the two asymmetric solutions described above, and the two-factor
classification theorems just cited do not contain this $s=3$ calculation.
This is a scope statement, not by itself an exhaustive novelty claim over the
entire homogeneous-Einstein literature.

\subsubsection{Ledger--Obata and normal metrics}

The Ledger--Obata literature \cite{LedgerObata} concerns
\[
 F^m/\Delta F
\]
with the same compact simple group in numerator and isotropy; in particular,
the four-factor Ledger--Obata case $F^4/\Delta F$ is already classified
there.  This is an important neighboring result, but it is not the present
proper-isotropy setting $K\subsetneq H$.  Since
$\SO(n)\subsetneq\SU(n)$, our spaces are not Ledger--Obata spaces, and those
Einstein metrics cannot account for the detected branches.

The asymmetric metrics are not normal homogeneous for the displayed group:
indeed Proposition~\ref{AI-prop:notNR} proves the stronger statement that they
are not naturally reductive for that group.

\subsubsection{Product and naturally reductive constructions}

The canonical diffeomorphism
\[
 \SU(n)^3/\Delta\SO(n)
 \simeq
 \SU(n)\times\SU(n)\times\SU(n)/\SO(n)
\]
does not turn either asymmetric Einstein metric into a product metric: the pullback of every positive
product metric has the nonzero cross terms computed above, whereas the detected
metric is diagonal in the aligned decomposition.

Proposition~\ref{AI-prop:notNR} also excludes natural reductivity for the
displayed $\SU(n)^3$ action and, using Theorem~\ref{AI-thm:fullisom}, excludes
a naturally reductive presentation by any larger connected transitive group.

\subsubsection{Comparison under automorphisms of the homogeneous pair}

Automorphisms of $\mathfrak{su}(n)^3$ permute the three simple ideals and apply
automorphisms to the individual factors.  Preservation of the diagonal
$\mathfrak{so}(n)$ isotropy forces the three factor automorphisms to induce the same
automorphism on $\mathfrak{so}(n)$, up to the finite normalizer.  Thus, at the level
of the multiplicity spaces, the only continuous ambiguity has already disappeared
in the full-isometry proof; the remaining ambiguity is finite and consists of factor
permutations together with the familiar finite outer automorphisms.

In particular, the unordered horizontal eigenvalue data and the vertical
multiplicity eigenvalue data,
\[
        \{x,x,z\},\qquad \{u,1\},
\]
up to common scale and the finite factor action, are intrinsic comparison data
for metrics on this homogeneous pair.  Proposition~\ref{AI-prop:pairwise}
shows in particular that the lower and upper detector metrics are not
homothetic or isometric to one another.

\subsubsection{Comparison with the literature}

The general aligned-space theory does contain the underlying homogeneous
spaces.  In particular, \cite[Example~2.4]{LWaligned} treats
\[
 H^s/\Delta K
\]
for arbitrary $s$, records the diffeomorphism
\[
 H^s/\Delta K\simeq (H/K)\times H^{s-1},
\]
and points to the previously studied $s=2$ case
$H\times H/\Delta K$.  Thus the homogeneous pair used here is not new as an
object in the aligned framework.  The point of the present result is the
Einstein geometry of the particular three-factor fixed family.

We have therefore compared the present branch with the general aligned-space
paper \cite{LWaligned}, the two-factor aligned classification \cite{LWtwo},
the dedicated $H\times H/\Delta K$ classification \cite{LWHH}, the
Ledger--Obata literature \cite{LedgerObata}, and the standard normal,
product, and naturally reductive constructions discussed above.  None of
these sources contains the pair of asymmetric metrics constructed here on
\[
 \SU(n)^3/\Delta\SO(n).
\]
In particular, the two-factor papers do not address this homogeneous pair,
and Ledger--Obata spaces have $K=H$ rather than the proper symmetric
subgroup $\SO(n)\subsetneq\SU(n)$.

To the best of our knowledge, no earlier construction gives the two
asymmetric Einstein metrics of Theorem~\ref{AI-thm:unique}.  This novelty
claim concerns the metrics, not the underlying aligned homogeneous spaces or
the general Ricci formalism.

\begin{theorem}[comparison with the standard constructions]
\label{AI-thm:comparison}
For $n=3$ or $n\ge5$, the two asymmetric Einstein metrics supplied by
Theorem~\ref{AI-main} are pairwise non-homothetic and non-isometric.  Neither
is a product metric in the canonical product presentation, a normal
homogeneous metric, or a naturally reductive metric for the displayed
connected transitive group; no larger connected naturally reductive
presentation exists.  The spaces are not Ledger--Obata spaces, and the cited
two-factor aligned and $H\times H/\Delta K$ classification theorems do not
classify this three-factor homogeneous pair.  To the best of our knowledge, these two asymmetric metrics are new.
\end{theorem}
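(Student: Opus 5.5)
The plan is to assemble Theorem~\ref{AI-thm:comparison} from results already proved, checking each clause against its source.

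\emph{Pairwise distinction.} This is Proposition~\ref{AI-prop:pairwise}. That proposition rests on Theorem~\ref{AI-thm:fullisom}: any isometry conjugates the effective $\SU(n)^3$ actions. The invariants $\{x,x,z\}$ and the vertical pair $\{u,1\}$ then distinguish $u_-<1<u_+$.

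\emph{Not a product metric.} Use the diffeomorphism $H^3/\Delta K\simeq H\times H\times H/K$. Pull back an arbitrary positive product metric, i.e.\ a metric invariant under the product structure. The same computation as in the two-factor discussion after Theorem~\ref{TF-thm:isom0} gives a nonzero cross term between distinct original horizontal copies $\mathfrak q_i,\mathfrak q_j$. Concretely, a vector $(0,Y,0)$ maps with a component in the first $H$ factor, coupling it to $\mathfrak q_1$. The detected metrics have $g(\mathfrak q_i,\mathfrak q_j)=0$, so they are not product metrics.

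An alternative route is Lemma~\ref{AI-lem:H4-metric}, which gives the same orthogonality contradiction in Ledger--Obata form. It requires checking that product metrics are invariant under an $H^4$-type enlargement of the kind covered by Lemma~\ref{AI-lem:minimal-H4}. The direct cross-term computation is cleaner, so that is the route I will write out. The one point needing care is that the cross term stays nonzero for \emph{every} positive product metric, not just the bi-invariant ones. To secure this, I will parametrize the $H^{2}$ part by a positive inner product on the multiplicity space and reuse the $\|\sum\bar e_i\|^2>0$ argument. I regard this as the main (mild) obstacle.

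\emph{Not normal or naturally reductive.} Normal homogeneous metrics are naturally reductive, so both the normal-homogeneous and naturally-reductive clauses for the displayed group follow from the first half of Proposition~\ref{AI-prop:notNR}. The argument there is that $x=u$ is forced, which is incompatible with $E_2$. The statement that no larger connected naturally reductive presentation exists is the second half of the same proposition. It uses that the transvection group is normal in $\Isom_0$, which equals $\SU(n)^3$ by Theorem~\ref{AI-thm:fullisom}, together with the dimension count $\dim M>2\dim H$.

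\emph{Remaining clauses.} The Ledger--Obata statement holds because Ledger--Obata spaces are $F^m/\Delta F$, whereas here the isotropy $\SO(n)$ is proper in $\SU(n)$. Because $\Isom_0$ is known, the isotropy is intrinsically $\Delta\SO(n)$ and not $\Delta\SU(n)$, so no isometric identification with a Ledger--Obata presentation is possible. The literature clause is a scope statement. The cited classifications \cite{LWtwo,LWHH} concern two simple factors, while Theorem~\ref{AI-thm:fullisom} shows our metrics have exactly three simple factors in their full connected isometry algebra, so no two-factor presentation exists. The closing novelty remark is qualified as ``to the best of our knowledge'' and needs no proof beyond this comparison.
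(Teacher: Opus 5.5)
Your proposal is correct and matches the paper's own assembly. The paper likewise obtains the theorem from Proposition~\ref{AI-prop:pairwise}, the cross-term computation in the canonical product presentation, and both halves of Proposition~\ref{AI-prop:notNR} (using normal $\Rightarrow$ naturally reductive), together with the observation $\SO(n)\subsetneq\SU(n)$ and a scope remark about the two-factor classifications. The obstacle you flag is milder than you fear: under $[a,b,c]\mapsto(ac^{-1},bc^{-1},cK)$ the transported action $(X,Y,Z)\mapsto(h_1Xh_3^{-1},h_2Yh_3^{-1},h_3Z)$ is factorwise, so any product metric pulling back to an $H^3$-invariant metric is already $\alpha Q\oplus\beta Q\oplus\gamma Q$, and the direct computation suffices without the multiplicity-space argument.
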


\section{Threshold detection: $\SU(2n)^3/\Delta\Sp(n)$}\label{sec:AII-threefactor}

The second three-factor family exhibits the cleanest threshold phenomenon in
the paper.  Here again the detector acts on the canonical
$\Gamma_{3,2}$-fixed metric family, so its reconstructed roots are genuine
invariant Einstein metrics on the full homogeneous space.  We first place
the space in the aligned homogeneous-space literature and separate the known
structural theory from the three-factor Einstein problem addressed here.
The detection argument then produces a sharp transition at $n=22$, with
exact nonexistence below threshold and exactly two reconstructed metrics
above it.

\subsection{What is known about the space}
For $M_n=\SU(2n)^3/\Delta\Sp(n)$, the pair $\SU(2n)/\Sp(n)$ is the
irreducible symmetric space AII, so the aligned-space Ricci theory applies
\cite{LWaligned}; the detailed classification in \cite{LWtwo} is again for
two simple factors.

The third factor creates the five-scale family below.  We prove a sharp
answer: no detected asymmetric metrics for $2\le n\le21$, and exactly two
reconstructed positive metrices for every $n\ge22$.  This three-factor threshold is not part of the cited two-factor classification theory.

\subsection{The five-scale family}

Let
\[
 H=\mathrm{SU}(2n),\qquad K=\mathrm{Sp}(n),
\]
so that $H/K$ is the irreducible compact symmetric space of type AII.  On
\[
 M_n=H^3/\Delta K
\]
use the standard orthogonal anti-diagonal vectors
\[
 v_1=\frac1{\sqrt2}(1,-1,0),\qquad
 v_2=\frac1{\sqrt6}(1,1,-2).
\]
The corresponding isotropy decomposition has three copies of
\[
 \mathfrak q=\mathfrak{su}(2n)/\mathfrak{sp}(n)
\]
and two copies of $\mathfrak k=\mathfrak{sp}(n)$.  Corollary~\ref{cor:three-factor-Cartan-fixed} identifies the
$\Gamma_{3,2}$-fixed invariant metrics exactly.  The Cartan sign symmetries
force the three horizontal summands to be mutually orthogonal, the
transposition of the first two factors forces their common scale $x$, and
the two inequivalent vertical lines $\mathbb Rv_1$ and $\mathbb Rv_2$ carry
independent scales.  After fixing homothety by $v=1$, the full fixed family is
therefore
\[
 g=(x,x,z,u,1).
\]
Every solution of the reduced equations below is consequently a genuine
$H^3$-invariant Einstein metric.

For AII one has
\[
 d=\dim\mathfrak q=2n^2-n-1,\qquad
 e=\dim\mathfrak k=n(2n+1),
\]
and
\[
 B_{\mathfrak k}=\frac{n+1}{2n}B_{\mathfrak h}|_{\mathfrak k}.
\]
The nonzero bracket constants in the above aligned basis give the Ricci eigenvalues
\[
 r_1=r_2=\frac{-3u+12x-1}{24x^2},
\qquad
 r_3=\frac{3z-1}{6z^2},
\]
together with the two vertical components.  Equating the remaining three components to $r_1$ gives the polynomial system
\begin{align}
E_1&=(3u-12x+1)z^2+12x^2z-4x^2=0,\label{AII-E1}\\
E_2&=(n+1)(6u-1)x^2-12nu^2x
     +u^2\bigl((6n-3)u+n\bigr)=0,\label{AII-E2}\\
E_3&=\Bigl(6nu^3+9nu^2x^2-24nu^2x+4nu^2+nx^2 \notag\\
&\qquad\quad+9u^2x^2-2u^2+x^2\Bigr)z^2
     +4(n-1)u^2x^2=0.\label{AII-E3}
\end{align}

\subsection{The detector polynomial}

Eliminate $z$ from \eqref{AII-E1} and \eqref{AII-E3}, remove the harmless factor $16x^4$, and then eliminate $x$ against \eqref{AII-E2}.  The exact resultant factors as
\[
 81u^8(n+1)(u-1)^2Q_n(u),
\]
where
\[
 Q_n(u)=\sum_{j=0}^{10}q_j(n)u^j
\]
and
\begin{align*}
q_{10}&=6561(n+1)(2n-1)^4(5n-3)^2,\\
q_9&=-4374(2n-1)^2(5n-3)(88n^4-94n^3+33n^2-28n+13),\\
q_8&=729(10984n^7-28240n^6+25318n^5-7938n^4\\
&\qquad\quad-1119n^3+1347n^2-347n+35),\\
q_7&=-972(3994n^7-10031n^6+6727n^5+1956n^4\\
&\qquad\quad-4538n^3+2363n^2-575n+56),\\
q_6&=81(13233n^7-33649n^6+17327n^5+17181n^4\\
&\qquad\quad-22351n^3+10015n^2-2185n+189),\\
q_5&=-54(n+1)(4235n^6-13390n^5+15265n^4-7476n^3\\
&\qquad\quad+1511n^2-58n-15),\\
q_4&=9(n+1)(5721n^6-13282n^5+8129n^4+2668n^3\\
&\qquad\quad-4339n^2+1466n-163),\\
q_3&=-12(n+1)^2(2n-1)(383n^4-945n^3+749n^2-217n+18),\\
q_2&=(n+1)^2(2n-1)^2(258n^3-454n^2+83n+43),\\
q_1&=-2(n+1)^3(2n-1)^3(8n-7),\\
q_0&=(n+1)^3(2n-1)^4.
\end{align*}

Three special values are decisive:
\begin{align}
Q_n(0)&=(n+1)^3(2n-1)^4>0,\label{AII-Q0}\\
Q_n(1)&=-64(n-1)^2(7n-3)F(n),\label{AII-Q1}\\
\operatorname{LC}(Q_n)&=6561(n+1)(2n-1)^4(5n-3)^2>0,\label{AII-QLC}
\end{align}
where
\[
 F(n)=139n^4-3320n^3+6450n^2-4520n+1123.
\]
Moreover
\[
 F(21)=-963008<0,\qquad F(22)=233707>0.
\]
A direct exact check on the finite set $2\le n\le21$ gives
\[
 F(n)<0\qquad(2\le n\le21),
\]
while
\[
 F(22+m)
 =139m^4+8912m^3+190986m^2+1378928m+233707>0
\]
for $m\ge0$.  Thus the integer sign transition is exactly between
$n=21$ and $n=22$, and in particular
\[
 Q_n(1)<0\qquad(n\ge22).
\]

We also need the lower endpoint
\[
 Q_n\!\left(\frac16\right)
 =\frac{(9n+1)G(n)}{9216},
\]
with
\[
 G(n)=4352n^6-1280n^5-1632n^4-400n^3+649n^2-70n+1.
\]
Writing $n=22+m$, all coefficients of $G(22+m)$ are positive, hence
\[
 Q_n(1/6)>0\qquad(n\ge22).
\]

\subsection{Exactly two detector roots above threshold}

\begin{lemma}
For every integer $n\ge22$, $Q_n$ has exactly one root in $(1/6,1)$ and exactly one root in $(1,\infty)$.
\end{lemma}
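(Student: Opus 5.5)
Existence of at least one root in each interval is immediate from the three recorded sign data. For $n\ge22$ we have $Q_n(1/6)>0$ and $Q_n(1)<0$ by \eqref{AII-Q1}, so the intermediate value theorem gives a root in $(1/6,1)$. Also $Q_n(1)<0$ and $\operatorname{LC}(Q_n)>0$ by \eqref{AII-QLC}, so $Q_n(u)\to+\infty$ and there is a root in $(1,\infty)$. What remains is uniqueness, and I would copy the Descartes-rule certificate used for $P_n$ in the $AI$ case (Section~\ref{AI-sec:uniqueness}).

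For the lower interval I would use the same M\"obius substitution $u=(1+6t)/(6(1+t))$, which maps $t\in(0,\infty)$ bijectively onto $(1/6,1)$. Put $\widetilde Q_n(t)=[6(1+t)]^{10}Q_n\bigl((1+6t)/(6(1+t))\bigr)$. Its constant term is $6^{10}Q_n(1/6)>0$. Its leading coefficient is the value at $u=1$ times a positive constant, hence negative. The goal is to show that, ordered by descending powers of $t$, the coefficient signs read $-\cdots-+\cdots+$ with exactly one change, for every $n\ge22$. The plan is as follows.
\begin{enumerate}
\item Write $n=22+m$ and expand each coefficient of $\widetilde Q_n$ as a polynomial in $m$.
\item For most coefficients, all $m$-coefficients should share one sign, which settles them outright.
\item For the few coefficients that do not, expect a single sign string in $m$ with one change. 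Descartes then gives at most one positive zero in $m$, and exact integer evaluation locates the transition index. This is the same device as $L_8,L_7$ for $AI$.
\end{enumerate}
Descartes' rule in $t$ then gives at most one positive root, hence exactly one in $(1/6,1)$.

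For the upper interval, set $\widehat Q_n(t)=Q_n(1+t)$. Its leading coefficient is $q_{10}>0$ and its constant term is $Q_n(1)<0$. I would again substitute $n=22+m$ into each coefficient $\widehat q_j=\sum_{i\ge j}\binom{i}{j}q_i(n)$. The aim is to show the descending sign string is $+\cdots+-\cdots-$ with one change. I expect, as in the $AI$ upper table, that the position of the change moves with $n$ through several bands. For each middle coefficient whose sign depends on $n$, I would strip the positive content and show it has a single sign change as a polynomial in $n$ (or $m$). Bracketing its unique positive zero between consecutive integers then shows that the coefficients switch from $+$ to $-$ monotonically in $j$ for each fixed $n$. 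Concretely, I need: if $\widehat q_j(n)<0$ then $\widehat q_{j-1}(n)<0$, which follows from the brackets being nested in the right order. Descartes gives at most one root in $(1,\infty)$, hence exactly one.

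The main obstacle is this sign-pattern verification. Some coefficients will not have uniformly signed expansions in $m$, and the brackets of the sign-changing middle coefficients must be ordered consistently so that no $+,-,+$ pattern appears. If a coefficient resists, my fallback is a Sturm-sequence or interval-exclusion argument on $(1,\infty)$. For example, show $Q_n$ is convex beyond its first root by checking that $Q_n''$ has a single sign change via Descartes. This suffices because, if $Q_n''$ changes sign only once, $Q_n$ can have at most one root after $1$ given $Q_n(1)<0$ and $Q_n\to+\infty$.
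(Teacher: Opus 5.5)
Your main plan matches the paper's proof: existence from $Q_n(1/6)>0>Q_n(1)$ and $\operatorname{LC}(Q_n)>0$, then Descartes' rule after $u=(1+6t)/(6(1+t))$ and after $u=1+t$. In fact only one coefficient moves in each case: the lower patterns are $-+\cdots+$ for $22\le n\le48$ and $--+\cdots+$ for $n\ge49$, and the upper patterns are $+\cdots+--$ for $22\le n\le26$ and $+\cdots+---$ for $n\ge27$. So no nesting of brackets is needed.

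Your fallback is unnecessary, and as stated it is false. A single sign change of $Q_n''$ on $(1,\infty)$ allows concave-then-convex behaviour with three roots; for example $(u-2)(u-3)(u-4)$ is negative at $u=1$, tends to $+\infty$, and has three roots beyond $1$. To make it work you would additionally need $Q_n'(1)\le0$, or you should apply Descartes to $Q_n'(1+t)$ directly.
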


\begin{proof}
Existence follows immediately from
\[
 Q_n(1/6)>0>Q_n(1)
\]
and from \eqref{AII-Q1}--\eqref{AII-QLC}.

For uniqueness in $(1/6,1)$ use
\[
 u=\frac{1+6t}{6(1+t)},\qquad t>0.
\]
After multiplication by the positive factor $[6(1+t)]^{10}$, the transformed degree-ten polynomial has one sign change.  More precisely, for $22\le n\le48$ its coefficient signs, from degree ten to degree zero, are
\[
 -++++++++++,
\]
whereas for $n\ge49$ they are
\[
 --+++++++++.
\]
The only moving coefficient is
\[
 -1612431360(n-1)^2
 (1835n^5-93538n^4+236046n^3-232384n^2+103367n-17118),
\]
whose integer sign changes between $48$ and $49$; the remaining coefficients have the displayed fixed signs after expansion at $n=22$.  Descartes' rule therefore gives exactly one positive $t$-root.

For $(1,\infty)$ write $u=1+t$.  The coefficient signs are
\[
 +++++++++--
 \qquad(22\le n\le26)
\]
and
\[
 ++++++++---
 \qquad(n\ge27).
\]
Again there is exactly one sign change.  The only moving coefficient is
\[
\begin{aligned}
&-2285175n^7+71630999n^6-288673307n^5+509346747n^4\\
&\qquad-483661413n^3+258458821n^2-73383177n+8599273,
\end{aligned}
\]
whose integer sign changes between $26$ and $27$.  Descartes' rule gives exactly one root with $u>1$.
\end{proof}

\subsection{Every detector root lifts uniquely}

Let $R_n(x,u)$ denote the factor obtained by eliminating $z$ from
\eqref{AII-E1}, \eqref{AII-E3} after removing $16x^4$.  Division of $R_n$ by the quadratic
$E_2$ in the variable $x$ has remainder
\[
 \frac{9u^4(u-1)}{(n+1)^2(6u-1)^3}
 \bigl(A_n(u)x+B_n(u)\bigr).
\]
An exact resultant calculation gives
\[
\begin{split}
 \operatorname{Res}_u(Q_n,A_n)
={}&C\,(n-1)^{14}(n+1)^{15}(2n-1)^4(5n-3)^2(9n+1)^4\\
&\qquad\times J(n)^2,
\end{split}
\]
where $C>0$ and
\begin{align*}
J(n)={}&4792064n^{18}-11073408n^{17}+1986416n^{16}
+17590656n^{15}-18716504n^{14}\\
&-1497300n^{13}+15267804n^{12}-10141328n^{11}
-313297n^{10}+4171648n^9\\
&-2787587n^8+953472n^7-182770n^6+13900n^5
+2790n^4-1232n^3\\
&+235n^2-24n+1.
\end{align*}
Every coefficient of $J(22+m)$ is positive.  Hence
\[
 \operatorname{Res}_u(Q_n,A_n)\ne0\qquad(n\ge22).
\]
Thus at a detector root $u$, $A_n(u)\ne0$, and the common $x$-root is unique and real.

It remains to prove positivity.  The quadratic \eqref{AII-E2} has leading coefficient
$(n+1)(6u-1)>0$, negative linear coefficient, and positive constant term whenever
$u>1/6$.  Thus any real root is positive.  In fact every such root satisfies
\[
 x>\frac{3u+1}{12}.
\]
Indeed the vertex
\[
 x_v=\frac{6nu^2}{(n+1)(6u-1)}
\]
lies to the right of $(3u+1)/12$ for $n\ge22$, since the numerator of their difference is at least
\[
 1170u^2-69u+23>0.
\]
Moreover
\[
 E_2\!\left(\frac{3u+1}{12},u\right)>0,
\]
because at $n=22$ its numerator is
\[
 10314u^3+621u^2-23>0
\]
for $u>1/6$, and the expression increases with $n$.  Hence the boundary point lies to the left of the smaller real root.

Consequently
\[
 3u+1-12x<0.
\]
Thus the quadratic \eqref{AII-E1} has negative leading coefficient, positive sum of roots, and positive product; every real root of $E_1$ is positive.

Since $R_n(x,u)=0$, equations \eqref{AII-E1}, \eqref{AII-E3} have a common complex $z$-root.  Both are genuine quadratics: the leading coefficient of $E_1$ is strictly negative, while $E_3$ cannot lose its quadratic term at a common root because its constant term $4(n-1)u^2x^2$ is positive.  A nonreal common root would force its conjugate to be common as well, hence the two quadratics would be proportional; this is impossible because $E_1$ has nonzero linear coefficient $12x^2$ whereas $E_3$ has none.  Therefore the common root is real, and by the preceding paragraph it is positive.  It is unique because two common roots would again force proportionality.

We have proved:

\begin{theorem}[sharp detected threshold]
For the diagonal detected branch
\[
 g=(x,x,z,u,1)
\]
on
\[
 M_n=\mathrm{SU}(2n)^3/\Delta\mathrm{Sp}(n),
\]
there are exactly two positive Einstein metrics for every integer $n\ge22$: exactly one has
\[
 \frac16<u<1,
\]
and exactly one has
\[
 u>1.
\]
Each detector root determines a unique positive pair $(x,z)$.
\end{theorem}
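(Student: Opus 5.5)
The argument assembles the pieces already in place for the AII family, following the same logic as the proof of Theorem~\ref{AI-thm:unique}. First, we reduce the Einstein condition on the fixed family $(x,x,z,u,1)$ to the polynomial system \eqref{AII-E1}--\eqref{AII-E3}. By Corollary~\ref{cor:three-factor-Cartan-fixed} and symmetric criticality, every positive solution of this system is a genuine $H^3$-invariant Einstein metric, and conversely every Einstein metric in the family satisfies it. Next, any positive solution with $u\ne1$ makes the iterated resultant vanish. Since the factors $81u^8(n+1)(u-1)^2$ are nonzero there, such a solution has $Q_n(u)=0$. So the set of candidate $u$-values is exactly the set of roots of $Q_n$ with $u\ne1$.

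The second step localizes the roots. We need every admissible root to satisfy $u>1/6$. For $u\le1/6$ the coefficient $(n+1)(6u-1)$ of $x^2$ in \eqref{AII-E2} is nonpositive. We then argue directly that positive $(x,z)$ cannot occur. The cleanest route is via $r_1=r_3>0$: \eqref{AII-E1} with $z>0$ forces $r_3$, hence $r_1$, to be positive (otherwise $r_3=r_1\le 0$ gives $z\le1/3$ and we chase signs), which gives $12x>3u+1$. Combined with $E_2=0$, this should contradict $u\le 1/6$. This is the point I expect to require the most care, since the paper's positivity estimates were only stated for $u>1/6$. If the sign chase fails, the fallback is to show $Q_n$ has no roots in $(0,1/6]$, by a Descartes count after the substitution $u=t/(6(1+t))$. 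With $u>1/6$ secured, the preceding Lemma gives exactly one root in $(1/6,1)$ and exactly one in $(1,\infty)$ for $n\ge22$.

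Third, we lift each root, quoting the subsection on lifting. At a root $u$, $\operatorname{Res}_u(Q_n,A_n)\ne0$ (via positivity of $J(22+m)$) gives $A_n(u)\ne0$. Hence the common root $x$ of $E_2$ and $R_n$ is unique, real, and satisfies $x>(3u+1)/12>0$. Existence of this common root follows because $Q_n(u)=0$ makes $\operatorname{Res}_x(E_2,R_n)$ vanish, and $E_2$ genuinely has degree two. The estimate $12x>3u+1$ makes $E_1$ a genuine quadratic in $z$ with only positive real roots. The non-proportionality argument ($12x^2\ne0$ versus no linear term in $E_3$) shows the common $z$ is real, positive, and unique. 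Then $r_1=r_2=r_3=r_4=r_5$, and the metric is positive.

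Finally, we count. The two lifted metrics have $u$ on opposite sides of $1$, so they are distinct in the normalized family. Each root of $Q_n$ lifts to exactly one pair $(x,z)$, so there are exactly two metrics, with one in each $u$-range as stated.
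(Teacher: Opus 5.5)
\textbf{Where the proposal agrees with the paper, and the gap.} Your steps 1, 3 and 4 reproduce the paper's argument. This includes the factorization $81u^8(n+1)(u-1)^2Q_n(u)$, the root-count lemma on $(1/6,1)$ and $(1,\infty)$, and the linear remainder $A_n(u)x+B_n(u)$ with $\operatorname{Res}_u(Q_n,A_n)\ne0$. It also includes the bound $x>(3u+1)/12$ and the non-proportionality of the two $z$-quadratics. The gap is in step 2.

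\textbf{Step 2 is needed.} You are right that ``exactly two'' requires ruling out lifts with $0<u\le 1/6$. The paper's written proof is silent here: its root count is only on $(1/6,1)\cup(1,\infty)$, and its positivity argument assumes $6u-1>0$.

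\textbf{Your sign chase fails.} The route you call cleanest does not work. For $u<1/6$ the quadratic $E_2$ opens downward and $E_2(0,u)=u^2((6n-3)u+n)>0$, so it has a unique positive root $x_+$. This root satisfies $x_+>(3u+1)/12$ exactly when $E_2\bigl(\tfrac{3u+1}{12},u\bigr)>0$. A direct computation gives
\[
 144\,E_2\Bigl(\tfrac{3u+1}{12},u\Bigr)=(486n-378)u^3+27(n+1)u^2-(n+1).
\]
This cubic is increasing for $u>0$ and equals $2n-2>0$ at $u=1/6$, so it is positive on some interval $(u_n^*,1/6]$. There $12x>3u+1$ and $E_2=0$ are compatible, and no contradiction arises.

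Your justification of $r_1>0$ via $E_1$ is also not right, since $r_3<0$ for $z<1/3$. Positivity actually comes from $r_1=r_5$, because $r_5$ is a sum of positive terms. Finally, the fallback Descartes count after $u=t/(6(1+t))$ is only announced, not carried out. As written, the exclusion of $(0,1/6]$ is unproved.

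\textbf{The missing idea.} Compare the two vertical eigenvalues with each other rather than with $r_1$. Insert $d/e=(n-1)/n$ and $a=(n+1)/(2n)$ into the universal formulas of Section~\ref{sec:predictor}. Then
\[
 r_4-r_5=\frac{n-1}{24n}\Bigl(\frac{3u-1}{x^2}-\frac{2}{z^2}\Bigr)
 -\frac{(n+1)(3u-1)(u-1)}{16nu^2}.
\]
As a check, at $u=1$ this forces $x=z$, which is the symmetric state. For $0<u\le 1/3$ the first term is strictly negative and the second is $\le0$, so $r_4<r_5$. Hence every positive Einstein metric in the family has $u>1/3>1/6$. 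Then the lemma's two roots are the only candidates, and your steps 3 and 4 apply verbatim. This identity also closes the same omission in the paper's own proof.
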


\subsection{Comparison with standard constructions}

We next compare these asymmetric Einstein metrics with the standard homogeneous
constructions that could otherwise account for them.

\subsubsection{The canonical product presentation}

There is a natural diffeomorphism
\[
 \mathrm{SU}(2n)^3/\Delta\mathrm{Sp}(n)
 \simeq
 \mathrm{SU}(2n)\times\mathrm{SU}(2n)\times \mathrm{SU}(2n)/\mathrm{Sp}(n),
\]
given by
\[
 [a,b,c]\longmapsto(ac^{-1},bc^{-1},cK).
\]
At the identity coset its differential sends
\[
 X\in\mathfrak q_1\mapsto(X,0,0),\qquad
 X\in\mathfrak q_2\mapsto(0,X,0),\qquad
 X\in\mathfrak q_3\mapsto(-X,-X,X).
\]
Hence the pullback of a positive product metric with coefficients
$\alpha,\beta,\gamma>0$ has
\[
 g(\mathfrak q_1,\mathfrak q_3)=-\alpha Q,\qquad
 g(\mathfrak q_2,\mathfrak q_3)=-\beta Q.
\]
The asymmetric Einstein metrics above are diagonal in
$\mathfrak q_1\oplus\mathfrak q_2\oplus\mathfrak q_3$.
They are therefore not product metrics in this canonical product presentation.

\subsubsection{Natural reductivity for the displayed group}

As in the AI family, the natural-reductivity identity cannot safely be
applied to the fixed reductive complement used in the Ricci calculation:
a hypothetical naturally reductive presentation may use a different
complement.  We instead use the invariant-form characterization.

\begin{proposition}[not naturally reductive]\label{AII-prop:notNR}
Neither asymmetric threshold metric is naturally reductive with respect to
any connected transitive group of isometries.
\end{proposition}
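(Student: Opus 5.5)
The plan follows the proof of Proposition~\ref{AI-prop:notNR} step by step, replacing the AI data by the AII data; there are two stages.

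\emph{Displayed group $H^3=\SU(2n)^3$.} For the compact semisimple transitive group $H^3$, natural reductivity is characterized by an $\operatorname{Ad}$-invariant nondegenerate symmetric form on $\mathfrak h^3$ whose orthogonal complement to $\Delta\mathfrak k$ is the naturally reductive complement. Such a form is $\lambda_1Q\oplus\lambda_2Q\oplus\lambda_3Q$, and positivity on the horizontal modules $\mathfrak q_i\perp\Delta\mathfrak k$ forces $\lambda_i>0$. Because the detected metric has equal scales on $\mathfrak q_1,\mathfrak q_2$, we get $\lambda_1=\lambda_2=a$; set $\lambda_3=b$. The quotient metric induced on the anti-diagonal multiplicity space $V_3$ by $\operatorname{diag}(a,a,b)$ has eigenvalue $a$ on $\mathbb Rv_1$ and $3ab/(2a+b)$ on $\mathbb Rv_2$. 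After normalizing the $v_2$ coefficient to $1$, this gives
\[
 x=u=\frac{2a+b}{3b}>\frac13 .
\]
Substituting $x=u$ into \eqref{AII-E2} yields
\[
 (n+1)(6u-1)u^2-12nu^3+(6n-3)u^3+nu^2=-2u^2(3u-1),
\]
where the $n$-dependence cancels: the $u^3$ coefficients sum to $6n+6-12n+6n-3=3$, and the $u^2$ coefficients sum to $-(n+1)+n=-1$. Hence an Einstein metric with $x=u$ forces $u=1/3$, which contradicts $u>1/3$. So neither threshold metric is naturally reductive for $H^3$.

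\emph{Arbitrary connected transitive groups.} For a compact naturally reductive metric, the connected transvection group is a transitive normal subgroup of $\Isom_0$. The key input needed here is the analogue of Theorem~\ref{AI-thm:fullisom}: $\Isom_0(M_n,g)$ is $\SU(2n)^3$ modulo a finite kernel. I would obtain it by repeating that argument with $K=\Sp(n)$.
\begin{itemize}
\item[(i)] The normalizer computation goes through because $\mathfrak{sp}(n)$ acts irreducibly on $\mathbb C^{2n}$ and is self-normalizing. This gives strong semisimplicity, since $A_{2n-1}$ admits no nonzero map to $A_1$ for $n\ge22$.
\item[(ii)] Primitive enlargements are excluded by Onishchik's list. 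The factorizations with an $A_{2n-1}$ summand are $D_{2n}=B_{2n-1}+A_{2n-1}$ and $A_{2n}=C_n+A_{2n-1}$, with intersections $A_{2n-2}$ and $C_{n-1}$ (up to centre). Both fix a line in $\mathbb C^{2n}$, whereas $\mathfrak{sp}(n)$ fixes none, so neither can contain it.
\item[(iii)] Lemmas~\ref{AI-lem:minimal-H4} and~\ref{AI-lem:H4-metric} use only that $H/K$ is irreducible symmetric and that the $\mathfrak q_i$ are pairwise orthogonal, so they apply verbatim.
\end{itemize}
Granting this, the transvection algebra is an ideal of $\mathfrak h^3$, hence a sum of simple factors. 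A proper sum has dimension at most $2\dim H<3\dim H-\dim K=\dim M$, so it cannot act transitively. The transvection group must therefore be all of $H^3$, which the first stage excluded.

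The main obstacle is the second stage: transferring the full-isometry theorem to AII, in particular checking that the factorization obstruction (ii) holds for $\mathfrak{sp}(n)\subset\mathfrak{su}(2n)$. I expect the irreducibility argument on $\mathbb C^{2n}$ to settle it. The first stage is a routine computation once the identity $E_2(u,u)=-2u^2(3u-1)$ is verified.
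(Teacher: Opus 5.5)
Your proposal is correct and follows the paper's proof essentially verbatim: the invariant-form reduction to $x=u=(2a+b)/(3b)>1/3$, the contradiction from \eqref{AII-E2}, and the transvection-group argument resting on the connected full-isometry result, which the paper establishes as Theorem~\ref{AII-thm:fullisom} by the same transfer of the AI argument that you sketch. Two harmless slips: your own coefficient sums $3$ and $-1$ give $E_2(u,u)=u^2(3u-1)$, whereas $-2u^2(3u-1)$ is the AI value; and $A_{2n}=C_n+A_{2n-1}$ is not a compact factorization, since the only one with an $A_{2n-1}$ summand is $D_{2n}=B_{2n-1}+A_{2n-1}$ (cf.\ Lemma~\ref{AII-lem:factorobs}). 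Neither slip affects the conclusion.
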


\begin{proof}
For a compact semisimple transitive group, the standard invariant-form
characterization of natural reductivity (see, e.g., \cite[Chapter~7]{Besse})
gives an
$\operatorname{Ad}(G)$-invariant nondegenerate symmetric bilinear form on
\[
 \mathfrak g=\mathfrak h_1\oplus\mathfrak h_2\oplus\mathfrak h_3,
 \qquad \mathfrak h_i\simeq\mathfrak{su}(2n),
\]
whose orthogonal complement to the isotropy is the naturally reductive
complement.  Such a form has the shape
\[
 \lambda_1Q\oplus\lambda_2Q\oplus\lambda_3Q.
\]
The horizontal modules $\mathfrak q_i$ are orthogonal to
$\Delta\mathfrak k$, so positivity of the induced Riemannian metric forces
$\lambda_i>0$.  Since our family has equal first two horizontal scales,
write
\[
 \lambda_1=\lambda_2=a>0,\qquad \lambda_3=b>0.
\]

On the anti-diagonal multiplicity space with
\[
 v_1=\frac1{\sqrt2}(1,-1,0),\qquad
 v_2=\frac1{\sqrt6}(1,1,-2),
\]
the quotient metric induced by
$\operatorname{diag}(a,a,b)$ has eigenvalues
\[
 a,\qquad \frac{3ab}{2a+b}.
\]
After normalizing the $v_2$ coefficient to $1$, the horizontal coefficient
$x$ and the $v_1$ coefficient $u$ therefore satisfy
\[
 x=u=\frac{2a+b}{3b}>\frac13.
\]
But substituting $x=u$ into \eqref{AII-E2} gives
\[
 E_2(u,u)=u^2(3u-1).
\]
Hence an Einstein metric with $x=u$ must have $u=1/3$, contradicting the
strict inequality above.  Thus neither threshold metric is naturally
reductive for the displayed group.

For completeness, Theorem~\ref{AII-thm:fullisom} identifies the connected
full isometry group with the effective image of $H^3$, where
$H=\SU(2n)$.  If the metric were naturally reductive for some connected
transitive group, its connected transvection group would be a transitive
normal subgroup of this group.  Its Lie algebra is therefore a sum of some
of the three simple $\mathfrak h$-ideals.  A proper such subgroup has
dimension at most $2\dim H$, while
\[
 \dim M=3\dim H-\dim K>2\dim H.
\]
Hence the transvection group would have to be all of $H^3$, already excluded
by the invariant-form calculation above.  No connected transitive naturally
reductive presentation exists.
\end{proof}

\subsubsection{Ledger--Obata metrics}

Ledger--Obata spaces \cite{LedgerObata} have the form
\[
 F^m/\Delta F.
\]
Since $\mathrm{Sp}(n)\subsetneq\mathrm{SU}(2n)$, the spaces
$\mathrm{SU}(2n)^3/\Delta\mathrm{Sp}(n)$ are not Ledger--Obata spaces.  Their classified
Einstein metrics therefore do not account for the threshold branch.

\subsection{Connected full isometry group}

\label{AII-sec:fullisom}

Let
\[
 G=\mathrm{SU}(2n)^3,\qquad K=\Delta\mathrm{Sp}(n),
\]
and let $G_0$ be the effective image of $G$ on $M_n$.  The ineffective kernel is
\[
 Z_0=\Delta\bigl(Z(\mathrm{SU}(2n))\cap\mathrm{Sp}(n)\bigr)
     =\Delta\{\pm I\},
\]
so $G_0=G/Z_0$.

\begin{lemma}\label{AII-lem:normalizer}
For $n\ge2$,
\[
 N_{\mathfrak{su}(2n)^3}(\Delta\mathfrak{sp}(n))
 =
 \Delta\mathfrak{sp}(n).
\]
Consequently the connected centralizer of the $G_0$-action in
$\operatorname{Diff}(M_n)$, and hence in the isometry group of any $G$-invariant metric,
is trivial.
\end{lemma}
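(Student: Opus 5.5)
The plan follows the proof of Lemma~\ref{AI-lem:normalizer}, with the real symmetric pair $\mathfrak{so}(n)\subset\mathfrak{su}(n)$ replaced by $\mathfrak{sp}(n)\subset\mathfrak{su}(2n)$.

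\emph{Step 1: the three components are equal.} Let $(X_1,X_2,X_3)\in\mathfrak{su}(2n)^3$ normalize $\Delta\mathfrak{sp}(n)$. For every $A\in\mathfrak{sp}(n)$ the bracket $([X_1,A],[X_2,A],[X_3,A])$ lies in $\Delta\mathfrak{sp}(n)$. Hence all three components coincide, and each difference $X_i-X_j$ centralizes $\mathfrak{sp}(n)$ in $\mathfrak{su}(2n)$. The standard representation of $\mathfrak{sp}(n)$ on $\mathbb C^{2n}$ is irreducible, so by Schur's lemma its commutant in $\mathfrak{gl}(2n,\mathbb C)$ consists of scalars. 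Traceless skew-Hermitian scalars vanish, so $C_{\mathfrak{su}(2n)}(\mathfrak{sp}(n))=0$ and $X_1=X_2=X_3=:X$.

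\emph{Step 2: $X$ lies in $\mathfrak{sp}(n)$.} The common element $X$ normalizes $\mathfrak k=\mathfrak{sp}(n)$ inside $\mathfrak h=\mathfrak{su}(2n)$. Decompose $X=X_{\mathfrak k}+X_{\mathfrak q}$ according to the symmetric splitting $\mathfrak h=\mathfrak k\oplus\mathfrak q$. Since $[\mathfrak q,\mathfrak k]\subset\mathfrak q$, the condition $[X,\mathfrak k]\subset\mathfrak k$ forces $[X_{\mathfrak q},\mathfrak k]=0$. So $X_{\mathfrak q}$ spans a trivial $\mathfrak k$-submodule of $\mathfrak q$. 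For $n\ge2$ the isotropy module $\mathfrak q\simeq\Lambda^2_0\mathbb C^{2n}$ of AII is irreducible and nontrivial, of dimension $2n^2-n-1>1$. Therefore $X_{\mathfrak q}=0$, so $X\in\mathfrak k$ and the normalizer is exactly $\Delta\mathfrak{sp}(n)$.

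\emph{Step 3: the centralizer statement.} As in Lemma~\ref{AI-lem:normalizer}, the Lie algebra of the centralizer of the transitive left $G$-action on $G/K$ is $N_{\mathfrak g}(\mathfrak k)/\mathfrak k$. This quotient vanishes by Step~2, so the connected centralizer in $\Diff(M_n)$ is trivial. It is then trivial inside the isometry group of any $G$-invariant metric as well.

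\emph{Main obstacle.} The only point needing care is the irreducibility and nontriviality of $\mathfrak q$ as a $\mathfrak k$-module in Step~2. This is exactly where $n\ge2$ enters: for $n=1$ one has $\mathfrak q=0$ and the statement degenerates. I would handle it by citing the irreducibility of the isotropy representation of the irreducible symmetric space $\SU(2n)/\Sp(n)$ \cite{Helgason}.
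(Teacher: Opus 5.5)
Your proof is correct and follows the paper's route: the Schur/commutant argument gives $C_{\mathfrak{su}(2n)}(\mathfrak{sp}(n))=0$ and hence $X_1=X_2=X_3$, self-normalization of $\mathfrak{sp}(n)$ then puts the common element in $\Delta\mathfrak{sp}(n)$, and $N_{\mathfrak g}(\mathfrak k)/\mathfrak k$ yields the centralizer statement. The one difference is that you justify the self-normalization, which the paper only asserts, via the symmetric splitting. There, however, $X_{\mathfrak q}$ centralizes $\mathfrak k$, so your Step~1 conclusion $C_{\mathfrak{su}(2n)}(\mathfrak{sp}(n))=0$ already forces $X_{\mathfrak q}=0$; the irreducibility of $\mathfrak q$ you flagged as the ``main obstacle'' is not needed, and the argument goes through even for $n=1$.
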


\begin{proof}
Let $(X_1,X_2,X_3)$ normalize the diagonal copy of
$\mathfrak k=\mathfrak{sp}(n)$.  Then for every $A\in\mathfrak k$ the three
brackets $[X_i,A]$ agree.  Hence $X_i-X_j$ centralizes
$\mathfrak{sp}(n)$ in $\mathfrak{su}(2n)$.  The standard complex
representation of $\mathrm{Sp}(n)$ is irreducible, and its commutant in
$\mathfrak u(2n)$ consists of scalar matrices.  Intersecting with
$\mathfrak{su}(2n)$ leaves zero.  Thus $X_1=X_2=X_3=:X$.
The symmetric subalgebra
$\mathfrak{sp}(n)\subset\mathfrak{su}(2n)$ is self-normalizing, so
$X\in\mathfrak{sp}(n)$.
\end{proof}

We use the following immediate consequence of Onishchik's compact
factorization classification.

\begin{lemma}[factorization obstruction]\label{AII-lem:factorobs}
For $n\ge2$ there is no proper simple compact factorization
\[
 \mathfrak s=\mathfrak a+\mathfrak b
\]
with
\[
 \mathfrak a\simeq A_{2n-1}
\]
such that
$\mathfrak a\cap\mathfrak b$ contains the standard
$C_n=\mathfrak{sp}(n)\subset\mathfrak{su}(2n)$.
\end{lemma}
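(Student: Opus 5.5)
The plan mirrors Lemma~\ref{AI-lem:factorobs}: read off the relevant entries of Onishchik's list of proper factorizations of compact simple Lie algebras \cite[Theorem~4.1 and Table~7]{Onishchik}, then use a representation-theoretic invariant to show that none of the possible intersections can contain the standard $\mathfrak{sp}(n)$.

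\textbf{Step 1 (the list).} The proper simple factorizations with one factor isomorphic to $A_{2n-1}=\mathfrak{su}(2n)$, modulo the low-rank coincidences, are:
\[
 D_{2n}=B_{2n-1}+A_{2n-1},\qquad A_{2n}=C_n+A_{2n-1}\ \ (\text{in }\mathfrak{su}(2n+1)),
\]
together with the $\mathfrak u(1)$-extended variants of these. For $n=2$ one must also treat $A_3\simeq D_3$. The corresponding intersections with the $A_{2n-1}$ summand are, respectively, $A_{2n-2}=\mathfrak{su}(2n-1)$ (possibly extended by a central $\mathfrak u(1)$) and $C_{n-1}$ (possibly with $\mathfrak u(1)$). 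In each case the intersection lies inside the stabilizer in $\mathfrak{su}(2n)$ of a nonzero vector, or at least of a complex line, in the standard representation $\mathbb C^{2n}$.

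\textbf{Step 2 (the obstruction).} The standard $\mathfrak{sp}(n)$ acts irreducibly on $\mathbb C^{2n}$. Hence it fixes no complex line, and it cannot lie in any of these intersections.

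If an extended intersection only stabilizes a line rather than fixing a vector, the irreducibility argument still applies: an irreducible action of dimension $2n\ge 4$ preserves no line. As a secondary check one can compare dimensions. For $C_{n-1}\oplus\mathfrak u(1)$ we have $\dim C_{n-1}+1<\dim C_n$, and for $A_{2n-2}\oplus\mathfrak u(1)$ we have $\dim=(2n-1)^2$, but any embedding of $C_n$ there acts reducibly on $\mathbb C^{2n}$.

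\textbf{Main obstacle.} The delicate part is making sure the list in Step~1 is complete. This means checking Onishchik's table for every occurrence of an $A_{2n-1}$ factor, including those arising through isomorphisms, since the factor need not be embedded in the standard way. For $n=2$, $A_3\simeq D_3$, so $\mathfrak{su}(4)$ could appear as the $D_3$ factor of a $D_4$-type or $B_3$-type factorization, for instance $B_3=G_2+D_3$ or $D_4=B_3+D_3$. There the intersection is $\mathfrak{su}(3)$ or $B_2$ inside $\mathfrak{so}(6)$. The case $B_2$ needs care: $\mathfrak{sp}(2)\simeq\mathfrak{so}(5)$, and the standard $\mathfrak{sp}(2)\subset\mathfrak{su}(4)$ corresponds to $\mathfrak{so}(5)\subset\mathfrak{so}(6)$. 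I would therefore examine this case explicitly. If the relevant factorization with intersection $\mathfrak{so}(5)$ genuinely exists, I would restrict the lemma to $n\ge3$ and handle $n=2$ separately, or argue that the isotropy is excluded by the rank or dimension count in the subsequent primitive-exclusion lemma.
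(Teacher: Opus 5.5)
Your strategy is the paper's: list the factorizations with an $A_{2n-1}$ summand, then use that $\mathfrak{sp}(n)$ acts irreducibly on $\mathbb C^{2n}$, so it lies in no line stabilizer. Two points in Step~1 need correction.

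First, the entry ``$A_{2n}=C_n+A_{2n-1}$'' does not exist. The $A$-type family in Onishchik's list is $A_{2m-1}=C_m+A_{2m-2}$, whose $A$-summand has \emph{even} rank, so it never supplies an $A_{2n-1}$ summand. Equivalently, $\SU(2n+1)/\SU(2n)=S^{4n+1}$ carries no transitive $\Sp(n)$-action. You transplanted the parity-restricted AI entry $A_n=C_{(n+1)/2}+A_{n-1}$ ($n$ odd) without checking parity. This is logically harmless, since it only adds a case to exclude. For $n\ge3$ the correct list is just $D_{2n}=B_{2n-1}+A_{2n-1}$ and its $\mathfrak u(1)$-extension, as in the paper.

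The genuine gap is $n=2$. You rightly notice something the paper's one-entry list passes over: through $A_3\simeq D_3$, the algebra $\mathfrak{su}(4)\simeq\mathfrak{so}(6)$ is also a summand in $B_3=G_2+D_3$ and in $D_4=B_3+D_3$ (the triality image of $D_4=B_3+A_3$, with its $\mathfrak u(1)$-extension). Since the standard $\mathfrak{sp}(2)\subset\mathfrak{su}(4)$ is $\mathfrak{so}(5)\subset\mathfrak{so}(6)$, these entries do need a check. But you leave them undecided and offer to restrict to $n\ge3$, which would not prove the lemma as stated.

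The missing step is a dimension count. For any factorization, $\dim(\mathfrak a\cap\mathfrak b)=\dim\mathfrak a+\dim\mathfrak b-\dim\mathfrak s$. This gives $15+14-21=8$ and $15+21-28=8$, and $16+21-28=9$ for the $\mathfrak u(1)$-extended $D_4$ entry. All are strictly less than $\dim\mathfrak{sp}(2)=10$, independently of the embedding, so no entry with a $B_2$-intersection exists.

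Concretely, the intersection is the isotropy $\mathfrak{su}(3)$ of $G_2$ on $S^6$, resp.\ of $\mathrm{Spin}(7)$ on $V_2(\mathbb R^8)$. This is the standard $\mathfrak{su}(3)\subset\mathfrak{so}(6)$, which under $\mathfrak{so}(6)\simeq\mathfrak{su}(4)$ fixes a vector of $\mathbb C^4$, so your irreducibility argument applies verbatim. With this added, your proof covers all $n\ge2$ and is more complete than the paper's, which omits the $D_3$-entries.
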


\begin{proof}
In Onishchik's complete list, the only proper simple compact factorization
in which an $A_{2n-1}$ factor occurs as a proper factor of a larger simple
algebra is
\[
 D_{2n}=B_{2n-1}+A_{2n-1}.
\]
Its intersection with the $A_{2n-1}$ factor is
\[
 A_{2n-2}.
\]
In the standard $2n$-dimensional complex representation this
$A_{2n-2}$ fixes a complex line.  By contrast, the standard
$C_n=\mathfrak{sp}(n)$ acts irreducibly on $\mathbb C^{2n}$ and fixes no
nonzero vector.  Hence it cannote be contained in the intersection.
\end{proof}

\begin{lemma}[primitive enlargements are excluded]\label{AII-lem:primitive}
Let $n\ge2$.  In a compact reductive decomposition
\[
 \mathfrak i=\mathfrak g+\mathfrak l_s,
 \qquad \mathfrak g=A_{2n-1}^{\,3},
\]
no effective irreducible component of Onishchik's reduction can contain a
proper enlargement of one of the three $A_{2n-1}$ ideals of
$\mathfrak g$ while carrying the standard
$C_n=\mathfrak{sp}(n)$ in the intersection.
\end{lemma}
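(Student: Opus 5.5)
The proof will mirror Lemma~\ref{AI-lem:primitive-excluded} for the AI family, with Lemma~\ref{AII-lem:factorobs} in place of Lemma~\ref{AI-lem:factorobs}. Suppose an effective irreducible component of Onishchik's reduction contained a proper enlargement of one $A_{2n-1}$ ideal of $\mathfrak g$. Then Theorem~4.3 of \cite{Onishchik1969} would make that component primitive. Following a primitive chain from this $A_{2n-1}$ ideal, we would reach a proper simple compact factorization
\[
 \mathfrak s=A_{2n-1}+\mathfrak b .
\]
The intersection $A_{2n-1}\cap\mathfrak b$ would carry the image of $\mathfrak g\cap\mathfrak l_s$ in this factor.

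The first step is to check that this image is the full standard $C_n$. For $n\ge2$ the algebra $C_n=\mathfrak{sp}(n)$ is simple and not of type $A_1$, so it is strongly semisimple. The general property of maximal strongly semisimple subalgebras, already used for \eqref{AI-intersection}, then gives
\[
 \Delta\mathfrak{sp}(n)=\mathfrak g\cap\mathfrak l\subset\mathfrak l_s,
 \qquad\text{hence}\qquad
 \mathfrak g\cap\mathfrak l_s=\Delta\mathfrak{sp}(n).
\]
Next, I would track the projection of $\Delta\mathfrak{sp}(n)$ along the chain. Each step of Onishchik's reduction only passes to factors and projections, and the diagonal projects injectively to every $A_{2n-1}$ ideal. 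So the standard $\mathfrak{sp}(n)\subset\mathfrak{su}(2n)$ lies in $A_{2n-1}\cap\mathfrak b$, up to an automorphism of $A_{2n-1}$.

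The main obstacle is that this automorphism could be outer. The outer automorphism of $\mathfrak{su}(2n)$ is complex conjugation, and the standard $\mathfrak{sp}(n)$ is stable under it up to conjugacy. Its image therefore still acts irreducibly on $\mathbb C^{2n}$ and fixes no line. Lemma~\ref{AII-lem:factorobs} now gives a contradiction, because the only candidate intersection is $A_{2n-2}$, which fixes a line.

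The low-rank coincidences need a separate check. For $n=2$ we have $A_3=D_3$, so I would also inspect the factorizations of $D_3$-containing algebras listed in Onishchik's table. The only relevant one is $D_4=B_3+D_3$, whose intersection with $D_3$ is $B_2=C_2$. Under $D_3\simeq A_3$ this intersection is exactly the standard $\mathfrak{sp}(2)\subset\mathfrak{su}(4)$, so this case is \emph{not} excluded by the fixed-line argument. Here I would use a dimension count like the one in the $n=3$ AI case instead. Every primitive enlargement of an $A_3$ factor needs a $B_3$ partner. By the intersection constraint $\mathfrak g\cap\mathfrak l_s=\Delta C_2$, any such partner must project diagonally onto all three enlarged factors. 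A single diagonal $B_3$ cannot span all three enlarged $D_4$ factors, since
\[
 \dim(A_3^3+B_3^\Delta)\le 45+21-10=56<84=3\cdot28 .
\]
Two independent $B_3$ factors cannot both surject onto the same partner. This would finish the proof for $n=2$; I expect this case to be the delicate one.
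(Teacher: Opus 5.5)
Your main argument is the paper's proof. The component is primitive, a primitive chain from the enlarged $A_{2n-1}$ ideal ends in a simple factorization $\mathfrak s=A_{2n-1}+\mathfrak b$ whose intersection would have to contain the standard $C_n$, and Lemma~\ref{AII-lem:factorobs} excludes this. The reason is that the available intersection preserves a complex line, while $\mathfrak{sp}(n)$ acts irreducibly on $\mathbb C^{2n}$. Your extra checks that $\mathfrak g\cap\mathfrak l_s=\Delta C_n$, and that an outer automorphism (which amounts to dualizing the representation) preserves irreducibility, are harmless added care.

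The low-rank paragraph, however, is wrong, and the ``delicate'' case it sets up does not exist. There is no factorization $D_4=B_3+D_3$ with intersection $B_2$. Every factorization $\mathfrak s=\mathfrak a+\mathfrak b$ satisfies $\dim\mathfrak a+\dim\mathfrak b-\dim(\mathfrak a\cap\mathfrak b)=\dim\mathfrak s$, and $21+15-10=26\neq28$.

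The genuine factorization $\mathfrak{so}(8)=\mathfrak{spin}(7)+\mathfrak{so}(6)$ has intersection of dimension $21+15-28=8$, namely $A_2=\mathfrak{su}(3)$. It is the triality image of the $n=2$ member $D_4=B_3+A_3$ of the family already treated in Lemma~\ref{AII-lem:factorobs}. The other low-rank factorization with a $D_3\simeq A_3$ summand is $B_3=G_2+D_3$, which the paper's lemma does not list explicitly. It also has intersection $A_2$, since $14+15-21=8$. The variants with an extra torus have intersections whose part in the $A_3$ summand lies in a $\mathfrak u(3)$.

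Every such subalgebra of $\mathfrak{su}(4)$ acts on $\mathbb C^4$ as $\mathbb C^3\oplus\mathbb C$ or its dual, so it preserves a line. In any case it has dimension at most $9<10=\dim C_2$. Hence the fixed-line argument already covers $n=2$ uniformly, with no separate treatment.

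Your dimension count with a diagonal $B_3$ partner therefore concerns a nonexistent configuration and should be dropped rather than relied on. As written it would also need justification of the projection pattern you assign to the partner, which you carried over from the AI $n=3$ case without checking.
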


\begin{proof}
An effective irreducible component is primitive by Onishchik's reduction.
Following a primitive chain from one $A_{2n-1}$ ideal reaches a proper
simple compact factorization
\[
 \mathfrak s=A_{2n-1}+\mathfrak b
\]
in which the standard $C_n$ survives in the intersection.  This is
excluded by Lemma~\ref{AII-lem:factorobs}.
\end{proof}

\begin{lemma}[minimal four-factor extension]\label{AII-lem:minimalH4}
Assume the primitive enlargements of Lemma~\ref{AII-lem:primitive} are
excluded.  If the connected isometry algebra $\mathfrak i$ properly
contains
\[
 \mathfrak g=\mathfrak h^3,\qquad
 \mathfrak h=\mathfrak{su}(2n),
\]
then the connected isometry group contains a transitive subgroup $J$ with
\[
 \Lie(J)\simeq\mathfrak h^4
\]
containing the original $\mathfrak h^3$ action.
\end{lemma}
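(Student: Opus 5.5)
The argument follows the proof of Lemma~\ref{AI-lem:minimal-H4} almost verbatim, with $\mathfrak h=\mathfrak{su}(2n)$, $\mathfrak k=\mathfrak{sp}(n)$, and Lemmas~\ref{AII-lem:normalizer} and \ref{AII-lem:primitive} in place of their AI counterparts.

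\emph{Step 1: $\mathfrak i$ is semisimple.} The centre of $\mathfrak i$ centralizes $\mathfrak g$, so it vanishes by Lemma~\ref{AII-lem:normalizer}. Since $\mathfrak i$ is compact, it is therefore semisimple, and we decompose it into simple ideals.

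\emph{Step 2: every simple ideal of $\mathfrak i$ is a copy of $\mathfrak h$.} Take a simple ideal $\mathfrak g_j\simeq\mathfrak h$ of $\mathfrak g$ and a simple ideal $\mathfrak s$ of $\mathfrak i$ onto which it projects nontrivially. The projection is injective. If its image were proper, the corresponding effective irreducible component of Onishchik's reduction $\mathfrak i=\mathfrak g+\mathfrak l_s$ would be a primitive simple enlargement of $A_{2n-1}$.

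That component carries the standard $C_n$ in its intersection, for the following reason. Since $n\ge2$, $\mathfrak{sp}(n)$ is strongly semisimple, so $\Delta\mathfrak k\subset\mathfrak l_s$ and $\mathfrak g\cap\mathfrak l_s=\Delta\mathfrak{sp}(n)$, exactly as in \eqref{AI-intersection}. Lemma~\ref{AII-lem:primitive} then excludes this case. Hence every nonzero projection is onto, and $\mathfrak s\simeq\mathfrak h$.

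\emph{Step 3: block structure of $\mathfrak i$.} Two commuting simple ideals of $\mathfrak g$ cannot both project onto the same simple $\mathfrak s$. An ideal $\mathfrak s$ met by no factor would centralize $\mathfrak g$, contradicting Lemma~\ref{AII-lem:normalizer}. This gives
\[
 \mathfrak i=\mathfrak i_1\oplus\mathfrak i_2\oplus\mathfrak i_3,\qquad \mathfrak i_j\simeq\mathfrak h^{r_j},
\]
with $\mathfrak g_j$ diagonal in $\mathfrak i_j$. Properness of $\mathfrak i\supsetneq\mathfrak g$ forces some $r_j\ge2$.

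\emph{Step 4: the four-factor subgroup.} For that $j$, choose one simple factor $\mathfrak s$ of $\mathfrak i_j$ and let $\widehat{\mathfrak h}$ be the diagonal copy of $\mathfrak h$ in the remaining $r_j-1$ factors. Set
\[
 \mathfrak j=(\mathfrak s\oplus\widehat{\mathfrak h})\oplus\bigoplus_{k\ne j}\mathfrak g_k\simeq\mathfrak h^4 .
\]
This is a subalgebra containing $\mathfrak g$. Let $J$ be the corresponding connected subgroup of $I_0$. It is closed, being the image of a compact group $\widetilde{H}^4$. Since $J\supset G_0$, it is transitive and contains the original $\mathfrak h^3$ action.

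\emph{Main obstacle.} The one delicate point is Step 2: verifying that the reduction really carries the full standard $C_n$ in the intersection along the primitive chain, so that Lemma~\ref{AII-lem:primitive} applies. This needs the strong semisimplicity of $\mathfrak{sp}(n)$, which holds for $n\ge2$, together with Onishchik's Theorem~3.3 of \cite{Onishchik1969}, used exactly as in the AI case. No low-rank exception arises, since $C_n$ is never of type $A_1$ for $n\ge2$.

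If desired, the same Goursat and dimension argument as in Lemma~\ref{AI-lem:minimal-H4} then also gives $\mathfrak m\simeq\mathfrak h\oplus\mathfrak k$. It uses the irreducibility of the AII isotropy module $\mathfrak q$ over $\mathfrak{sp}(n)$ and shows that the simple $\mathfrak h$-ideal of $\mathfrak m$ couples diagonally to a nonempty set of the other original factors, as needed for the metric exclusion that follows.
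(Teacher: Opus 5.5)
Your proposal is correct and follows essentially the same route as the paper. Both arguments decompose $\mathfrak i$ into simple ideals and use the excluded primitive enlargements to force every nonzero projection of an original $\mathfrak{su}(2n)$ ideal to be an isomorphism; Lemma~\ref{AII-lem:normalizer} then rules out ambient ideals missed by $\mathfrak g$, giving $\mathfrak i\simeq\mathfrak h^{r_1}\oplus\mathfrak h^{r_2}\oplus\mathfrak h^{r_3}$, and splitting one factor off a block with $r_j\ge2$ yields $\mathfrak j\simeq\mathfrak h^4$. Your additional remarks only make explicit what the paper handles in the proof of Theorem~\ref{AII-thm:fullisom} or leaves implicit: the vanishing centre, carrying $\Delta\mathfrak{sp}(n)$ into $\mathfrak l_s$ so that Lemma~\ref{AII-lem:primitive} applies, and the closedness of $J$.
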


\begin{proof}
Decompose $\mathfrak i$ into simple ideals.  A nonzero projection of one
original simple ideal $\mathfrak g_j\simeq\mathfrak h$ to an ambient simple
ideal is injective.  If the image were proper, the corresponding effective
irreducible component would give a primitive simple enlargement, already
excluded.  Hence every nonzero projection is onto another copy of
$\mathfrak h$.  Two commuting original simple ideals cannot both project
onto the same nonabelian ambient ideal.  An ambient ideal missed by all
three would centralize $\mathfrak g$, contrary to
Lemma~\ref{AII-lem:normalizer}.  Therefore, after reordering,
\[
 \mathfrak i=
 \mathfrak h^{r_1}\oplus\mathfrak h^{r_2}\oplus\mathfrak h^{r_3},
\]
with the $j$-th original factor embedded diagonally in its block.

If $\mathfrak i\ne\mathfrak g$, some $r_j\ge2$.  Select one ambient
$\mathfrak h$ factor from that block and replace the remaining
$r_j-1$ factors by their diagonal copy.  Together with the other two
original simple ideals this gives a subalgebra
\[
 \mathfrak j\simeq\mathfrak h^4
\]
containing $\mathfrak g$.  The corresponding subgroup $J$ contains the
transitive group $G$, hence is itself transitive.
\end{proof}

\begin{lemma}[metric obstruction to the four-factor extension]
\label{AII-lem:H4metric}
No metric in the detected diagonal family
\[
 (x,x,z,u,1)
\]
is invariant under a proper four-factor extension from
Lemma~\ref{AII-lem:minimalH4}.
\end{lemma}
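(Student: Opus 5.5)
The plan is to transplant the argument of Lemma~\ref{AI-lem:H4-metric} to the AII setting; its only new input is that the isotropy of the extension must first be determined. The key steps are the following.

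\textbf{Step 1: the isotropy of $J$.} Let $J$ be the transitive four-factor subgroup from Lemma~\ref{AII-lem:minimalH4}, with $\mathfrak j=\mathfrak h_A\oplus\mathfrak h_B\oplus\mathfrak h_C\oplus\mathfrak h_D$. The first original factor sits diagonally in $\mathfrak h_A\oplus\mathfrak h_B$, and the other two factors are $\mathfrak h_C,\mathfrak h_D$. The isotropy $\mathfrak m=\mathfrak j\cap\mathfrak l$ has dimension $\dim\mathfrak h+\dim\mathfrak k$ and satisfies $\mathfrak g\cap\mathfrak m=\Delta\mathfrak k$. Let $\mathfrak b$ be the projection of $\mathfrak m$ to $\mathfrak h_A\oplus\mathfrak h_B$; transitivity gives $\mathfrak h^2=\Delta\mathfrak h+\mathfrak b$.

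\textbf{Step 2: the projections of $\mathfrak b$.} Since $\mathfrak{su}(2n)/\mathfrak{sp}(n)$ is an irreducible symmetric pair, any subalgebra strictly containing $\mathfrak k$ equals $\mathfrak h$. So each projection of $\mathfrak b$ is $\mathfrak k$ or $\mathfrak h$. The dimension count of Lemma~\ref{AI-lem:minimal-H4} excludes both being $\mathfrak k$. Goursat's lemma excludes a graph, because a graph cannot contain $\Delta\mathfrak k$. Hence $\mathfrak b=\mathfrak h\oplus\mathfrak k$ and $\mathfrak m\simeq\mathfrak h\oplus\mathfrak k$.

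\textbf{Step 3: the shape of $\mathfrak m$.} Let $S\subset\{C,D\}$ be the set of factors on which the $\mathfrak h$-ideal projects nontrivially. Effectivity forces $S\ne\emptyset$. The condition $\mathfrak g\cap\mathfrak m=\Delta\mathfrak k$ then forces
\[
\mathfrak m=\Delta_{\{A\}\cup S}\mathfrak h\oplus\Delta_{\{B\}\cup S^c}\mathfrak k .
\]
Consequently $J/J_o\simeq H^{1+|S|}/\Delta H\times H^{1+|S^c|}/\Delta K$.

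\textbf{Step 4: the metric contradiction.} The first factor is a Ledger--Obata factor with multiplicity space $V_m$, $m=1+|S|\in\{2,3\}$. Every $J$-invariant metric restricts to it as $\langle\,,\rangle_V\otimes Q$. The original horizontal directions belonging to the $m$ selected factors have components only in this factor. Their mutual inner products are therefore $\langle\bar e_i,\bar e_j\rangle_V Q$. The detected metric $(x,x,z,u,1)$ makes $\mathfrak q_1,\mathfrak q_2,\mathfrak q_3$ pairwise orthogonal, so $\langle\bar e_i,\bar e_j\rangle_V=0$ for $i\ne j$. But $\sum_i\bar e_i=0$ with every $\bar e_i\ne0$, which gives $0=\|\sum\bar e_i\|^2=\sum\|\bar e_i\|^2>0$, a contradiction.

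The main obstacle is Step~3. One must check carefully that a component on $\mathfrak q_1$ in the second product factor does not spoil the clean inner-product formula. I would handle this as in the AI case, by noting that under the normalization the $\mathfrak k$-ideal is concentrated on $B$ and $S^c$. It then suffices to use only the horizontal directions attached to the factors in $\{A\}\cup S$, whose projections to the second factor vanish. If the direction from $\mathfrak q_1$ does have a $B$-component, I would instead use the original horizontal copies indexed by $S$ together with the $A$-direction restricted appropriately. Restricting to the selected coordinate classes should still yield the positivity contradiction.
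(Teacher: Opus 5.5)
\textbf{Verdict.} Your Steps 1--3 and the closing positivity contradiction follow the paper's proof. The paper also reruns the Goursat argument of Lemma~\ref{AI-lem:minimal-H4}, using only that $\mathfrak{sp}(n)\subset\mathfrak{su}(2n)$ is an irreducible symmetric subalgebra, to reach $\mathfrak m=\Delta_{\{A\}\cup S}\mathfrak h\oplus\Delta_{\{B\}\cup S^c}\mathfrak k$, and then applies the Ledger--Obata multiplicity argument. There is a gap at the point you flag yourself, and your proposed repairs do not close it.

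\textbf{The gap.} Step~4 claims that the original horizontal directions of the selected factors have components only in the Ledger--Obata factor. This is false for $\mathfrak q_1$. Inside $\mathfrak j$ it is $\{(X,X,0,0):X\in\mathfrak q\}$, and its $B$-coordinate gives a nonzero class in $\mathfrak h^{\{B\}\cup S^c}/\Delta\mathfrak k$ because $X\notin\mathfrak k$. You also cannot work with the copies indexed by $S$ alone. For $|S|=1$ there is only one such copy. For $|S|=2$ the single condition $\langle\bar e_C,\bar e_D\rangle_V=0$ is satisfied by a positive definite form on the two-dimensional space $V_3$: simply declare $\bar e_C,\bar e_D$ orthonormal. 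So the $\mathfrak q_1$-direction is indispensable. ``Restricting it appropriately'' to its first-factor component is legitimate only once you know that its second-factor component cannot contribute to the inner products you use.

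\textbf{The missing observation.} Every $J$-invariant metric is block-orthogonal for the splitting $\mathfrak j/\mathfrak m=T_1\oplus T_2$, where $T_1=\mathfrak h^{\{A\}\cup S}/\Delta\mathfrak h\cong V_m\otimes\mathfrak h$ and $T_2=\mathfrak h^{\{B\}\cup S^c}/\Delta\mathfrak k$. The argument runs as follows.
\begin{enumerate}
\item The ideal $\Delta_{\{A\}\cup S}\mathfrak h$ of $\mathfrak m$ acts on $T_1$ by $\mathrm{id}\otimes\operatorname{ad}$ and acts trivially on $T_2$.
\item For $Z$ in this ideal, $v\in T_1$ and $w\in T_2$, invariance gives $g(Z\cdot v,w)=-g(v,Z\cdot w)=0$.
\item The vectors $Z\cdot v$ span $T_1$ because $[\mathfrak h,\mathfrak h]=\mathfrak h$, so $g=g_1\oplus g_2$.
\end{enumerate}
Each $\mathfrak q_j$ with $j\in S$ lies entirely in $T_1$. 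Hence the $T_2$-component of $\mathfrak q_1$ drops out of every inner product you need, and these are exactly $\langle\bar e_i,\bar e_j\rangle_V\,Q$. The factor $Q$ is not degenerate on these pairs: the relative twist between two coordinates of the Ledger--Obata factor fixes $\mathfrak k$ pointwise because $\Delta\mathfrak k\subset\mathfrak m$, so it maps $\mathfrak q$ onto $\mathfrak q$. The paper uses this block-orthogonality tacitly in Lemma~\ref{AI-lem:H4-metric}, where it notes that the directions indexed by $S$ have no component in the second product factor. With it, your argument is complete.

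\textbf{Minor point in Step~2.} A graph of an automorphism \emph{can} contain $\Delta\mathfrak k$; the graph of the identity or of the Cartan involution does. Exclude the graph case instead by the dimension bound $\dim\mathfrak b\ge\dim\mathfrak h+\dim\mathfrak k>\dim\mathfrak h$.
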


\begin{proof}
Write
\[
 \mathfrak j=
 \mathfrak h_A\oplus\mathfrak h_B\oplus
 \mathfrak h_C\oplus\mathfrak h_D,
\]
so that one original factor is diagonal in the $A,B$ block and the other
two are the $C,D$ factors.  Let $\mathfrak m$ be the isotropy algebra of
the transitive $J$-action.  Since
\[
 \dim J-\dim M_n=\dim\mathfrak h+\dim\mathfrak k,
 \qquad \mathfrak k=\mathfrak{sp}(n),
\]
the same two-factor Goursat argument as in
Lemma~\ref{AI-lem:minimal-H4} applies.  The only ingredient used there is
that $\mathfrak k\subset\mathfrak h$ is a maximal irreducible symmetric
subalgebra, which also holds for
$\mathfrak{sp}(n)\subset\mathfrak{su}(2n)$.  After automorphisms and
permuting the four factors one obtains
\[
 \mathfrak m=
 \Delta_{\{A\}\cup S}\mathfrak h
 \oplus
 \Delta_{\{B\}\cup S^c}\mathfrak k
\]
for a nonempty subset $S\subset\{C,D\}$.

Thus one product factor of $J/J_o$ is the Ledger--Obata quotient
\[
 \frac{H^{\,1+|S|}}{\Delta H}.
\]
Its tangent multiplicity space is
\[
 V_m=\{(t_1,\dots,t_m):\textstyle\sum_i t_i=0\},
 \qquad m=1+|S|\in\{2,3\}.
\]
Every $J$-invariant metric restricts there as
\[
 \langle\, ,\,\rangle_V\otimes Q.
\]
For
\[
 \bar e_i=e_i-\frac1m(1,\dots,1)
\]
the original horizontal directions represented in this factor have mutual
inner products
\[
 \langle\bar e_i,\bar e_j\rangle_VQ.
\]
The detected metric makes the three original horizontal copies pairwise
orthogonal.  Hence
\[
 \langle\bar e_i,\bar e_j\rangle_V=0\qquad(i\ne j).
\]
But
\[
 \bar e_1+\cdots+\bar e_m=0
\]
and every $\bar e_i$ is nonzero, so positive definiteness gives
\[
 0=\Bigl\|\sum_i\bar e_i\Bigr\|_V^2
  =\sum_i\|\bar e_i\|_V^2>0,
\]
a contradiction.
\end{proof}

\begin{theorem}[connected full isometry group]\label{AII-thm:fullisom}
Let $n\ge22$ and let $g$ be either asymmetric Einstein metric of
Theorem~\ref{AII-thm:mainfinal}.  Then
\[
 \operatorname{Isom}_0(M_n,g)
 =
 \mathrm{SU}(2n)^3/\Delta\{\pm I\}.
\]
Thus the displayed three-factor action is already the connected full
isometry group.
\end{theorem}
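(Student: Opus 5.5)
The argument follows the template of Theorem~\ref{AI-thm:fullisom}, with the AII lemmas of this section in place of the AI ones. Let $I_0=\Isom_0(M_n,g)$ with Lie algebra $\mathfrak i$, let $\mathfrak l$ be the isotropy algebra at the origin, and set $\mathfrak g=\mathfrak{su}(2n)^3$. Transitivity of the effective image $G_0$ gives the compact reductive decomposition
\[
 \mathfrak i=\mathfrak g+\mathfrak l,\qquad \mathfrak g\cap\mathfrak l=\Delta\mathfrak{sp}(n).
\]

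\textbf{Step 1: strong semisimplicity.} By Lemma~\ref{AII-lem:normalizer}, $\mathfrak i$ has zero centre. It also has no simple ideal of type $A_1$: every homomorphism $A_{2n-1}\to A_1$ vanishes for $n\ge22$, so such an ideal would centralize $\mathfrak g$, again contradicting Lemma~\ref{AII-lem:normalizer}. Hence $\mathfrak i$ is strongly semisimple. Onishchik's reduction \cite[Theorem~3.3]{Onishchik1969} then gives $\mathfrak i=\mathfrak g+\mathfrak l_s$, where $\mathfrak l_s$ is the maximal strongly semisimple subalgebra of $\mathfrak l$. Since $\mathfrak{sp}(n)=C_n$ with $n\ge22$ is itself strongly semisimple, $\Delta\mathfrak{sp}(n)\subset\mathfrak l_s$, so
\[
 \mathfrak g\cap\mathfrak l_s=\Delta\mathfrak{sp}(n).
\]
This places the standard $C_n$ in every intersection along a primitive chain.

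\textbf{Step 2: excluding proper enlargements.} Lemma~\ref{AII-lem:primitive} rules out every effective primitive enlargement of an $A_{2n-1}$ ideal, since the standard $C_n$ is carried in the intersection. Suppose $\mathfrak i\ne\mathfrak g$. Then Lemma~\ref{AII-lem:minimalH4} produces a transitive subgroup $J\subset I_0$ with $\Lie(J)\simeq\mathfrak h^4$ containing the $H^3$ action. Because $J\subset I_0$, the metric $g$ is $J$-invariant. But $g$ lies in the detected diagonal family $(x,x,z,u,1)$, and Lemma~\ref{AII-lem:H4metric} says no such metric is invariant under a proper four-factor extension. This contradiction gives $\mathfrak i=\mathfrak g$.

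\textbf{Step 3: passing to groups.} Both $I_0$ and $G_0$ are connected with the same Lie algebra, and $G_0\subset I_0$, so $I_0=G_0$. The ineffective kernel was computed above as $Z_0=\Delta\{\pm I\}$, which yields $\Isom_0(M_n,g)=\SU(2n)^3/\Delta\{\pm I\}$.

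The main obstacle is Step~2, specifically making sure that the passage from $\mathfrak l$ to $\mathfrak l_s$ keeps the full $\Delta C_n$ in the intersection. Only then do the hypotheses of Lemma~\ref{AII-lem:primitive} actually hold, and only then can the Goursat argument behind Lemma~\ref{AII-lem:H4metric}, which relies on maximality of the symmetric subalgebra $\mathfrak{sp}(n)\subset\mathfrak{su}(2n)$, be applied. I would check this first by the same reasoning used for \eqref{AI-intersection}: $\Delta C_n$ is a strongly semisimple subalgebra of $\mathfrak l$, hence lies in the maximal one.
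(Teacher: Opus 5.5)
Your proposal is correct and follows the paper's proof essentially step for step. The steps are: strong semisimplicity of $\mathfrak i$ from Lemma~\ref{AII-lem:normalizer}; Onishchik's reduction keeping $\Delta C_n$ inside $\mathfrak l_s$; exclusion of primitive enlargements by Lemma~\ref{AII-lem:primitive}; exclusion of reducible enlargements via Lemmas~\ref{AII-lem:minimalH4} and~\ref{AII-lem:H4metric}; and passage to connected groups with ineffective kernel $\Delta\{\pm I\}$.

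For the point you flag, $\Delta C_n\subset\mathfrak l_s$, the cleanest justification is this: a simple $C_n$ has zero projection to the centre and to every $A_1$ ideal of $\mathfrak l$, so it lies in the sum of the remaining simple ideals.
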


\begin{proof}
Let $I_0=\operatorname{Isom}_0(M_n,g)$ with Lie algebra $\mathfrak i$.
Lemma~\ref{AII-lem:normalizer} gives zero centre and excludes $A_1$ ideals,
so $\mathfrak i$ is strongly semisimple.  The isotropy intersection
\[
 \mathfrak g\cap\mathfrak l=\Delta C_n
\]
is strongly semisimple.  If $\mathfrak l_s$ denotes a maximal strongly
semisimple subalgebra of $\mathfrak l$, Onishchik's strongly semisimple
reduction retains this intersection:
\[
 \Delta C_n\subset\mathfrak l_s,\qquad
 \mathfrak g\cap\mathfrak l_s=\Delta C_n.
\]
Lemma~\ref{AII-lem:primitive} therefore excludes every effective primitive
simple enlargement.  If a proper reducible enlargement
remained, Lemma~\ref{AII-lem:minimalH4} would produce a transitive
four-factor subgroup, but Lemma~\ref{AII-lem:H4metric} excludes its
invariance for the detected metric.  Therefore
\[
 \mathfrak i=\mathfrak g.
\]
Both groups are connected and the displayed action has precisely the finite
ineffective kernel $\Delta\{\pm I\}$.
\end{proof}

\subsection{The two threshold metrics are not isometric to each other}
\label{AII-sec:pairwise}

We now settle the finite comparison left after Theorem~\ref{AII-thm:fullisom}.

\begin{proposition}[pairwise separation]\label{AII-prop:pairwise}
Fix $n\ge22$.  Let $g_-$ and $g_+$ denote the unique asymmetric Einstein metrics with
\[
 \frac16<u_-<1,
 \qquad
 u_+>1.
\]
Then $g_-$ and $g_+$ are not related by any combination of a positive
homothety and an arbitrary Riemannian isometry.
\end{proposition}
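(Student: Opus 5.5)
The proof will follow the AI argument of Proposition~\ref{AI-prop:pairwise} essentially verbatim, with Theorem~\ref{AII-thm:fullisom} taking the place of Theorem~\ref{AI-thm:fullisom}. Suppose $\phi$ is a Riemannian isometry with $\phi^*g_+=c\,g_-$ for some $c>0$. Homotheties do not change the connected isometry group. So $\phi$ conjugates $\operatorname{Isom}_0(M_n,g_-)$ onto $\operatorname{Isom}_0(M_n,g_+)$, and by Theorem~\ref{AII-thm:fullisom} both groups equal the effective image $\mathrm{SU}(2n)^3/\Delta\{\pm I\}$. After composing $\phi$ with a $G$-translation we may assume it fixes the base point. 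Then it normalizes the isotropy and induces an automorphism of the pair $(\mathfrak{su}(2n)^3,\Delta\mathfrak{sp}(n))$. Such an automorphism permutes the three simple ideals and applies automorphisms of $\mathfrak{su}(2n)$ factorwise. These factorwise automorphisms must agree on $\mathfrak{sp}(n)$ up to the finite normalizer.

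Next I will show that the permutation can only swap the first two factors. The induced map sends the horizontal modules $\mathfrak q_i$ to $\mathfrak q_{\pi(i)}$, and it scales the metric eigenvalues $(x,x,z)$ by $c$. I claim $z\ne x$ on both metrics. Setting $z=x$ in \eqref{AII-E1} gives $(3u-12x+1)x^2+12x^3-4x^2=3x^2(u-1)$, which is nonzero because $u\ne1$. Hence the permutation must fix the third factor, the unique one with horizontal scale $z$. It is therefore either the identity or the transposition of factors $1,2$.

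On the multiplicity space $V_3$, this transposition acts by $-1$ on $\mathbb Rv_1$ and by $+1$ on $\mathbb Rv_2$, so it preserves both lines. Factorwise automorphisms act only on the $\mathfrak k$ tensor factor and not on the multiplicity coordinate, so they preserve both lines as well. Consequently the pulled-back metric has the same eigenvalue on each of $v_1\otimes\mathfrak k$ and $v_2\otimes\mathfrak k$. Comparing the $v_2$ coefficients, which are $1$ for both normalized metrics, gives $c=1$. Comparing the $v_1$ coefficients then gives $u_+=u_-$, which contradicts $u_-<1<u_+$.

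The main obstacle is the first reduction step, from an arbitrary isometry to an automorphism of the homogeneous pair. Everything rests on Theorem~\ref{AII-thm:fullisom}, together with the standard fact that the normalizer of the effective $G$-action, modulo translations, acts through automorphisms of $G$ preserving $\Delta K$. One point needs care here: the induced automorphism may be outer on $\mathfrak{su}(2n)$, namely complex conjugation. I will check that conjugation preserves the decomposition $\mathfrak{su}(2n)=\mathfrak{sp}(n)\oplus\mathfrak q$ up to an inner automorphism by $\Sp(n)$-normalizing elements. If it does, the multiplicity-space argument above is unaffected.
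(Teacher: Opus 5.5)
Your proof is correct and shares the paper's skeleton. Theorem~\ref{AII-thm:fullisom} reduces an isometry--homothety to an automorphism of $(\mathfrak{su}(2n)^3,\Delta\mathfrak{sp}(n))$, and one then compares the coefficients on $\mathbb Rv_2$ and $\mathbb Rv_1$.

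The difference is how you rule out an exchange of the two vertical eigenlines. You use horizontal data. Your computation $E_1|_{z=x}=3x^2(u-1)$ is right, so $z\ne x$, and the $2+1$ multiplicity pattern of $(x,x,z)$ forces $\pi(3)=3$. Hence $\pi\in\{e,(12)\}$, and both of these preserve $\mathbb Rv_1$ and $\mathbb Rv_2$. This is exactly the paper's argument in the AI case, Proposition~\ref{AI-prop:pairwise}.

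The paper's AII proof instead uses only vertical data. The $S_3$-orbit of $\mathbb Rv_1$ consists of the three root lines $\mathbb R(e_i-e_j)$, none of which is $\mathbb Rv_2$. Since $u_\pm\ne1$, the eigenlines of $\operatorname{diag}(u,1)$ are intrinsic, so an admissible automorphism must preserve each line separately. Your route costs the extra $E_1$ check but makes the permutation explicit; the paper's route never touches the horizontal equations.

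Your closing worry about complex conjugation is unnecessary. Because the automorphism preserves $\Delta\mathfrak k$, each factor automorphism $\alpha_i$ maps $\mathfrak k$ onto itself, and the $\alpha_i$ agree \emph{exactly} on $\mathfrak k$, not merely up to a finite normalizer. Because automorphisms preserve the Killing form, each $\alpha_i$ also preserves $\mathfrak q=\mathfrak k^\perp$. Hence they act on $V\otimes\mathfrak k$ as $\mathrm{id}_V\otimes\alpha$, whether inner or outer.
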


\begin{proof}
Suppose
\[
 F^*g_+=c\,g_-
\]
for some isometry $F$ and $c>0$.  By Theorem~\ref{AII-thm:fullisom}, $F$
conjugates the connected full isometry group
\[
 G_0=\mathrm{SU}(2n)^3/\Delta\{\pm I\}
\]
to itself.  Composing $F$ with a translation in $G_0$, we may assume that
$F$ fixes the base point.  Its differential therefore comes from an
automorphism of the homogeneous pair
\[
 \bigl(\mathfrak{su}(2n)^3,\Delta\mathfrak{sp}(n)\bigr).
\]

Every automorphism of the semisimple algebra
$\mathfrak{su}(2n)^3$ permutes the three simple ideals and then applies
automorphisms to the individual factors.  Preservation of the diagonal
$\mathfrak{sp}(n)$ imposes the same restriction on the three copies of
$\mathfrak{sp}(n)$.  On the two-dimensional anti-diagonal multiplicity
space
\[
 V=\{(a,b,c)\in\mathbb R^3:a+b+c=0\}
\]
the only nontrivial action on multiplicity coordinates is therefore the
standard $S_3$-action by permutation of the three entries.

Recall our orthonormal basis
\[
 v_1=\frac1{\sqrt2}(1,-1,0),
 \qquad
 v_2=\frac1{\sqrt6}(1,1,-2).
\]
The $S_3$-orbit of the line $\mathbb Rv_1$ is
\[
 \mathbb R(e_1-e_2),\qquad
 \mathbb R(e_1-e_3),\qquad
 \mathbb R(e_2-e_3).
\]
None of these lines is $\mathbb Rv_2$.  Thus no automorphism of the
homogeneous pair interchanges the two lines $\mathbb Rv_1$ and
$\mathbb Rv_2$.

On the $\mathfrak{sp}(n)$-multiplicity space the metric endomorphism of either asymmetric Einstein metric is
\[
 A_{\mathfrak k}(u)=
 \begin{pmatrix}u&0\\0&1\end{pmatrix}
\]
in the basis $(v_1,v_2)$.  Since $u\ne1$, its two eigendirections are
intrinsic.  If an automorphism carries one detected diagonal metric to
a homothetic copy of another, it must therefore preserve
$\mathbb Rv_1$ and $\mathbb Rv_2$ separately.  Comparing the coefficient
on $\mathbb Rv_2$ gives $c=1$, and then comparison on
$\mathbb Rv_1$ gives
\[
 u_-=u_+.
\]
This contradicts
\[
 u_-<1<u_+.
\]
Hence no such homothety-isometry exists.
\end{proof}

\begin{theorem}[pairwise non-isometry]\label{AII-cor:two}
For every $n\ge22$, the detected branch contains exactly two
\emph{pairwise non-homothetic and non-isometric} Einstein metrics.
\end{theorem}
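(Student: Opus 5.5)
The statement assembles two results already proved in this section, so the proof will be a short combination of them.

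First, existence and the count come from the sharp detected-threshold theorem. For every integer $n\ge22$, the $\Gamma_{3,2}$-fixed family $(x,x,z,u,1)$ contains exactly two positive Einstein metrics: one with $1/6<u<1$ and one with $u>1$. Each detector root lifts to a unique positive pair $(x,z)$. That theorem rests on three inputs:
\begin{itemize}
\item the Descartes-rule sign tables for $Q_n$ on the two intervals;
\item the non-vanishing of $\operatorname{Res}_u(Q_n,A_n)$ through $J(22+m)$;
\item the positivity argument for the reconstructed $x$ and $z$.
\end{itemize}
By Corollary~\ref{cor:three-factor-Cartan-fixed} and symmetric criticality, these two solutions are genuine $\SU(2n)^3$-invariant Einstein metrics. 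So ``exactly two'' in the normalized family is already settled.

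Second, the two metrics are distinct up to homothety and isometry by Proposition~\ref{AII-prop:pairwise}. I would recall its mechanism. Theorem~\ref{AII-thm:fullisom} shows that an isometry $F$ conjugates the connected full isometry group $\SU(2n)^3/\Delta\{\pm I\}$. After composing with a translation, $F$ induces an automorphism of $(\mathfrak{su}(2n)^3,\Delta\mathfrak{sp}(n))$. Such an automorphism acts on the multiplicity space $V$ only through $S_3$ permutations. These permutations cannot carry $\mathbb Rv_1$ to $\mathbb Rv_2$. Since $u\ne1$, the eigenlines of the vertical metric endomorphism $\operatorname{diag}(u,1)$ are intrinsic. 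Comparing coefficients on $\mathbb Rv_2$ forces $c=1$, and comparing on $\mathbb Rv_1$ then forces $u_-=u_+$. This contradicts $u_-<1<u_+$.

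One point needs a little care. In the comparison step, $F$ could also permute the horizontal factors, which a priori might interfere with the vertical comparison. The $S_3$-orbit argument in Proposition~\ref{AII-prop:pairwise} already covers every permutation, so no horizontal analysis (such as $z\ne x$) is needed. The counting in the first step is up to the normalization $v=1$, which is exactly homothety. Together these give exactly two Einstein metrics in the detected branch, pairwise non-homothetic and non-isometric.

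The main obstacle is Theorem~\ref{AII-thm:fullisom}, which is assumed here. Without it, an arbitrary isometry need not normalize the $\SU(2n)^3$ action, and the reduction to automorphisms of the homogeneous pair would fail.
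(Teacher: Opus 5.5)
Your proposal is correct and matches the paper. The paper states this theorem without a separate proof, as the immediate combination of the sharp detected-threshold theorem (one normalized metric with $1/6<u<1$ and one with $u>1$, each with a unique positive lift) and Proposition~\ref{AII-prop:pairwise}. You are also right that the $S_3$-orbit argument there needs no horizontal input such as $z\ne x$, unlike the $AI$ version.

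One small caveat, shared with the paper: the inputs you list only control detector roots with $u>1/6$, so ``exactly two'' also tacitly needs $0<u\le 1/6$ to be impossible. This does hold: for such $u$, the $z^2$-coefficient of \eqref{AII-E3}, viewed as a quadratic in $x$ with leading coefficient $(n+1)(9u^2+1)$, has discriminant $4u^2\bigl[144n^2u^2-(n+1)(9u^2+1)(6nu+4n-2)\bigr]<0$, so $E_3>0$ for all real $z$.
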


\subsection{The disconnected isometry group}
\label{AII-sec:fullgroup}

The connected group has already been determined.  We now compute the finite
component group.

\begin{theorem}[full isometry group]\label{AII-thm:fullgroup}
Let $n\ge22$ and let $g$ be either asymmetric Einstein metric.  Then
\[
 \operatorname{Isom}(M_n,g)/\operatorname{Isom}_0(M_n,g)
 \simeq (\mathbb Z_2)^3\rtimes\mathbb Z_2,
\]
where the three commuting involutions are the outer involutions of the three
$\mathrm{SU}(2n)$ factors, normalized to fix $\mathrm{Sp}(n)$ pointwise, and
the last $\mathbb Z_2$ interchanges the first two simple factors.  In
particular
\[
 \operatorname{Isom}_0(M_n,g)
 =\mathrm{SU}(2n)^3/\Delta\{\pm I\}
\]
has index $16$ in the full isometry group.
\end{theorem}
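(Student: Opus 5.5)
The plan is to compute $\Isom(M_n,g)$ as an extension of a finite group by the connected group of Theorem~\ref{AII-thm:fullisom}.

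\textbf{Reduction to the isotropy pair.} Any isometry $F$ normalizes $\Isom_0(M_n,g)=G_0$. After composing with a $G_0$-translation we may assume $F$ fixes the base point. Conjugation by $F$ then induces an automorphism $\phi$ of $\mathfrak{su}(2n)^3$ that preserves $\Delta\mathfrak{sp}(n)$, and $d F_o$ equals the induced map on $\mathfrak m$. Conversely, such an automorphism integrates to a diffeomorphism of $M_n$, because $H^3$ is simply connected and $\Delta K$ is connected. It is an isometry exactly when the induced map on $\mathfrak m$ preserves $g$. Hence
\[
 \Isom(M_n,g)/\Isom_0(M_n,g)\simeq \mathcal A_g/\bigl(\operatorname{Ad}(\Delta K)\cdot\text{inner part}\bigr),
\]
where $\mathcal A_g$ is the group of metric-preserving automorphisms of the pair. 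I will compute $\mathcal A_g$ modulo automorphisms that are realized by $G_0$ itself, namely conjugation by elements of $N_{G}(\Delta K)$, which reduces to $\Delta K$ by Lemma~\ref{AII-lem:normalizer}.

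\textbf{Classification of pair automorphisms.} An automorphism is $\phi=(\phi_1,\phi_2,\phi_3)\circ\pi$ with $\pi\in S_3$ and $\phi_i\in\operatorname{Aut}\mathfrak{su}(2n)$. Preservation of $\Delta\mathfrak{sp}(n)$ forces every $\phi_i$ to preserve $\mathfrak{sp}(n)$ and to restrict to one common automorphism $\psi$ of it. Since $\operatorname{Out}(C_n)=1$, I compose with an element of $\Delta K$ to arrange $\psi=\mathrm{id}$. Each $\phi_i$ then lies in the centralizer of $\mathfrak{sp}(n)$ in $\operatorname{Aut}\mathfrak{su}(2n)$.

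\begin{itemize}
\item The inner part of this centralizer is $\operatorname{Ad}$ of the centralizer of $\Sp(n)$ in $\SU(2n)$, which is $\{\pm I\}$, so it acts trivially.
\item The outer part is generated by the Cartan involution $\sigma_i$ of the AII pair. It fixes $\mathfrak{sp}(n)$ pointwise and acts by $-1$ on $\mathfrak q_i$.
\end{itemize}

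This yields the group $(\mathbb Z_2)^3\rtimes S_3$. Each $\sigma_i$ acts on $\mathfrak m$ by $-1$ on $\mathfrak q_i$ and trivially on the vertical part, so it preserves every diagonal metric $g$; by Theorem~\ref{thm:Cartan-fixed-point} these sign symmetries are genuine isometries.

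\textbf{Determining which permutations are isometries.} A permutation $\pi$ permutes the horizontal scales $(x,x,z)$ and acts on $V$ by permuting coordinates. We already know $z\ne x$: substituting $z=x$ into \eqref{AII-E1} gives $3x^2(u-1)=0$, contradicting $u\ne1$. The vertical metric $\operatorname{diag}(u,1)$ in the basis $(v_1,v_2)$ has distinct eigenvalues, so its eigenlines must be preserved. As in Proposition~\ref{AII-prop:pairwise}, only the identity and the transposition $(12)$ preserve both conditions. The transposition $(12)$ acts by $-1$ on $v_1$ and fixes $v_2$, so it preserves $g$. The stabilizer is therefore $(\mathbb Z_2)^3\rtimes\mathbb Z_2$, with $(12)$ swapping $\sigma_1$ and $\sigma_2$.

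\textbf{Main obstacle: faithfulness of the quotient.} It remains to show that this group injects into the component group. Equivalently, no nontrivial element of $(\mathbb Z_2)^3\rtimes\mathbb Z_2$ is induced by an element of $G_0$, which means its action on $\mathfrak m$ must not coincide with $\operatorname{Ad}(k)$ for any $k\in\Delta K$. I would handle this as follows.

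\begin{itemize}
\item The group $\operatorname{Ad}(\Delta K)$ preserves each $\mathfrak q_i$ and acts on the multiplicity space $V$ trivially.
\item A sign pattern $(\sigma_1^{\epsilon_1}\sigma_2^{\epsilon_2}\sigma_3^{\epsilon_3})$ acts on $\mathfrak q_i$ by $\pm1$. If this action equaled $\operatorname{Ad}(k)$, then $\operatorname{Ad}(k)=-1$ would hold on some $\mathfrak q$. This is impossible, because $\mathfrak q$ is a nontrivial irreducible $\mathfrak k$-module and $-1$ would then commute with $\operatorname{ad}\mathfrak k$, forcing $\operatorname{Ad}(k)$ to be central. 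Central elements of $\Sp(n)$ act trivially on $\mathfrak q$, not by $-1$.
\item The swap acts nontrivially on $V$, so it cannot be of the form $\operatorname{Ad}(k)$ either.
\end{itemize}

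Finally, the outer Cartan involution of $\SU(2n)$ is not inner, so the three signs are independent. The index is therefore $16$.
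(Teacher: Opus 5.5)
Your proposal is correct and follows the paper's route: you reduce to metric-preserving automorphisms of $(\mathfrak{su}(2n)^3,\Delta\mathfrak{sp}(n))$ modulo $\operatorname{Ad}(\Delta K)$, identify the factor parts with the Cartan involutions fixing $\mathfrak{sp}(n)$ pointwise, and cut the permutations down to $\{e,(12)\}$ using $z\ne x$ and the eigenlines of $\operatorname{diag}(u,1)$. Your explicit check that no nontrivial class is realized by $\operatorname{Ad}(\Delta K)$ fills in a step the paper only asserts; for the mixed elements $\sigma^\epsilon(12)$ it suffices to note that they carry $\mathfrak q_1$ onto $\mathfrak q_2$, whereas every $\operatorname{Ad}(\Delta k)$ preserves each $\mathfrak q_i$. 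There are two harmless slips: the centralizer of $\Sp(n)$ in $\SU(2n)$ is the whole centre $Z(\SU(2n))\cong\mathbb Z_{2n}$ rather than $\{\pm I\}$ (it still acts trivially by $\operatorname{Ad}$), and the group normalizer $N_G(\Delta K)=\Delta K\cdot Z(G)$ does not follow from the Lie-algebra Lemma~\ref{AII-lem:normalizer} alone, though neither affects the final count of $16$.
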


\begin{proof}
The identity component is characteristic in the full isometry group.  Hence,
after composing an arbitrary isometry with a translation, we may assume that
it fixes the base point; it then induces an automorphism of the homogeneous
pair
\[
 \bigl(\mathfrak{su}(2n)^3,\Delta\mathfrak{sp}(n)\bigr)
\]
preserving the metric.

For $n\ge22$, every automorphism of $\mathfrak{su}(2n)^3$ is a permutation of
the three simple ideals followed by independent automorphisms of the three
$A_{2n-1}$ factors.  Since
\[
 \operatorname{Out}(\mathfrak{su}(2n))\simeq\mathbb Z_2,
\]
this gives three independent outer classes.  The outer involution of
$\mathfrak{su}(2n)$ restricts to an inner automorphism of
$\mathfrak{sp}(n)$ (the Dynkin diagram $C_n$ has no outer automorphism for
$n\ge3$).  After composing with a suitable inner automorphism, it may
therefore be chosen to fix the standard $\mathfrak{sp}(n)$ pointwise.
Applying this normalized involution independently in any of the three simple
factors preserves the diagonal isotropy and every metric block.  These give
a subgroup $(\mathbb Z_2)^3$ of the component group.

It remains to determine the allowed factor permutations.  On the three
$\mathfrak q$-copies the metric has eigenvalues
\[
 (x,x,z).
\]
Here $z\ne x$: substituting $z=x$ in $E_1=0$ gives
\[
 3x^2(u-1)=0,
\]
contrary to $u\ne1$.  Thus a metric-preserving permutation must fix the third
factor and may only interchange the first two.  Conversely the transposition
$(12)$ preserves the metric: it exchanges the two equal $\mathfrak q$ blocks
and acts on the anti-diagonal basis by
\[
 v_1\longmapsto-v_1,\qquad v_2\longmapsto v_2,
\]
so it preserves $\operatorname{diag}(u,1)$ on the
$\mathfrak{sp}(n)$-multiplicity space.

There are no further components.  Modulo inner automorphisms, every
automorphism of the homogeneous pair is represented by the independent
outer classes and a factor permutation, and the preceding eigenvalue
argument gives precisely the stabilizer of the metric.  The transposition
permutes the first two outer generators, giving the stated semidirect
product.  Its order is $8\cdot2=16$.
\end{proof}

\subsection{Comparison with known constructions}

The general aligned-space formalism of Lauret--Will allows an arbitrary number
$s$ of simple factors and explicitly contains spaces of the form
\[
 H^s/\Delta K.
\]
Thus $H^s/\Delta K$ is contained in the general aligned-space theory, while
the detailed Einstein classifications cited above concern two factors.  The
earlier $H\times H/\Delta K$ paper is likewise a two-factor problem.

By the full-isometry theorem, if a previously published homogeneous Einstein
metric were isometric to one of the asymmetric Einstein metrics above, its connected full isometry group would have to be locally
$\mathrm{SU}(2n)^3$, and after conjugating this group its isotropy would be the diagonal
$\mathrm{Sp}(n)$.  The remaining comparison is therefore finite, coming from automorphisms
of the homogeneous pair.  On the two multiplicity spaces the metric endomorphism
has eigenvalue data
\[
 \{x,x,z\},\qquad \{u,1\},
\]
up to a common positive factor and the finite permutations induced by automorphisms
of the three simple factors.  The product and Ledger--Obata constructions are excluded above, while
Proposition~\ref{AII-prop:notNR} excludes natural reductivity for the
displayed group.  The connected full-isometry theorem rules out a larger
connected naturally reductive presentation.

\subsubsection{Comparison with the literature}

The underlying homogeneous spaces are already part of the general
aligned-space framework.  In particular, \cite{LWaligned} treats
\[
 H^s/\Delta K
\]
for arbitrary numbers of simple transitive factors and supplies the
structural and Ricci-curvature formalism used here.  Thus neither the
homogeneous pair nor the general aligned Ricci machinery is claimed to be
new.

The more specific Einstein problem considered here is the
$\Gamma_{3,2}$-fixed three-factor family on
\[
 \SU(2n)^3/\Delta\Sp(n).
\]
The classification in \cite{LWtwo} concerns two simple transitive factors,
and the paper \cite{LWHH} concerns the corresponding
$H\times H/\Delta K$ problem.  Ledger--Obata classifications concern
$F^m/\Delta F$, with the full simple group in the diagonal isotropy rather
than the proper symmetric subgroup
$\Sp(n)\subsetneq\SU(2n)$.  The product, normal homogeneous, and naturally
reductive possibilities have been excluded intrinsically above.

To the best of our knowledge, the two asymmetric Einstein metrics in the
superthreshold range and the sharp transition at $n=22$ have not appeared
previously.  This novelty claim concerns the metrics and the threshold
phenomenon, not the underlying aligned spaces.

\begin{theorem}[comparison with the standard constructions]\label{AII-thm:comparison}
For every $n\ge22$, the two asymmetric Einstein metrics are pairwise
non-homothetic and non-isometric.  Neither is a product metric in the
canonical product presentation, a normal homogeneous metric, or a naturally
reductive metric for the displayed connected transitive group; no larger
connected naturally reductive presentation exists.  The spaces are not
Ledger--Obata spaces, and the cited two-factor aligned and
$H\times H/\Delta K$ classification theorems do not classify this
three-factor homogeneous pair.  To the best of our knowledge, neither the two asymmetric metrics nor the
sharp $n=22$ threshold occurs in the existing classifications cited above.
\end{theorem}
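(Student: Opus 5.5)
The statement is Theorem~\ref{AII-thm:comparison}, which collects results already established in this section. The plan is to assemble it from them, checking each clause in turn. No new computation is needed; only the normal-homogeneous clause needs a one-line reduction.

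\emph{Pairwise separation.} This is Proposition~\ref{AII-prop:pairwise}, restated as Theorem~\ref{AII-cor:two}. That proof uses Theorem~\ref{AII-thm:fullisom} to reduce an arbitrary homothety-isometry to an automorphism of the pair $(\mathfrak{su}(2n)^3,\Delta\mathfrak{sp}(n))$. Such an automorphism cannot exchange the intrinsic eigenlines $\mathbb Rv_1$ and $\mathbb Rv_2$. Comparing the $v_2$ coefficient forces $c=1$, and comparing the $v_1$ coefficient then forces $u_-=u_+$, contradicting $u_-<1<u_+$.

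\emph{Product presentation.} Use the differential of $[a,b,c]\mapsto(ac^{-1},bc^{-1},cK)$ computed above. Every positive product metric pulls back with $g(\mathfrak q_1,\mathfrak q_3)=-\alpha Q\neq0$. The detected metrics are block diagonal in $\mathfrak q_1\oplus\mathfrak q_2\oplus\mathfrak q_3$, so they are not of this form.

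\emph{Normal homogeneous and naturally reductive.} A normal homogeneous metric for the displayed group is naturally reductive for that group. Hence both clauses follow from Proposition~\ref{AII-prop:notNR}. That proposition rests on the invariant-form characterization, which gives $x=u=\tfrac{2a+b}{3b}>\tfrac13$, while \eqref{AII-E2} gives $E_2(u,u)=u^2(3u-1)$, so the two are incompatible. Its second half rules out any larger connected naturally reductive presentation. The argument is that the transvection group would be a transitive normal subgroup of $\Isom_0=\SU(2n)^3/\Delta\{\pm I\}$. By the dimension count $\dim M>2\dim H$, it would then have to be all of $H^3$, which the first half already excludes.

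\emph{Ledger--Obata and literature scope.} Since $\Sp(n)\subsetneq\SU(2n)$, the space is not of the form $F^m/\Delta F$. The cited classifications \cite{LWtwo,LWHH} treat two simple factors, whereas here there are three. The final sentence is a bibliographic claim qualified by ``to the best of our knowledge,'' and it rests on the comparison subsection.

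\emph{Main obstacle.} The only delicate ingredient is Theorem~\ref{AII-thm:fullisom}, which makes the isometry and naturally-reductive reductions intrinsic. Since it is proved earlier, the remaining work is just to cite it correctly. One must also keep the scope restricted to $n\ge22$, because the metrics exist only in that range.
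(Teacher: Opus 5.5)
Your proposal is correct and follows the paper's own justification. The theorem is a summary assembled from four pieces: Proposition~\ref{AII-prop:pairwise} (resting on Theorem~\ref{AII-thm:fullisom}), the cross-term computation for the canonical product presentation, Proposition~\ref{AII-prop:notNR} with its transvection-group dimension argument, and the Ledger--Obata and scope remarks of the comparison subsection. Your one-line reduction of the normal-homogeneous clause to natural reductivity is the one the paper states explicitly in the $AI$ case (right after Proposition~\ref{AI-prop:notNR}) and uses implicitly here.
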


\subsection{Exact nonexistence below the threshold}
\label{AII-sec:below}

We make the lower-range statement completely explicit.  No floating-point
root finder is used.

\begin{proposition}[exact lower-range exclusion]\label{AII-prop:below}
For every integer
\[
 2\le n\le21
\]
the detector polynomial $Q_n$ has no positive real zero.  Consequently the
detected asymmetric branch contains no positive Einstein metric in this range.
\end{proposition}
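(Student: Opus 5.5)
The proposition concerns only twenty fixed integer polynomials $Q_2,\dots,Q_{21}$ of degree ten with rational coefficients. I will certify exactly, for each of them, that there are no positive real roots, and then pass from that to the Einstein statement.

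\emph{Reduction to the detector.} For the consequence I use the elimination chain already set up. Suppose a positive Einstein metric $(x,x,z,u,1)$ in the fixed family exists. Then $E_1=E_2=E_3=0$ at a positive point. Eliminating $z$ gives $R_n(x,u)=0$ (the removed factor $16x^4$ is nonzero). Eliminating $x$ against $E_2$ then gives
\[
 81u^8(n+1)(u-1)^2Q_n(u)=0 .
\]
Since $u>0$ and $n+1>0$, either $u=1$ or $Q_n(u)=0$. The case $u=1$ is the symmetric locus, which is excluded from the asymmetric branch by definition. Hence it suffices to show $Q_n(u)\ne0$ for all $u>0$. Note that no reconstruction or lifting step is needed here, because resultants give necessary conditions.

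\emph{Root exclusion.} For each fixed $n$ I will first try the cheap certificates.
\begin{enumerate}
\item Descartes' rule applied to $Q_n(u)$ itself. The coefficients alternate in sign, so this alone will fail.
\item Descartes' rule applied after the substitutions used above: $u=1+t$ on $(1,\infty)$, and $u=(1+6t)/(6(1+t))$ on $(1/6,1)$. For the remaining interval $(0,1/6]$ I would use $u=t/(6(1+t))$, which maps $(0,\infty)$ onto $(0,1/6)$, together with the value $Q_n(1/6)=(9n+1)G(n)/9216>0$.
\end{enumerate}
Each transformed polynomial will have integer coefficients explicit in $n$. If all coefficients share a sign, the corresponding interval contains no root. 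The point $u=1$ is covered separately, since $Q_n(1)=-64(n-1)^2(7n-3)F(n)$ is nonzero because $F(n)<0$ for $2\le n\le21$.

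\emph{Main obstacle.} The hard step is that Descartes' bound on a whole interval will typically be $2$ rather than $0$. Near threshold ($n=21$) there are two nearby real conjugate roots that become real only at $n=22$, since $Q_n(1)$ changes sign there. More precisely, for $n\le21$ the sign pattern $Q_n(0)>0$, $Q_n(1)>0$, leading coefficient positive, is compatible with an even number of roots. My fallback is an exact Sturm sequence for each $Q_n$, computed over $\mathbb Q$, counting sign variations at $0$ and at $+\infty$. This is finite, rigorous, and involves no floating point. Equivalently, I could use Vincent--Collins--Akritas bisection, subdividing until every subinterval has Descartes count $0$. I expect this bisection to need only a few steps except near $u\approx1$ for $n$ close to $21$.

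As a uniform sanity check I would also observe that $\operatorname{disc}_u Q_n$ has no integer zero in $2\le n\le21$. Combined with the count at one value of $n$, this would show that the number of positive roots is constant on this range. That count is $0$, by a Sturm computation at $n=2$, provided the leading and constant coefficients do not vanish there, which holds by \eqref{AII-Q0} and \eqref{AII-QLC}. This discriminant argument gives a single certificate in place of twenty, and it is the route I would try first.
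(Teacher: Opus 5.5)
Your reduction of the Einstein statement to the detector is correct: resultants vanish at any common root, $x,u>0$, and $u=1$ is the excluded symmetric locus. Your fallback is exactly the paper's proof: an exact Sturm chain of each $Q_n$ over $\mathbb Q$, comparing sign variations at $0^+$ and $+\infty$. The paper tabulates $V_n(0^+)=V_n(+\infty)=5$ for every $2\le n\le 21$. A Descartes/Vincent--Collins--Akritas subdivision would be an equivalent exact certificate. If you use the split at $u=1/6$, note that the paper only certifies $Q_n(1/6)>0$ for $n\ge22$. For $2\le n\le21$ you need $G(n)>0$, which is immediate because $4352n^6$ dominates the negative terms of $G$ for $n\ge2$.

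The discriminant shortcut you say you would try first does not work as formulated. Knowing that $\operatorname{disc}_u Q_n\ne0$ at the twenty integers $n=2,\dots,21$ says nothing about what happens between consecutive integers. The number of positive roots is locally constant only along a \emph{real} $n$-interval on which $\operatorname{disc}_u Q_n$, $Q_n(0)$ and the leading coefficient have no zero. Roots can therefore be born at a non-integer parameter.

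This is exactly what happens at the threshold. At the root $n_0\in(21,22)$ of $F$ one has $Q_{n_0}(1)=0$, so positive roots already exist at a non-integer parameter, while there are none at $n=21$. The change in the count thus occurs in $(21,n_0]$, where no integer test can see it.

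To make your single-certificate route valid, you would have to show that $\operatorname{disc}_u Q_n$, viewed as a polynomial in a real variable $n$, has no zero on the whole interval $[2,21]$. That is a genuinely stronger claim. It can also fail for reasons irrelevant to the question, since collisions of negative or nonreal roots also make the discriminant vanish. In that case you are back to per-$n$ certificates, or a parametric Sturm--Habicht analysis restricted to $u>0$, which is what the paper does.
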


\begin{proof}
For a fixed integer $n$, form the Sturm chain
\[
 S_0=Q_n,\qquad S_1=Q_n',\qquad
 S_{j+1}=-\operatorname{rem}(S_{j-1},S_j)
\]
over $\mathbb Q[u]$.  Let $V_n(0^+)$ and $V_n(+\infty)$ denote the numbers of
sign variations of this chain at the two ends of the positive half-line.
Sturm's theorem gives
\[
 N_+(Q_n)=V_n(0^+)-V_n(+\infty).
\]
The exact endpoint signs admit the following compact certificate.  Writing
only the signs of the eleven members of the Sturm chain,
\[
\begin{array}{c|c|c|c|c}
n&\operatorname{sgn}S(0^+)&V_n(0^+)&
\operatorname{sgn}S(+\infty)&V_n(+\infty)\\ \hline
2&+--++--+++-&5&+++-++--+--&5\\
3&+--++--++--&5&+++-++--++-&5\\
4\le n\le15&+--++---+--&5&+++-++--++-&5\\
16\le n\le18&+--+----+--&5&+++-++--++-&5\\
19\le n\le21&+--+----+--&5&+++--+--++-&5
\end{array}
\]
Every sign in this table is obtained from an exact rational Sturm chain;
there are no floating-point evaluations.  In each case
\[
 N_+(Q_n)=V_n(0^+)-V_n(+\infty)=5-5=0.
\]
This is the exact finite certificate for the lower-range exclusion.
\end{proof}

\subsection{Sharp threshold theorem}

\begin{theorem}[sharp three-factor detection threshold]\label{AII-thm:mainfinal}
For the diagonal detected branch
\[
 g=(x,x,z,u,1)
\]
on
\[
 M_n=\mathrm{SU}(2n)^3/\Delta\mathrm{Sp}(n),\qquad n\ge2,
\]
the following dichotomy holds.
\begin{enumerate}
\item If $2\le n\le21$, there is no positive Einstein metric in the detected
asymmetric branch.
\item If $n\ge22$, there are exactly two such metrics.  One has
\[
 \frac16<u<1,
\]
the other has
\[
 u>1.
\]
They are pairwise non-homothetic and non-isometric.
\end{enumerate}
For either metric with $n\ge22$,
\[
 \operatorname{Isom}_0(M_n,g)
 =
 \mathrm{SU}(2n)^3/\Delta\{\pm I\},
\]
and the full component group is
\[
 \operatorname{Isom}(M_n,g)/\operatorname{Isom}_0(M_n,g)
 \simeq(\mathbb Z_2)^3\rtimes\mathbb Z_2.
\]
\end{theorem}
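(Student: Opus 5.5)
This theorem assembles results already proved in this section, so the plan is to chain them rather than compute anything new.

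\emph{Starting point.} By Corollary~\ref{cor:three-factor-Cartan-fixed}, the detected branch $(x,x,z,u,1)$ is the full $\Gamma_{3,2}$-fixed family. Hence, by symmetric criticality, its positive solutions are exactly the positive solutions of $E_1=E_2=E_3=0$ with $u\ne1$. Any such solution forces the elimination chain to vanish: eliminating $z$ gives $R_n(x,u)=0$, and then eliminating $x$ gives $81u^8(n+1)(u-1)^2Q_n(u)=0$. Since $u>0$ and $u\ne1$, every asymmetric positive solution yields a positive zero of $Q_n$. One small point needs care: $R_n$ must really vanish at a genuine common root. This holds because $E_1$ keeps degree two in $z$ whenever $12x>3u+1$, and conversely $R_n$ vanishes whenever a common root exists.

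\emph{Part 1 ($2\le n\le21$).} Proposition~\ref{AII-prop:below} says $Q_n$ has no positive real zero in this range, so the branch is empty.

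\emph{Part 2 ($n\ge22$).} The lemma on detector roots gives exactly one zero of $Q_n$ in $(1/6,1)$ and exactly one in $(1,\infty)$. Two further steps are needed.

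\begin{itemize}
\item Each root lifts uniquely to a positive pair $(x,z)$. This is the lifting argument of the subsection on unique lifts: since $A_n(u)\ne0$ via $\operatorname{Res}_u(Q_n,A_n)\ne0$, the value of $x$ is unique and positive with $12x>3u+1$; the common $z$ is then real, positive and unique.
\item There are no admissible roots in $(0,1/6]$.
\end{itemize}

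The second item is the main obstacle, since the earlier lemma only counts roots on $(1/6,\infty)$. First I would note that for $u\le1/6$ the leading coefficient $(n+1)(6u-1)$ of $E_2$ in $x$ is nonpositive while its constant term is positive. Writing $E_2=0$ as $x$-quadratic, a positive root could still exist, so the algebra of $E_2$ alone does not exclude this range. I would therefore use the Ricci positivity route instead. From $r_1=(12x-3u-1)/(24x^2)$ and $r_3=(3z-1)/(6z^2)$, I would show that the Einstein constant must be positive. The justification is that a compact homogeneous non-flat space with $G$ semisimple has positive scalar curvature, so an Einstein metric there has positive Einstein constant. Hence $12x>3u+1$. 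Substituting this bound into $E_2$ with $u\le1/6$ should then give a sign contradiction. If that direct substitution fails, I would fall back on a Sturm or Descartes count of $Q_n$ on $(0,1/6)$, via the substitution $u=t/(6(1+t))$, to show there are no roots there for $n\ge22$.

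\emph{Isometry statements.} Non-homothety and non-isometry come from Proposition~\ref{AII-prop:pairwise}. The connected full isometry group comes from Theorem~\ref{AII-thm:fullisom}, and the component group from Theorem~\ref{AII-thm:fullgroup}. Those arguments apply verbatim to the two metrics constructed here.
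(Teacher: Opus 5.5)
Your assembly is the paper's own. There the statement is obtained purely by combining the root-count lemma on $(1/6,1)$ and $(1,\infty)$, the unique-lift subsection, Proposition~\ref{AII-prop:below}, Proposition~\ref{AII-prop:pairwise}, and Theorems~\ref{AII-thm:fullisom} and~\ref{AII-thm:fullgroup}.

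You are right that ``exactly two'' in part~2 also requires ruling out Einstein metrics with $0<u\le 1/6$. The paper's root count and its lifting argument both work only for $u>1/6$, and this interval is never addressed for $n\ge22$. For $2\le n\le21$ it is covered, since Proposition~\ref{AII-prop:below} counts all positive roots.

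\textbf{The argument you lead with for this step fails.} The conclusion $\lambda>0$ is true, but not for your stated reason: invariant metrics on compact homogeneous spaces can have negative scalar curvature. The correct reason is Bochner's theorem plus Alekseevskii--Kimelfeld, together with simple connectivity of $M_n$. Granting $\lambda>0$, you get $12x>3u+1$, but no sign contradiction with $E_2$ follows. Indeed
\[
 144\,E_2\!\left(\tfrac{3u+1}{12},u\right)=(486n-378)u^3+27(n+1)u^2-(n+1).
\]
This equals $2(n-1)>0$ at $u=1/6$, so it is positive on a whole interval $(u_n,1/6]$; for $n=22$, $u_n\approx0.113$. For $u\le 1/6$ the $x$-quadratic $E_2$ has nonpositive leading coefficient, negative linear coefficient and positive constant term, hence a single positive root. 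Positivity at $x_0=(3u+1)/12$ places that root to the right of $x_0$. Thus $r_1>0$ and $E_2=0$ are compatible there, and Ricci positivity only removes $0<u\le u_n$.

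\textbf{So this step rests entirely on your fallback, which is only announced.} It can be carried out. $Q_n(0)>0$ and $Q_n(1/6)>0$ are already established. After $u=t/(6(1+t))$, the eleven coefficients of $[6(1+t)]^{10}Q_n\bigl(t/(6(1+t))\bigr)$ appear to be all positive: they are at $n=22$, and so are their $n^7$-leading terms. Descartes' rule would then give no root in $(0,1/6)$.

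What is missing is an $n$-uniform certificate of that positivity, for example positivity of all coefficients after substituting $n=22+m$, in the style of the paper's certificates on the other two intervals. With that supplied, your remaining steps go through exactly as in the paper: necessity of $Q_n(u)=0$, the unique positive lift for $u>1/6$, and the isometry statements.
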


The point of the calculation is not merely that a degree-ten polynomial happens
to have two roots.  The detector is a compressed response of the Einstein
system after the geometrically natural branch selection $x_1=x_2$ and the
homothety normalization $v=1$.  The threshold is already visible at the
distinguished response value $u=1$:
\[
 Q_n(1)=-64(n-1)^2(7n-3)F(n),
\]
where
\[
 F(n)=139n^4-3320n^3+6450n^2-4520n+1123.
\]
The quartic changes sign precisely between the consecutive integers $21$ and
$22$.  Above this transition the response is forced to cross zero once on each
side of $u=1$; below it, Sturm's theorem shows that no positive
crossing survives.  Thus the detector does more than simplify the equations:
it identifies the algebraic quantity that records the birth of the two
Einstein metrics.

\paragraph{Three-factor consequence of the Cartan fixed-point principle.}
The AI, AII, and exceptional calculations above all use the same
$\Gamma_{3,2}$-fixed family.  Their five-scale notation is therefore not an
extra diagonal ansatz imposed for tractability: it is the fixed-point
geometry of the independent Cartan involutions together with the
transposition of two factors.  This observation separates two issues cleanly.
The detector is responsible for locating, counting, and reconstructing the
Einstein metrics, while the fixed-point principle guarantees that the
reconstructed solutions solve the full invariant variational problem.

\section{A representation-theoretic predictor before elimination}
\label{sec:predictor}

For three-factor spaces $H^3/\Delta K$, the sign of the detector at the
symmetric state can be read directly from representation-theoretic data,
without first computing the full detector polynomial.  This section derives
that criterion and applies it to the families considered above.

Let $H/K$ be an irreducible compact symmetric space with $H$ simple and $K$
simple, and let
\[
 \mathfrak h=\Lie(H),\qquad \mathfrak k=\Lie(K)
\]
be their Lie algebras.  Write the symmetric decomposition as
\[
 \mathfrak h=\mathfrak k\oplus\mathfrak q.
\]
Put
\[
 d=\dim\mathfrak q,\qquad e=\dim\mathfrak k.
\]
We denote by $a$ the \emph{embedding constant of $K$ in $H$}, defined by the
restriction of the Killing forms:
\begin{equation}\label{eq:embeddingconstant}
 B_{\mathfrak k}=a\,B_{\mathfrak h}|_{\mathfrak k},
 \qquad 0<a<1.
\end{equation}
Thus the entire three-factor Ricci system in the branch
\[
 (x,x,z,u,1)
\]
is determined by the three representation-theoretic numbers $(d,e,a)$.

For an irreducible compact symmetric pair with $H$ and $K$ simple, these
numbers satisfy the standard Casimir identity
\begin{equation}\label{eq:simpleKidentity}
 e(1-a)=\frac d2.
\end{equation}
Indeed, with the metric induced by $-B_{\mathfrak h}$, the isotropy Casimir
on the irreducible symmetric-space module $\mathfrak q$ is $\frac12 I$.
Taking traces gives \eqref{eq:simpleKidentity}.  Consequently
\[
 a=1-\frac{1}{2\rho},
 \qquad
 \sigma=a(1-a)=\frac{2\rho-1}{4\rho^2},
 \qquad
 \rho=\frac ed>\frac12.
\]
It is convenient first to use the two variables $(\rho,\sigma)$; the
Casimir identity then reduces the criterion to the single parameter $\rho$.

Introduce the two scale-free invariants
\[
 \rho=\frac{e}{d},
 \qquad
 \sigma=a(1-a).
\]
Define
\begin{equation}\label{eq:Xi}
\begin{aligned}
\Xi(\rho,\sigma)
={}&
\rho^2(320\rho^2+880\rho+605)\sigma^2\\
&-18(8\rho^3+16\rho^2-27\rho-24)\sigma-243.
\end{aligned}
\end{equation}

\begin{theorem}[representation-level detection criterion]
\label{thm:predictor}
For the three-factor aligned branch associated with $H/K$, the detector can
be normalized so that
\[
 D(0)>0,\qquad D(1/6)>0,\qquad
 \lim_{u\to+\infty}D(u)=+\infty,
\]
and
\[
 \operatorname{sgn}D(1)=\operatorname{sgn}\Xi(\rho,\sigma).
\]
Consequently
\[
 \Xi(\rho,\sigma)<0
 \quad\Longrightarrow\quad
 \text{$D$ has a root in $(1/6,1)$ and a root in $(1,\infty)$}.
\]
Thus negativity of $\Xi$ is a sufficient pre-elimination criterion for the
two-root detection pattern.
\end{theorem}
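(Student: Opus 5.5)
We will redo the AI and AII calculations for general representation data $(d,e,a)$, keeping all three numbers symbolic until the end. First we record the bracket constants of the $\Gamma_{3,2}$-fixed family. Exactly as in the AI case, with $S=e(1-a)$ and $K_0=ea$, they are
\[
 [114]=[224]=S/2,\quad [115]=[225]=S/6,\quad [335]=2S/3,\quad [445]=[555]=K_0/6 .
\]
Substituting these into \eqref{eq:homogeneous-Ricci-formula} gives $r_1,\dots,r_5$ as rational functions of $(x,z,u)$. Their coefficients depend only on $S/d$ and $K_0/e$, and by \eqref{eq:simpleKidentity} these equal $1/2$ and $a$. So $r_1,r_3$ are the universal expressions already seen, and $r_4,r_5$ depend on $e/d=\rho$ and $a$. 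This yields a system $E_1,E_2,E_3$ of the same shape as \eqref{AI-E1}--\eqref{AI-E3} and \eqref{AII-E1}--\eqref{AII-E3}:
\begin{itemize}
\item $E_1$ is independent of the data;
\item $E_2$ is a quadratic in $x$ with coefficients affine in $a$;
\item $E_3$ is quadratic in $z$ with no linear term.
\end{itemize}
We will check this against the two specializations $a=(n-2)/(2n)$ and $a=(n+1)/(2n)$.

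We then define the detector as in the examples. Set $R=\operatorname{Res}_z(E_1,E_3)/(16x^4)$ and take $\operatorname{Res}_x(E_2,R)$. Strip off the factor $u^8(u-1)^2$ together with a positive constant, so that the remaining factor $D$ has positive leading coefficient; in AI and AII this reproduces $P_n$ and $Q_n$ up to positive factors. Three of the claims are evaluations and should be mechanical:
\begin{itemize}
\item $D(0)$ comes from $E_2(x,0)=(\text{coef})x^2$ and should be a positive multiple of a power of $a$ times a power of $(1-a)$, as in \eqref{AI-P0} and \eqref{AII-Q0};
\item $D(1/6)$ is simplified by the vanishing of the $x^2$-coefficient of $E_2$ at $u=1/6$, which makes $E_2$ linear there;
\item the leading coefficient can be read off from the top-degree terms in $u$.
\end{itemize}
Positivity of each should reduce to $0<a<1$ and $\rho>1/2$.

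The central step is $D(1)$. At $u=1$ we solve $E_2(x,1)=0$, which is a quadratic in $x$, and compute the resultant of $E_2(\cdot,1)$ with $R(\cdot,1)$ explicitly. Using $a=1-1/(2\rho)$ to rewrite everything in $\rho$ and $\sigma=a(1-a)$, we expect $D(1)$ to equal a positive factor times $\Xi(\rho,\sigma)$. As a sanity check, for AI we should recover $P_n(1)$ from \eqref{AI-P1}, including the factor $139n^4+\dots$, and for AII we should recover $F(n)$ in \eqref{AII-Q1}.

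The main obstacle is isolating that positive cofactor and proving it is positive. The resultant naturally comes out as a polynomial in $a$ and $\rho$ that is not manifestly symmetric under $a\leftrightarrow1-a$. We would first try to factor it over $\mathbb Q[\rho,a]$ and identify the prefactors, such as $(7n+6)$ and $(n+2)^2$ in AI, as positive expressions in $\rho$. Only then would we substitute to reach the $(\rho,\sigma)$ form.

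Once these sign facts are in hand, the consequence is immediate. If $\Xi<0$, then $D(1/6)>0>D(1)$ gives a root in $(1/6,1)$ by the intermediate value theorem. Likewise $D(1)<0$ together with $D(u)\to+\infty$ as $u\to+\infty$ gives a root in $(1,\infty)$.
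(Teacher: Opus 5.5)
You follow the paper's route: symbolic Ricci eigenvalues in $(d,e,a)$, the identity $e(1-a)=d/2$, two successive resultants, stripping $u^8(u-1)^2$, normalizing the leading coefficient, then endpoint values and the intermediate value theorem. However, the central step fails as you describe it. You propose to get $D(1)$ by solving $E_2(x,1)=0$ and computing $\operatorname{Res}_x\bigl(E_2(\cdot,1),R(\cdot,1)\bigr)$. That number is zero for every $(d,e,a)$: it equals $C\cdot1^8\cdot0^2\cdot D(1)$, because $u=1$ is a root of the factor $(u-1)^2$ you have just removed.

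The reason is structural. At $u=1$ the substitution $z=x$ makes $E_1$ vanish identically, and it turns $E_3$ into $x^2$ times a nonzero constant multiple of $E_2(x,1)$. After clearing denominators with $d/e=2(1-a)$, both $E_3(x,x,1)/x^2$ and $E_2(x,1)$ are multiples of $10ax^2-12x+10-6a$; in the AI normalization one has literally $E_3(x,x,1)=2x^2E_2(x,1)$. So the two roots $x_\pm$ of $E_2(\cdot,1)$, i.e.\ the two (possibly complex) $S_3$-symmetric solutions $(x_\pm,x_\pm,x_\pm,1,1)$, are common roots. Hence $R(x_\pm,1)=0$, and the specialized resultant carries no information about $D(1)$; your sanity check against $P_n(1)$ would fail immediately. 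The same objection applies to your description of $D(0)$. The resultant vanishes to order $8$ at $u=0$, so $D(0)$ is a Taylor coefficient and cannot be read off by evaluating against the degenerate quadratic $E_2(x,0)$.

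The repair is to divide before evaluating. Compute $\operatorname{Res}_x(E_2,R)$ as a polynomial in $u$ over $\mathbb Q(a)$ (or $\mathbb Q(d,e,a)$), carry out the exact division by $u^8(u-1)^2$, and only then evaluate the quotient. This is what the paper does, and it yields $D(1)=16d^8(3+4\rho)\,\Xi(\rho,\sigma)$ and, up to positive factors, $D(0)=\sigma^3e^4(d+e)^4$.

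Equivalently, with $C$ the stripped constant, $C\,D(1)$ is $\tfrac12\partial_u^2\operatorname{Res}_x(E_2,R)$ evaluated at $u=1$. When $x_+\ne x_-$ it is, up to a nonzero constant, the product $\prod_\pm\frac{d}{du}R(x_\pm(u),u)$ at $u=1$. That is the product of the first-order responses at the two symmetric Einstein metrics, which is why its sign is a representation-level quantity.

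Once $D$ is obtained this way, the cofactor $16d^8(3+4\rho)$ is visibly positive. So the difficulty you anticipate (isolating a cofactor, lack of $a\leftrightarrow1-a$ symmetry) is not where the work lies; on the Casimir curve $\rho$ and $\sigma$ are both functions of $a$, so no such symmetry is to be expected. At $u=1/6$ direct evaluation is legitimate, since $u^8(u-1)^2\neq0$ there. You must, however, keep the factor (up to sign) given by the leading $x$-coefficient of $R$, which the Sylvester resultant acquires when the $x^2$-coefficient of $E_2$ drops out.
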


\begin{proof}
For the three-factor aligned decomposition, the nonzero bracket
coefficients in the homogeneous Ricci formula depend only on
\[
 e(1-a),\qquad ea.
\]
Substitution into the homogeneous Ricci formula gives the five Ricci
eigenvalues explicitly:
\[
 r_1=r_2
 =
 \frac{3aeu+ae+6dx-3eu-e}{12dx^2},
\qquad
 r_3
 =
 \frac{2ae+3dz-2e}{6dz^2},
\]
\[
 r_4
 =
 \frac{du}{8ex^2}
 +\frac{a}{2u}
 -\frac{a}{12u^2},
\qquad
 r_5
 =
 \frac{3a}{8}
 +\frac{d}{12ez^2}
 +\frac{d}{24ex^2}
 +\frac{a}{24u^2}.
\]
Using \eqref{eq:simpleKidentity}, the first two simplify to
\[
 r_1=r_2=\frac{-3u+12x-1}{24x^2},
 \qquad
 r_3=\frac{3z-1}{6z^2}.
\]
Eliminate $z$
from the equations
\[
 r_3=r_1,\qquad r_5=r_1,
\]
remove the universal factor coming from $x=0$, and eliminate $x$ against
$r_4=r_1$.  The resultant has the universal form
\[
 \mathcal R(u)
 =
 C\,u^8(u-1)^2D(u),
\]
where $C\ne0$ depends only on $(d,e,a)$.  Choose the sign of $D$ so that its
leading coefficient is positive.

The endpoint factors simplify before any family is substituted.  Up to
positive factors,
\[
 D(0)=\sigma^3 e^4(d+e)^4>0,
\]
and
\[
 [u^{10}]D
 =
 6561\,\sigma(d+e)^4(d^2+\sigma e^2)^2>0.
\]
At $u=1/6$ the remaining factor, after division by positive powers of $d$, is
\[
\begin{aligned}
&5\rho^2(3\rho+1)^4\sigma^2\\
&\quad +(3\rho+1)^2(41\rho^2-10\rho+1)\sigma\\
&\quad +16(5\rho^2-2\rho+1),
\end{aligned}
\]
which is positive for $\rho>0$ and $\sigma>0$: the two quadratic factors
$41\rho^2-10\rho+1$ and $5\rho^2-2\rho+1$ have negative discriminant.
Hence $D(1/6)>0$.

Finally the symmetric-state value factors as
\[
 D(1)
 =
 16d^8(3+4\rho)\,\Xi(\rho,\sigma).
\]
The prefactor is positive, proving the sign identity.  If $\Xi<0$, the
intermediate value theorem gives one root between $1/6$ and $1$, and the
positive leading coefficient gives another root beyond $1$.
\end{proof}

\begin{remark}[what the criterion does and does not say]
The condition $\Xi<0$ is sufficient, not necessary, for the existence of
detector roots.  Likewise $\Xi>0$ alone does not prove nonexistence; the
detector could in principle cross zero twice while remaining positive at
$u=1$.  Thus $\Xi$ is a pre-elimination predictor, while a complete classification
still requires root counting and positive reconstruction.
\end{remark}

\subsection*{The basic families}

The criterion gives the following conclusions directly from the representation
data.

For
\[
 H/K=\SU(n)/\SO(n),
\]
one has
\[
 \rho=\frac{n}{n+2},
 \qquad
 a=\frac{n-2}{2n},
\]
and therefore
\[
 \Xi
 =
 -\frac{
 139n^4+6640n^3+25800n^2+36160n+17968
 }{16n^2(n+2)^2}<0.
\]
Thus $\Xi<0$ already forces the two-root detection pattern throughout the
parameter range.

For
\[
 H/K=\SU(2n)/\Sp(n),
\]
one has
\[
 \rho=\frac{n}{n-1},
 \qquad
 a=\frac{n+1}{2n},
\]
and
\[
 \Xi
 =
 -\frac{
 139n^4-3320n^3+6450n^2-4520n+1123
 }{16n^2(n-1)^2}.
\]
Here the sign changes at the same integer transition as the full detector:
\[
 \Xi>0\quad(2\le n\le21),
 \qquad
 \Xi<0\quad(n\ge22).
\]
The sharp threshold is therefore already visible in $(\dim\mathfrak q,
\dim\mathfrak k,a)$.

Finally, for the sphere-type control
\[
 H/K=\SO(m+1)/\SO(m),
\]
\[
 \rho=\frac{m-1}{2},
 \qquad
 a=\frac{m-2}{m-1},
\]
and
\[
 \Xi
 =
 \frac{8m^4+32m^3-51m^2-40m-1072}
 {4(m-1)^2}>0
 \qquad(m\ge5).
\]
Thus the orthogonal family lies on the opposite side of the criterion from
the $AI$ existence family.  As proved in Section~\ref{sec:twofactor-symplectic}, its asymmetric branch does
not yield a positive Einstein metric.

The significance is that the threshold polynomial appearing after a long
elimination is not an accidental algebraic artifact.  Its decisive sign is
already encoded in two elementary scale-free invariants of the symmetric
pair:
\[
 \rho=\frac{\dim\mathfrak k}{\dim\mathfrak q},
 \qquad
 \sigma=a(1-a).
\]

\section{Exceptional three-factor families}\label{sec:exceptional-threefactor}

The same Cartan fixed-point mechanism applies uniformly to all three
exceptional pairs below.  Consequently the asymmetric family used in the
exact elimination is a canonical automorphism-fixed family, and its positive
solutions are genuine invariant Einstein metrics on the full three-factor
homogeneous spaces.

\subsection{What is known}
The symmetric pairs
\[
 E_6/\Sp(4),\qquad E_7/\SU(8),\qquad E_8/\SO(16)
\]
are classical and their standard symmetric metrics are Einstein
\cite{WZnormal}.  Here we study the associated three-factor spaces.  They
fit the aligned framework \cite{LWaligned}, but not the two-factor
classification \cite{LWtwo}.  We determine the asymmetric Einstein branches on these three-factor spaces
and prove that each contains exactly two positive Einstein metrics.  These
branches are not contained in the cited two-factor classifications.  The
criterion of Section~\ref{sec:predictor} is negative in all three cases, so it
already guarantees the two-root detection pattern; the elimination below
establishes the complete existence statement.

\subsection{The asymmetric branch and exact elimination}

On
\[
 M=H^3/\Delta K
\]
use the standard two-dimensional anti-diagonal $K$-multiplicity space.
By Corollary~\ref{cor:three-factor-Cartan-fixed}, after homothety the
$\Gamma_{3,2}$-fixed invariant metric family is exactly
\[
 g=(x,x,z,u,1).
\]
Thus the elimination below is performed on an automorphism-fixed family,
and every reconstructed solution is a genuine $H^3$-invariant Einstein
metric on the full homogeneous space.
For the three exceptional symmetric pairs, the representation parameters
\[
 d=\dim\mathfrak q,\qquad e=\dim\mathfrak k,
 \qquad \rho=\frac ed,
\]
and the embedding constant $a$ of $K$ in $H$ are
\[
\begin{array}{c|cccc}
H/K&d&e&\rho&a\\ \hline
E_6/\Sp(4)&42&36&6/7&5/12\\
E_7/\SU(8)&70&63&9/10&4/9\\
E_8/\SO(16)&128&120&15/16&7/15.
\end{array}
\]
The values of $a$ follow from the simple-$K$ identity
$e(1-a)=d/2$ from \eqref{eq:simpleKidentity}.

Substituting these data into the universal Ricci formulas of
Section~\ref{sec:predictor}, the equations
\[
 r_3=r_1,\qquad r_4=r_1,\qquad r_5=r_1
\]
become the following explicit system.  The first equation is common to all
three pairs:
\begin{equation}\label{EX-E1}
 (3u+1-12x)z^2+12x^2z-4x^2=0.
\end{equation}
The second equation is
\begin{equation}\label{EX-E2}
\begin{array}{c|l}
E_6/\Sp(4)&
(30u-5)x^2-72u^2x+u^2(39u+6)=0,\\[1mm]
E_7/\SU(8)&
(48u-8)x^2-108u^2x+u^2(57u+9)=0,\\[1mm]
E_8/\SO(16)&
(84u-14)x^2-180u^2x+u^2(93u+15)=0.
\end{array}
\end{equation}
The third equation is
\begin{equation}\label{EX-E3}
\begin{array}{c|l}
E_6/\Sp(4)&
\bigl(36u^3+45u^2x^2-144u^2x+26u^2+5x^2\bigr)z^2
 +28u^2x^2=0,\\[1mm]
E_7/\SU(8)&
\bigl(27u^3+36u^2x^2-108u^2x+19u^2+4x^2\bigr)z^2
 +20u^2x^2=0,\\[1mm]
E_8/\SO(16)&
\bigl(45u^3+63u^2x^2-180u^2x+31u^2+7x^2\bigr)z^2
 +32u^2x^2=0.
\end{array}
\end{equation}
Taking the resultant of \eqref{EX-E1} and \eqref{EX-E3} with respect to $z$,
removing the universal factor $x^4$, and then eliminating $x$ against
\eqref{EX-E2} gives
\[
 C\,u^8(u-1)^2P(u),
\]
with $C>0$.  The ressulting degree-ten polynomials are listed below.

\subsection{Exact exceptional detectors}

For $E_6/\Sp(4)$,
\begin{align*}
P_{E_6}(u)={}&
1020331585845u^{10}-3310750658358u^9+3352853663343u^8\\
&-1589002382808u^7+435908575593u^6-87253681950u^5\\
&+17099709345u^4-2842210800u^3+306426575u^2\\
&-30208750u+3570125.
\end{align*}

For $E_7/\SU(8)$,
\begin{align*}
P_{E_7}(u)={}&
30781298916u^{10}-96048263592u^9+98372543040u^8\\
&-47109626568u^7+12983537385u^6-2655319482u^5\\
&+545018535u^4-93035628u^3+10178395u^2\\
&-1083722u+130321.
\end{align*}

For $E_8/\SO(16)$,
\begin{align*}
P_{E_8}(u)={}&
64512528978807u^{10}-195904535664942u^9+202116549220155u^8\\
&-97402314527496u^7+26896813744302u^6-5594937807060u^5\\
&+1191539665134u^4-207348494472u^3+22934744539u^2\\
&-2595451502u+316767703.
\end{align*}

\begin{proposition}[exact root count]\label{EX-rootcount}
Each of $P_{E_6},P_{E_7},P_{E_8}$ has exactly two positive real roots:
exactly one in $(1/6,1)$ and exactly one in $(1,\infty)$.
\end{proposition}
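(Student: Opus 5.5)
The proof follows the same pattern as the AI and AII families, now for three explicit integer polynomials. First, existence. I will use Theorem~\ref{thm:predictor}: the table gives $\rho=6/7,9/10,15/16$ and $\sigma=a(1-a)$, so $\Xi(\rho,\sigma)<0$ in all three cases. The text above already says the criterion is negative, and I would confirm this by exact rational evaluation. Alternatively, and more directly, I evaluate each $P$ exactly at $u=1/6$ and $u=1$ and check
\[
P(1/6)>0,\qquad P(1)<0.
\]
Since the leading coefficient is positive, $P\to+\infty$ as $u\to\infty$. The intermediate value theorem then gives at least one root in $(1/6,1)$ and at least one in $(1,\infty)$. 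The value $P(1)$ is just the sum of the coefficients, so its sign is checked with integer arithmetic.

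Second, the count in $(1/6,1)$. I substitute
\[
u=\frac{1+6t}{6(1+t)},\qquad t>0,
\]
and multiply by the positive factor $[6(1+t)]^{10}$. This gives an integer polynomial $\widetilde P(t)$ whose positive roots correspond bijectively to the roots of $P$ in $(1/6,1)$. I then expand $\widetilde P$ and read off its coefficient sign string. I expect exactly one sign change, as in the AI and AII cases: the top coefficients equal $P(1)$ up to a positive factor, hence negative, and the bottom coefficients are $6^{10}P(1/6)>0$ with positive neighbours. Descartes' rule then gives at most one positive root, and existence makes it exactly one.

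Third, the count in $(1,\infty)$. I expand $\widehat P(t)=P(1+t)$. Its constant term is $P(1)<0$ and its leading term is positive. I would check that the signs run as a block of $+$ followed by a block of $-$, which gives one sign change, so exactly one root with $u>1$. Finally, I rule out roots in $(0,1/6]$, since the proposition asserts exactly two positive roots. On $(0,1/6)$ I substitute
\[
u=\frac{1}{6(1+s)},\qquad s>0,
\]
and clear denominators; I expect all coefficients to be positive. This is plausible because $P(0)>0$ and $P(1/6)>0$, and the low coefficients of $P$ alternate in a way dominated by the constant term. The endpoint $u=1/6$ is excluded because $P(1/6)\ne0$.

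\textbf{Main obstacle.} The only real risk is that one of these substituted polynomials has more than one sign change, so that Descartes' rule is inconclusive. This is most likely for the $(0,1/6)$ exclusion, or for the $(1/6,1)$ transform, where the middle coefficients could alternate. If that happens, I would fall back on an exact Sturm chain over $\mathbb Q$ for each of the three fixed integer polynomials, as in Proposition~\ref{AII-prop:below}. The chain would be evaluated at $0$, $1/6$, $1$ and $+\infty$ to certify the root counts $0,1,1$ on the three intervals. This is a finite exact computation, since there are no parameters.
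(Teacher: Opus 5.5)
Your proof is correct, but it is organized differently from the paper's. The paper runs one exact Sturm chain per polynomial and reads off the variation numbers $6,6,5,4$ at $0^+,1/6,1,+\infty$. That single certificate gives existence, exactly one root in each of $(1/6,1)$ and $(1,\infty)$, and (since $V(0^+)=V(1/6)$) no roots in $(0,1/6]$.

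You split the work into three parts:
\begin{enumerate}
\item existence from $P(1/6)>0>P(1)$ and the positive leading coefficient;
\item uniqueness from Descartes' rule after the M\"obius map onto $(1/6,1)$ and the shift $u=1+t$;
\item exclusion of $(0,1/6)$ by the substitution $u=1/(6(1+s))$.
\end{enumerate}
Your expectations hold. The exceptional systems \eqref{EX-E1}--\eqref{EX-E3} are positive constant multiples of the AI systems \eqref{AI-E1}--\eqref{AI-E3} at $n=12,18,30$, because $\rho$ and $a$ coincide: $6/7,9/10,15/16$ are the values $n/(n+2)$ there. For example, $P_{E_6}=P_{12}/128$. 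So the paper's sign tables for $\widetilde P_n$ and $\widehat P_n$ already supply your two one-sign-change certificates: $--+++++++++$ in all three cases, and $+++++------$ for $E_6$, $++++++-----$ for $E_7,E_8$. I also checked the $E_6$ strings directly. A direct check of your third substitution gives all positive coefficients in each case, so $(0,1/6)$ is excluded as you expected.

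Your route gives a lighter, more transparent certificate: coefficient signs of explicit integer polynomials rather than a rational remainder sequence. It also matches the paper's own AI and AII treatment. The Sturm route is conclusive by construction and covers all subintervals in one computation. Descartes could in principle have been inconclusive, which is why your Sturm fallback (identical to the paper's proof) makes the plan complete.
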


\begin{proof}
Apply Sturm's theorem over $\mathbb Q$ to the three explicit degree-ten
polynomials.  Writing only the signs of the eleven members of the Sturm
chain, the exact endpoint data are
\begingroup\scriptsize
\[
\begin{array}{c|c|c|c|c}
 &0^+&1/6&1&+\infty\\ \hline
E_6&
+--+-+++-++&
+--+-+++-++&
--++--+++-+&
++++--+++-+\\
E_7&
+--+-+++-++&
+--+-+++-++&
--++--+++-+&
++++--+++-+\\
E_8&
+--+-+++-++&
+--+-+++-++&
--++--+++-+&
++++--++--+
\end{array}
\]
\endgroup
and hence the variation numbers are
\[
\begin{array}{c|cccc}
 &V(0^+)&V(1/6)&V(1)&V(+\infty)\\ \hline
E_6&6&6&5&4\\
E_7&6&6&5&4\\
E_8&6&6&5&4.
\end{array}
\]
Therefore
\[
 N_{(0,\infty)}=6-4=2,\qquad
 N_{(1/6,1)}=6-5=1,\qquad
 N_{(1,\infty)}=5-4=1.
\]
All signs are obtained in exact rational arithmetic, so the root count is
an exact certificate rather than a numerical root count.
\end{proof}

For orientation the roots are
\[
\begin{array}{c|cc}
 & u_-&u_+\\ \hline
E_6/\Sp(4)&0.5785416840&1.8786711104\\
E_7/\SU(8)&0.6374581609&1.6826023387\\
E_8/\SO(16)&0.6949377257&1.5345163054.
\end{array}
\]

\subsection{Unique positive reconstruction}

Let $R(x,u)$ be the factor obtained after eliminating $z$ from the first and
third Einstein equations and deleting the universal $x^4$ factor.  Division
of $R$ by the quadratic $x$-equation leaves, after deletion of the harmless
factor $u^4(u-1)/(6u-1)^3$, a linear remainder
\[
 A(u)x+B(u).
\]
The coefficient $A$ is, up to a nonzero constant,
\[
\begin{array}{c|l}
E_6&
7602012u^5-8763714u^4+3734910u^3-818325u^2+98925u-8125,\\
E_7&
564246u^5-684936u^4+302535u^3-68022u^2+8520u-703,\\
E_8&
66244230u^5-83616219u^4+37932300u^3-8699922u^2+1118670u-92659.
\end{array}
\]
Exact Euclidean algorithms give
\[
 \gcd(P_{E_6},A_{E_6})
 =
 \gcd(P_{E_7},A_{E_7})
 =
 \gcd(P_{E_8},A_{E_8})
 =1.
\]
Therefore every detector root determines a unique real common $x$-root.

\begin{lemma}[positivity]\label{EX-positive}
For either detector root in each exceptional case, the unique reconstructed
$x$ and $z$ are positive.
\end{lemma}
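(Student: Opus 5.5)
The plan is to repeat the lifting argument of Lemmas~\ref{AI-lem:x} and~\ref{AI-lem:z}, now with the explicit exceptional coefficients in \eqref{EX-E2}. By Proposition~\ref{EX-rootcount} each detector root satisfies $u>1/6$, so it suffices to prove positivity for all $u>1/6$ and not just at the two roots.

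\textbf{Positivity of $x$.} Write the $x$-equation \eqref{EX-E2} as $\alpha(6u-1)x^2-\beta u^2x+u^2(\gamma u+\delta)$, where $(\alpha,\beta,\gamma,\delta)$ equals $(5,72,39,6)$, $(8,108,57,9)$ or $(14,180,93,15)$. For $u>1/6$ the leading coefficient is positive, the linear coefficient is negative and the constant term is positive. Hence any real root is positive, and the unique real common root from the gcd computation is such a root.

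\textbf{The bound $12x>3u+1$.} To apply the $z$-argument I next need $12x>3u+1$. Put $x_0=(3u+1)/12$ and proceed in two steps.
\begin{itemize}
\item First evaluate $144f(x_0)$ as an explicit cubic in $u$. I expect its value at $u=1/6$ to be positive and its derivative on $[1/6,\infty)$ to be positive, which can be checked directly (or by substituting $u=1/6+s$ and inspecting coefficient signs).
\item Then compare the vertex $x_v=\beta u^2/(2\alpha(6u-1))$ with $x_0$. The numerator of $x_v-x_0$ is a quadratic in $u$, whose positivity for $u\ge1/6$ I would again check by the substitution $u=1/6+s$.
\end{itemize}
Together these place $x_0$ to the left of the smaller root, so both real roots exceed $x_0$. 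This is the step I expect to be the main obstacle: the sign of $f(x_0)$ could fail for large $u$ in one of the three families. If it does, the fallback is to argue only at the actual roots. I would take the resultant of $P$ with the cubic $f(x_0)$, or run a Sturm count on intervals isolating $u_\pm$, to certify the needed signs exactly.

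\textbf{Positivity of $z$.} The coefficient $3u+1-12x$ of $z^2$ in \eqref{EX-E1} is then strictly negative, so $E_1$ is a genuine quadratic. In \eqref{EX-E3} the constant term, for example $28u^2x^2$, is positive, so $E_3$ cannot degenerate at a common root. The vanishing resultant therefore yields a finite common root. This root is real and unique, because a nonreal root together with its conjugate, or two common roots, would force proportionality. Proportionality is impossible since $E_1$ has linear coefficient $12x^2\ne0$ while $E_3$ has no linear term.

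Finally, $E_1$ has negative leading coefficient, positive linear coefficient and negative constant $-4x^2$. So its roots have positive sum and positive product, and every real root is positive. Equivalently, $r_3=r_1=(12x-3u-1)/(24x^2)>0$ forces $z>1/3$, exactly as in Lemma~\ref{AI-lem:z}.
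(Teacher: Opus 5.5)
Your proposal is correct and follows essentially the same route as the paper: the sign pattern of the $x$-quadratic, the comparison point $x_0=(3u+1)/12$ via $f(x_0)>0$ and $x_v>x_0$, then negativity of $3u+1-12x$ together with the non-proportionality argument for the common $z$-root. The obstacle you anticipated does not arise, so your fallback via resultants or Sturm counts is unnecessary. The values $f(x_0)$ are $(3294u^3+135u^2-5)/144$, $(594u^3+27u^2-1)/18$ and $(3834u^3+189u^2-7)/72$, each positive at $u=1/6$ and increasing for $u>0$, and the vertex-difference numerators $342u^2-15u+5$, $63u^2-3u+1$, $414u^2-21u+7$ have negative discriminant.
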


\begin{proof}
For $u>1/6$, the $x$-equation is an upward quadratic, its linear coefficient
is negative, and its constant term is positive.  Hence any real roots are
positive.

We sharpen this.  Put
\[
 x_0=\frac{3u+1}{12}.
\]
For the three cases the value of the quadratic at $x_0$ is respectively
\[
 \frac{3294u^3+135u^2-5}{144},\qquad
 \frac{594u^3+27u^2-1}{18},\qquad
 \frac{3834u^3+189u^2-7}{72}.
\]
All are positive for $u>1/6$.  Moreover the difference between the vertex
and $x_0$ is respectively
\[
 \frac{342u^2-15u+5}{60(6u-1)},\qquad
 \frac{63u^2-3u+1}{12(6u-1)},\qquad
 \frac{414u^2-21u+7}{84(6u-1)}.
\]
The quadratic numerators have negative discriminant and positive leading
coefficient, so these differences are positive.  Thus $x_0$ lies to the
left of the smaller real root and
\[
 x>\frac{3u+1}{12}.
\]
Consequently the leading coefficient $3u+1-12x$ in \eqref{EX-E1} is negative.
The $z$-quadratic then has positive sum and positive product of roots.

The resultant guarantees a common complex root of the two $z$-quadratics.
A nonreal common root would force its conjugate to be common and hence force
the two real quadratics to be proportional.  This is impossible because
\eqref{EX-E1} has nonzero linear coefficient $12x^2$, while the other
$z$-equation has no linear term.  Hence the common root is real, and the
preceding sign argument makes it positive.  The same proportionality argument
shows that the common root is unique.
\end{proof}

\begin{theorem}[exceptional three-factor metrics]\label{EX-main}
Each of
\[
 E_6^3/\Delta\Sp(4),\qquad
 E_7^3/\Delta\SU(8),\qquad
 E_8^3/\Delta\SO(16)
\]
carries exactly two Einstein metrics in the asymmetric branch
\[
 g=(x,x,z,u,1).
\]
Exactly one has $1/6<u<1$ and exactly one has $u>1$.
\end{theorem}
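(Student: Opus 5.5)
The proof assembles results already established in this section, following the pattern of the AI and AII cases.

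By Corollary~\ref{cor:three-factor-Cartan-fixed}, the positive Einstein metrics in the branch $g=(x,x,z,u,1)$ are exactly the positive solutions $(x,z,u)$ of the system \eqref{EX-E1}--\eqref{EX-E3}. These solutions are genuine $H^3$-invariant Einstein metrics, and no off-diagonal equations are suppressed.

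\textbf{Step 1: every asymmetric solution gives a detector root.} Let $(x,z,u)$ be a positive solution with $u\ne1$.
\begin{enumerate}
\item Since \eqref{EX-E1} and \eqref{EX-E3} share the root $z$, their $z$-resultant vanishes. Because $x>0$, the reduced factor satisfies $R(x,u)=0$.
\item Since $R$ and \eqref{EX-E2} share the root $x$, their $x$-resultant $C\,u^8(u-1)^2P(u)$ vanishes.
\item As $u>0$ and $u\ne1$, this forces $P(u)=0$.
\end{enumerate}
One caveat is that the resultant specialization is only valid when the leading coefficients in $z$ and $x$ do not both vanish. The $x^2$-coefficient of \eqref{EX-E2} vanishes only at $u=1/6$. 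I would treat the region $u\le 1/6$ directly, checking that \eqref{EX-E2} has no positive root $x$ there, or that the resultant specialization still applies. Once this is settled, Proposition~\ref{EX-rootcount} shows that $u$ is one of exactly two values, $u_-\in(1/6,1)$ or $u_+\in(1,\infty)$.

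\textbf{Step 2: each root lifts to exactly one positive metric.} Fix $u\in\{u_-,u_+\}$; both satisfy $u>1/6$.
\begin{enumerate}
\item Since $P(u)=0$, the polynomials $R(\cdot,u)$ and \eqref{EX-E2} have a common complex root $x$.
\item Any common root must satisfy the linear remainder equation $A(u)x+B(u)=0$. Because $\gcd(P,A)=1$, we have $A(u)\ne0$, so this common root is unique and real.
\item Lemma~\ref{EX-positive} then shows that $x>(3u+1)/12>0$.
\item The same lemma gives a unique real common $z$-root of \eqref{EX-E1} and \eqref{EX-E3}, and shows it is positive.
\end{enumerate}
Thus each detector root yields exactly one positive triple $(x,z,u)$. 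The equations $r_3=r_4=r_5=r_1=r_2$ then hold, so the triple is an Einstein metric.

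\textbf{Step 3: the two metrics are distinct.} The two lifts have $u$-values on opposite sides of $1$, so they are different points of the normalized family. This gives exactly one metric with $1/6<u<1$ and exactly one with $u>1$.

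The main obstacle is the degeneracy bookkeeping in Step 1. One must exclude loss of leading coefficients, spurious roots at infinity, and solutions with $u\le1/6$ that Proposition~\ref{EX-rootcount} does not cover. For $0<u<1/6$, \eqref{EX-E2} has negative leading coefficient and positive constant term, so it does have a positive root; I would therefore rely on the fact that $P$ has no roots in $(0,1/6]$, which the Sturm count $V(0^+)=V(1/6)$ provides. At $u=1/6$ itself, \eqref{EX-E2} degenerates to a linear equation, and I would verify directly that $P(1/6)\ne0$ and that the reduced system at $u=1/6$ has no positive solution.
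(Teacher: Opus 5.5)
Your proposal is correct and follows the paper's proof. Like the paper, you count the roots via Proposition~\ref{EX-rootcount} and lift each root uniquely via the linear remainder $A(u)x+B(u)$, $\gcd(P,A)=1$, and Lemma~\ref{EX-positive}; your Step~1 adds the necessity direction that the paper leaves implicit. The degeneracy worry in Step~1 is unnecessary: the formal resultant is a polynomial combination $af+bg$, so it vanishes at every finite common root whatever the leading coefficients do, and the Sturm data ($P$ has no zero in $(0,1/6]$) then exclude $u\le 1/6$, including $u=1/6$, with no separate analysis.
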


\begin{proof}
Proposition~\ref{EX-rootcount} gives exactly two detector roots.  The linear
subresultant and Lemma~\ref{EX-positive} give a unique positive lift $(x,z)$ for
each root.
\end{proof}

Numerically the two metrics are
\[
\begin{array}{c|ccc}
 &x&z&u\\ \hline
E_6^-&1.396190693&1.208327311&0.578541684\\
E_6^+&1.653601576&2.087796427&1.878671110\\
E_7^-&1.328325053&1.157057250&0.637458161\\
E_7^+&1.524340531&1.871696652&1.682602339\\
E_8^-&1.284073915&1.133251311&0.694937726\\
E_8^+&1.432249887&1.711130207&1.534516305.
\end{array}
\]

\subsection{Isometry and comparison with standard families}

\subsubsection{Product and natural-reductivity obstructions}

The standard product presentation
\[
 H^3/\Delta K\simeq H\times H\times H/K
\]
does not identify these diagonal Einstein metrics with product metrics.  At
the identity coset,
\[
 X\in\mathfrak q_1\mapsto(X,0,0),\qquad
 X\in\mathfrak q_2\mapsto(0,X,0),\qquad
 X\in\mathfrak q_3\mapsto(-X,-X,X),
\]
so the pullback of a positive product metric has nonzero cross terms between
the third horizontal copy and the first two.  The detected family is
diagonal in
\[
 \mathfrak q_1\oplus\mathfrak q_2\oplus\mathfrak q_3,
\]
hence none of the six exceptional Einstein metrics is a product metric in
this canonical presentation.

A test on the fixed reductive complement is not sufficient, because a
naturally reductive presentation may use a different reductive complement.
We therefore use the invariant-form characterization.

\begin{proposition}[not naturally reductive]\label{EX-notNR}
For each of
\[
 E_6^3/\Delta\Sp(4),\qquad
 E_7^3/\Delta\SU(8),\qquad
 E_8^3/\Delta\SO(16),
\]
neither asymmetric Einstein metric is naturally reductive with respect to
any connected transitive group of isometries.
\end{proposition}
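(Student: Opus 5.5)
The proof follows the template of Propositions~\ref{AI-prop:notNR} and~\ref{AII-prop:notNR} and has two steps.

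\textbf{Step 1: the displayed group $G=H^3$.} By the invariant-form characterization of natural reductivity (see \cite[Chapter~7]{Besse}), a naturally reductive metric for $G$ comes from an $\operatorname{Ad}(G)$-invariant nondegenerate form $\lambda_1Q\oplus\lambda_2Q\oplus\lambda_3Q$ on $\mathfrak g$. Here $H=E_6,E_7,E_8$ is simple, so every invariant form has this shape. The naturally reductive complement is the orthogonal complement of $\Delta\mathfrak k$ for this form. Each $\mathfrak q_i$ is automatically orthogonal to $\Delta\mathfrak k$, and positivity on $\mathfrak q_i$ forces $\lambda_i>0$; the metric is then normal homogeneous. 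The equal horizontal scales of the family $(x,x,z,u,1)$ force $\lambda_1=\lambda_2=a$; write $\lambda_3=b$.

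The quotient metric on the anti-diagonal multiplicity space then has eigenvalues $a$ on $v_1$ and $3ab/(2a+b)$ on $v_2$. Normalizing the $v_2$ coefficient to $1$ gives
\[
x=u=\frac{2a+b}{3b}>\frac13 .
\]
Next I substitute $x=u$ into the second Einstein equation \eqref{EX-E2}. For $E_6$ this gives
\[
(30u-5)u^2-72u^3+u^2(39u+6)=u^2(-3u+1),
\]
and I expect the same factor $u^2(1-3u)$ up to a positive constant for $E_7$ and $E_8$. In fact this factor should be universal: the Casimir identity \eqref{eq:simpleKidentity} makes $r_1$ depend only on $(x,u)$ in a $\rho$-independent way. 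One checks that $r_4=r_1$ at $x=u$ holds only at $u=1/3$, exactly as in the AI and AII computations. So Einstein together with $x=u$ forces $u=1/3$, contradicting $u>1/3$.

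As an independent cross-check, the detected roots $u_\pm$ listed in the table are far from $1/3$, and $x\ne u$ numerically.

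\textbf{Step 2: other connected transitive groups.} For a compact naturally reductive metric, the connected transvection group is a transitive normal subgroup of $\Isom_0$. This requires $\Isom_0(M,g)=H^3/Z_0$, which I would prove exactly as Theorems~\ref{AI-thm:fullisom} and~\ref{AII-thm:fullisom}:
\begin{itemize}
\item \emph{Normalizer.} The normalizer of $\Delta\mathfrak k$ is $\Delta\mathfrak k$. This holds because $\mathfrak k$ is self-normalizing in $\mathfrak h$ and has trivial centralizer, since $\mathfrak q$ is an irreducible $\mathfrak k$-module.
\item \emph{Strong semisimplicity.} The isometry algebra is strongly semisimple, since there are no nonzero maps $E_k\to A_1$.
\item \emph{No primitive enlargements.} Onishchik's factorization table contains no proper simple factorization with an exceptional $E_k$ summand, so no primitive enlargement of an $E_k$ ideal occurs.
\item \emph{No four-factor extension.} The proof of Lemmas~\ref{AI-lem:minimal-H4} and~\ref{AI-lem:H4-metric} goes through verbatim, since it used only the maximality of the irreducible symmetric subalgebra $\mathfrak k$. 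The horizontal orthogonality of the detected metric then excludes any $H^4$ extension.
\end{itemize}
Given $\Isom_0=H^3/Z_0$, the transvection algebra is a sum of simple ideals of $\mathfrak h^3$. A proper sum has dimension at most $2\dim H<\dim M=3\dim H-\dim K$, so it cannot be transitive. The transvection group is therefore all of $H^3$, which Step~1 excludes.

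The main obstacle is Step~2: recognizing that the exceptional ideals admit no primitive enlargement, and that the four-factor argument transfers unchanged. If the full-isometry theorem appears only later in the paper, I would cite it forward, as the AII section does. Step~1 is a one-line substitution in each case.
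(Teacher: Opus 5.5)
Your proposal is correct and follows the paper's proof essentially verbatim: the invariant-form reduction to $\lambda_1Q\oplus\lambda_2Q\oplus\lambda_3Q$ with $\lambda_1=\lambda_2$, the quotient eigenvalues $a$ and $3ab/(2a+b)$ forcing $x=u>1/3$, the substitution into \eqref{EX-E2} (which gives exactly $u^2(1-3u)$ for all three pairs), and the transvection-group argument resting on Theorem~\ref{EX-fullisom}, whose proof via Lemmas~\ref{EX-primitive} and~\ref{EX-reducible} matches your outline. One small correction to your universality heuristic: using the Casimir identity, the condition $r_4=r_1$ at $x=u$ reduces in general to $(1-2a_K)(3u-1)=0$, where $a_K$ is the embedding constant of \eqref{eq:embeddingconstant}, so concluding $u=1/3$ also uses $a_K\ne\frac12$, which holds here since $a_K\in\{5/12,4/9,7/15\}$.
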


\begin{proof}
For a compact semisimple transitive group, a naturally reductive metric is
induced by an $\operatorname{Ad}(H^3)$-invariant nondegenerate symmetric
bilinear form on
\[
 \mathfrak h^3.
\]
Since $\mathfrak h$ is simple, such a form has the shape
\[
 \lambda_1Q\oplus\lambda_2Q\oplus\lambda_3Q.
\]
Positivity on the horizontal modules gives $\lambda_i>0$.  Because our
family has equal first two horizontal scales, write
\[
 \lambda_1=\lambda_2=a>0,\qquad \lambda_3=b>0.
\]
On the anti-diagonal multiplicity space
\[
 v_1=\frac1{\sqrt2}(1,-1,0),\qquad
 v_2=\frac1{\sqrt6}(1,1,-2),
\]
the induced quotient metric has eigenvalues
\[
 a,\qquad \frac{3ab}{2a+b}.
\]
After normalizing the $v_2$ coefficient to $1$ one obtains
\[
 x=u=\frac{2a+b}{3b}>\frac13.
\]

For all three exceptional pairs, substitution $x=u$ in the second Einstein
equation gives
\[
 E_2(u,u)=-u^2(3u-1).
\]
Thus an Einstein metric in this naturally reductive class would have
$u=1/3$, contradicting $u>1/3$.  Hence the asymmetric Einstein metrics are
not naturally reductive for the displayed group.

Theorem~\ref{EX-fullisom} identifies the connected full isometry group,
modulo its finite ineffective kernel, with $H^3$.  If one of these metrics
were naturally reductive for some connected transitive group, its connected
transvection group would be a transitive normal subgroup of $H^3$.  Its Lie
algebra is therefore a sum of some of the three simple $\mathfrak h$-ideals.
A proper such subgroup has dimension at most $2\dim H$, whereas
\[
 \dim M=3\dim H-\dim K>2\dim H.
\]
Thus the transvection group would have to be all of $H^3$, which the
invariant-form calculation above excludes.  Hence no connected transitive
naturally reductive presentation exists.
\end{proof}

\subsubsection{Connected full isometry group}

Let $g$ be one of the exceptional asymmetric Einstein metrics and write
\[
 G=H^3,\qquad I_0=\operatorname{Isom}_0(M,g).
\]
At Lie-algebra level the normalizer of $\Delta\mathfrak k$ in
$\mathfrak h^3$ is exactly $\Delta\mathfrak k$: indeed the centralizer of
the irreducible symmetric subalgebra $\mathfrak k\subset\mathfrak h$ is
zero and $\mathfrak k$ is self-normalizing.  Consequently the connected
centralizer of the effective $G$-action is trivial.

\begin{lemma}[primitive exceptional enlargements are excluded]
\label{EX-primitive}
No effective primitive component in Onishchik's reduction can properly
enlarge one of the three exceptional simple ideals
$\mathfrak h=E_6,E_7,E_8$ occurring here.
\end{lemma}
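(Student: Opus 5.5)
The argument follows the pattern of Lemma~\ref{AI-lem:primitive-excluded} and Lemma~\ref{AII-lem:primitive}. I first place myself in Onishchik's strongly semisimple reduction. By the normalizer computation just recorded, $\mathfrak i$ has zero centre. It also has no $A_1$ ideal, because every homomorphism $E_k\to A_1$ is zero, so such an ideal would centralize $\mathfrak g=\mathfrak h^3$. Hence $\mathfrak i$ is strongly semisimple. In all three cases $\mathfrak k=\mathfrak{sp}(4),\mathfrak{su}(8),\mathfrak{so}(16)$ is simple of rank at least $3$, hence strongly semisimple. Theorem~3.3 of \cite{Onishchik1969} therefore lets me replace $\mathfrak l$ by $\mathfrak l_s$ while keeping
\[
 \mathfrak g\cap\mathfrak l_s=\Delta\mathfrak k .
\]
Suppose an effective irreducible component were to properly enlarge one ideal $\mathfrak h_j\simeq E_k$. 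Theorem~4.3 of \cite{Onishchik1969} makes it primitive, and following the primitive chain produces a proper simple compact factorization $\mathfrak s=\mathfrak a+\mathfrak b$ in which $\mathfrak a\supseteq\mathfrak h_j$ is an injective image of $E_k$. The projection of $\Delta\mathfrak k$ then lies in $\mathfrak a\cap\mathfrak b$, as in the AI argument.

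The decisive step is to inspect Onishchik's list of proper factorizations of compact simple Lie algebras \cite[Theorem~4.1 and Table~7]{Onishchik}. That list contains no factor of exceptional type $E_6$, $E_7$ or $E_8$. The only exceptional algebras appearing there are $G_2$ (in $B_3=G_2+D_3$ and related cases) and $B_3$, $B_4$-type spin factors (in $D_4=B_3+B_3$, $D_8=B_7+B_4$, and similar). So no simple $\mathfrak s$ properly contains $E_k$ as a factor $\mathfrak a$ with $\mathfrak s=\mathfrak a+\mathfrak b$. The chain therefore cannot even start, and no enlargement occurs.

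I also need to handle chains that pass through a semisimple intermediate, where $\mathfrak h_j$ lands injectively but not onto a simple ideal. Here I would rely on the reduction theorem, which says that within an irreducible primitive component every projection into a simple ambient ideal gives a simple factorization of that ideal. So the simple-case list suffices.

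The main obstacle is bookkeeping rather than mathematics. I must justify carefully that the primitive component really yields a factorization of a simple algebra with an $E_k$ summand, and not merely one where the $\mathfrak k$-intersection matters. Unlike the AI and AII cases, no intersection argument is needed here, since the absence of $E_k$ from the table already decides the question. As a cross-check I would record a dimension/rank consideration. Any simple $\mathfrak s\supsetneq E_k$ with $\mathfrak s=E_k+\mathfrak b$ would require $\mathfrak s/E_k$ to be a homogeneous space on which a compact subgroup $B$ acts transitively. For the candidate overalgebras (for example $E_7\supset E_6$, $E_8\supset E_7$, or classical algebras containing $E_k$), the tables of transitive actions on compact homogeneous spaces show this never happens.
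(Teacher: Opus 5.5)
Your proposal is correct and follows the paper's own argument. A proper primitive enlargement would yield a proper factorization $\mathfrak s=\mathfrak a+\mathfrak b$ of a compact simple algebra with an $E_6$, $E_7$ or $E_8$ summand, and Onishchik's list contains no such factorization, so, as you note, no intersection argument is needed and your strongly semisimple preliminaries are harmless but unused (one minor slip: the $B_3$ and $B_4$ factors in $D_4=B_3+B_3$ and $D_8=B_7+B_4$ are classical algebras embedded via spin representations, not exceptional algebras, which does not affect the argument).
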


\begin{proof}
A proper effective primitive enlargement would yield a proper simple compact
factorization
\[
 \mathfrak s=\mathfrak h+\mathfrak b
\]
with $\mathfrak h$ one of $E_6,E_7,E_8$.  The complete compact
factorization list contains no such factorization with an exceptional
$E_6,E_7$, or $E_8$ summand.  Hence no primitive enlargement occurs.
\end{proof}

\begin{lemma}[reducible exceptional enlargements are excluded]
\label{EX-reducible}
No proper reducible connected enlargement of the displayed $H^3$ action
preserves a metric in the detected diagonal family.
\end{lemma}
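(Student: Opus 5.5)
The plan is to reproduce, for $\mathfrak h=E_6,E_7,E_8$, the two-step argument used for the AI and AII families: Lemma~\ref{AI-lem:minimal-H4} followed by Lemma~\ref{AI-lem:H4-metric}.

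\textbf{Reducing to a minimal four-factor extension.} Let $\mathfrak i$ be the Lie algebra of a proper connected enlargement preserving a detected metric. Its centre centralizes $\mathfrak g=\mathfrak h^3$. By the normalizer computation recorded just above (the centralizer of $\mathfrak k$ in $\mathfrak h$ is zero and $\mathfrak k$ is self-normalizing), this centre vanishes, so $\mathfrak i$ is semisimple. Decompose $\mathfrak i$ into simple ideals. A nonzero projection of an original ideal $\mathfrak g_j\simeq\mathfrak h$ to an ambient simple ideal is injective. If its image were proper, we would have a primitive enlargement, which Lemma~\ref{EX-primitive} excludes. Hence every nonzero projection is onto a copy of $\mathfrak h$. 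Two commuting original ideals cannot hit the same ambient ideal, and an ambient ideal hit by none would centralize $\mathfrak g$. Therefore $\mathfrak i=\mathfrak h^{r_1}\oplus\mathfrak h^{r_2}\oplus\mathfrak h^{r_3}$, with each $\mathfrak g_j$ diagonal in its own block. Since the enlargement is proper, some $r_j\ge2$. Splitting one factor off that block and taking the diagonal of the rest gives a transitive subalgebra $\mathfrak j\simeq\mathfrak h^4\supset\mathfrak g$. The dimension of its isotropy $\mathfrak m$ is $\dim\mathfrak h+\dim\mathfrak k$.

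\textbf{Determining the isotropy.} Next I would run the Goursat argument of Lemma~\ref{AI-lem:minimal-H4} verbatim. The only structural input it needs is that $\mathfrak k\subset\mathfrak h$ is the isotropy of an irreducible symmetric pair. Then $\mathfrak q$ is an irreducible $\mathfrak k$-module, so every subalgebra strictly containing $\mathfrak k$ equals $\mathfrak h$. This holds for $\Sp(4)\subset E_6$, $\SU(8)\subset E_7$ and $\SO(16)\subset E_8$. The argument yields $\mathfrak m\simeq\mathfrak h\oplus\mathfrak k$. The automorphism-graph alternative is excluded because $\dim\mathfrak k>0$. Tracking the projections as in Lemma~\ref{AI-lem:H4-metric}, and using $\mathfrak g\cap\mathfrak m=\Delta\mathfrak k$ together with effectivity, gives
\[
 \mathfrak m=\Delta_{\{A\}\cup S}\mathfrak h\oplus\Delta_{\{B\}\cup S^c}\mathfrak k,
\]
with $\emptyset\ne S\subset\{C,D\}$. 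Consequently $J/J_o$ contains a Ledger--Obata factor $H^{1+|S|}/\Delta H$.

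\textbf{Metric contradiction and main obstacle.} Every $J$-invariant metric restricts on the Ledger--Obata factor as $\langle\,,\rangle_V\otimes Q$ on $V_m$, where $m=1+|S|\in\{2,3\}$. The original horizontal copies that project into this factor pair with one another through $\langle\bar e_i,\bar e_j\rangle_V$. The detected metric makes $\mathfrak q_1,\mathfrak q_2,\mathfrak q_3$ pairwise orthogonal, so these pairings must all vanish. Combined with $\sum_i\bar e_i=0$ and positive definiteness, this gives $0=\|\sum_i\bar e_i\|^2=\sum_i\|\bar e_i\|^2>0$, a contradiction. I expect the main obstacle to be the bookkeeping in this last step. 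One must check that each original horizontal direction in the chosen factors has no component in the $\Delta\mathfrak k$ quotient factor, so that its inner products really are computed in $V_m$ alone. I would verify this first by writing the differential of the identification at the base point explicitly, as was done for the product presentation.
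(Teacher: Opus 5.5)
Your proposal is essentially the paper's own proof. It uses the same diagonal-replication reduction to a transitive $\mathfrak h^4$ subalgebra (via Lemma~\ref{EX-primitive} and the trivial normalizer), the same Goursat determination $\mathfrak m=\Delta_{\{A\}\cup S}\mathfrak h\oplus\Delta_{\{B\}\cup S^c}\mathfrak k$, and the same $\sum_i\bar e_i=0$ contradiction in the Ledger--Obata factor.

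The check you flag as the main obstacle is slightly misstated. The copy $\mathfrak q_1$ replicated into $A,B$ does have a nonzero component in the $\Delta\mathfrak k$ factor: its $B$-part is nonzero modulo $\Delta\mathfrak k$. What you actually need, and what the paper uses, is weaker. First, the horizontal copies indexed by $S$ have no component in that factor. Second, a $J$-invariant metric has no cross terms between the two product factors, because $\Delta_{\{A\}\cup S}\mathfrak h$ acts on the first tangent factor as a sum of adjoint representations and trivially on the second.
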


\begin{proof}
After primitive components are excluded, the same diagonal-replication
argument used in the classical three-factor families applies.  Every ambient
simple ideal met nontrivially by an original $\mathfrak h$-factor is another
copy of $\mathfrak h$; two commuting original factors cannot project onto
the same simple ambient ideal, and an ambient ideal missed by all three
would centralize the original action.  Thus any proper enlargement contains
a minimal transitive subgroup
\[
 J\quad\text{with}\quad \Lie(J)\simeq\mathfrak h^4
\]
containing the original $\mathfrak h^3$.

Write
\[
 \mathfrak j=
 \mathfrak h_A\oplus\mathfrak h_B\oplus
 \mathfrak h_C\oplus\mathfrak h_D,
\]
with one original factor diagonal in the $A,B$ block.  Since
$\mathfrak k\subset\mathfrak h$ is a maximal irreducible symmetric
subalgebra in each of the three exceptional pairs, the same two-factor
Goursat argument as in Lemma~\ref{AI-lem:minimal-H4} gives the isotropy in
the form
\[
 \mathfrak m=
 \Delta_{\{A\}\cup S}\mathfrak h
 \oplus
 \Delta_{\{B\}\cup S^c}\mathfrak k
\]
for a nonempty subset $S\subset\{C,D\}$.

One quotient factor is therefore
\[
 \frac{H^{\,1+|S|}}{\Delta H}.
\]
Its tangent multiplicity space is
\[
 V_m=\{(t_1,\dots,t_m):\textstyle\sum_i t_i=0\},
 \qquad m=1+|S|\in\{2,3\}.
\]
Every $J$-invariant metric restricts on this factor as
\[
 \langle\, ,\,\rangle_V\otimes Q.
\]
With
\[
 \bar e_i=e_i-\frac1m(1,\dots,1),
\]
the corresponding original horizontal directions have mutual inner
products
\[
 \langle\bar e_i,\bar e_j\rangle_VQ.
\]
The detected family makes the three original horizontal copies pairwise
orthogonal, so
\[
 \langle\bar e_i,\bar e_j\rangle_V=0\qquad(i\ne j).
\]
But
\[
 \bar e_1+\cdots+\bar e_m=0
\]
with all $\bar e_i\ne0$, and positive definiteness would give
\[
 0=\Bigl\|\sum_i\bar e_i\Bigr\|_V^2
   =\sum_i\|\bar e_i\|_V^2>0,
\]
a contradiction.
\end{proof}

\begin{theorem}[connected full isometry algebra]\label{EX-fullisom}
For either asymmetric Einstein metric on each exceptional three-factor
space,
\[
 \operatorname{Lie}\operatorname{Isom}_0(M,g)=\mathfrak h^3.
\]
Equivalently, modulo the finite ineffective kernel of the displayed action,
$H^3$ is the connected full isometry group.
\end{theorem}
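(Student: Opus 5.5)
The proof will follow the same template as Theorem~\ref{AI-thm:fullisom} and Theorem~\ref{AII-thm:fullisom}. Let $I_0=\operatorname{Isom}_0(M,g)$ with Lie algebra $\mathfrak i$ and isotropy algebra $\mathfrak l$, and put $\mathfrak g=\mathfrak h^3$. Since the effective image of $G$ is transitive, one has
\[
 \mathfrak i=\mathfrak g+\mathfrak l,
 \qquad
 \mathfrak g\cap\mathfrak l=\Delta\mathfrak k .
\]

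The first step is to show that $\mathfrak i$ is strongly semisimple. The preceding paragraph records that the connected centralizer of the effective $G$-action is trivial, because $N_{\mathfrak h^3}(\Delta\mathfrak k)=\Delta\mathfrak k$. The centre of $\mathfrak i$ centralizes $\mathfrak g$, so it vanishes. A simple ideal of type $A_1$ is also impossible: every homomorphism from $E_6$, $E_7$ or $E_8$ to $A_1$ is zero, so such an ideal would commute with $\mathfrak g$, which is again excluded. Onishchik's reduction \cite[Theorem~3.3]{Onishchik1969} then gives
\[
 \mathfrak i=\mathfrak g+\mathfrak l_s,
\]
and one reduces to the irreducible components of this decomposition.

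Next, Lemma~\ref{EX-primitive} rules out every effective primitive component that properly enlarges an exceptional ideal. This case is simpler than the classical ones, because no compact simple factorization has an $E_6$, $E_7$ or $E_8$ summand. We therefore do not need to track how $\Delta\mathfrak k$ is carried through the intersection, and the only input from $\mathfrak k$ needed here is that $\mathfrak k$ is strongly semisimple (it is $C_4$, $A_7$ or $D_8$).

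Consequently, every nonzero projection of an original ideal $\mathfrak g_j$ to an ambient simple ideal is an isomorphism onto a copy of $\mathfrak h$. Two commuting original ideals cannot project onto the same ambient simple ideal, and an ambient simple ideal missed by all three would centralize $\mathfrak g$. This yields the block form
\[
 \mathfrak i=\mathfrak h^{r_1}\oplus\mathfrak h^{r_2}\oplus\mathfrak h^{r_3},
\]
with $\mathfrak g_j$ embedded diagonally in the $j$-th block.

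If $\mathfrak i\ne\mathfrak g$, then some $r_j\ge2$. The construction in Lemma~\ref{AI-lem:minimal-H4} then produces a transitive subgroup $J\subset I_0$ with $\Lie(J)\simeq\mathfrak h^4$ that contains the original $\mathfrak h^3$ action. Since $J\subset I_0$, it preserves $g$, and this contradicts Lemma~\ref{EX-reducible}. Hence $\mathfrak i=\mathfrak g$. Passing to groups, $I_0$ is connected with Lie algebra $\mathfrak h^3$ and contains the effective image of $H^3$, so the two coincide modulo the finite ineffective kernel $\Delta(Z(H)\cap K)$.

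The main obstacle is making sure the reducible step genuinely lands in the hypotheses of Lemma~\ref{EX-reducible}, that is, that the $\mathfrak h^4$ subgroup is transitive with isotropy of the Goursat form used there. I would check this by repeating the dimension count
\[
 \dim\mathfrak m=\dim\mathfrak h+\dim\mathfrak k
\]
and using the maximality of $\mathfrak k$ in $\mathfrak h$. For the three exceptional pairs this maximality follows from the irreducibility of the isotropy module $\mathfrak q$, exactly as in the proof of Lemma~\ref{AI-lem:minimal-H4}.
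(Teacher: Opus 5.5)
Your proposal is correct and follows essentially the same route as the paper. Lemma~\ref{EX-primitive} removes the primitive enlargements, since no compact simple factorization has an $E_6$, $E_7$ or $E_8$ summand, and any remaining proper enlargement yields a transitive minimal $\mathfrak h^4$ extension that Lemma~\ref{EX-reducible} excludes by horizontal orthogonality; the strong-semisimplicity step and the block decomposition $\mathfrak h^{r_1}\oplus\mathfrak h^{r_2}\oplus\mathfrak h^{r_3}$ that you spell out are already packaged inside the proof of Lemma~\ref{EX-reducible}, so they are harmless redundancy rather than a different argument.
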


\begin{proof}
Lemma~\ref{EX-primitive} excludes every effective primitive enlargement.
If a proper reducible enlargement remained,
Lemma~\ref{EX-reducible} would produce a forbidden minimal four-factor
extension.  Hence the connected isometry algebra is exactly
$\mathfrak h^3$.
\end{proof}

By Theorem~\ref{EX-fullisom}, a naturally reductive presentation by a
strictly larger connected transitive group is impossible.  Together with
Proposition~\ref{EX-notNR}, this excludes natural reductivity altogether.

\subsubsection{Pairwise distinction}

\begin{proposition}[pairwise non-isometry]\label{EX-pairwise}
For each exceptional homogeneous pair, the metric with $1/6<u<1$ and the
metric with $u>1$ are not related by a positive homothety and a Riemannian
isometry.
\end{proposition}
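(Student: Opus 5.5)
The proof will follow the pattern of Proposition~\ref{AI-prop:pairwise} and Proposition~\ref{AII-prop:pairwise}. We argue by contradiction. Let $g_-$ and $g_+$ be the normalized metrics with $1/6<u_-<1<u_+$, and suppose there is an isometry $F$ with $F^*g_+=c\,g_-$ for some $c>0$. By Theorem~\ref{EX-fullisom}, the identity components of the isometry groups of both metrics are the effective image of $H^3$. Since $F$ conjugates these identity components, it normalizes the effective $H^3$-action. After composing with a translation we may assume $F$ fixes the base point. Its differential is then induced by an automorphism of the homogeneous pair $(\mathfrak h^3,\Delta\mathfrak k)$.

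Such an automorphism permutes the three simple ideals and applies automorphisms factorwise. Preserving $\Delta\mathfrak k$ forces these factorwise automorphisms to restrict compatibly to $\mathfrak k$. On the anti-diagonal multiplicity space $V_3=\{t_1+t_2+t_3=0\}$, the only action on multiplicity coordinates is the $S_3$ permutation action. The factor automorphisms act only on the $\mathfrak k$ factor of $V_3\otimes\mathfrak k$. They could act by a scalar on the multiplicity space only through a central twist, and there is none because $\mathfrak k$ has zero centralizer.

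The permutation is then constrained by the horizontal scales. In each metric these are $(x,x,z)$, and $z\neq x$: substituting $z=x$ into \eqref{EX-E1} gives $3x^2(u-1)=0$, which contradicts $u\neq1$. So the permutation, which must carry the horizontal eigenvalue pattern of $g_+$ to $c$ times that of $g_-$, fixes the third factor and can at most interchange the first two. That transposition acts on $V_3$ by $v_1\mapsto -v_1$ and $v_2\mapsto v_2$. Hence it preserves the lines $\mathbb Rv_1$ and $\mathbb Rv_2$. Alternatively one can note, as in the AII case, that the $S_3$-orbit of $\mathbb Rv_1$ never contains $\mathbb Rv_2$.

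Now compare coefficients on $\mathbb Rv_2\otimes\mathfrak k$, where both normalized metrics have coefficient $1$; this gives $c=1$. Comparing on $\mathbb Rv_1\otimes\mathfrak k$ then gives $u_+=u_-$, which contradicts $u_-<1<u_+$.

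The only delicate step is justifying that $F$ induces an automorphism of the pair which acts on the multiplicity space only through factor permutations. This is where Theorem~\ref{EX-fullisom} is essential. The outer automorphisms of $E_6$ (the $E_7$ and $E_8$ factors have none) act within each factor and preserve each multiplicity line. So the argument reduces, exactly as in the classical cases, to the eigenvalue comparison above.
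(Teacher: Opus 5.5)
Your proof is correct and follows the paper's route: you use Theorem~\ref{EX-fullisom} to reduce an arbitrary homothety-isometry to an automorphism of $(\mathfrak h^3,\Delta\mathfrak k)$, and then compare the coefficients on the multiplicity lines $\mathbb Rv_2$ and $\mathbb Rv_1$. The only difference is how you pin down the permutation. You restrict it to $\{\mathrm{id},(12)\}$ using $z\ne x$ from \eqref{EX-E1}, which is the horizontal-scale step of the $AI$ proof. The paper instead uses only that the $S_3$-orbit of $\mathbb Rv_1$ avoids $\mathbb Rv_2$, together with the fact that the eigendirections of $\operatorname{diag}(u,1)$ are intrinsic for $u\ne1$. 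Either argument suffices, and you mention both.
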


\begin{proof}
An isometry between homothetic copies conjugates the connected full isometry
groups.  After composing with a translation, it fixes the base point and
induces an automorphism of the homogeneous pair
\[
 (\mathfrak h^3,\Delta\mathfrak k).
\]
Such an automorphism permutes the three simple factors and applies
automorphisms within the individual factors.  Only the factor permutation
acts on the two-dimensional multiplicity coordinate.

The $S_3$-orbit of
\[
 \mathbb Rv_1=\mathbb R(e_1-e_2)
\]
consists of the three root lines
\[
 \mathbb R(e_1-e_2),\qquad
 \mathbb R(e_1-e_3),\qquad
 \mathbb R(e_2-e_3),
\]
and none is $\mathbb Rv_2$.  Since $u\ne1$, the two eigendirections of the
vertical metric endomorphism
\[
 \operatorname{diag}(u,1)
\]
are intrinsic.  Hence an automorphism carrying one detected metric to a
homothetic copy of the other preserves the $v_2$ line.  Comparison on that
line gives homothety factor $1$, and comparison on $v_1$ then forces
\[
 u_-=u_+,
\]
contrary to $u_-<1<u_+$.
\end{proof}

\subsubsection{Comparison with the standard constructions}

The general aligned-space theory supplies the ambient framework for these
homogeneous spaces, while the cited detailed classifications concern two
simple transitive factors.  The present spaces are not Ledger--Obata spaces
because their diagonal isotropy is the proper symmetric subgroup
$K\subsetneq H$.  The product and natural-reductivity possibilities have
been excluded above.

\subsubsection{Comparison with the literature}

The underlying homogeneous spaces are not new as aligned spaces.  Indeed,
the general aligned-space framework of Lauret--Will \cite[Example~2.4]{LWaligned}
contains
\[
 H^s/\Delta K
\]
for arbitrary $s$, records the diffeomorphism
\[
 H^s/\Delta K\simeq(H/K)\times H^{s-1},
\]
and identifies the previously studied $H\times H/\Delta K$ problem as the
case $s=2$.  Thus neither the three-factor homogeneous spaces themselves nor
the general Ricci formalism is claimed to be new.

What is specific here is the Einstein geometry of the
$\Gamma_{3,2}$-fixed five-scale family on
\[
 E_6^3/\Delta\Sp(4),\qquad
 E_7^3/\Delta\SU(8),\qquad
 E_8^3/\Delta\SO(16),
\]
with the standard harmless convention that the displayed group notation is
understood up to the finite central quotients required by the corresponding
compact symmetric pairs.  The cited two-factor aligned classification and
the dedicated $H\times H/\Delta K$ results do not classify these
three-factor homogeneous pairs.  Ledger--Obata spaces have $K=H$, whereas
all three exceptional cases above have proper symmetric isotropy
$K\subsetneq H$.  Product and naturally reductive presentations have been
excluded intrinsically in the preceding subsections.

To the best of our knowledge, no earlier construction gives either member
of any of the three asymmetric pairs above.  Accordingly, the six Einstein
metrics of Theorem~\ref{EX-main} are new to the best of our knowledge.  This
claim concerns the metrics rather than the underlying aligned spaces or
their standard symmetric factors.

\begin{theorem}[exceptional comparison]\label{EX-comparison}
For each of
\[
 E_6^3/\Delta\Sp(4),\qquad
 E_7^3/\Delta\SU(8),\qquad
 E_8^3/\Delta\SO(16),
\]
the two asymmetriv Einstein metrics are pairwise non-homothetic and
non-isometric.  Neither is a product metric or naturally reductive, and no
larger connected naturally reductive presentation exists.  The cited
two-factor aligned and $H\times H/\Delta K$ classification theorems do not
classify these three-factor homogeneous pairs.  To the best of our knowledge, none of the six asymmetric Einstein metrics
occurs in the classifications cited above.
\end{theorem}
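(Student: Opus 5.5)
The proof assembles results already established in this section, one assertion at a time.

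For pairwise non-homothety and non-isometry, I would invoke Proposition~\ref{EX-pairwise} directly. Its proof rests on Theorem~\ref{EX-fullisom}: any isometry conjugates the connected full isometry group $H^3$. After composing with a translation, the isometry induces an automorphism of $(\mathfrak h^3,\Delta\mathfrak k)$. Such an automorphism can only permute the factors, and no permutation carries $\mathbb Rv_1$ to $\mathbb Rv_2$. Since $u\ne1$, the eigenlines of $\operatorname{diag}(u,1)$ are intrinsic, so comparing the two lines forces $c=1$ and then $u_-=u_+$, which is impossible.

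The product obstruction comes from the explicit differential of $H^3/\Delta K\simeq H\times H\times H/K$. A pulled-back product metric has nonzero $\mathfrak q_1$--$\mathfrak q_3$ and $\mathfrak q_2$--$\mathfrak q_3$ cross terms. The detected metrics are diagonal on $\mathfrak q_1\oplus\mathfrak q_2\oplus\mathfrak q_3$, so neither is a product metric.

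Non-natural reductivity for the displayed group is Proposition~\ref{EX-notNR}. An invariant form $aQ\oplus aQ\oplus bQ$ forces $x=u>1/3$, while $E_2(u,u)=-u^2(3u-1)$ forces $u=1/3$. The absence of a larger connected naturally reductive presentation follows in two steps. First, the transvection group is a transitive normal subgroup of $\Isom_0$, which is $H^3$ by Theorem~\ref{EX-fullisom}. Second, a dimension count excludes every proper sum of simple ideals, since $\dim M>2\dim H$. Normal homogeneous metrics are a special case of naturally reductive ones, so they are excluded as well.

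The remaining statements are scope statements. The Ledger--Obata spaces have $K=H$, whereas here $K\subsetneq H$. The cited classifications \cite{LWtwo,LWHH} treat two simple factors, whereas these spaces have three.

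The only substantive input is Theorem~\ref{EX-fullisom}, and within it Lemma~\ref{EX-reducible}. That lemma's four-factor Goursat reduction relies on $\mathfrak k\subset\mathfrak h$ being maximal with irreducible isotropy module $\mathfrak q$. This holds for all three exceptional symmetric pairs, so I would verify that hypothesis carefully. With it in place, the theorem follows by combining the cited results.
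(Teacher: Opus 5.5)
Your proposal is correct and is essentially the paper's own argument. The paper gives no separate proof of this theorem; it simply collects Proposition~\ref{EX-pairwise}, the cross-term computation for the canonical product presentation, Proposition~\ref{EX-notNR} (the $x=u>1/3$ versus $E_2(u,u)=-u^2(3u-1)$ contradiction, plus the transvection-group reduction via Theorem~\ref{EX-fullisom}), and the scope remarks on Ledger--Obata and two-factor classifications, exactly as you do. You are right that Theorem~\ref{EX-fullisom} is the real input, with one addition: it rests on Lemma~\ref{EX-primitive} (no compact simple factorization has an $E_6$, $E_7$ or $E_8$ summand) as well as on the maximality of $\mathfrak k\subset\mathfrak h$ used in the Goursat step of Lemma~\ref{EX-reducible}.
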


\subsection{Cartan sign symmetries and the full invariant metric space}

The apparent restriction to horizontal diagonal metrics has a canonical
geometric origin.  Let $\sigma$ be the involution of the compact symmetric
pair $H/K$, so that
\[
 d\sigma|_{\mathfrak k}=+\operatorname{Id},
 \qquad
 d\sigma|_{\mathfrak q}=-\operatorname{Id}.
\]
For each factor of $H^s$ apply $\sigma$ in that factor and the identity in
the others.  Since $\sigma$ fixes $K$, these automorphisms preserve
$\Delta K$ and descend to $H^s/\Delta K$.

\begin{theorem}[Cartan fixed-point principle]
\label{thm:Cartan-fixed-point}
Let $H/K$ be an irreducible compact symmetric pair with $H$ and $K$ simple,
and put
\[
 M_s=H^s/\Delta K,\qquad
 T_s=\langle\tau_1,\ldots,\tau_s\rangle\cong(\mathbb Z_2)^s,
\]
where $\tau_i$ applies the symmetric-pair involution in the $i$th factor.
At the origin,
\[
 \mathfrak m
 \simeq
 \bigoplus_{i=1}^s\mathfrak q_i
 \oplus(\mathfrak k\otimes V_s),
 \qquad
 V_s=\{(t_1,\ldots,t_s):\sum_i t_i=0\}.
\]
The $T_s$-fixed $H^s$-invariant metrics are exactly the metrics
\[
 (x_1,\ldots,x_s;P),
 \qquad
 x_i>0,\qquad P\in\operatorname{Sym}^+(V_s),
\]
where the horizontal copies $\mathfrak q_i$ are mutually orthogonal, the
horizontal and vertical parts are orthogonal, and the metric on
$\mathfrak k\otimes V_s$ is
$(-B_{\mathfrak k})\otimes P$.

Moreover, at a $T_s$-fixed Einstein metric the full invariant
Einstein--Hilbert Hessian preserves the tangent space of this fixed metric
family.  Consequently a zero mode of the Hessian restricted to the
$T_s$-fixed family is a zero mode of the full invariant Hessian.
\end{theorem}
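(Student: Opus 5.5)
I will work at the origin, identifying $H^s$-invariant metrics on $M_s$ with $\operatorname{Ad}(\Delta K)$-invariant inner products on $\mathfrak m$. Take $\mathfrak m$ to be the $Q$-orthogonal complement of $\Delta\mathfrak k$ in $\mathfrak h^s$. It splits as $\bigoplus_i\mathfrak q_i\oplus(\mathfrak k\otimes V_s)$, where $\mathfrak q_i$ is $\mathfrak q$ placed in the $i$th slot and $\mathfrak k\otimes V_s=\{(X t_1,\dots,X t_s):X\in\mathfrak k,\ t\in V_s\}$. Each $\tau_i$ fixes $\Delta K$, so it acts on $\mathfrak m$ and normalizes the isotropy action. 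Its action is explicit: $-1$ on $\mathfrak q_i$, $+1$ on every $\mathfrak q_j$ with $j\ne i$, and $+1$ on $\mathfrak k\otimes V_s$, because $\sigma$ is the identity on $\mathfrak k$. An invariant inner product $g$ is $T_s$-fixed exactly when every $\tau_i$ is $g$-orthogonal.

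\textbf{Classification step.} For a $T_s$-fixed $g$ and $X\in\mathfrak q_i$, $Y$ in any other summand, $\tau_i$-invariance gives $g(X,Y)=g(\tau_iX,\tau_iY)=-g(X,Y)$, so $g(X,Y)=0$. Hence the $\mathfrak q_i$ are mutually orthogonal and orthogonal to the vertical part.

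Each $\mathfrak q_i$ is an irreducible $\operatorname{Ad}(K)$-module. Over $\mathbb R$ its real commutant could a priori be $\mathbb C$ or $\mathbb H$, but the symmetric invariant forms on an irreducible isotropy module of a symmetric pair are multiples of $Q$ (its complexification is irreducible, or one can appeal to irreducibility of the Riemannian symmetric space). So $g|_{\mathfrak q_i}=x_iQ$ with $x_i>0$.

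On $\mathfrak k\otimes V_s$ the module is isotypic with multiplicity space $V_s$, and $\mathfrak k$ is simple, so $\operatorname{End}_K(\mathfrak k)=\mathbb R$ (a compact simple adjoint representation is absolutely irreducible). Therefore invariant symmetric forms are exactly $(-B_{\mathfrak k})\otimes P$ with $P\in\operatorname{Sym}(V_s)$, and positivity forces $P$ to be positive definite. Conversely, every metric of the displayed form is invariant under each $\tau_i$, since $\tau_i$ is $\pm1$ on the blocks and preserves the splitting. This proves the first assertion.

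\textbf{Hessian step.} The finite group $T_s$ acts on the space $\mathcal M$ of $H^s$-invariant metrics by pullback through automorphisms of $(H^s,\Delta K)$. The normalized total scalar curvature (invariant Einstein--Hilbert functional) $S$ on $\mathcal M$ is $T_s$-invariant. At a $T_s$-fixed critical point $g$, the Hessian $\operatorname{Hess}S_g$ is a symmetric bilinear form on $T_g\mathcal M$ that commutes with the linear $T_s$-action. It therefore preserves the isotypic decomposition, and in particular the fixed subspace $(T_g\mathcal M)^{T_s}$, which is the tangent space of the fixed family. Its $g$-orthogonal complement, the sum of the nontrivial isotypic pieces, is preserved as well; orthogonality here is for the $T_s$-invariant $L^2$ metric on symmetric tensors.

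Now let $h$ be fixed with $\operatorname{Hess}S_g(h,k)=0$ for all fixed $k$. For $k$ in the complement, the value vanishes automatically, since the Hessian pairs the trivial isotypic component only with itself. So $h$ lies in the radical of the full Hessian. The same argument applies to the self-adjoint Hessian operator, which is equivariant and hence maps fixed vectors to fixed vectors. The volume normalization is $T_s$-invariant, so it does not disturb the argument.

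\textbf{Main obstacle.} I expect the delicate point to be the Schur-type claim that $\mathfrak q$ carries, up to scale, a unique invariant symmetric form, including the case of symmetric pairs where $\mathfrak q$ is not absolutely irreducible. I would handle it by noting that for irreducible $H/K$ with $H$ simple, the isotropy representation is irreducible and the invariant metrics on $H/K$ are unique up to scale. That is the standard fact for isotropy-irreducible spaces.
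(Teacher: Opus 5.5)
Your proposal is correct and follows essentially the same route as the paper. Each $\tau_i$ flips sign on $\mathfrak q_i$ and so kills its cross terms, irreducibility of $\mathfrak q$ and simplicity of $\mathfrak k$ give the blocks $x_iQ$ and $(-B_{\mathfrak k})\otimes P$, and $T_s$-equivariance of the Hessian at a fixed critical point shows that a fixed vector annihilated by all fixed test vectors lies in the full kernel.

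The point you flag as the main obstacle is not really one. Any invariant symmetric form on $\mathfrak q$ is $Q(A\cdot,\cdot)$ with $A$ a $Q$-self-adjoint $K$-equivariant endomorphism. The real eigenspaces of $A$ are $K$-invariant, so irreducibility alone forces $A$ to be scalar, whatever the commutant. In any case, $K$ simple excludes the Hermitian case, which is the only one where $\mathfrak q^{\mathbb C}$ splits.
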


\begin{proof}
The automorphism $\tau_i$ acts by $-1$ on $\mathfrak q_i$, by $+1$ on
$\mathfrak q_j$ for $j\ne i$, and by $+1$ on
$\mathfrak k\otimes V_s$.  If a $T_s$-invariant bilinear form had a cross
term between $\mathfrak q_i$ and either $\mathfrak q_j$, $j\ne i$, or the
vertical block, invariance under $\tau_i$ would change the sign of that
term.  Hence every such cross term vanishes.  Irreducibility of
$\mathfrak q$ gives one positive scale $x_i$ on each horizontal copy.
Since $K$ is simple, the invariant inner products on the adjoint
$\mathfrak k$-factor are scalar multiples of $-B_{\mathfrak k}$, leaving an
arbitrary positive definite multiplicity form $P$ on $V_s$.  This proves the
description of the fixed metric family.

Normalized scalar curvature on the full finite-dimensional space of
$H^s$-invariant unit-volume metrics is $T_s$-invariant.  At a fixed Einstein
metric its Hessian therefore commutes with $T_s$, so the $T_s$-fixed tangent
space is invariant under the Hessian.  If $v$ belongs to that tangent space
and is in the kernel of the restricted Hessian, then the full Hessian vector
$Hv$ is again fixed and is orthogonal to every fixed vector.  Taking the
fixed test vector to be $Hv$ gives $Hv=0$.
\end{proof}

\begin{lemma}[no hidden equivariant gauge directions]\label{lem:no-equivariant-gauge}
Let $H/K$ be an effective irreducible compact symmetric pair with $H$ and
$K$ simple, and put $G=H^s$, $L=\Delta K$.  Then
\[
 N_{\mathfrak h^s}(\Delta\mathfrak k)=\Delta\mathfrak k.
\]
Consequently the connected group of $G$-equivariant diffeomorphisms of
$G/L$ is trivial.  More generally, the invariant metric tangent space
contains no nonzero infinitesimal pure-gauge tensor $\mathcal L_Xg$.
After fixing homothety, the response kernel therefore contains neither a
scale direction nor a diffeomorphism-gauge direction.
\end{lemma}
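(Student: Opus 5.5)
The plan is to deduce the lemma from the normalizer computation, which has essentially been carried out already in Lemmas~\ref{AI-lem:normalizer} and \ref{AII-lem:normalizer}. Let $(X_1,\dots,X_s)\in\mathfrak h^s$ normalize $\Delta\mathfrak k$. Then for every $A\in\mathfrak k$ the tuple $([X_1,A],\dots,[X_s,A])$ lies in $\Delta\mathfrak k$. Hence all brackets $[X_i,A]$ agree and lie in $\mathfrak k$, so each difference $X_i-X_j$ centralizes $\mathfrak k$ in $\mathfrak h$.

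\textbf{Step 1: the centralizer vanishes.} Split $X_i-X_j=Y+W$ with $Y\in\mathfrak k$ and $W\in\mathfrak q$.
\begin{itemize}
\item The component $Y$ lies in the centre of $\mathfrak k$. Since $\mathfrak k$ is simple, $Y=0$.
\item The component $W$ is a $\mathfrak k$-fixed vector in the irreducible module $\mathfrak q$, and $\mathfrak q$ is nontrivial by effectivity. Hence $W=0$.
\end{itemize}
Therefore $X_1=\dots=X_s=:X$, and $[X,\mathfrak k]\subset\mathfrak k$.

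\textbf{Step 2: self-normalization.} Write $X=Y+W$ with $Y\in\mathfrak k$ and $W\in\mathfrak q$. Then $[W,\mathfrak k]\subset\mathfrak q\cap\mathfrak k=0$, so $W=0$ by the same fixed-vector argument. This gives $N_{\mathfrak h^s}(\Delta\mathfrak k)=\Delta\mathfrak k$.

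\textbf{Step 3: the group statement.} Use the standard fact already invoked in Lemma~\ref{AI-lem:normalizer}: the Lie algebra of the group of $G$-equivariant diffeomorphisms of $G/L$ is $N_{\mathfrak g}(\mathfrak l)/\mathfrak l$. Indeed, such maps are right translations by $N_G(L)/L$. Its identity component is therefore trivial.

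\textbf{Step 4: no pure-gauge tensors.} This is the main point. Suppose $\mathcal L_Xg$ is $G$-invariant and nonzero. I would average $X$ over the compact group $G$. The averaged field $\bar X$ has the same Lie derivative, since $\mathcal L_{g_*X}g=g_*(\mathcal L_Xg)=\mathcal L_Xg$ by invariance of both $g$ and $\mathcal L_Xg$, and the map $X\mapsto\mathcal L_Xg$ is linear. But $\bar X$ is $G$-invariant, so its flow consists of $G$-equivariant diffeomorphisms. By Step~3 that flow is trivial, so $\bar X=0$. Hence $\mathcal L_Xg=\mathcal L_{\bar X}g=0$, a contradiction.

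The step needing care is that averaging commutes with $X\mapsto\mathcal L_Xg$. It does, because $G$ acts isometrically on $g$ and fixes $\mathcal L_Xg$. A second point to handle is that a $G$-invariant vector field is complete (since $M$ is compact) and its flow commutes with $G$.

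\textbf{Step 5: the response kernel.} After fixing homothety, scaling is removed from the metric variables by normalization. The absence of gauge directions is then exactly the content of Step~4.
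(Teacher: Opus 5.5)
Your proof is correct and follows the paper's argument in all essentials. Both compute the normalizer, identify the $G$-equivariant diffeomorphisms with $N_G(L)/L$, and average $X$ over $G$ to reduce a pure-gauge tensor to $\mathcal L_{\bar X}g$ with $\bar X$ $G$-invariant.

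The differences are minor:
\begin{itemize}
\item You derive the vanishing centralizer and the self-normalization of $\mathfrak k$ directly from the grading $\mathfrak h=\mathfrak k\oplus\mathfrak q$. The paper instead uses that derivations of the simple algebra $\mathfrak k$ are inner.
\item You kill $\bar X$ by integrating it to a flow of $G$-equivariant diffeomorphisms and invoking your Step~3. The paper notes directly that $\mathfrak m^L=0$. This is the same fact, since $\mathfrak n_{\mathfrak g}(\mathfrak l)=\mathfrak l\oplus\mathfrak m^{\mathfrak l}$.
\end{itemize}
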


\begin{proof}
If $(X_1,\ldots,X_s)$ normalizes $\Delta\mathfrak k$, then for every
$A\in\mathfrak k$ the brackets $[X_i,A]$ are independent of $i$.  They
define a derivation of the simple algebra $\mathfrak k$, hence an inner
derivation $\operatorname{ad}_Y$ for some $Y\in\mathfrak k$.  Thus
$X_i-Y$ centralizes $\mathfrak k$ in $\mathfrak h$.

For an effective irreducible compact symmetric pair, the centralizer of
$\mathfrak k$ in $\mathfrak h$ is zero.  Equivalently,
$\mathfrak n_{\mathfrak h}(\mathfrak k)
=\mathfrak k+\mathfrak c_{\mathfrak h}(\mathfrak k)$ and the symmetric
subalgebra $\mathfrak k$ is self-normalizing.  Hence $X_i=Y$ for every
$i$, proving the normalizer identity.

The connected group of $G$-equivariant diffeomorphisms is
$N_G(L)_0/L$, whose Lie algebra is the quotient of the two algebras above,
and is therefore trivial.

It remains to exclude an invariant pure-gauge tensor generated by a vector
field that is not itself $G$-invariant.  Suppose
\[
 h=\mathcal L_Xg
\]
is $G$-invariant.  Averaging $X$ over the compact transitive group $G$ gives
a $G$-invariant vector field $\bar X$ and, because $g$ and $h$ are
$G$-invariant,
\[
 h=\mathcal L_{\bar X}g.
\]
But a $G$-invariant vector field is determined at the origin by an
$L$-fixed vector in
\[
 \mathfrak m=
 \bigoplus_i\mathfrak q_i\oplus(\mathfrak k\otimes V_s).
\]
The irreducible nontrivial isotropy module $\mathfrak q$ has no fixed vector,
and the adjoint module of the simple algebra $\mathfrak k$ has none either.
Thus $\mathfrak m^L=0$, so $\bar X=0$ and hence $h=0$.
\end{proof}

\subsection{Four-factor geometric preliminaries}
For later reference, the fully $S_4$-symmetric scale-fixed Einstein equation
is
\begin{equation}\label{eq:H4sym}
 (6\rho-3)x^2-8\rho x+3\rho+2=0.
\end{equation}
The full-isometry reduction needed to distinguish the resulting metrics is
recorded next.

\subsection{Full isometry reduction for the geometric four-factor families}
\label{sec:H4-fullisom}

The preceding proposition only used automorphisms of the displayed
homogeneous pair.  We now show that, for the geometric AI, AII and
exceptional Cartan families occurring below, this already controls arbitrary
Riemannian isometries.  The proof is independent of the Einstein equation:
it uses only the Cartan-fixed orthogonality of the four horizontal
$\mathfrak q$-summands.

\begin{lemma}[normalizer and centralizer]\label{lem:H4-normalizer}
Let $H/K$ be an effective irreducible compact symmetric pair with $H$ and
$K$ simple.  Then
\[
 N_{\mathfrak h^4}(\Delta\mathfrak k)=\Delta\mathfrak k.
\]
Consequently the connected centralizer of the effective $H^4$-action on
$H^4/\Delta K$ is trivial.
\end{lemma}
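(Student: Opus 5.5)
The plan follows the three-factor arguments of Lemma~\ref{AI-lem:normalizer} and Lemma~\ref{lem:no-equivariant-gauge}, now with four factors. The proof has two parts: first compute the normalizer directly, then translate it into the statement about the centralizer.

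\textbf{Normalizer.} Let $(X_1,X_2,X_3,X_4)\in\mathfrak h^4$ normalize $\Delta\mathfrak k$. Then for every $A\in\mathfrak k$ the element $([X_1,A],\dots,[X_4,A])$ lies in $\Delta\mathfrak k$. So the four brackets coincide, and their common value lies in $\mathfrak k$.

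\begin{enumerate}
\item The map $A\mapsto[X_1,A]$ is a derivation of $\mathfrak k$. It is well defined because $X_1$ normalizes $\mathfrak k$, as the last condition shows.
\item Since $\mathfrak k$ is simple, this derivation is inner: it equals $\operatorname{ad}_Y$ for some $Y\in\mathfrak k$.
\item Consequently each $X_i-Y$ centralizes $\mathfrak k$ in $\mathfrak h$.
\item For an effective irreducible symmetric pair, $\mathfrak c_{\mathfrak h}(\mathfrak k)=0$. Irreducibility means $\mathfrak q$ is a nontrivial irreducible $\mathfrak k$-module, so $\mathfrak q^{\mathfrak k}=0$. Moreover, a centralizing element of $\mathfrak k$ would lie in the centre of $\mathfrak k$, and that centre is zero because $\mathfrak k$ is simple.
\item Hence $X_i=Y$ for all $i$, which gives $N_{\mathfrak h^4}(\Delta\mathfrak k)=\Delta\mathfrak k$.
\end{enumerate}

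\textbf{Centralizer.} For the transitive action of $G=H^4$ on $G/L$ with $L=\Delta K$, the connected centralizer of the action in $\Diff(G/L)$ is identified with $(N_G(L)/L)_0$, acting by right translations. Its Lie algebra is $N_{\mathfrak g}(\mathfrak l)/\mathfrak l$, which vanishes by the first part. The only extra care concerns the effective quotient. The ineffective kernel $\Delta(Z(H)\cap K)$ is finite, so passing to the effective image changes neither Lie algebras nor identity components, and the connected centralizer is trivial.

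The only substantive input is the vanishing of $\mathfrak c_{\mathfrak h}(\mathfrak k)$, and I expect no real obstacle there: it uses only that $\mathfrak k$ is simple (so has no centre) and that $\mathfrak q$ has no $\mathfrak k$-fixed vectors.
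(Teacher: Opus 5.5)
Your proof is correct and essentially follows the paper's argument: equal brackets, vanishing of $\mathfrak c_{\mathfrak h}(\mathfrak k)$, and then the standard identification of the centralizer of a transitive action with $N_G(L)/L$. The paper cites the self-normalization $\mathfrak n_{\mathfrak h}(\mathfrak k)=\mathfrak k$ directly, whereas you pass through the inner-derivation step for the simple algebra $\mathfrak k$ (as the paper itself does in Lemma~\ref{lem:no-equivariant-gauge}) and also prove $\mathfrak c_{\mathfrak h}(\mathfrak k)=0$ instead of citing it.
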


\begin{proof}
If $(X_1,\ldots,X_4)$ normalizes $\Delta\mathfrak k$, then for every
$A\in\mathfrak k$ the four brackets $[X_i,A]$ agree.  Thus
$X_i-X_j$ centralizes $\mathfrak k$ in $\mathfrak h$.  For an effective
irreducible compact symmetric pair with $H$ simple one has
\[
 \mathfrak c_{\mathfrak h}(\mathfrak k)=0,
 \qquad
 \mathfrak n_{\mathfrak h}(\mathfrak k)=\mathfrak k.
\]
Hence all $X_i$ are equal to one element of $\mathfrak k$, proving the
normalizer identity.  The usual normalizer description of the centralizer
of a transitive action gives the final assertion.
\end{proof}

\begin{lemma}[minimal five-factor extension]\label{lem:H4-minimal-H5}
Suppose that the primitive simple enlargements of the four original
$\mathfrak h$-ideals have been excluded.  If a connected compact transitive
isometry algebra $\mathfrak i$ properly contains
\[
 \mathfrak g=\mathfrak h^4,
\]
then the connected isometry group contains a transitive subgroup $J$ with
\[
 \Lie(J)\simeq\mathfrak h^5
\]
which contains the original $\mathfrak h^4$ action.
\end{lemma}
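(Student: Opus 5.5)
The argument will be a four-factor transcription of the first half of the proof of Lemma~\ref{AI-lem:minimal-H4}, with Lemma~\ref{lem:H4-normalizer} in place of Lemma~\ref{AI-lem:normalizer}. First, $Z(\mathfrak i)$ centralizes $\mathfrak g=\mathfrak h^4$. The connected centralizer of the transitive $\mathfrak g$-action is trivial by Lemma~\ref{lem:H4-normalizer}, so $Z(\mathfrak i)=0$. Being compact, $\mathfrak i$ is therefore semisimple, and we decompose it into simple ideals. Fix one original ideal $\mathfrak g_j\simeq\mathfrak h$ and an ambient simple ideal $\mathfrak s$ with nonzero projection from $\mathfrak g_j$. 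Since $\mathfrak h$ is simple, this projection is injective. If its image were proper in $\mathfrak s$, the corresponding effective irreducible component of Onishchik's reduction would be a primitive simple enlargement of $\mathfrak g_j$, which is excluded by hypothesis. Hence every nonzero projection is onto, and $\mathfrak s\simeq\mathfrak h$.

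Next, two distinct original ideals commute. Their projections to the same nonabelian simple $\mathfrak s$ would then be two commuting surjections, which is impossible, so each ambient simple ideal is met by at most one $\mathfrak g_j$. An ambient ideal met by none of them centralizes $\mathfrak g$, again contradicting Lemma~\ref{lem:H4-normalizer}. After reordering we obtain
\[
 \mathfrak i=\mathfrak i_1\oplus\mathfrak i_2\oplus\mathfrak i_3\oplus\mathfrak i_4,\qquad \mathfrak i_j\simeq\mathfrak h^{r_j},
\]
with $\mathfrak g_j$ embedded diagonally (up to automorphisms of the factors) in $\mathfrak i_j$. Properness $\mathfrak i\ne\mathfrak g$ forces $r_j\ge2$ for some $j$.

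For such a $j$, choose one ambient factor $\mathfrak s\subset\mathfrak i_j$ and let $\widehat{\mathfrak h}$ be the diagonal copy of $\mathfrak h$ in the remaining $r_j-1$ factors, using the same automorphisms as in the embedding of $\mathfrak g_j$. Then $\mathfrak j_j=\mathfrak s\oplus\widehat{\mathfrak h}\simeq\mathfrak h^2$ contains $\mathfrak g_j$. Set $\mathfrak j=\mathfrak j_j\oplus\bigoplus_{k\ne j}\mathfrak g_k\simeq\mathfrak h^5$, a subalgebra of $\mathfrak i$ containing $\mathfrak g$. Let $J$ be the corresponding connected subgroup of the connected isometry group. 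It contains the transitive group $G$, so it is itself transitive, and $\Lie(J)\simeq\mathfrak h^5$ contains the original $\mathfrak h^4$.

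The main point needing care is the passage from ``nonzero image proper in $\mathfrak s$'' to ``primitive simple enlargement''. This is exactly the step already used in Lemma~\ref{AI-lem:minimal-H4} and Lemma~\ref{AII-lem:minimalH4}, resting on Onishchik's reduction (Theorems~3.3 and~4.3 of \cite{Onishchik1969}): the effective irreducible component containing $\mathfrak s$ yields the primitive factorization. I will simply invoke that mechanism as in the three-factor case, since the hypothesis of the present lemma is formulated precisely to absorb it. A minor check is that $J$ is closed, or at least that its transitivity does not need closedness. Transitivity follows from $J\supset G$ in either case, and closedness holds because $J$ is a semisimple subgroup of a compact group.
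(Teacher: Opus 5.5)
Your proposal is correct and follows essentially the same argument as the paper. Both proofs get zero centre from Lemma~\ref{lem:H4-normalizer}, use the excluded primitive enlargements to force every nonzero projection onto a copy of $\mathfrak h$, and arrive at the block decomposition $\mathfrak h^{r_1}\oplus\cdots\oplus\mathfrak h^{r_4}$; both then build $\mathfrak h^5$ by splitting one factor off a block with $r_j\ge2$. Your remarks on matching the diagonal automorphisms and on the closedness of $J$ are harmless additions that the paper leaves implicit.
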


\begin{proof}
The centre of $\mathfrak i$ centralizes the transitive $\mathfrak h^4$ action
and is therefore zero by Lemma~\ref{lem:H4-normalizer}.  Since a compact Lie
algebra is reductive, $\mathfrak i$ is semisimple.  We may therefore
decompose it into simple ideals.  A nonzero projection of one original
simple ideal $\mathfrak h$ to an ambient simple ideal is injective.
If its image were proper, one would obtain a primitive simple enlargement,
contrary to the hypothesis.  Hence every nonzero projection is onto another
copy of $\mathfrak h$.  Two commuting original ideals cannot project
nontrivially to the same nonabelian simple ambient ideal.  An ambient ideal
missed by all four original factors would centralize $\mathfrak g$, which is
impossible by Lemma~\ref{lem:H4-normalizer}.  Therefore, after reordering,
\[
 \mathfrak i=\mathfrak h^{r_1}\oplus\cdots\oplus\mathfrak h^{r_4},
\]
where the $j$th original factor is diagonally embedded in its block.  If
$\mathfrak i\ne\mathfrak g$, some $r_j\ge2$.  Replacing that block by one
selected factor together with the diagonal copy in all remaining factors
produces a subalgebra $\mathfrak j\simeq\mathfrak h^5$ containing
$\mathfrak g$.  The corresponding group contains the transitive group
$H^4$ and is therefore transitive.
\end{proof}

\begin{lemma}[horizontal orthogonality excludes the five-factor extension]
\label{lem:H4-H5-obstruction}
Let $g$ be an $H^4$-invariant metric in the Cartan-fixed family of
Theorem~\ref{thm:Cartan-fixed-point}; in particular the four horizontal
copies $\mathfrak q_1,\ldots,\mathfrak q_4$ are pairwise orthogonal.  Then
$g$ is not invariant under a proper five-factor extension supplied by
Lemma~\ref{lem:H4-minimal-H5}.
\end{lemma}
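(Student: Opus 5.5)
The proof will follow the template of Lemma~\ref{AI-lem:H4-metric}, now with five ambient factors instead of four. Suppose $J$ is a proper five-factor extension as in Lemma~\ref{lem:H4-minimal-H5} that preserves $g$. Write
\[
 \mathfrak j=\mathfrak h_A\oplus\mathfrak h_B\oplus\mathfrak h_C\oplus\mathfrak h_D\oplus\mathfrak h_E,
\]
where one original factor sits diagonally in $\mathfrak h_A\oplus\mathfrak h_B$ and the other three original factors are $\mathfrak h_C,\mathfrak h_D,\mathfrak h_E$. A dimension count gives
\[
 \dim\mathfrak m=5\dim\mathfrak h-\dim M=\dim\mathfrak h+\dim\mathfrak k
\]
for the isotropy $\mathfrak m$ of $J$.

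The first step is to run the two-factor Goursat argument of Lemma~\ref{AI-lem:minimal-H4} on the projection $\mathfrak b$ of $\mathfrak m$ to $\mathfrak h_A\oplus\mathfrak h_B$. This argument uses only three facts:
\begin{itemize}
\item $\mathfrak k\subset\mathfrak h$ is a maximal subalgebra, because $\mathfrak q$ is an irreducible $\mathfrak k$-module;
\item $\mathfrak h^2=\Delta\mathfrak h+\mathfrak b$ holds, by transitivity;
\item $\Delta\mathfrak k\subset\mathfrak b$ holds, because $\mathfrak g\cap\mathfrak m=\Delta\mathfrak k$.
\end{itemize}
It yields $\mathfrak b=\mathfrak h\oplus\mathfrak k$ up to automorphisms and exchange of $A,B$, and shows that $\mathfrak m\simeq\mathfrak h\oplus\mathfrak k$ via this projection.

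Next, let $S\subset\{C,D,E\}$ be the set of factors onto which the simple $\mathfrak h$-ideal of $\mathfrak m$ projects nontrivially. Each such projection is an automorphism. $S$ must be nonempty, since otherwise $\mathfrak h_A$ would be an ideal of $\mathfrak j$ lying in the isotropy, contradicting effectivity. On factors in $S$ the commuting $\mathfrak k$-ideal projects to zero. On factors outside $S$, the condition $\mathfrak g\cap\mathfrak m=\Delta\mathfrak k$ forces the standard $\mathfrak k$. This gives
\[
 \mathfrak m=\Delta_{\{A\}\cup S}\mathfrak h\oplus\Delta_{\{B\}\cup S^c}\mathfrak k,
\]
so $J/J_o$ contains the Ledger--Obata factor $H^{m}/\Delta H$ with $m=1+|S|\in\{2,3,4\}$.

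The final step is the metric argument on this Ledger--Obata factor. Every $J$-invariant metric restricts there as $\langle\,,\,\rangle_V\otimes Q$ on $V_m$. The original horizontal directions coming from $\mathfrak q_1$ (the $A$-component) and from the $\mathfrak q_j$ with $j\in S$ have components only in this factor, represented by the vectors $\bar e_i$. Their mutual inner products are therefore $\langle\bar e_i,\bar e_j\rangle_V Q$. By the Cartan-fixed orthogonality from Theorem~\ref{thm:Cartan-fixed-point}, these vanish for $i\neq j$. Then
\[
 0=\Bigl\|\textstyle\sum_i\bar e_i\Bigr\|_V^2=\sum_i\|\bar e_i\|_V^2>0,
\]
which is a contradiction.

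The main obstacle is the identification of the pulled-back inner products. One must check that the original horizontal directions indexed by $S$ have no component in the second factor $H^{1+|S^c|}/\Delta K$, and that the $\mathfrak q_1$ direction's component in the Ledger--Obata factor is exactly the class of $\bar e_A$. I would verify this by writing the horizontal vectors $(0,\dots,X,\dots,0)$ explicitly modulo $\mathfrak m$ under the normalized embedding. The mixed $\mathfrak q_1$ contribution from factor $B$ must be shown to land entirely in the $\mathfrak k$-orthogonal part of the second factor.
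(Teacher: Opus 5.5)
Your proposal is correct and follows the paper's proof step for step: the dimension count, the two-factor Goursat reduction giving $\mathfrak m\simeq\mathfrak h\oplus\mathfrak k$, the support set $S$ with the normal form $\Delta_{\{A\}\cup S}\mathfrak h\oplus\Delta_{\{B\}\cup S^c}\mathfrak k$, and the contradiction $0=\|\sum_i\bar e_i\|_V^2>0$ on the Ledger--Obata factor.

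The point you flag as the main obstacle is not settled by locating the $B$-component of $\mathfrak q_1$ inside the second factor. It is settled by the isotropy action: the $\mathfrak h$-ideal of $\mathfrak m$ acts on the Ledger--Obata tangent space $\mathfrak h\otimes V_m$ by copies of the adjoint representation, with no trivial summand, and acts trivially on the tangent space of $H^{1+|S^c|}/\Delta K$. Hence every $J$-invariant metric has no cross terms between the two factors, and that $B$-component pairs to zero with every $\mathfrak q_j$, $j\in S$. The paper's proof uses this step implicitly as well.
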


\begin{proof}
Write
\[
 \mathfrak j=
 \mathfrak h_A\oplus\mathfrak h_B\oplus
 \mathfrak h_C\oplus\mathfrak h_D\oplus\mathfrak h_E,
\]
where one original $\mathfrak h$-factor is diagonal in the $A,B$ block and
the other three are the $C,D,E$ factors.  If $\mathfrak m$ denotes the
isotropy of the transitive $J$-action, then
\[
 \dim\mathfrak m=5\dim\mathfrak h-\dim(H^4/\Delta K)
 =\dim\mathfrak h+\dim\mathfrak k.
\]
Project $\mathfrak m$ to $\mathfrak h_A\oplus\mathfrak h_B$.  Exactly as in
the two-factor Goursat argument of
Lemma~\ref{AI-lem:minimal-H4}, irreducibility of the symmetric complement
implies that each projection is either $\mathfrak k$ or $\mathfrak h$; one
projection must be onto $\mathfrak h$, and the dimension equality forces
\[
 \mathfrak m\simeq\mathfrak h\oplus\mathfrak k.
\]
The simple $\mathfrak h$-ideal of $\mathfrak m$ projects by either zero or
an automorphism to each of $C,D,E$.  Let
\[
 \varnothing\ne S\subset\{C,D,E\}
\]
be its nonzero support.  The condition
$\mathfrak g\cap\mathfrak m=\Delta\mathfrak k$ then forces, up to the
chosen automorphisms,
\[
 \mathfrak m=
 \Delta_{\{A\}\cup S}\mathfrak h
 \oplus
 \Delta_{\{B\}\cup S^c}\mathfrak k.
\]
Thus the quotient contains the Ledger--Obata factor
\[
 \frac{H^{1+|S|}}{\Delta H},
 \qquad 1+|S|\in\{2,3,4\}.
\]
Its tangent multiplicity space is
\[
 V_m=\{(t_1,\ldots,t_m):\textstyle\sum_i t_i=0\}.
\]
For a $J$-invariant metric the restriction to this factor is
$\langle\, ,\,\rangle_V\otimes Q$.  If
\[
 \bar e_i=e_i-\frac1m(1,\ldots,1),
\]
the original horizontal directions represented in this factor have mutual
inner products
\[
 \langle\bar e_i,\bar e_j\rangle_VQ.
\]
The Cartan-fixed metric makes the corresponding original horizontal
$\mathfrak q$-copies pairwise orthogonal.  Hence all the relevant
$\bar e_i$ are pairwise orthogonal.  But
\[
 \bar e_1+\cdots+\bar e_m=0,
 \qquad \bar e_i\ne0,
\]
so positive definiteness gives the contradiction
\[
 0=\Bigl\|\sum_i\bar e_i\Bigr\|_V^2
  =\sum_i\|\bar e_i\|_V^2>0.
\]
\end{proof}

\begin{theorem}[connected full isometry group in the four-factor Cartan
families]\label{thm:H4-fullisom}
Let $H/K$ be one of
\[
 \SU(n)/\SO(n),\quad n=3\text{ or }n\ge5,
\]
\[
 \SU(2n)/\Sp(n),\quad n\ge9,
\]
or
\[
 E_6/\Sp(4),\qquad E_7/\SU(8),\qquad E_8/\SO(16).
\]
Let $g$ be any positive $H^4$-invariant metric in the Cartan-fixed family
of Theorem~\ref{thm:Cartan-fixed-point}.  Then
\[
 \Lie\Isom_0(H^4/\Delta K,g)=\mathfrak h^4.
\]
Equivalently, the effective image of the displayed $H^4$ action is the
connected full isometry group.
\end{theorem}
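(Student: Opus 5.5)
We follow the three-factor template of Theorem~\ref{AI-thm:fullisom}, Theorem~\ref{AII-thm:fullisom} and Theorem~\ref{EX-fullisom}, now with four original simple ideals. Let $I_0=\Isom_0(H^4/\Delta K,g)$ with Lie algebra $\mathfrak i$ and isotropy algebra $\mathfrak l$, and put $\mathfrak g=\mathfrak h^4$. Since the effective $H^4$-action is transitive, $\mathfrak i=\mathfrak g+\mathfrak l$ and $\mathfrak g\cap\mathfrak l=\Delta\mathfrak k$.

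\emph{Step 1: $\mathfrak i$ is strongly semisimple.} By Lemma~\ref{lem:H4-normalizer}, any central element or $A_1$-ideal of $\mathfrak i$ centralizes $\mathfrak g$. This rules out a centre. It also rules out $A_1$-ideals: $\mathfrak h$ is $A_{n-1}$ with $n=3$ or $n\ge5$, or $A_{2n-1}$ with $n\ge9$, or $E_{6,7,8}$, and in each case every homomorphism $\mathfrak h\to A_1$ is zero. Then Onishchik's reduction \cite[Theorem~3.3]{Onishchik1969} gives $\mathfrak i=\mathfrak g+\mathfrak l_s$. In every listed case $\mathfrak k$ is strongly semisimple: $\mathfrak{so}(n)$ with $n\ge5$, $\mathfrak{sp}(n)$, $\mathfrak{sp}(4)$, $\mathfrak{su}(8)$ and $\mathfrak{so}(16)$ all qualify. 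Hence $\mathfrak g\cap\mathfrak l_s=\Delta\mathfrak k$, exactly as in \eqref{AI-intersection}. The case $\SU(3)/\SO(3)$ is the exception, since there $\mathfrak k=A_1$; for it we reuse the $C_2$-partner dimension count from Lemma~\ref{AI-lem:primitive-excluded}. With four factors that count reads $4\cdot 8+10-3<4\cdot 15$, together with the same argument that the support must be all factors.

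\emph{Step 2: exclude primitive enlargements.} A primitive enlargement of one original $\mathfrak h$-ideal would give a proper simple factorization $\mathfrak s=\mathfrak h+\mathfrak b$ whose intersection carries the standard $\mathfrak k$. Nothing in this argument depends on the number of factors, so we invoke the earlier lemmas directly:
\begin{itemize}
\item Lemma~\ref{AI-lem:factorobs} and Lemma~\ref{AI-lem:primitive-excluded} for AI;
\item Lemma~\ref{AII-lem:factorobs} and Lemma~\ref{AII-lem:primitive} for AII;
\item Lemma~\ref{EX-primitive} for the exceptional pairs, whose factorization list contains no $E_{6,7,8}$ summand.
\end{itemize}
For AI and AII the key point is that the intersections occurring in the factorization list fix a line, while $\mathfrak k$ acts irreducibly.

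\emph{Step 3: exclude reducible enlargements.} With primitive enlargements excluded, Lemma~\ref{lem:H4-minimal-H5} says that $\mathfrak i\ne\mathfrak g$ would force a transitive subgroup $J$ with $\Lie(J)\simeq\mathfrak h^5$ containing $\mathfrak h^4$. Lemma~\ref{lem:H4-H5-obstruction} shows that no metric in the Cartan-fixed family, with its pairwise orthogonal horizontal $\mathfrak q_i$, is $J$-invariant. Therefore $\mathfrak i=\mathfrak g$, and passing to the finite ineffective kernel gives the equivalent group statement.

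The main obstacle is Step 1 together with the low-rank edge cases. We must check carefully that every family in the list has strongly semisimple $\mathfrak k$ and that $\mathfrak h$ admits no nonzero map to $A_1$. The bound $n\ge9$ for AII guarantees both comfortably. The remaining work is the $n=3$ AI case, where the $C_2$-partner count has to be redone with four factors. The Goursat step inside Lemma~\ref{lem:H4-H5-obstruction} relies on the maximality of $\mathfrak k$ in $\mathfrak h$, which holds for all of these symmetric pairs.
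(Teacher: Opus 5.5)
Your proposal is correct and matches the paper's proof essentially step for step. Primitive enlargements are excluded ideal by ideal via Lemmas~\ref{AI-lem:factorobs}, \ref{AII-lem:factorobs} and~\ref{EX-primitive}, with the separate $C_2$-partner count $4\cdot8+10-3=39<60$ for $\SU(3)/\SO(3)$; a proper reducible enlargement is reduced by Lemma~\ref{lem:H4-minimal-H5} to an $\mathfrak h^5$ extension, which Lemma~\ref{lem:H4-H5-obstruction} rules out by horizontal orthogonality. The only point the paper spells out beyond your sketch is, for $n=3$, the Schur-lemma fact $C_{A_3}(C_2)=0$ excluding two distinct full-support commuting $C_2$ partners, which is the four-factor version of the step in Lemma~\ref{AI-lem:primitive-excluded} that you reuse.
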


\begin{proof}
For AI with $n\ge5$, the factorization obstruction
Lemma~\ref{AI-lem:factorobs} excludes a primitive enlargement of any
$A_{n-1}$ ideal while carrying the standard $\mathfrak{so}(n)$ in the
intersection.  The argument is local to one original ideal and is unchanged
when three factors are replaced by four.  The same observation applies to
AII by Lemma~\ref{AII-lem:factorobs}.  For the three exceptional pairs,
Onishchik's compact factorization list contains no proper simple
factorization having $E_6,E_7$, or $E_8$ as a proper summand, exactly as in
Lemma~\ref{EX-primitive}.

For the low-rank AI case $n=3$, the only possible primitive enlargement is
\[
 A_3=C_2+A_2,
 \qquad C_2\cap A_2=A_1.
\]
A $C_2$ ideal of the strongly semisimple isotropy supported on a proper
subset of the four enlarged coordinates would contribute an $A_1$ on that
proper subset to the intersection with $A_2^4$, contradicting the diagonal
isotropy.  Hence any such $C_2$ must be diagonal across all four enlarged
factors.  Moreover the standard $C_2=\mathfrak{sp}(2)$ acts irreducibly on
$\mathbb C^4$; Schur's lemma therefore gives
\[
 C_{A_3}(C_2)=0.
\]
Consequently two distinct full-support commuting $C_2$ partners cannot
occur.  A single diagonal partner is too small, since
\[
 \dim(A_2^4+C_2^\Delta)
 =4\cdot8+10-3=39<60=4\cdot15.
\]
Thus primitive enlargement is impossible here as well.

After primitive components are excluded,
Lemma~\ref{lem:H4-minimal-H5} reduces every proper connected enlargement to
a transitive five-factor extension.  Lemma~\ref{lem:H4-H5-obstruction}
excludes such an extension for every Cartan-fixed metric.  Therefore the
connected full isometry algebra is exactly $\mathfrak h^4$.
\end{proof}

\begin{remark}[disconnected components]\label{rem:H4-components}
Theorem~\ref{thm:H4-fullisom} is exactly what is needed for arbitrary
isometry questions.  Indeed every component of the full isometry group has
a representative fixing the origin: compose an isometry with a suitable
left translation.  Such a representative normalizes the characteristic
identity component $H^4_{\rm eff}$ and therefore induces an automorphism of
$\mathfrak h^4$ preserving $\Delta\mathfrak k$.  The kernel of this action
is trivial, because an isometry fixing the origin and commuting with the
transitive $H^4_{\rm eff}$ action fixes every point.  Thus the disconnected
part of the full isometry group is a finite subgroup of the automorphism
group of the homogeneous pair, namely the subgroup preserving the metric.
No separate classification of those finite components is required for the
non-isometry result below.
\end{remark}

\begin{corollary}[arbitrary non-isometry of the $3+1$ and $2+2$ branches]
\label{cor:H4-arbitrary-nonisometry}
Fix one of the geometric Cartan pairs in
Theorem~\ref{thm:H4-fullisom}.  Suppose that at the same Cartan parameter
there are local branch metrics $g_{31}$ and $g_{22}$, sufficiently close to
the bifurcation point, whose horizontal equality partitions are respectively
$3+1$ and $2+2$.  Then there are no
$c>0$ and no Riemannian isometry $F$ such that
\[
 F^*g_{22}=c\,g_{31}.
\]
In particular, whenever both local symmetry types are geometrically
realized on the same four-factor homogeneous space in the local regime,
they define genuinely non-homothetic, non-isometric Riemannian metrics.
\end{corollary}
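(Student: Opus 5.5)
The plan is to reduce an arbitrary isometry to an automorphism of the homogeneous pair, and then separate the two branches by an invariant that such automorphisms cannot change: the multiplicity pattern of the four horizontal scales.

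Suppose, for contradiction, that $F^*g_{22}=c\,g_{31}$ with $c>0$. Both metrics lie in the Cartan-fixed family of Theorem~\ref{thm:Cartan-fixed-point}. Theorem~\ref{thm:H4-fullisom} therefore applies to each of them. It says that the identity component of the isometry group of either metric is the effective image of $H^4$. Since $F$ conjugates the identity components of the two isometry groups, it normalizes $H^4_{\rm eff}$. Composing $F$ with a left translation, we may assume that $F$ fixes the origin. By Remark~\ref{rem:H4-components}, $dF$ at the origin is then induced by an automorphism $\phi$ of $\mathfrak h^4$ preserving $\Delta\mathfrak k$. Any such $\phi$ is a permutation $\pi\in S_4$ of the simple ideals, followed by automorphisms $\phi_i$ of the individual factors. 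Because $\Delta\mathfrak k$ is preserved, all the $\phi_i$ restrict to the same automorphism of $\mathfrak k$.

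Next we examine how $\phi$ acts on the tangent space
\[
 \mathfrak m=\bigoplus_i\mathfrak q_i\oplus(\mathfrak k\otimes V_4).
\]
Each $\phi_i$ preserves $\mathfrak k$ and is a $Q$-isometry, so it also preserves $\mathfrak q=\mathfrak k^{\perp}$. Hence $\phi$ carries $\mathfrak q_i$ onto $\mathfrak q_{\pi(i)}$, isometrically for $Q$. It also acts on the vertical block through $\pi$ on $V_4$, tensored with a fixed automorphism of $\mathfrak k$. In particular $\phi$ preserves the horizontal–vertical splitting. Writing $g_{31}$ and $g_{22}$ with horizontal scales $(x_1,\dots,x_4)$ and $(y_1,\dots,y_4)$, the relation $F^*g_{22}=c\,g_{31}$ then gives
\[
 y_{\pi(i)}=c\,x_i\qquad(i=1,\dots,4).
\]
So the multisets $\{y_i\}$ and $\{c\,x_i\}$ coincide, and in particular their equality partitions must agree.

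It remains to show that the two equality partitions really are $3+1$ and $2+2$, and are not both degenerate. The $3+1$ metric has scales $(a,a,a,b)$ with $a\ne b$, giving partition $3+1$. The $2+2$ metric has scales $(p,p,q,q)$ with $p\ne q$, giving partition $2+2$. The partition is invariant under permutation and scaling, so these cannot match, which is the required contradiction.

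The main obstacle is precisely this non-degeneracy check: we must ensure $a\ne b$ and $p\ne q$, rather than having a branch metric collapse onto the fully symmetric point. The phrase ``local branch metrics sufficiently close to the bifurcation point'' is meant to say exactly that each metric lies on its branch and not at the symmetric point. I would make this precise by arguing that on a nontrivial branch the asymmetry parameter is nonzero off the bifurcation point. Alternatively, equal horizontal scales would force the metric into the more symmetric fixed family, and there the Einstein solution near the bifurcation point is the symmetric one, solving \eqref{eq:H4sym}, by the implicit function theorem transverse to the kernel. The vertical data $P$ are not needed at all, since the horizontal partitions already distinguish the two metrics.
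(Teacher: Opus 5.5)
Your argument is correct and follows the paper's route: Theorem~\ref{thm:H4-fullisom} reduces $F$, after a translation, to an automorphism of $(\mathfrak h^4,\Delta\mathfrak k)$. Such an automorphism permutes the horizontal summands $\mathfrak q_i$ and so preserves the equality partition of the horizontal scales, and $3+1$ and $2+2$ are different partitions. Your explicit check that $\phi$ maps $\mathfrak q_i$ isometrically onto $\mathfrak q_{\pi(i)}$ spells out a step the paper states in one line. The non-degeneracy you flag as the main obstacle is already part of the hypothesis, which assumes the partitions are exactly $3+1$ and $2+2$, so no implicit-function argument is needed.
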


\begin{proof}
Assume $F^*g_{22}=c\,g_{31}$.  A constant homothety does not change the
isometry group, so $F$ conjugates the identity components of the two full
isometry groups.  By Theorem~\ref{thm:H4-fullisom}, both identity components
are the effective images of $H^4$.  Compose $F$ with a left translation so
that the resulting isometry fixes the origin.  It then normalizes the
connected $H^4$ action and hence induces an automorphism of
$\mathfrak h^4$ preserving the isotropy $\Delta\mathfrak k$.  Thus it is,
at Lie-algebra level, an automorphism of the homogeneous pair
$(H^4,\Delta K)$.

Such an automorphism acts on the four simple ideals by a permutation,
together with factorwise automorphisms preserving the diagonal isotropy.
It therefore preserves the unordered equality partition of the four
horizontal scales.  A nonzero $3+1$ metric has partition $3+1$, whereas a
nonzero balanced metric has partition $2+2$; these partitions are not
conjugate under $S_4$.  Hence no positive homothety can carry one metric to
the other.  This contradiction proves the claim.
\end{proof}

\begin{remark}[geometric scope]\label{rem:H4-nonisometry-scope}
The full-isometry theorem applies to every actual Cartan-fixed metric on the
listed geometric homogeneous spaces.  Thus whenever both fixed symmetry
types occur at the same Cartan value, the preceding corollary gives arbitrary
Riemannian non-isometry, not merely inequivalence under the displayed group.
The first such simultaneous realization is proved below at the $AII$ value
$\rho=9/8$.
\end{remark}

The balanced $2+2$ branch has an additional symmetry.  Interchanging the two
blocks sends its amplitude $a$ to $-a$ while leaving $\rho$ unchanged.
Consequently, if the branch is written locally as
\[
 \rho=\rho(a),
\]
then
\[
 \rho(a)=\rho(-a),
 \qquad\text{hence}\qquad
 \rho'(0)=0.
\]
Thus the balanced branch is pitchfork-symmetric at leading order.  The
remaining nonlinear datum can be isolated intrinsically as follows.  First
recenter the scale-fixed Einstein system on the $\Gamma_{4,2}$-fixed family
along the smooth symmetric solution branch, so that in the resulting
coordinates
\[
 F(0,\rho)=0
\]
for all $\rho$ near $\rho_{4,*}$.  Thus the symmetric solution at
$\rho_{4,*}$ is $(y,\rho)=(0,\rho_{4,*})$.  Write
\[
 L=D_yF(0,\rho_{4,*}),
\]
choose a kernel vector $v$ and a left-kernel vector $\psi$ with
$\langle\psi,v\rangle=1$, and let $Q=I-v\otimes\psi$.  The restriction
$L:QY\to QY$ is invertible.  Define
\[
 w_2=-\frac12\,(L|_{QY})^{-1}Q\,D_y^2F[v,v].
\]
Then Liapunov--Schmidt reduction gives
\begin{equation}\label{eq:H4balancednormalform}
 \phi(a,\rho)
 =
 a\left(
 \alpha(\rho-\rho_{4,*})+\beta a^2
 +O\!\left((\rho-\rho_{4,*})^2+
 |a|^2|\rho-\rho_{4,*}|+a^4\right)\right),
\end{equation}
where
\begin{equation}\label{eq:H4balancedcoefficients}
 \alpha
 =
 \left\langle\psi,D_\rho D_yF[v]\right\rangle,
 \qquad
 \beta
 =
 \left\langle\psi,
 \frac16D_y^3F[v,v,v]+D_y^2F[v,w_2]
 \right\rangle .
\end{equation}
The transverse response crossing proved above is exactly $\alpha\ne0$.
Consequently, if $\beta\ne0$, the balanced branch satisfies
\begin{equation}\label{eq:H4balancedside}
 \rho(a)
 =
 \rho_{4,*}-\frac{\beta}{\alpha}a^2+O(a^4).
\end{equation}
Thus its side is determined by the single coordinate-independent sign
$-\beta/\alpha$.

\paragraph{The balanced fixed equations.}
To determine the side on which the balanced branch emerges, consider the
nonlinear fixed equations in the orthonormal anti-diagonal basis
\[
 u_1=\frac1{\sqrt2}(1,-1,0,0),\qquad
 u_2=\frac1{\sqrt2}(0,0,1,-1),\qquad
 u_3=\frac12(1,1,-1,-1),
\]
write a $\Gamma_{4,2}$-fixed metric, after homothety, as
\[
 (p,p,q,q;A,B,1).
\]
Normalize $d=\dim\mathfrak q$ to one and put $e=\rho$.  The
simple-isotropy identity $e(1-a)=d/2$ gives
\[
 ea=\rho-\frac12.
\]
The only nonzero vertical cubic coefficients, up to permutation of the
indices, are
\[
 \left(\sum_i u_{1i}^2u_{3i}\right)^2
 =
 \left(\sum_i u_{2i}^2u_{3i}\right)^2
 =\frac14.
\]
The homogeneous Ricci formula therefore gives
\begingroup\scriptsize
\begin{align}
 r_p&=\frac{8p-2A-1}{16p^2},
&
 r_q&=\frac{8q-2B-1}{16q^2},
\label{eq:H4bal-rpq}\\[1mm]
 r_A&=
 \frac{2A^3+(8\rho-4)Ap^2+(1-2\rho)p^2}
 {16A^2p^2\rho},
&
 r_B&=
 \frac{2B^3+(8\rho-4)Bq^2+(1-2\rho)q^2}
 {16B^2q^2\rho},
\label{eq:H4bal-rAB}\\[1mm]
 r_C&=
 \frac{2\rho-1}{8\rho}
 +\frac1{16\rho}\left(\frac1{p^2}+\frac1{q^2}\right)
 +\frac{2\rho-1}{32\rho}
 \left(\frac1{A^2}+\frac1{B^2}\right).
\label{eq:H4bal-rC}
\end{align}
\endgroup
At $p=q=x$ and $A=B=1$, these reproduce the symmetric equation
\eqref{eq:H4sym}.

The Einstein equations on this fixed family are
\begin{equation}\label{eq:H4bal-F}
\begin{aligned}
 F_1&=\frac{r_p-r_q}{2}=0,\qquad&
 F_2&=\frac{r_A-r_B}{2}=0,\\
 F_3&=\frac{r_p+r_q}{2}-r_C=0,\qquad&
 F_4&=\frac{r_A+r_B}{2}-r_C=0.
\end{aligned}
\end{equation}
They are invariant under
\[
 (p,q,A,B)\longleftrightarrow(q,p,B,A).
\]
Accordingly put
\begin{equation}\label{eq:H4bal-expansion}
\begin{aligned}
 p&=m+h,&q&=m-h,\\
 A&=y+k,&B&=y-k,
\end{aligned}
\qquad
\begin{aligned}
 m&=x+m_2h^2+O(h^4),\\
 y&=1+y_2h^2+O(h^4),\\
 k&=k_1h+k_3h^3+O(h^5),\\
 \rho&=\rho_*+r_2h^2+O(h^4).
\end{aligned}
\end{equation}
The coefficient of $h$ in $F_1$ gives
\begin{equation}\label{eq:H4bal-k1}
 k_1=-\frac{4x-3}{x}.
\end{equation}
The coefficient of $h$ in $F_2$, together with the symmetric equation,
then gives
\begin{equation}\label{eq:H4bal-critical-relations}
 \rho_*=\frac{20x-9}{32x^2-36x+9},
 \qquad
 P(x)=32x^4-76x^3+59x^2-20x+3=0.
\end{equation}

To determine $r_2$, substitute
\eqref{eq:H4bal-k1}--\eqref{eq:H4bal-critical-relations} into the
coefficients of $h^2$ in $F_3,F_4$ and of $h^3$ in $F_1,F_2$, and reduce
all coefficients modulo $P(x)$.  With
\[
 z=(m_2,y_2,k_3,r_2)^T
\]
one obtains the exact system
\begin{equation}\label{eq:H4bal-linear-system}
 \mathcal M(x)z=b(x),
\end{equation}
where $\mathcal M(x)$ and $b(x)$ are polynomial in $x$ of degree at
most three.  Their coefficients are recorded in
Appendix~\ref{app:H4balanced-coefficients}; they are not needed for the
conceptual argument here.  What matters in the main text is the exact
Cramer reduction below.

Let
\[
 \Delta=\det\mathcal M,\qquad
 \Delta_\rho=\det\mathcal M_\rho,
\]
where $\mathcal M_\rho$ is obtained by replacing the fourth column of
$\mathcal M$ by $b$.  Reduction modulo $P$ gives
\begin{equation}\label{eq:H4bal-Cramer-reduction}
 \Delta\equiv-\frac{C_*(x)B_*(x)}{256}\pmod{P},
 \qquad
 \Delta_\rho\equiv2C_*(x)A_*(x)\pmod{P},
\end{equation}
where
\[
 C_*(x)=1519996x^3-1703953x^2+667668x-114057.
\]
Consequently, at the critical root $x_*$,
\begin{equation}\label{eq:H4bal-r2}
 r_2=\frac{\Delta_\rho(x_*)}{\Delta(x_*)}
 =-512\,\frac{A_*(x_*)}{B_*(x_*)}.
\end{equation}
The Cramer denominator is nonzero: indeed
\[
 C_*(5/4)=\frac{8214749}{8}>0,
\]
and $C_*'$ is a quadratic with positive leading coefficient and negative
discriminant, hence $C_*(x)>0$ for $x\ge5/4$; the proof below shows
$B_*(x_*)>0$.

\begin{theorem}[side of the balanced four-factor branch]
\label{thm:H4balancedside}
For the continuous four-factor algebraic system, the
$\Gamma_{4,2}$-fixed balanced branch issuing from $\rho_{4,*}$ points
toward the
lower-$\rho$ side.  More precisely, with a kernel coordinate $h$ chosen so
that the two horizontal block scales are $x_*+h$ and $x_*-h$,
\[
 \rho(h)=\rho_{4,*}-c_*h^2+O(h^4),
 \qquad c_*>0.
\]
One may take
\[
 c_*
 =
 512\,\frac{A_*(x_*)}{B_*(x_*)},
\]
where
\[
 A_*(x)=1823444x^3-2041947x^2+800072x-136299,
\]
\[
 B_*(x)=93530764x^3-104635533x^2+40950468x-6975909,
\]
and $x_*$ is the relevant root of
\[
 P(x)=32x^4-76x^3+59x^2-20x+3.
\]
In particular,
\[
 \rho(h)<\rho_{4,*}
\]
for all sufficiently small $h\ne0$.
\end{theorem}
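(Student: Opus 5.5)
The proof will pin down the correct root $x_*$ of $P$, show $A_*(x_*)>0$ and $B_*(x_*)>0$ exactly, and then read off the side from the Cramer formula \eqref{eq:H4bal-r2}.

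\textbf{Step 1: which root of $P$.} The relevant root must give an admissible symmetric state, with $\rho_*=(20x-9)/(32x^2-36x+9)>1/2$ and $x$ a positive root of \eqref{eq:H4sym}. The Cramer-denominator remark already restricts to $x\ge5/4$. I will bracket the roots of $P$ by exact rational evaluation. For instance, $P(1)=-2<0$, and $P(5/4)$ and $P(3/2)$ can be evaluated exactly. A Descartes or Sturm count on $P(5/4+t)$ then shows there is exactly one root $x_*$ with $x_*>5/4$. I will isolate it in a short rational interval $I=[\ell,r]$ with $P(\ell)P(r)<0$. Substituting this root into $\rho_*$ should reproduce the AII value $\rho=9/8$ quoted in Remark~\ref{rem:H4-nonisometry-scope}. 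This gives a consistency check: $x_*$ solves both $P=0$ and \eqref{eq:H4sym} at $\rho=9/8$. I can also obtain $x_*$ as a root of that quadratic, which avoids guessing the interval.

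\textbf{Step 2: signs of $A_*$ and $B_*$.} Both are cubics with positive leading coefficient. I will show that each is increasing on $[5/4,\infty)$ and positive at $5/4$, mirroring the argument used for $C_*$.
\begin{itemize}
\item Evaluate $A_*(5/4)$ and $B_*(5/4)$ exactly.
\item Show that the derivatives $A_*'$ and $B_*'$ are positive there, for example by checking the sign of the discriminant, or by expanding in $t=x-5/4$ and checking that all coefficients are positive.
\end{itemize}
If positivity at $5/4$ fails, I will fall back to the isolating interval $I$. There I will show that $\gcd(P,A_*)=1$ and that $A_*$ has no root in $I$, using Sturm or endpoint signs, and similarly for $B_*$. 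This also supplies the claim $B_*(x_*)>0$ that was deferred in the $C_*$ remark, so $\Delta(x_*)\ne0$ and \eqref{eq:H4bal-r2} is legitimate. Then $c_*=512A_*(x_*)/B_*(x_*)>0$.

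\textbf{Step 3: expansion and conclusion.} The expansion \eqref{eq:H4bal-expansion} is justified by the implicit function argument behind \eqref{eq:H4balancednormalform}. Nondegeneracy $\alpha\ne0$ is the transverse crossing. The linear system \eqref{eq:H4bal-linear-system} is uniquely solvable because $\Delta(x_*)\ne0$. Hence $\rho(h)=\rho_*+r_2h^2+O(h^4)$, where the absence of odd terms comes from the block interchange $h\mapsto-h$. Since $r_2=-c_*<0$, we get $\rho(h)<\rho_{4,*}$ for small $h\ne0$.

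I expect the main obstacle to be Step 1: correctly identifying the geometrically relevant root among the real roots of $P$ and certifying that it is the one used in the Cramer reduction. The sign evaluations in Step 2 are routine exact arithmetic once that root is fixed.
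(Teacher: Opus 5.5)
Your plan is the paper's proof. Like the paper, you isolate $x_*$ as the unique root of $P$ above $5/4$, check $A_*(5/4)>0$ and $B_*(5/4)>0$, note that $A_*'$ and $B_*'$ are quadratics with negative discriminant, and read the sign off $r_2=-512A_*(x_*)/B_*(x_*)$. Your Descartes count works directly, since
\[
 P(5/4+t)=32t^4+84t^3+74t^2+\tfrac{85}{4}t-\tfrac18
\]
has one sign change. Your admissibility criterion $\rho_*>1/2$ also justifies the word ``relevant'', which the paper only asserts. The other real root of $P$ lies in $(1/2,3/5)$, where $32x^2-36x+9<0<20x-9$, so it gives $\rho_*<0$.

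Drop the proposed consistency check through $\rho=9/8$, because it is false. The paper has $9/8<\rho_{4,*}\approx1.1305$. The value $9/8$ is the nearest $AII$ Cartan value below the wall, where the balanced metric of Theorem~\ref{thm:H4balanced-AII9} lives; it is not the bifurcation point. At $\rho=9/8$ the symmetric equation \eqref{eq:H4sym} becomes $30x^2-72x+43=0$, with roots $(36\pm\sqrt6)/30\approx1.2816,\ 1.1184$, and neither is a root of $P$. So obtaining $x_*$ ``as a root of that quadratic'' would produce the wrong point. The check is unnecessary, since your Descartes/Sturm isolation already pins down $x_*$.
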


\begin{proof}
The expansion \eqref{eq:H4bal-expansion} is forced by block interchange.
Equations \eqref{eq:H4bal-k1}--\eqref{eq:H4bal-r2}, obtained directly from
the fixed Ricci equations \eqref{eq:H4bal-rpq}--\eqref{eq:H4bal-F}, give
\[
 \rho(h)=\rho_{4,*}
 -512\,\frac{A_*(x_*)}{B_*(x_*)}h^2+O(h^4).
\]
It remains to determine the sign.  The relevant root satisfies $x_*>5/4$:
\[
 P(5/4)=-\frac18<0,
\]
and
\[
 P'(x)=128x^3-228x^2+118x-20
\]
has exactly one real zero, at $1.0383\ldots<5/4$.  Since
$P'(5/4)=85/4>0$, the polynomial $P$ is strictly increasing on
$[5/4,\infty)$.

Furthermore,
\[
 A_*(5/4)=\frac{9877303}{8}>0,
 \qquad
 B_*(5/4)=\frac{507171433}{8}>0.
\]
Both $A_*'$ and $B_*'$ are quadratic polynomials with positive leading
coefficient and negative discriminant, so both derivatives are positive on
the real line.  Hence
\[
 A_*(x_*)>0,\qquad B_*(x_*)>0,
\]
and therefore
\[
 c_*=512\,\frac{A_*(x_*)}{B_*(x_*)}>0.
\]
\end{proof}

This is genuinely nonlinear information.  The linear response detects the
balanced kernel and forces the pitchfork symmetry, but it does not determine
the side of the wall.  The sign above is obtained only after the cubic
Lyapunov--Schmidt calculation.  Thus the four-factor analysis goes beyond
detection of a degeneracy and determines the local geometry of the
symmetry-breaking branch.

\begin{remark}
Numerically,
\[
 x_*=1.2557658237054698\ldots,\qquad
 \rho_{4,*}=1.1305217618711898\ldots,
\]
and
\[
 c_*=9.9713932738686\ldots .
\]
These decimal values are included only for orientation; the sign proof
above is exact.
\end{remark}

\begin{corollary}[the four-factor fixed families]
\label{cor:H4-fixed-families}
For $s=4$ the two local partition families are fixed-point families of
automorphisms of the full homogeneous space.

For the $3+1$ partition,
\[
 \Gamma_{4,1}=T_4\rtimes S_3,
\]
and, after homothety, its fixed metrics are exactly
\[
 (p,p,p,q;r,r,1)
\]
in the horizontal/anti-diagonal coordinates used below.  For the balanced
$2+2$ partition,
\[
 \Gamma_{4,2}=T_4\rtimes(S_2\times S_2),
\]
and, after homothety, its fixed metrics are exactly
\[
 (p,p,q,q;A,B,1).
\]
Consequently the $3+1$ and balanced $2+2$ solutions obtained in these
families are genuine critical points of normalized scalar curvature on the
full invariant metric space, not merely critical points of a chosen
diagonal ansatz.
\end{corollary}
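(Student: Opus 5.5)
The plan is to model the argument on Theorem~\ref{thm:Cartan-fixed-point} and Corollary~\ref{cor:three-factor-Cartan-fixed}, now for $s=4$. By Theorem~\ref{thm:Cartan-fixed-point}, a $T_4$-fixed invariant metric has four mutually orthogonal horizontal copies with scales $x_1,\dots,x_4$. It has no horizontal--vertical cross terms, and on $\mathfrak k\otimes V_4$ it is $(-B_{\mathfrak k})\otimes P$ with $P\in\operatorname{Sym}^+(V_4)$.

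For a subgroup $\Sigma\subset S_4$ of factor permutations, the fixed metrics of $T_4\rtimes\Sigma$ are then those with:
\begin{enumerate}
\item $x$ constant on the $\Sigma$-orbits;
\item $P$ commuting with the permutation action of $\Sigma$ on $V_4$.
\end{enumerate}
One small check is needed first: each permutation normalizes $T_4$ and preserves $\Delta K$, so $T_4\rtimes\Sigma$ is indeed a group of automorphisms descending to $H^4/\Delta K$.

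For the $3+1$ case take $\Sigma=S_3$ acting on the first three factors. The horizontal scales become $(p,p,p,q)$. The module $V_4$ is the restriction of the standard $S_4$-module to $S_3$, which splits as the two-dimensional standard $S_3$-module $V_3\otimes(\text{first three coordinates})$ plus the trivial line spanned by $\frac1{2\sqrt3}(1,1,1,-3)$. These are inequivalent, and the standard module is absolutely irreducible, so by Schur $P$ is a scalar $r$ on the first and a scalar on the trivial line. After homothety the latter scalar is $1$, giving $(p,p,p,q;r,r,1)$.

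For the $2+2$ case take $\Sigma=S_2\times S_2$ generated by $(12)$ and $(34)$. The horizontal scales become $(p,p,q,q)$. In the basis $u_1,u_2,u_3$ the generators act by the characters
\[
 u_1\colon(-,+),\qquad u_2\colon(+,-),\qquad u_3\colon(+,+).
\]
These three characters are distinct, so $P$ is diagonal, $\operatorname{diag}(A,B,1)$ after homothety.

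Conversely, every metric of either displayed form is fixed by the relevant group. This follows by inspecting the same decomposition. Hence the fixed families are exactly as stated.

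The final assertion uses symmetric criticality, exactly as in the proof of Corollary~\ref{cor:three-factor-Cartan-fixed}. Normalized scalar curvature on the finite-dimensional space of $H^4$-invariant unit-volume metrics is invariant under these finite automorphism groups. At a fixed point, its gradient is therefore fixed. If the gradient is orthogonal to the fixed tangent space, it is orthogonal to itself, so it vanishes. Thus critical points within each fixed family are critical points of normalized scalar curvature on the full invariant metric space, i.e.\ genuine Einstein metrics.

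The main obstacle is bookkeeping, not analysis. One must check that the chosen vertical bases are exactly the isotypic decompositions. This includes confirming that the $3+1$ two-dimensional block is irreducible over $\mathbb R$ (and not merely over $\mathbb C$), so that Schur forces a scalar rather than a general positive form.
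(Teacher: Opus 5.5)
Your proposal is correct and follows essentially the same route as the paper. The horizontal form comes from Theorem~\ref{thm:Cartan-fixed-point} together with the block permutations, and the vertical form from $V_4|_{S_3}\simeq V_3\oplus\mathbf1$ and $V_4|_{S_2\times S_2}\simeq(\operatorname{sgn}\boxtimes\mathbf1)\oplus(\mathbf1\boxtimes\operatorname{sgn})\oplus\mathbf1$, with the final assertion by symmetric criticality; you simply spell out details the paper leaves implicit (permutations normalize $T_4$ and preserve $\Delta K$, absolute irreducibility of the $S_3$ block, and the gradient form of symmetric criticality).
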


\begin{proof}
For $3+1$,
\[
 V_4|_{S_3}\simeq V_3\oplus\mathbf1,
\]
so the vertical metric has two eigenvalues, represented in the chosen
anti-diagonal basis by $(r,r,1)$.  For $2+2$,
\[
 V_4|_{S_2\times S_2}
 \simeq(\operatorname{sgn}\boxtimes\mathbf1)
 \oplus(\mathbf1\boxtimes\operatorname{sgn})
 \oplus\mathbf1,
\]
so the vertical metric has three independent eigenvalues $(A,B,C)$; fixing
homothety by $C=1$ gives $(A,B,1)$.  The horizontal forms are forced by
Theorem~\ref{thm:Cartan-fixed-point} and the block permutations.  The final assertion follows directly from symmetric criticality for the
finite automorphism group defining the fixed family.
\end{proof}

At non-Cartan values of the continued parameter $\rho$, the same coordinate
families and equations are used algebraically, but there is no underlying
homogeneous space to which symmetric criticality could be applied.

\subsection{Global analysis on the $\Gamma_{4,1}$-fixed $3+1$ branch}

On the $\Gamma_{4,1}$-fixed $3+1$ family write the horizontal scales as
\[
 (p,p,p,q)
\]
and the three anti-diagonal scales as
\[
 (r,r,1).
\]
For symbolic $\rho$ the Einstein equations reduce to
\begin{align}
0={}&24p^2q-9p^2-24pq^2+8q^2r+q^2,\label{eq:H4F1}\\
0={}&20p^2r\rho-10p^2r-2p^2\rho+p^2
-24pr^2\rho+8r^3\rho+6r^3+r^2\rho,\label{eq:H4F2}\\
0={}&32p^2q^2r^2\rho-16p^2q^2r^2+4p^2q^2\rho-2p^2q^2
+9p^2r^2\notag\\
&-48pq^2r^2\rho+16q^2r^3\rho+2q^2r^2\rho+3q^2r^2.
\label{eq:H4F3}
\end{align}
Eliminating $p$ and $q$ and removing geometrically understood factors gives a
universal degree-ten detector
\[
 \mathcal D_\rho(r)=\sum_{j=0}^{10}a_j(\rho)r^j.
\]
Its coefficients are
{\small
\begin{align*}
a_{10}&=4096(2\rho-1)(2\rho+3)^2(4\rho+3)^4,\\
a_9&=-8192(2\rho+3)(4\rho+3)^2
 (80\rho^4+228\rho^3+72\rho^2-79\rho+42),\\
a_8&=2048(8064\rho^7+54400\rho^6+122104\rho^5+106884\rho^4
 +24982\rho^3-5813\rho^2-3441\rho-990),\\
a_7&=-1024(3264\rho^7+37744\rho^6+104112\rho^5+92752\rho^4
 +15864\rho^3-11667\rho^2-7882\rho-1551),\\
a_6&=-128(7152\rho^7-23176\rho^6-174516\rho^5-174706\rho^4
 -19406\rho^3+29799\rho^2+15812\rho+2237),\\
a_5&=128(2\rho-1)(1224\rho^6+4364\rho^5-7974\rho^4-17619\rho^3
 -9055\rho^2-1641\rho-3),\\
a_4&=32(2\rho-1)(348\rho^6-1984\rho^5+3031\rho^4+7591\rho^3
 +2189\rho^2-719\rho-264),\\
a_3&=-16(\rho+1)(2\rho-1)^2
 (150\rho^4+67\rho^3+621\rho^2+505\rho+73),\\
a_2&=-(\rho+1)^2(2\rho-1)^2
 (126\rho^3-131\rho^2-720\rho+33),\\
a_1&=2(\rho+1)^3(2\rho-1)^3(7\rho-9),\\
a_0&=(\rho+1)^4(2\rho-1)^3.
\end{align*}}
At the symmetric value $r=1$, the nonlinear detector restricts to the linear
response polynomial:
\begin{equation}\label{eq:H4D1}
 \mathcal D_\rho(1)=11664(3\rho+2)\Xi_{4,31}(\rho).
\end{equation}
Thus $\rho_{4,*}$ is exactly the parameter at which an asymmetric detector
root passes through the symmetric locus $r=1$.

A multiple positive detector root satisfies
\[
 \mathcal D_\rho(r)=0,
 \qquad
 \partial_r\mathcal D_\rho(r)=0.
\]
Direct elimination from \eqref{eq:H4F1}--\eqref{eq:H4F3} gives
\[
 \operatorname{Res}_q(F_1,F_3)=324p^4R_\rho(p,r),
\]
followed by
\[
 \operatorname{Res}_p(F_2,R_\rho)
 =
 16r^8(r-1)^2(2\rho-1)\mathcal D_\rho(r),
\]
so the displayed degree-ten detector is reproduced exactly from the fixed
Einstein equations.

Its discriminant factors as
\[
\begin{aligned}
\operatorname{Res}_r(\mathcal D_\rho,\partial_r\mathcal D_\rho)
={}&C(\rho+1)^{12}(2\rho-1)^{16}(2\rho+3)^6(4\rho+3)^8\\
&\times\Psi^{(4)}_{18}(\rho)^2\Phi^{(4)}_{25}(\rho),
\end{aligned}
\]
with $C>0$.  Here
\[
\begin{aligned}
\Psi^{(4)}_{18}={}&
746496\rho^{18}+4907520\rho^{17}+13814528\rho^{16}
+21043328\rho^{15}\\
&+17102464\rho^{14}+4503104\rho^{13}-4470480\rho^{12}
-5752984\rho^{11}\\
&-2836276\rho^{10}+494118\rho^9+1022904\rho^8+208006\rho^7\\
&-95947\rho^6-27540\rho^5+759\rho^4-490\rho^3
-233\rho^2+42\rho+9.
\end{aligned}
\]
After the shift $\rho=t+\frac12$, every coefficient of
$\Psi^{(4)}_{18}(t+\frac12)$ is positive, hence this factor never vanishes
for $\rho>1/2$.

For $\Phi^{(4)}_{25}$ the exact Sturm chain after the same shift has sign
strings
\[
 --+----+--++--+++++-++--++
 \quad(t=0)
\]
and
\[
 ++++--++---++-+++++-+++-++
 \quad(t=+\infty),
\]
with variation numbers $11$ and $10$.  Thus it has exactly one zero for
$\rho>1/2$:
\[
 \rho_{4,\mathrm{fold}}=1.13236356611068\ldots.
\]

\begin{theorem}[four-factor detector phase diagram]\label{thm:H4phase}
For $\rho>1/2$ the positive roots of $\mathcal D_\rho$ satisfy
\[
 \begin{array}{c|c}
 \rho>\rho_{4,\mathrm{fold}}&0\text{ positive roots},\\
 \rho=\rho_{4,\mathrm{fold}}&1\text{ positive double root},\\
 1/2<\rho<\rho_{4,\mathrm{fold}}&2\text{ positive simple roots}.
 \end{array}
\]
Moreover
\[
 \rho_{4,*}<\rho_{4,\mathrm{fold}}.
\]
Thus pair creation at the off-symmetric fold and crossing of the symmetric
branch are distinct nearby events.
\end{theorem}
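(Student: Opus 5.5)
The plan is to treat the positive roots of $\mathcal D_\rho$ as a continuous family in $\rho$. The number of positive roots can change only where a root passes through $r=0$, escapes to $r=\infty$, or two roots collide. I will show that the first two events never happen for $\rho>1/2$, and that the third happens only at $\rho_{4,\mathrm{fold}}$.

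\textbf{Boundary behaviour.} On $\rho>1/2$ one has $a_{10}(\rho)>0$ and $a_0(\rho)=(\rho+1)^4(2\rho-1)^3>0$. So the degree stays ten, no root escapes to infinity, and no root reaches $r=0$.

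\textbf{Collisions.} A real root can appear or disappear only through a multiple root, which means the discriminant $\operatorname{Res}_r(\mathcal D_\rho,\partial_r\mathcal D_\rho)$ vanishes. From the displayed factorization:
\begin{itemize}
\item $C>0$;
\item the factors $(\rho+1)$, $(2\rho-1)$, $(2\rho+3)$, $(4\rho+3)$ do not vanish for $\rho>1/2$;
\item $\Psi^{(4)}_{18}$ is positive after the shift $\rho=t+\tfrac12$.
\end{itemize}
Hence the only zero is the unique zero $\rho_{4,\mathrm{fold}}$ of $\Phi^{(4)}_{25}$ given by the Sturm count.

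Consequently the number of positive roots, counted with multiplicity, is constant on each of $(1/2,\rho_{4,\mathrm{fold}})$ and $(\rho_{4,\mathrm{fold}},\infty)$. On each interval the positive roots are simple. They cannot collide there, and they cannot become complex without passing through a double root.

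\textbf{Counts on the two intervals.} I will evaluate a single rational sample point in each interval, for instance $\rho=1$ and $\rho=2$. At each point I run an exact Sturm chain, or Descartes' rule after the substitution $r\mapsto$ shifted variables, in $\mathbb Q[r]$. The expected counts are $2$ and $0$.

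There is also a cross-check for the lower interval. If $\mathcal D_\rho(1)=11664(3\rho+2)\Xi_{4,31}(\rho)<0$ for $\rho$ just below $\rho_{4,*}$, then together with the positive signs at $0$ and $\infty$ this forces at least two positive roots. For large $\rho$, a leading-order sign analysis of the coefficients should confirm the value $0$.

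\textbf{At the fold and the ordering.} At $\rho=\rho_{4,\mathrm{fold}}$ the discriminant has a simple zero, coming from $\Phi_{25}$ appearing to the first power. So exactly one double root arises, and it is the merger of the two simple roots. I still need that this double root is positive and not a complex pair. This follows by continuity from the lower interval: the two positive roots are bounded away from $0$ and $\infty$ by the sign facts above, so their limit is a positive double root.

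Finally, for $\rho_{4,*}<\rho_{4,\mathrm{fold}}$:
\begin{itemize}
\item $\rho_{4,*}$ is a root of a known algebraic equation, namely $\rho_*=(20x-9)/(32x^2-36x+9)$ with $P(x)=0$, or equivalently a zero of $\Xi_{4,31}$.
\item I isolate both numbers with rational brackets. Concretely, pick $\rho_0\in(1.1306,1.1323)$ and show $\Phi_{25}$ has no zero on $(1/2,\rho_0]$, using the Sturm chain at $t=\rho_0-\tfrac12$.
\item Then show $\rho_{4,*}<\rho_0$ by a sign check of $\Xi_{4,31}$, or of the resultant defining $\rho_*$, at $\rho_0$.
\end{itemize}

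The main obstacle is this separation. The two values differ only by about $2\times10^{-3}$, so the brackets require exact high-precision rational arithmetic on degree-$25$ and degree-$18$ polynomials. The remaining steps are routine given the displayed factorizations.
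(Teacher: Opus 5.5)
Your proposal is correct and follows essentially the same route as the paper. That route is: positivity of $a_{10}$ and $a_0$ for $\rho>1/2$, the discriminant factorization leaving $\rho_{4,\mathrm{fold}}$ as the only possible transition, exact Sturm counts at $\rho=1$ and $\rho=2$, the first-power occurrence of $\Phi^{(4)}_{25}$ to get a single double root, and rational brackets for the ordering; your continuity argument that the double root is positive is, if anything, more explicit than the paper's. The separation is also cheaper than you expect: the paper uses only $\Xi_{4,31}(1)<0<\Xi_{4,31}(1131/1000)$ and $\Phi^{(4)}_{25}(283/250)<0<\Phi^{(4)}_{25}(1133/1000)$, together with uniqueness of each zero, to get $\rho_{4,*}<1131/1000<283/250<\rho_{4,\mathrm{fold}}$.
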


\begin{proof}
The leading coefficient and $\mathcal D_\rho(0)$ are positive for
$\rho>1/2$.  Moreover
\[
 \mathcal D_\rho(1/6)>0.
\]
After shifting $\rho=t+\frac12$, the numerator of this value has coefficients
\[
 480200,\ 2175600,\ 3444396,\ 3271004,\ \frac{5834937}{2},
 \ 1247952,\ 595976,\ 129600,
\]
all positive.  Hence no positive root enters through $r=0$ or crosses the
reconstruction boundary $r=1/6$.

The discriminant certificate above shows that the only parameter at which
the number of positive roots can change is $\rho_{4,\mathrm{fold}}$.
Because the corresponding $\Phi^{(4)}_{25}$ zero is simple and occurs to
the first power in the discriminant, exactly one pair of simple roots
coalesces there, producing one double root.

At $\rho=1$ the exact Sturm sign strings at $r=0^+$ and $r=+\infty$ are
\[
 +--+-++--++,
 \qquad
 +++++-+++-+,
\]
with variation numbers $6$ and $4$, so there are two positive roots.  At
$\rho=2$ they are
\[
 ++-++--++--,
 \qquad
 +++--+-+++-,
\]
with variation number $5$ at both ends, so there are none.  This proves the
first table.

Finally \eqref{eq:H4D1} shows that the symmetric crossing occurs at the
unique zero $\rho_{4,*}$ of $\Xi_{4,31}$.  The order relative to the fold is
certified without decimals:
\[
 \Xi_{4,31}(1)<0<
 \Xi_{4,31}\!\left(\frac{1131}{1000}\right),
\]
so $\rho_{4,*}<1131/1000$, whereas the unique root of
$\Phi^{(4)}_{25}$ satisfies
\[
 \Phi^{(4)}_{25}\!\left(\frac{283}{250}\right)<0<
 \Phi^{(4)}_{25}\!\left(\frac{1133}{1000}\right).
\]
Thus
\[
 \rho_{4,*}<\frac{1131}{1000}
 <\frac{283}{250}<\rho_{4,\mathrm{fold}}.
\]
\end{proof}

\subsection{Reconstruction and positivity}

The detector is only useful if its roots reconstruct to genuine positive
metrics.  Here this can be proved uniformly.

Eliminate $q$ from \eqref{eq:H4F1} and \eqref{eq:H4F3} and reduce the result
modulo the quadratic equation \eqref{eq:H4F2} in $p$.  After deleting the
nonzero factors $48r^4(r-1)/(10r-1)^3(2\rho-1)^2$, the coefficient of $p$
is a polynomial $A_4(\rho,r)$.  Its resultant with the detector is
\[
\begin{aligned}
\operatorname{Res}_r(\mathcal D_\rho,A_4)
={}&C(\rho+1)^4(2\rho-1)^{15}(2\rho+3)^2(54\rho-19)^4\\
&\times\Psi^{(4)}_{18}(\rho)^2,
\end{aligned}
\]
with $C>0$.  Every displayed factor is nonzero for $\rho>1/2$, and
$\Psi^{(4)}_{18}>0$ there.  Hence the remainder is genuinely linear at
every asymmetric detector root and determines a unique real value of $p$.

The reconstruction equation \eqref{eq:H4F2} is
\[
 (2\rho-1)(10r-1)p^2-24\rho r^2p
 +r^2\bigl((8\rho+6)r+\rho\bigr)=0.
\]
For $\rho>1/2$ and $r>1/6$ its leading and constant coefficients are positive
and its linear coefficient is negative.  Put
\[
 p_0=\frac{8r+1}{24}.
\]
After the shifts
\[
 r=\frac16+a,\qquad \rho=\frac12+b,\qquad a,b\ge0,
\]
the numerator of $F_2(p_0)$ is
\[
\begin{aligned}
&1280a^3b+3456a^3+832a^2b+1728a^2\\
&\qquad+\frac{476}{3}ab+288a+\frac{196}{27}b+16>0,
\end{aligned}
\]
and the numerator of the difference between the quadratic vertex and
$p_0$ is
\[
 128a^2b+144a^2+\frac{116}{3}ab+48a+\frac{44}{9}b+4>0.
\]
Thus $p_0$ lies strictly to the left of the smaller real root and every
reconstructed real root satisfies
\[
 p>\frac{8r+1}{24}>0.
\]
Then \eqref{eq:H4F1} becomes
\[
 (8r+1-24p)q^2+24p^2q-9p^2=0,
\]
whose common reconstructed root is necessarily real, unique, and positive.
The detector roots remain in $r>1/6$ throughout the two-root phase.

\begin{theorem}[four-factor Einstein phase diagram]
\label{thm:H4Einsteinphase}
For the continuous $3+1$ algebraic system one has
\[
 \begin{array}{c|c}
 \rho>\rho_{4,\mathrm{fold}}
 &\text{no positive asymmetric solution},\\[1mm]
 \rho=\rho_{4,\mathrm{fold}}
 &\text{one degenerate positive asymmetric solution},\\[1mm]
 1/2<\rho<\rho_{4,\mathrm{fold}}
 &\text{exactly two positive asymmetric solutions}.
 \end{array}
\]
Each solution is uniquely reconstructed from its detector root.  Whenever
$\rho$ is the Cartan value of an actual symmetric pair $H/K$, these
solutions are genuine $H^4$-invariant Einstein metrics by symmetric
criticality for the corresponding finite automorphism group.
\end{theorem}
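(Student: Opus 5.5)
The plan is to combine Theorem~\ref{thm:H4phase}, the detector phase diagram, with the reconstruction discussion immediately preceding the statement. I will use the following dictionary. A positive asymmetric solution of \eqref{eq:H4F1}--\eqref{eq:H4F3} with $r\ne1$ produces a positive zero of $\mathcal D_\rho$, because the two resultants $\operatorname{Res}_q(F_1,F_3)=324p^4R_\rho$ and $\operatorname{Res}_p(F_2,R_\rho)=16r^8(r-1)^2(2\rho-1)\mathcal D_\rho$ vanish at any common solution, and $p,r>0$, $r\ne1$, $\rho>1/2$ kill the prefactors. Conversely, each positive detector root $r$ lifts to exactly one positive $(p,q)$. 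The count of asymmetric solutions then equals the count of positive detector roots in each of the three $\rho$-ranges of Theorem~\ref{thm:H4phase}. A root with $r=1$ but unequal $p,q$ should not be called asymmetric in the $r$-sense. I will note that the symmetric locus $r=1$ is exactly the one removed by the factor $(r-1)^2$. The asymmetry is therefore recorded through $r\ne1$, and $\mathcal D_\rho(1)\ne0$ away from $\rho_{4,*}$ shows that detector roots are genuinely off the symmetric locus there.

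For the lifting step, I first show that every positive detector root satisfies $r>1/6$. Since $\mathcal D_\rho(0)>0$ and $\mathcal D_\rho(1/6)>0$ for all $\rho>1/2$, and the discriminant analysis shows roots are only created at $\rho_{4,\mathrm{fold}}$, no root can enter $(0,1/6]$ along a continuous path in $\rho$. At $\rho=1$ both roots lie in $r>1/6$; I would check this by a Sturm count on $(1/6,\infty)$, reusing the sign strings already computed. Continuity of the roots in $\rho$ on $(1/2,\rho_{4,\mathrm{fold}}]$ then keeps them in $r>1/6$. For such $r$, the nonvanishing of $\operatorname{Res}_r(\mathcal D_\rho,A_4)$ makes the linear remainder $A_4p+B_4$ determine a unique real $p$, since common roots of \eqref{eq:H4F2} and $R_\rho$ are roots of that remainder. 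The $p_0$-bound places this $p$ above $(8r+1)/24$. Then \eqref{eq:H4F1} has negative leading coefficient $8r+1-24p$, positive linear coefficient and negative constant term, so its real roots are positive. The resultant vanishes, so a common root with \eqref{eq:H4F3} exists. The conjugation and proportionality argument used in the three-factor case shows this root is real and unique: \eqref{eq:H4F1} has a linear term $24p^2q$, while \eqref{eq:H4F3} is quadratic in $q$ with no linear term. I also need to rule out a loss of the leading coefficient of \eqref{eq:H4F3} at the common root. I would handle this as in the AII argument, using the positive constant term $9p^2r^2+3q^2r^2$-type contribution.

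The step I expect to be the main obstacle is the degenerate case $\rho=\rho_{4,\mathrm{fold}}$ together with the claim that detector roots stay in $r>1/6$. At the fold the double root is a limit of simple roots in $r>1/6$ and $\mathcal D_\rho(1/6)>0$, so it also lies in $r>1/6$. Its lift is unique by the same resultant argument, since $\Psi^{(4)}_{18}$ and the other factors do not vanish at $\rho_{4,\mathrm{fold}}$. I call this solution degenerate because it is a double detector root. The final sentence about Cartan values follows from Corollary~\ref{cor:H4-fixed-families} together with symmetric criticality for $\Gamma_{4,1}$.
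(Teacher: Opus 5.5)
Your proposal is correct and follows essentially the paper's own argument: the root count of Theorem~\ref{thm:H4phase}, then the linear remainder $A_4p+B_4$ with $\operatorname{Res}_r(\mathcal D_\rho,A_4)\ne0$, the bound $p>(8r+1)/24$, the sign pattern of \eqref{eq:H4F1} with the conjugation/proportionality argument, and symmetric criticality via Corollary~\ref{cor:H4-fixed-families}. Your continuity argument supplies the containment $r>1/6$, which the paper states without a base-point check, and that base check needs no new Sturm chain: $\Xi_{4,31}(1)<0$ gives $\mathcal D_1(1)<0<\mathcal D_1(1/6)$.

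One caveat is left open by both you and the paper. At $\rho=\rho_{4,*}$ one has $\mathcal D_\rho(1)=0$, so one detector root is $r=1$. There the deleted factor $r-1$ voids the remainder argument. Moreover, once \eqref{eq:H4F2} holds at $r=1$, \eqref{eq:H4F3} reduces to $9(p^2-q^2)$, so that root lifts only to symmetric metrics. Hence at this single non-Cartan parameter ($9/8<\rho_{4,*}<8/7$) there is one asymmetric solution, not two.
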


\subsection{Discrete families and the Cartan list}

For the $AII$ family
\[
 H/K=\SU(2n)/\Sp(n),
 \qquad
 \rho=\frac{n}{n-1},
\]
the first integer member below the fold is $n=9$:
\[
 \frac87>\rho_{4,\mathrm{fold}},
 \qquad
 \frac98<\rho_{4,*}<\rho_{4,\mathrm{fold}}.
\]
Thus $\SU(18)^4/\Delta\Sp(9)$ is the first member of this discrete family
lying in the two-root phase.  At $\rho=9/8$ the detector has
exactly two positive roots,
\[
 r_-=0.967796528665\ldots<1<1.102746101811\ldots=r_+,
\]
which reconstruct to
\[
 (p,q,r)\approx(1.25601417,1.22950814,0.96779653),
\]
and
\[
 (p,q,r)\approx(1.27127506,1.35654659,1.10274610).
\]
The exact existence statement follows from
Theorem~\ref{thm:H4Einsteinphase}; these decimals are only for orientation.

\subsubsection{A balanced Einstein metric at the first $AII$ Cartan point}

The balanced symmetry type is not merely a formal feature of the continuous
representation parameter.  At the first four-factor $AII$ value below the
response wall,
\[
 \rho=\frac98,\qquad
 H/K=\SU(18)/\Sp(9),
\]
the $\Gamma_{4,2}$-fixed equations have a positive asymmetric solution.

\begin{theorem}[balanced $AII$ metric at $n=9$]
\label{thm:H4balanced-AII9}
On
\[
 M_9=\SU(18)^4/\Delta\Sp(9)
\]
there is a normalized $\Gamma_{4,2}$-fixed Einstein metric
\[
 g_{\mathrm{bal}}
 =(p,p,q,q;A,B,1)
\]
with
\[
\begin{aligned}
p&=1.2372527948223150\ldots,\\
q&=1.2845474708446486\ldots,\\
A&=1.0396839360703710\ldots,\\
B&=0.9632340955343939\ldots .
\end{aligned}
\]
It is asymmetric, positive, and locally unique up to interchange of the two
blocks.  More precisely, after ordering $p<q$, it is the unique zero of the
balanced Einstein map in the rational cube
\[
 \mathcal B_9=
 \left\{
 \left\|(p,q,A,B)-c_9\right\|_\infty\le10^{-8}
 \right\},
\]
where
\[
 c_9=
 \left(
 \frac{1237252794822}{10^{12}},
 \frac{1284547470845}{10^{12}},
 \frac{1039683936070}{10^{12}},
 \frac{963234095534}{10^{12}}
 \right).
\]
\end{theorem}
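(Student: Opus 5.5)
We would prove the theorem by a computer-free-in-principle interval Newton (Krawczyk) argument on the box $\mathcal B_9$, applied to the balanced fixed equations \eqref{eq:H4bal-F} with $\rho=9/8$ and the Ricci formulas \eqref{eq:H4bal-rpq}--\eqref{eq:H4bal-rC}. First we clear denominators. Multiplying $F_1,\dots,F_4$ by the positive quantities $16p^2q^2$, $16A^2B^2p^2q^2\rho$, and so on gives a polynomial map $\Phi:\mathbb R^4\to\mathbb R^4$ with rational coefficients. On $\mathcal B_9$ all of $p,q,A,B$ are bounded away from $0$, so zeros of $\Phi$ in the box coincide with zeros of the Einstein map. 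By Corollary~\ref{cor:H4-fixed-families}, any such zero is a genuine $\SU(18)^4$-invariant Einstein metric, by symmetric criticality for $\Gamma_{4,2}$ at the actual Cartan value $\rho=9/8$.

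Second, we certify existence and uniqueness in $\mathcal B_9$. Take the rational center $c_9$ and a rational approximation $Y$ to $D\Phi(c_9)^{-1}$, and form the Krawczyk operator
\[
 \mathcal K(\mathcal B_9)=c_9-Y\Phi(c_9)+\bigl(I-Y\,D\Phi(\mathcal B_9)\bigr)(\mathcal B_9-c_9),
\]
where $D\Phi(\mathcal B_9)$ is an interval enclosure of the Jacobian. The Jacobian is polynomial, so this enclosure can be obtained with exact rational interval arithmetic. We then check two things. The first is the exact bound $\|Y\Phi(c_9)\|_\infty\ll10^{-8}$; the residual should be of order $10^{-13}$, since $c_9$ has twelve digits. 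The second is $\|I-YD\Phi(\mathcal B_9)\|_\infty<1$; this is easy because the box has width $2\cdot10^{-8}$ and $D\Phi$ is nonsingular at the point. Together these give $\mathcal K(\mathcal B_9)\subset\operatorname{int}\mathcal B_9$. Krawczyk's theorem then yields exactly one zero in $\mathcal B_9$, and that zero is nondegenerate, which gives the local uniqueness claim. The displayed decimals of $(p,q,A,B)$ come from Newton iteration started at $c_9$ and serve only for orientation.

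Third, we settle the remaining qualitative claims, all of which read off directly from the box. Positivity holds since every coordinate exceeds $0.96$. Asymmetry holds because $q-p\ge 0.047-2\cdot10^{-8}>0$ and $A\ne B$, so the metric is neither on the symmetric locus $p=q$, $A=B$ nor on a $3+1$ locus. The phrase "up to interchange" refers to the involution $(p,q,A,B)\mapsto(q,p,B,A)$, which preserves $\Phi$. Its image of $\mathcal B_9$ is a disjoint box, and ordering $p<q$ selects $\mathcal B_9$.

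The main obstacle is making the Jacobian enclosure and the contraction check genuinely exact rather than floating point. We would expand $\Phi$ symbolically at $\rho=9/8$, evaluate $D\Phi$ on the box by monotone rational bounds, and record the resulting rational matrix norm, in the same certificate style as Appendix~\ref{app:H4balanced-n9-elimination}. If the conditioning of $D\Phi(c_9)$ turns out to be poor, we would first try a preconditioner $Y$ computed in high precision and rounded to rationals.
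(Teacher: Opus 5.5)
Your proposal is correct and is essentially the paper's argument. The paper applies the simplified Newton map $T(x)=x-C_0\mathcal P(x)$, with $C_0=D\mathcal P(c_9)^{-1}$, on $\mathcal B_9$. It certifies $\|C_0\mathcal P(c_9)\|_\infty<1/(2.5\cdot10^{12})$ and $\sup_{\mathcal B_9}\|I-C_0D\mathcal P\|_\infty<1/30000$ by exact rational interval arithmetic, and concludes by Banach's fixed-point theorem; this is precisely the contraction argument underlying your Krawczyk inclusion. One point to state more carefully: the inclusion needs the combined bound $\|Y\Phi(c_9)\|_\infty+\kappa\cdot10^{-8}<10^{-8}$ with $\kappa=\|I-YD\Phi(\mathcal B_9)\|_\infty$, not just the two checks separately, and the paper records this combined bound explicitly.
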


\begin{proof}
Set $\rho=9/8$ in
\eqref{eq:H4bal-rpq}--\eqref{eq:H4bal-rC}, and let
$\mathcal P=(P_1,P_2,P_3,P_4)$ be the polynomial map obtained by taking the
numerators of
\[
 r_p-r_q,\qquad r_A-r_B,\qquad
 \frac{r_p+r_q}{2}-r_C,\qquad
 \frac{r_A+r_B}{2}-r_C .
\]
All denominators are strictly positive on $\mathcal B_9$, so
$\mathcal P=0$ is equivalent there to the Einstein equations.

Let
\[
 J_0=D\mathcal P(c_9),\qquad C_0=J_0^{-1}.
\]
Exact rational interval evaluation on $\mathcal B_9$ gives
\[
 \left\|C_0\mathcal P(c_9)\right\|_\infty
 <
 \frac1{2500000000000}
\]
and
\[
 \sup_{x\in\mathcal B_9}
 \left\|I-C_0D\mathcal P(x)\right\|_\infty
 <
 \frac1{30000}.
\]
Hence the Newton map
\[
 T(x)=x-C_0\mathcal P(x)
\]
is a contraction on $\mathcal B_9$ and
\[
 \|T(c_9)-c_9\|_\infty+
 \frac1{30000}\,10^{-8}
 <10^{-8}.
\]
Thus $T$ maps the cube strictly into itself.  Banach's fixed-point theorem
gives one and only one zero in $\mathcal B_9$.

The cube lies in the positive orthant, and its $p$- and $q$-intervals as
well as its $A$- and $B$-intervals are disjoint.  The solution is therefore
positive and asymmetric.  Interchanging the two blocks sends
$(p,q,A,B)$ to $(q,p,B,A)$ and gives the only equivalent representative
with reversed ordering.
\end{proof}

\begin{theorem}[global uniqueness in the balanced $n=9$ family]
\label{thm:H4balanced-AII9-global}
Up to homothety and interchange of the two $2$-blocks, the metric
$g_{\mathrm{bal}}$ of Theorem~\ref{thm:H4balanced-AII9} is the unique
positive asymmetric Einstein metric in the full $\Gamma_{4,2}$-fixed family
on
\[
 \SU(18)^4/\Delta\Sp(9).
\]
The same fixed family contains exactly two symmetric Einstein metrics.
\end{theorem}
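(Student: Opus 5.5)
The plan is an exact elimination of the balanced fixed system at $\rho=9/8$, followed by exact root isolation, mirroring the three-factor arguments. The input is the polynomial map $\mathcal P=(P_1,P_2,P_3,P_4)$ from the proof of Theorem~\ref{thm:H4balanced-AII9}. It comes from \eqref{eq:H4bal-rpq}--\eqref{eq:H4bal-rC} with $\rho=9/8$ and has rational coefficients in the unknowns $(p,q,A,B)$; every positive solution is a critical point of the full invariant problem by Corollary~\ref{cor:H4-fixed-families}.

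\textbf{Step 1: factor out the symmetric locus.} The numerator of $r_p-r_q$ factors as $(p-q)$ times a cofactor, because the summands $8p-2A-1$ and $8q-2B-1$ differ only through the swap of $p$ and $q$ together with $A$ and $B$. I will instead factor modulo $P_2$. Note that $r_A$ and $r_B$ depend only on $(A,p)$ and $(B,q)$ respectively, which suggests a substitution. Solve $r_p=r_C$ and $r_q=r_C$: each is linear in $A$ (respectively $B$) once $r_C$ is fixed. This is awkward because $r_C$ couples the variables, so I will use resultants directly.

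\textbf{Step 2: eliminate down to a univariate detector.} Eliminate $A$ between $P_3$ and $P_4$, eliminate $B$ similarly, and reduce to a two-variable system in $(p,q)$. Then eliminate $q$ to obtain a univariate polynomial $\mathcal D_{\rm bal}(p)$, and remove the factors corresponding to:
\begin{itemize}
\item the symmetric locus $p=q$, $A=B$;
\item the coordinate zeros $p=0$, $A=0$;
\item any spurious leading-coefficient components.
\end{itemize}
This is the elimination certificate announced for Appendix~\ref{app:H4balanced-n9-elimination}.

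\textbf{Step 3: count roots exactly.} Apply Sturm's theorem over $\mathbb Q$ to the reduced detector, in the same way as Proposition~\ref{EX-rootcount}, to count its positive roots. I expect exactly two roots, namely $p_*\approx1.23725$ and $q_*\approx1.28455$, interchanged by the block involution. If extra real roots survive, I will exclude them by showing that their reconstruction fails positivity or fails to satisfy the original equations.

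\textbf{Step 4: reconstruct uniquely.} Show that each root reconstructs to a unique $(q,A,B)$. Use a linear subresultant, as in the AI uniqueness argument, and certify by a gcd computation that its leading coefficient does not vanish at the roots. Then match the reconstructed point with the zero certified in $\mathcal B_9$ by Theorem~\ref{thm:H4balanced-AII9}.

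\textbf{Step 5: count the symmetric solutions.} On the symmetric locus $p=q$, $A=B$, the system reduces to the balanced-symmetric equations. I will solve these directly.
\begin{itemize}
\item With $A=B=1$ one recovers \eqref{eq:H4sym} at $\rho=9/8$, namely $\tfrac{15}{4}x^2-9x+\tfrac{59}{8}=0$. Its discriminant is $81-4\cdot\tfrac{15}{4}\cdot\tfrac{59}{8}<0$, so the assumption $A=B=1$ must be dropped.
\item The genuinely symmetric family is $p=q$ with general $A=B=y$. The "symmetric" metrics of the statement presumably include the fully $S_4$-symmetric points together with the points with $p=q$ and $A=B\ne1$. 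I will enumerate these using the two-variable resultant and check positivity, aiming for exactly two.
\end{itemize}

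\textbf{Main obstacle.} The hardest step is Step 2: controlling the size of the elimination and proving that every discarded factor is extraneous, meaning it has no positive common zero reconstructing to a genuine solution. For each discarded factor I will first try sign certificates via shifted-coefficient positivity, and use Sturm chains only where those fail.
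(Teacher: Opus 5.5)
Your outline has the same skeleton as the paper's proof: exact elimination, exact Sturm counts, a unique lift, and matching with the zero certified in $\mathcal B_9$. However, Step~5 is wrong as written. At $\rho=9/8$ one has $3\rho+2=\frac{43}{8}$, not $\frac{59}{8}$. So \eqref{eq:H4sym} reads $\frac{15}{4}x^2-9x+\frac{43}{8}=0$, with discriminant $81-\frac{645}{8}=\frac38>0$. Its two positive roots $x\approx1.118$ and $x\approx1.282$, with $A=B=1$, are exactly the two symmetric metrics of the statement; their Einstein constants are the two roots of the factor $Q_2$.

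There are also no diagonal solutions with $A=B\ne1$. On $p=q$ the equation $r_p=r_q$ forces $A=B$. The paper's elimination on the diagonal gives $C\lambda^8Q_2(\lambda)Q_4(\lambda)$, where $Q_4$ has no real zero and $Q_2$ forces $A=1$. So Step~5, as written, rests on a false computation and misidentifies the two symmetric solutions. The factorization claimed in Step~1 is also false: at $p=q$ the numerator of $r_p-r_q$ equals $2p^2(B-A)$, so the difference vanishes only on the codimension-two set $\{p=q,\ A=B\}$. You abandon that claim, so it does no damage, but it should be removed.

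The second gap is Step~2. You rightly call it the main obstacle but leave it unresolved, and you actually discard the idea that resolves it: make the common Einstein constant $\lambda$ an extra unknown. Then $r_p=\lambda$ is linear in $A$, namely $A=4p-\frac12-8\lambda p^2$, so $r_A=\lambda$ becomes a single block polynomial $F(p,\lambda)$. The other block gives $F(q,\lambda)$, and only $r_C=\lambda$ couples the two blocks. Eliminating $q$ from $F(q,\lambda)$ and $r_C-\lambda$, and then $p$ against $F(p,\lambda)$, produces $Q_2Q_4Q_5^2Q_{10}^2$, in which the symmetric solutions separate off automatically as $Q_2$.

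Eliminating straight down to a polynomial in $p$, as you propose, is not wrong in principle. But your plan for extraneous roots (\emph{reconstruction fails positivity}) is not the mechanism actually needed. The factor $Q_{10}$ has four positive real roots, and they are excluded only because no lift is real. The paper certifies this with the penultimate subresultant $H_{10}$ and the root-free polynomial $R_{20}(p)$. On the $Q_5$ branch you must also rule out $p=q$ before concluding $\{p,q\}=\{p_-,p_+\}$ from $R_{10}$. Only then are $\lambda=\lambda_5$ and $A,B$ determined, so that the match with $\mathcal B_9$ yields uniqueness. Without these certificates, your Steps~2--4 are a program rather than a proof.
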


\begin{proof}
Put $\rho=9/8$ and let $\lambda$ denote the common Einstein constant.  From
$r_p=\lambda$ one obtains
\begin{equation}\label{eq:H4n9-Afromp}
 A=4p-\frac12-8\lambda p^2,
\end{equation}
and the equation $r_A=\lambda$ becomes
\begin{equation}\label{eq:H4n9-blockpoly}
\begin{aligned}
F(p,\lambda)={}&-8704\lambda^3p^6+10752\lambda^2p^5
-(1344\lambda^2+4384\lambda)p^4\\
&+(1056\lambda+592)p^3-(66\lambda+207)p^2+24p-1=0.
\end{aligned}
\end{equation}
The same equations hold with $(p,A)$ replaced by $(q,B)$.  Moreover
\[
 \lambda=r_C>\frac5{36},
\]
while $A>0$ and \eqref{eq:H4n9-Afromp} give
\[
 \lambda<\frac{8p-1}{16p^2}\le1.
\]
Hence every positive solution has $5/36<\lambda<1$.

Eliminating $q$ from the second copy of \eqref{eq:H4n9-blockpoly} and
$r_C-\lambda=0$, and then eliminating $p$ against
\eqref{eq:H4n9-blockpoly}, gives, after deleting nonzero constants and the
impossible factor $\lambda^{48}$,
\begin{equation}\label{eq:H4n9-lambda-eliminant}
 Q_2(\lambda)Q_4(\lambda)Q_5(\lambda)^2Q_{10}(\lambda)^2=0.
\end{equation}
The four factors are recorded in
Appendix~\ref{app:H4balanced-n9-elimination}.  Exact Sturm counts give
\[
\begin{array}{c|cccc}
 &Q_2&Q_4&Q_5&Q_{10}\\ \hline
N_{\mathbb R}&2&0&1&4\\
N_{(0,\infty)}&2&0&1&4.
\end{array}
\]

For $Q_2$, the common subresultant reduces to
\[
 14792\lambda+1920p-6543=0.
\]
By symmetry the same relation holds for $q$, so $p=q$; moreover direct
reduction gives $A=1$ at every root of $Q_2$.  These are precisely the two
symmetric solutions.

The factor $Q_4$ has no real root.  For the $Q_{10}$ factor, the exact
quadratic penultimate subresultant $H_{10}(p,\lambda)$ is necessary for a
common root.  Eliminating $\lambda$ from $Q_{10}$ and $H_{10}$ gives the
degree-$20$ polynomial $R_{20}(p)$ recorded in the appendix.  Its Sturm
chain has no real root.  Thus $Q_{10}$ produces no real metric coefficient
$p$ and hence no positive Einstein metric.

It remains to consider $Q_5$.  This polynomial has one real root,
\[
 \frac{2783952}{10^7}<\lambda_5<\frac{2783953}{10^7}.
\]
Eliminating $\lambda$ from $Q_5$ and the corresponding quadratic
subresultant gives the degree-$10$ polynomial
\begin{equation}\label{eq:H4n9-R10}
\begin{aligned}
R_{10}(p)={}&417624883200p^{10}-1602650112000p^9
+2447647555072p^8\\
&-2051779628864p^7+1120872741086p^6-408205319880p^5\\
&+100371031695p^4-16591337408p^3+1793365224p^2\\
&-116614968p+3591403.
\end{aligned}
\end{equation}
Its Sturm count gives exactly two real roots, both positive, isolated by
\[
 \frac{12372527}{10^7}<p_-<\frac{12372529}{10^7},
 \qquad
 \frac{12845474}{10^7}<p_+<\frac{12845475}{10^7}.
\]
There is one remaining diagonal possibility to exclude.  If $p=q$, then
necessarily $A=B$, but this alone does not yet force $A=1$.  Eliminating
$p$ from the exact diagonal block equation $F(p,\lambda)=0$ and the
remaining Einstein equation
\[
 r_C(p,p,A,A)-\lambda=0,
 \qquad
 A=4p-\frac12-8\lambda p^2,
\]
gives
\[
 C\,\lambda^8Q_2(\lambda)Q_4(\lambda),
 \qquad C\ne0.
\]
Since $Q_4$ has no real zero, every real diagonal solution belongs to the
$Q_2$ factor.  In particular the real $Q_5$ root cannot satisfy $p=q$.

Thus any positive asymmetric solution belonging to $Q_5$ must have
\[
 \{p,q\}=\{p_-,p_+\}.
\]
The certified solution of Theorem~\ref{thm:H4balanced-AII9} realizes this
unordered pair.  Hence it is the unique positive asymmetric solution,
modulo block interchange.
\end{proof}

\begin{proposition}[non-product and non-naturally reductive]
\label{prop:H4balanced-AII9-geometry}
The metric $g_{\mathrm{bal}}$ of
Theorem~\ref{thm:H4balanced-AII9} is Riemannian irreducible and is not
naturally reductive with respect to any connected transitive group of
isometries.
\end{proposition}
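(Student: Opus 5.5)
The plan rests on Theorem~\ref{thm:H4-fullisom}, which applies to $\SU(2n)/\Sp(n)$ with $n\ge9$. Since $g_{\mathrm{bal}}$ lies in the Cartan-fixed family, $\Lie\Isom_0(M_9,g_{\mathrm{bal}})=\mathfrak h^4$ with $\mathfrak h=\mathfrak{su}(18)$. The two assertions are then handled separately, each reduced to a statement about the displayed $H^4$-action.

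\emph{Irreducibility.} Suppose the universal cover splits isometrically as a nontrivial de Rham product. The first step is to pass to the holonomy-invariant parallel distributions. These are preserved by the identity component of the isometry group, so they give an $\operatorname{Ad}(\Delta K)$-invariant splitting $\mathfrak m=\mathfrak m'\oplus\mathfrak m''$. This splitting is $g$-orthogonal and, by the standard result for homogeneous spaces, is induced by an ideal decomposition of $\mathfrak i=\mathfrak h^4$ compatible with the isotropy. In practice this means $\mathfrak h^4=\mathfrak i'\oplus\mathfrak i''$ with $\mathfrak i',\mathfrak i''$ sums of simple factors, and $\Delta\mathfrak k=(\Delta\mathfrak k\cap\mathfrak i')\oplus(\Delta\mathfrak k\cap\mathfrak i'')$. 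The diagonal $\Delta\mathfrak k$ is simple and projects nontrivially to every factor, so such a splitting forces one side to be trivial. Hence no nontrivial splitting exists. Where the cover or a Euclidean factor needs care, I would argue as follows. The Einstein constant is positive, since it equals $\lambda>5/36$. So $M_9$ is compact with finite fundamental group, and the de Rham factors are compact homogeneous spaces whose transitive groups are the ideals above. This last step is the one I expect to be the delicate point: making precise that the de Rham splitting descends to an ideal splitting of $\mathfrak h^4$. I would use the fact that each de Rham factor is invariant under the connected isometry group (Kostant/Wolf), combined with the trivial-centralizer Lemma~\ref{lem:H4-normalizer}.

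\emph{Not naturally reductive.} I would follow the invariant-form argument of Proposition~\ref{AI-prop:notNR}. A naturally reductive presentation for $H^4$ comes from a form $\lambda_1Q\oplus\cdots\oplus\lambda_4Q$ with all $\lambda_i>0$, which makes the metric normal homogeneous. Equality of the horizontal scales $(p,p,q,q)$ forces $\lambda_1=\lambda_2=a$ and $\lambda_3=\lambda_4=b$. The next step is to compute the induced quotient metric on $V_4$ in the basis $u_1,u_2,u_3$. I expect the eigenvalues to be $A=a$, $B=b$ and $C=2ab/(a+b)$, the harmonic mean. After normalizing $C=1$ this gives $p=A$ and $q=B$. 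The certified values violate this, since $p\approx1.237\ne1.0397\approx A$ and these lie in disjoint intervals of the cube $\mathcal B_9$. So the metric is not naturally reductive for $H^4$.

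For an arbitrary connected transitive group, I would argue as in the AI case. The transvection group is a transitive normal subgroup of $\Isom_0=H^4_{\mathrm{eff}}$, so its Lie algebra is a sum of simple ideals. A proper sum has dimension at most $3\dim H<\dim M_9=4\dim H-\dim K$. The transvection group would therefore be all of $H^4$, which has just been excluded.
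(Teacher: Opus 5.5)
Your proposal is correct and follows essentially the same route as the paper's proof. It uses the normal-homogeneity test with weights $(a,a,b,b)$, whose quotient eigenvalues $a,b,2ab/(a+b)$ force $p=A$, contradicted by the certified intervals; the transvection-group argument via Theorem~\ref{thm:H4-fullisom}; and the splitting of the four simple ideals of $\mathfrak h^4$ among de Rham factors, which is incompatible with the simple diagonal isotropy $\Delta\mathfrak k$. The detour through Myers' theorem is unnecessary: $M_9$ is compact and already simply connected, since $H^4$ is simply connected and $\Delta K$ is connected.
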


\begin{proof}
First consider natural reductivity with respect to the displayed $H^4$.
An $H^4$-naturally reductive metric is induced by an
$\operatorname{Ad}(H^4)$-invariant form
\[
 \lambda_1Q\oplus\lambda_2Q\oplus\lambda_3Q\oplus\lambda_4Q.
\]
The balanced horizontal equalities force, after relabelling,
\[
 \lambda_1=\lambda_2=a,\qquad
 \lambda_3=\lambda_4=b.
\]
On the anti-diagonal basis
\[
 u_1=\frac1{\sqrt2}(1,-1,0,0),\quad
 u_2=\frac1{\sqrt2}(0,0,1,-1),\quad
 u_3=\frac12(1,1,-1,-1),
\]
the induced quotient eigenvalues are
\[
 a,\qquad b,\qquad \frac{2ab}{a+b}.
\]
After the normalization in which the $u_3$-scale is one, natural
reductivity therefore forces
\[
 p=A=\frac{a+b}{2b},
 \qquad
 q=B=\frac{a+b}{2a}.
\]
But the certified intervals in Theorem~\ref{thm:H4balanced-AII9} give
$p>1.23$ and $A<1.05$, so $p\ne A$.  Thus the metric is not
$H^4$-naturally reductive.

By Theorem~\ref{thm:H4-fullisom}, the connected full isometry algebra is
$\mathfrak h^4$.  For a compact naturally reductive metric, the connected
transvection group is a transitive normal subgroup of the connected full
isometry group.  Every connected normal subgroup of $H^4$ has Lie algebra
equal to a sum of some of the four simple ideals.  A proper such subgroup
has dimension at most $3\dim H$, whereas
\[
 \dim M_9=4\dim H-\dim K>3\dim H.
\]
Hence no proper normal subgroup can act transitively.  Natural reductivity
with respect to any connected transitive group would therefore force the
transvection group to be all of $H^4$, which has just been excluded.

Finally suppose that $(M_9,g_{\mathrm{bal}})$ were a nontrivial Riemannian
product.  The de Rham factors are preserved by the identity component of
the isometry group, up to a discrete permutation of isometric factors.
Thus the four simple ideals of $\mathfrak h^4$ split among the nontrivial
de Rham factors.  Transitivity forces each de Rham factor to receive at
least one ideal.  The isotropy algebra of a product action then splits
accordingly as a direct sum of its intersections with the corresponding
sums of simple ideals.  This is impossible for
\[
 \Delta\mathfrak k\subset\mathfrak h^4,
\]
which is simple and projects nontrivially to every one of the four simple
ideals.  Hence the metric is Riemannian irreducible.
\end{proof}

\begin{theorem}[three asymmetric geometries at the first four-factor $AII$ point]
\label{thm:H4-AII9-three}
On
\[
 M_9=\SU(18)^4/\Delta\Sp(9)
\]
the fixed families studied in this section contain exactly three asymmetric
Einstein metrics up to homothety and their internal block permutations: two
of type $3+1$ and one of type $2+2$.  They are pairwise non-homothetic and
non-isometric under arbitrary Riemannian isometries.  All three are
Riemannian irreducible and none is naturally reductive with respect to any
connected transitive group of isometries.
\end{theorem}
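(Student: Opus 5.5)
The proof assembles results already established; no new elimination is needed. I would organize it in three steps: counting, distinction, and irreducibility with non-natural-reductivity.

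\emph{Counting.} Since $\rho=9/8$ lies strictly between $1/2$ and $\rho_{4,\mathrm{fold}}$ (by the exact bracketing in Theorem~\ref{thm:H4phase}), Theorem~\ref{thm:H4Einsteinphase} gives exactly two positive asymmetric solutions of the $3+1$ system, each uniquely reconstructed from its detector root. Corollary~\ref{cor:H4-fixed-families} and symmetric criticality make them genuine $H^4$-invariant Einstein metrics. The two detector roots satisfy $r_-<1<r_+$, and by the reconstruction argument they lift to distinct normalized metrics, so they are distinct. For the $2+2$ type, Theorem~\ref{thm:H4balanced-AII9-global} gives exactly one positive asymmetric metric up to homothety and block interchange. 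This yields exactly three. I would also check that a $3+1$ solution is genuinely asymmetric and not secretly symmetric. For this I would use $r\ne1$ together with the fact that $r=1$ forces $p=q$ through \eqref{eq:H4F1}--\eqref{eq:H4F3}, so that the horizontal partition is exactly $3+1$ and not $4$. I would also confirm that the $2+2$ metric has $p\ne q$, which holds because the certified intervals are disjoint.

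\emph{Pairwise distinction.} Theorem~\ref{thm:H4-fullisom} applies to $\SU(18)/\Sp(9)$ (AII with $n=9$), so every isometry composed with a translation induces an automorphism of $(\mathfrak h^4,\Delta\mathfrak k)$. The $3+1$ metrics and the $2+2$ metric are then separated by Corollary~\ref{cor:H4-arbitrary-nonisometry}. Its argument only uses the unordered horizontal equality partition, so it applies here even though it was phrased locally. Separating the two $3+1$ metrics from each other is the delicate step. Such an automorphism permutes the factors by an element of $S_4$ that must preserve the distinguished singleton factor, and so lies in $S_3$. It therefore preserves the $S_3$-decomposition $V_4=V_3\oplus\mathbf 1$ of the vertical multiplicity space. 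The vertical endomorphism has eigenvalue $r$ on $V_3$ and $1$ on the trivial line. Comparing the trivial line fixes the homothety constant $c=1$, and comparing $V_3$ then forces $r_-=r_+$, a contradiction. The main obstacle is making sure that factorwise automorphisms do not mix these multiplicity components. I would argue, as in Proposition~\ref{AI-prop:pairwise}, that factor automorphisms preserving $\Delta\mathfrak k$ act on the $\mathfrak k$-factor and not on the multiplicity coordinate.

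\emph{Irreducibility and non-natural-reductivity.} For the balanced metric both properties are Proposition~\ref{prop:H4balanced-AII9-geometry}. For the $3+1$ metrics, the de Rham argument in that proposition is verbatim, since it uses only Theorem~\ref{thm:H4-fullisom} and the simplicity of $\Delta\mathfrak k$. For natural reductivity, the invariant form has the shape $(a,a,a,b)$, and the quotient eigenvalues on the anti-diagonal basis then force $p=r$ up to normalization. I would substitute $p=r$ into \eqref{eq:H4F2} at $\rho=9/8$ and show this is incompatible with the reconstructed values, for instance via $p>(8r+1)/24$ and the certified decimals $p\approx1.256$, $r\approx0.968$ and $p\approx1.271$, $r\approx1.103$, which are exactly bracketable from the detector roots. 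The transvection-group argument then excludes every connected transitive presentation, because $\dim M_9>3\dim H$.
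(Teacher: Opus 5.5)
Your proposal is correct and is essentially the paper's own proof: the count comes from Theorem~\ref{thm:H4Einsteinphase} and Theorem~\ref{thm:H4balanced-AII9-global}, and the $3+1$ versus $2+2$ separation from Theorem~\ref{thm:H4-fullisom} and Corollary~\ref{cor:H4-arbitrary-nonisometry}; the two $3+1$ metrics are separated by the intrinsic repeated vertical eigenvalue $r$, and non-natural-reductivity follows from the $(a,a,a,b)$ invariant form forcing $p=r$, together with the transvection and de Rham arguments. Two small repairs are needed: for $p\ne q$ you need the converse of what you wrote (at $p=q$, \eqref{eq:H4F1} reduces to $8p^2(r-1)=0$, so $p=q$ forces $r=1$), and the bound $p>(8r+1)/24$ cannot exclude $p=r$, but your proposed substitution does so exactly: \eqref{eq:H4F2} at $p=r$ becomes $(\rho-1)r^2(4r-1)=0$, which forces $r=1/4$ and contradicts $r=(3a+b)/(4b)>1/4$, and this removes the reliance on orientation decimals in the paper's version.
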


\begin{proof}
The two $3+1$ metrics are exactly those supplied by
Theorem~\ref{thm:H4Einsteinphase}; the balanced metric is globally unique by
Theorem~\ref{thm:H4balanced-AII9-global}.  A $3+1$ metric has horizontal
partition $3+1$, whereas the balanced metric has partition $2+2$.
Corollary~\ref{cor:H4-arbitrary-nonisometry} and
Theorem~\ref{thm:H4-fullisom} therefore separate the balanced metric from
both $3+1$ metrics under arbitrary isometry and homothety.

For a $3+1$ metric, an $H^4$-naturally reductive form has, after relabelling,
weights $(a,a,a,b)$.  On the anti-diagonal multiplicity space its two
vertical eigenvalues are
\[
 a\quad\text{(multiplicity two)},
 \qquad \frac{4ab}{3a+b}\quad\text{(multiplicity one)}.
\]
After normalizing the latter to one, natural reductivity forces
\[
 p=r=\frac{3a+b}{4b}.
\]
For the two reconstructed Einstein metrics one has respectively
\[
 (p,r)\approx(1.25601417,0.96779653),
 \qquad
 (p,r)\approx(1.27127506,1.10274610),
\]
so neither is $H^4$-naturally reductive.  The transvection-group argument
of Proposition~\ref{prop:H4balanced-AII9-geometry} applies verbatim: a
compact naturally reductive presentation would have a transitive normal
subgroup of the connected full isometry group, while every proper normal
subgroup of $H^4$ has dimension at most $3\dim H<\dim M_9$.  Hence the
transvection group would have to be all of $H^4$, already excluded above.
Thus neither metric is naturally reductive with respect to any connected
transitive group.  The diagonal simple isotropy excludes a nontrivial de
Rham product.

Finally, after a hypothetical isometry between the two $3+1$ metrics has
been reduced to an automorphism of $(H^4,\Delta K)$, the singleton factor is
intrinsic because its horizontal equality multiplicity is one.  On the
vertical block the normalized one-dimensional separating eigenspace is also
intrinsic, while its complementary eigenspace has multiplicity two.  Hence
the repeated vertical eigenvalue $r$ is an isometry invariant.  Since the
two values of $r$ are distinct, the two $3+1$ metrics are not homothetic or
isometric.
\end{proof}

For orientation, the three normalized metrics are summarized by
\begingroup\small
\[
\begin{array}{c|c|c|c}
\text{type}&\text{normalized scale data}&\text{naturally reductive}&\text{product}\\ \hline
3+1_-&(p,q,r)\approx(1.256014,1.229508,0.967797)&\text{no}&\text{no}\\
3+1_+&(p,q,r)\approx(1.271275,1.356547,1.102746)&\text{no}&\text{no}\\
2+2&(p,q,A,B)\approx(1.237253,1.284547,1.039684,0.963234)&\text{no}&\text{no}
\end{array}
\]
\endgroup
For all three,
\[
 \Lie\Isom_0(M_9,g)=\mathfrak{su}(18)^4.
\]

The phase theorem also gives a direct consequence for the Cartan list.
Under the standing assumptions that $H/K$ is
irreducible compact symmetric and $H,K$ are simple, one only has to compare
\[
 \rho=\frac{\dim K}{\dim(H/K)}
\]
with $\rho_{4,\mathrm{fold}}$.

\begin{theorem}[four-factor Cartan-list consequence]\label{thm:H4cartan}
Assume that $H/K$ is an effective irreducible compact symmetric pair and
that both $H$ and $K$ are simple.  Then the normalized
$\Gamma_{4,3}$-fixed $3+1$ family on $H^4/\Delta K$ contains exactly two
positive asymmetric Einstein metrics precisely for
\[
 \frac{\SU(n)^4}{\Delta\SO(n)},
 \qquad n=3\ \text{or}\ n\ge5,
\]
\[
 \frac{\SU(2n)^4}{\Delta\Sp(n)},
 \qquad n\ge9,
\]
and
\[
 \frac{E_6^4}{\Delta\Sp(4)},\qquad
 \frac{E_7^4}{\Delta\SU(8)},\qquad
 \frac{E_8^4}{\Delta\SO(16)}.
\]
For $2\le n\le8$ in type $AII$, for $E_6/F_4$ and
$F_4/\mathrm{Spin}(9)$, and for the sphere-type pairs
$\SO(m+1)/\SO(m)$ with $m\ge5$, this $3+1$ family contains no positive
asymmetric Einstein metric.

In addition, at the first $AII$ Cartasn value below the four-factor response
wall, $n=9$, the balanced $\Gamma_{4,2}$-fixed family contains exactly one
positive asymmetric Einstein metric up to block interchange and homothety.
It lies on the analytic branch issuing from the response wall, is Riemannian
irreducible, and is not naturally reductive.
\end{theorem}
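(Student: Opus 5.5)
The plan is to reduce the statement to Theorem~\ref{thm:H4Einsteinphase}. That theorem says the normalized $3+1$ family carries exactly two positive asymmetric Einstein metrics when $1/2<\rho<\rho_{4,\mathrm{fold}}$ and none when $\rho>\rho_{4,\mathrm{fold}}$. At Cartan values, symmetric criticality (Corollary~\ref{cor:H4-fixed-families}) turns these algebraic solutions into genuine $H^4$-invariant Einstein metrics. So the $3+1$ part of the statement comes down to two checks for each admissible pair: compute $\rho=\dim\mathfrak k/\dim\mathfrak q$, confirm $\rho>1/2$, and certify on which side of $\rho_{4,\mathrm{fold}}$ it lies.

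For $\rho_{4,\mathrm{fold}}$ I will use only the exact bracket from the proof of Theorem~\ref{thm:H4phase}, namely
\[
 \tfrac{283}{250}<\rho_{4,\mathrm{fold}}<\tfrac{1133}{1000}.
\]
The case analysis runs through the Cartan list of irreducible symmetric pairs with $H$ and $K$ both simple; pairs with $\mathfrak u(1)$ or non-simple $K$ are discarded.

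\textbf{Type AI.} Here $\rho=n/(n+2)<1<283/250$, and $\rho>1/2$ exactly when $n>2$. The case $n=4$ is excluded because $\mathfrak{so}(4)$ is not simple.

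\textbf{Type AII.} Here $\rho=n/(n-1)$, which decreases in $n$. Since $\frac98<\frac{283}{250}$ and $\frac87>\frac{1133}{1000}$, the admissible values are exactly $n\ge9$, and $2\le n\le8$ gives nothing.

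\textbf{Exceptional cases.} The table gives $\rho=6/7,\,9/10,\,15/16$, all below $1$.

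\textbf{Negative cases.} These are $E_6/F_4$ with $\rho=52/26=2$, $F_4/\mathrm{Spin}(9)$ with $\rho=36/16$, and the spheres with $\rho=(m-1)/2\ge2$ for $m\ge5$. All of these exceed $\rho_{4,\mathrm{fold}}$, so there is no solution.

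\textbf{Remaining pairs.} The main obstacle is completeness: I must show that no other Cartan pair with both groups simple falls in the window $(1/2,\rho_{4,\mathrm{fold}})$. I will go through the list explicitly:
\begin{itemize}
\item the simple-$K$ cases of AIII, BDI and CII essentially reduce to spheres or have non-simple $K$;
\item DIII $\SO(2n)/\mathrm U(n)$ and CI have a $\mathfrak u(1)$ factor;
\item $\SO(2n)/\SO(2n-1)$ is of sphere type;
\item the exceptional list EI--EIX, FI, FII, G has non-simple $K$ except EI, EIV, EV, EVIII and FII, which are already handled;
\item $\mathrm{SU}(3)/\SO(3)$ falls under AI with $n=3$.
\end{itemize}
For each pair I record $\rho$, using the Casimir identity \eqref{eq:simpleKidentity} to keep the bookkeeping uniform. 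The effectiveness assumption and the low-rank coincidences ($\mathfrak{so}(4)$, $\mathfrak{so}(3)$, $\mathfrak{sp}(1)$) need care and will be stated explicitly.

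\textbf{Balanced $n=9$ assertion.} Existence and uniqueness up to block interchange and homothety are Theorems~\ref{thm:H4balanced-AII9} and~\ref{thm:H4balanced-AII9-global}. Irreducibility and non-natural-reductivity follow from Proposition~\ref{prop:H4balanced-AII9-geometry}.

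For the claim that the balanced metric lies on the analytic branch from the response wall, I will argue as follows. Theorem~\ref{thm:H4balancedside} shows the balanced branch from $\rho_{4,*}$ emerges toward lower $\rho$ with curvature constant $c_*\approx 9.97$. Since $9/8<\rho_{4,*}$ with $\rho_{4,*}-9/8\approx0.0055$, the branch amplitude at $\rho=9/8$ is $h\approx(0.0055/c_*)^{1/2}\approx0.024$. This matches the certified half-gap $(q-p)/2\approx0.024$ around $x_*\approx1.256$. To make this rigorous rather than numerical, I will use the global uniqueness in the $\Gamma_{4,2}$-fixed family: for each $\rho\in[9/8,\rho_{4,*})$, the implicit-function continuation of the balanced branch stays positive and asymmetric, hence must coincide at $\rho=9/8$ with the unique such solution. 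If controlling the continuation over the whole interval proves too heavy, I will weaken ``lies on the analytic branch'' to this compatibility statement and say so explicitly.
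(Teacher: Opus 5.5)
Your $3+1$ argument is correct and is the paper's own argument. You place the rational Cartan values of $\rho$ against the certified bracket $\frac{283}{250}<\rho_{4,\mathrm{fold}}<\frac{1133}{1000}$ and invoke Theorem~\ref{thm:H4Einsteinphase} with symmetric criticality. Your explicit sweep of the Cartan list is more detailed than the paper's appeal to the classification. One phrase needs tightening: AIII and CII never have simple $K$; only BDI with a one-dimensional factor gives the spheres; and the group-type spaces are excluded because $H$ is not simple. The balanced $n=9$ claims also read off correctly: existence, uniqueness up to interchange, irreducibility and non-natural-reductivity come from Theorems~\ref{thm:H4balanced-AII9}, \ref{thm:H4balanced-AII9-global} and Proposition~\ref{prop:H4balanced-AII9-geometry}.

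The gap is the clause that $g_{\mathrm{bal}}$ lies on the analytic branch issuing from the response wall. Your continuation argument assumes what has to be shown. Global uniqueness in the $\Gamma_{4,2}$-fixed family is proved only at $\rho=9/8$, not along $[9/8,\rho_{4,*})$. Continuing the local pitchfork branch in $\rho$ by the implicit function theorem needs the $(p,q,A,B)$-Jacobian of the balanced map to stay invertible along the whole path. That means no turning point of $\rho$ along the branch in $(9/8,\rho_{4,*})$, and the branch must neither leave the positive orthant nor become unbounded before reaching $9/8$. Saying the continuation ``stays positive and asymmetric'' is exactly this unproved claim. The expansion $\rho(h)=\rho_{4,*}-c_*h^2+O(h^4)$ carries no explicit remainder bound, so matching $(\rho_{4,*}-9/8)/c_*$ with $\bigl((q-p)/2\bigr)^2$ is only a heuristic consistency check.

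The paper's own proof of this theorem stops after the appeal to Theorem~\ref{thm:H4Einsteinphase} and never addresses the balanced paragraph, so there is no argument to borrow here. One way to close the gap is to repeat the treatment of Theorem~\ref{thm:H4phase} for the balanced system:
\begin{enumerate}
\item form a $\rho$-symbolic eliminant;
\item prove by a discriminant/Sturm certificate that no fold occurs and no root enters through the boundary (zero or infinite coordinates, or the diagonal locus) for $\rho\in[9/8,\rho_{4,*})$;
\item conclude that the number of positive asymmetric solutions modulo interchange is constant there, equal to one by the $\rho=9/8$ count, so the Lyapunov--Schmidt solution near $\rho_{4,*}$ continues to $g_{\mathrm{bal}}$.
\end{enumerate}
Alternatively, run a validated interval-Newton continuation through overlapping boxes, from a region where the local reduction is quantitatively controlled down to $\mathcal B_9$. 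Without such a certificate, weakening the clause is the honest option, but then the statement as written is not proved.
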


\begin{proof}
The Cartan classification of irreducible compact symmetric pairs
\cite{Helgason} shows that, under the additional requirement that both
$\mathfrak h$ and $\mathfrak k$ be simple, the complete list is
\[
\begin{array}{c|c}
H/K&\rho=\dim\mathfrak k/\dim\mathfrak q\\ \hline
\SU(n)/\SO(n),\ n=3\text{ or }n\ge5& n/(n+2)\\
\SU(2n)/\Sp(n),\ n\ge2& n/(n-1)\\
\SO(m+1)/\SO(m),\ m\ge5& (m-1)/2\\
E_6/\Sp(4)&6/7\\
E_6/F_4&2\\
E_7/\SU(8)&9/10\\
E_8/\SO(16)&15/16\\
F_4/\mathrm{Spin}(9)&9/4 .
\end{array}
\]
Here the omission of $n=4$ from type $AI$ is exactly the failure of
$\mathfrak{so}(4)$ to be simple.  All other compact irreducible Cartan
types have nonsimple isotropy and therefore lie outside the standing
hypotheses.

Now compare these rational values with the unique fold parameter.
For type $AI$,
\[
 \rho=\frac n{n+2}<1<\rho_{4,\mathrm{fold}}.
\]
For type $AII$,
\[
 \frac n{n-1}<\rho_{4,\mathrm{fold}}
 \quad\Longleftrightarrow\quad n\ge9;
\]
indeed $8/7>\rho_{4,\mathrm{fold}}>9/8$.  The exceptional ratios
\[
 \frac67,\qquad\frac9{10},\qquad\frac{15}{16}
\]
lie below the fold, whereas
\[
 2,\qquad\frac94,\qquad\frac{m-1}{2}\ (m\ge5)
\]
lie above it.  None equals the fold.  The conclusion follows directly from
Theorem~\ref{thm:H4Einsteinphase}.
\end{proof}

\appendix

\section{Coefficient record for the balanced four-factor branch}
\label{app:H4balanced-coefficients}

This appendix records the polynomial system used in
\eqref{eq:H4bal-linear-system}.  It is included so that the Cramer reduction
\eqref{eq:H4bal-Cramer-reduction} can be checked directly, while keeping the
main four-factor argument focused on the response mechanism.  With
$z=(m_2,y_2,k_3,r_2)^T$,
\[
\mathcal M(x)=
\begin{pmatrix}
a_{11}&a_{12}&0&a_{14}\\
0&a_{22}&0&0\\
-3x&2x^2&-x^3&0\\
a_{41}&a_{42}&a_{43}&a_{44}
\end{pmatrix},
\]
where
\begin{align*}
a_{11}&=-8704x^3+12416x^2-7616x+1920,\\
a_{12}&=28544x^3-32288x^2+13440x-2592,\\
a_{14}&=-42100x^3+48851x^2-20028x+3483,\\
a_{22}&=40x^3-50x^2+20x-3,\\
a_{41}&=276480x^3-320000x^2+133120x-24576,\\
a_{42}&=-202880x^3+228320x^2-90752x+16032,\\
a_{43}&=-41088x^3+45728x^2-17920x+2976,\\
a_{44}&=214724x^3-240863x^2+94380x-15975,
\end{align*}
and
\[
b(x)=
\begin{pmatrix}
452608x^3-536064x^2+206592x-28224\\
-128x^3+352x^2-220x+33\\
3-8x\\
188416x^3-196608x^2+83712x-20160
\end{pmatrix}.
\]
These coefficients arise by substituting
\eqref{eq:H4bal-k1}--\eqref{eq:H4bal-critical-relations} into the
$h^2$ coefficients of $F_3,F_4$ and the $h^3$ coefficients of $F_1,F_2$,
clearing harmless nonzero scalar factors, and reducing modulo $P(x)$.
No numerical approximation enters this reduction.

\begin{proposition}[exact verification of the coefficient record]
\label{prop:H4balanced-appendix-check}
Let $R_i(x)$, $i=1,\ldots,4$, denote respectively the coefficient rows
obtained directly from
\[
 [h^2]F_3,\qquad [h^2]F_4,\qquad [h^3]F_1,\qquad [h^3]F_2
\]
after substituting \eqref{eq:H4bal-k1}--\eqref{eq:H4bal-critical-relations}
and reducing in the quotient ring $\mathbb Q[x]/(P)$.  Let
$\widetilde R_i$ be the four rows of the displayed augmented system
$(\mathcal M\mid b)$.  Then
\[
 \widetilde R_i=\gamma_i(x)R_i\qquad\text{in }\mathbb Q[x]/(P),
\]
where
\[
\begin{aligned}
\gamma_1&=4(119164x^3-133361x^2+52500x-8937),\\
\gamma_2&=\frac{3492x^3-3743x^2+1420x-231}{8},\\
\gamma_3&=\frac{972x^3-961x^2+356x-57}{32},\\
\gamma_4&=\frac{1197228x^3-1337669x^2+524324x-89373}{2}.
\end{aligned}
\]
None of these multipliers vanishes at a root of $P$: the resultants of
$P$ with their numerators are respectively
\[
\begin{gathered}
485699642990788608,\qquad
70962905088,\qquad
1019215872,\qquad
728549464486182912,
\end{gathered}
\]
all nonzero.  Hence the appendix system is exactly equivalent to the
coefficient system obtained from the Ricci equations at the critical root.

Moreover direct determinant expansion gives
\[
 \det\mathcal M+\frac{C_*B_*}{256}\equiv0\pmod P,
 \qquad
 \det\mathcal M_\rho-2C_*A_*\equiv0\pmod P.
\]
Thus the Cramer formula \eqref{eq:H4bal-r2} follows from the displayed
coefficient record alone.
\end{proposition}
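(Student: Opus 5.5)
The statement is the exact verification of the appendix coefficient record. I would prove it in three mechanical stages, entirely in exact arithmetic in the quotient field $\mathbb Q[x]/(P)$. Note that $P$ is irreducible over $\mathbb Q$, or at least squarefree; reducing modulo $P$ and then evaluating at the relevant root $x_*$ is legitimate either way.

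\textbf{Stage 1: derive the raw rows $R_i$.} Substitute the ansatz \eqref{eq:H4bal-expansion} into the numerators of $F_1,\dots,F_4$ from \eqref{eq:H4bal-F}, using \eqref{eq:H4bal-rpq}--\eqref{eq:H4bal-rC}. Expand in $h$ and extract $[h^2]F_3$, $[h^2]F_4$, $[h^3]F_1$ and $[h^3]F_2$. By the block-interchange symmetry, $F_1,F_2$ are odd in $h$ and $F_3,F_4$ are even, so these are the first nontrivial orders at which the unknowns $z=(m_2,y_2,k_3,r_2)$ enter.

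Each coefficient is affine in $z$, with coefficients rational in $x$ and $\rho_*$. Now impose $k_1=-(4x-3)/x$ and $\rho_*=(20x-9)/(32x^2-36x+9)$. Before clearing denominators, check that these denominators ($x$, $32x^2-36x+9$, powers of $p=q=x$, $A=B=1$) are invertible modulo $P$. This is done by computing their resultants with $P$; for example $\operatorname{Res}(P,x)=3\ne0$. Then reduce each row to a canonical representative of degree at most three.

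\textbf{Stage 2: compare rows with the record.} For each $i$, verify that $\widetilde R_i-\gamma_i R_i\equiv0$ entrywise, across all five entries (four columns plus right-hand side), by polynomial division by $P$. The third row is the best first sanity check, because it has the simple entries $-3x,\,2x^2,\,-x^3,\,0$ and right-hand side $3-8x$.

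Nonvanishing of $\gamma_i(x_*)$ follows from computing $\operatorname{Res}_x(P,\operatorname{num}\gamma_i)$ by the Sylvester determinant, or by a Euclidean gcd showing that the gcd is $1$, and confirming that the listed integers are obtained. Since each $\gamma_i$ is a unit in $\mathbb Q[x]/(P)$, row scaling by it preserves the solution set. The two augmented systems are therefore equivalent at every root of $P$, and in particular at $x_*$.

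\textbf{Stage 3: the Cramer identities.} Expand $\det\mathcal M$ along the second row, which has the single nonzero entry $a_{22}$:
\[
\det\mathcal M=a_{22}\det\begin{pmatrix}a_{11}&0&a_{14}\\-3x&-x^3&0\\a_{41}&a_{43}&a_{44}\end{pmatrix}.
\]
Reduce this modulo $P$ and compare it with $-C_*B_*/256$, also reduced. For $\mathcal M_\rho$, replace the fourth column by $b$ and do the same, now against $2C_*A_*$.

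\textbf{Main obstacle.} The only real difficulty is Stage 1. The expansion of the rational Ricci components to third order in $h$, carrying $m_2,y_2,k_3,r_2$ and the $\rho$-shift $r_2h^2$ inside $r_A,r_B,r_C$, is large, and sign or normalization slips there would break the match with the record. I would organize it by writing each numerator as a polynomial in $(h,m_2,y_2,k_3,r_2)$ truncated at total $h$-order $3$, and cross-check the $[h^1]$ coefficients against \eqref{eq:H4bal-k1} and \eqref{eq:H4bal-critical-relations} before trusting the higher orders.
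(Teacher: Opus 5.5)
Your plan is the same as the paper's proof, which likewise expands the $F_i$ using the block-interchange parity, substitutes $k_1$ and $\rho_*$, inverts denominators in $\mathbb Q[x]/(P)$, compares rows to read off the $\gamma_i$, checks the four resultants, and expands the two determinants modulo $P$. One caution: the stated $\gamma_i$ are normalized against the coefficients of the $F_i$ themselves, not of cleared numerators. For instance $\gamma_3\equiv 8x^5 \pmod P$ because the $k_3$-coefficient of $[h^3]F_1$ is $-1/(8x^2)$. So if you expand cleared numerators, as your Stage 1 suggests, you must divide out their nonzero value at $h=0$; also add $20x-9$, which comes from $1/\rho_*$, to your list of denominators to invert.
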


\begin{proof}
Expand the four Ricci differences in \eqref{eq:H4bal-F} using
\eqref{eq:H4bal-expansion}.  The odd/even symmetry removes all irrelevant
orders.  Substitute $k_1$ and $\rho_*$ from
\eqref{eq:H4bal-k1}--\eqref{eq:H4bal-critical-relations}, invert the
denominators modulo $P$, and reduce to degree at most three.  Coefficient
comparison gives the four multipliers above.  Their nonzero resultants with
$P$ show that no equation was lost in the row rescaling.  The final two
congruences are obtained by expanding the two determinants and taking the
remainders modulo $P$.
\end{proof}

\section{Exact elimination certificate for the balanced $n=9$ family}
\label{app:H4balanced-n9-elimination}

The factors in \eqref{eq:H4n9-lambda-eliminant} are
\[
 Q_2=118336\lambda^2-67824\lambda+9705,
\]
\[
 Q_4=85525504\lambda^4-178227456\lambda^3+146321616\lambda^2
 -39670200\lambda+3372225,
\]
\[
\begin{aligned}
Q_5={}&28731224\lambda^5-107853933\lambda^4+174378112\lambda^3\\
&-146115968\lambda^2+61759488\lambda-9031680,
\end{aligned}
\]
and
\begingroup\scriptsize
\[
\begin{aligned}
Q_{10}={}&46809453462458179125248\lambda^{10}
-314586034387869050290176\lambda^9\\
&+1010451232441805406027776\lambda^8
-2003015681816922939798016\lambda^7\\
&+2672398944406405048886048\lambda^6
-2474119520605069968097412\lambda^5\\
&+1585939874511436273944671\lambda^4
-683530719800271346906224\lambda^3\\
&+186339422829223521541152\lambda^2
-28509209062714489132800\lambda\\
&+1822293612319632480000.
\end{aligned}
\]
\endgroup
The exact isolating intervals for their positive real roots are
\[
\begin{array}{c|c}
Q_2&(0.2759757,0.2759758),\ (0.2971719,0.2971720)\\
Q_4&\varnothing\\
Q_5&(0.2783952,0.2783953)\\
Q_{10}&(0.1991575,0.1991576),\ (0.3833677,0.3833678),\\
& (0.8540932,0.8540933),\ (0.9998771,0.9998772).
\end{array}
\]
Each interval contains exactly one root by Sturm's theorem.

For the $Q_{10}$ factor, eliminating $\lambda$ from the quadratic
penultimate subresultant and $Q_{10}$ gives, up to a nonzero constant,
\begingroup\scriptsize
\[
\begin{aligned}
R_{20}(p)={}&647846034381691084800000000p^{20}
-6078972948353735399424000000p^{19}\\
&+25517470116767457031833600000p^{18}
-63908220599190020274588441600p^{17}\\
&+107618920735950870038638701152p^{16}
-130330558655869260004967585408p^{15}\\
&+118644433408782053700277158448p^{14}
-83811829358343784090026939136p^{13}\\
&+47040207383101583662090240592p^{12}
-21335077554718709193502154816p^{11}\\
&+7904941125574329014365177000p^{10}
-2405364793665697625467059712p^9\\
&+601295446443520819098674686p^8
-122968121949797712969792488p^7\\
&+20390150710464224079067447p^6
-2701251087098777590652352p^5\\
&+279401562807744960997722p^4
-21766775691506486381048p^3\\
&+1203824517295040374503p^2
-42298771803979115040p\\
&+714255576514559618.
\end{aligned}
\]
\endgroup
Its Sturm variation is the same at $-\infty$ and $+\infty$, so it has no
real root.  This is the exact exclusion used in
Theorem~\ref{thm:H4balanced-AII9-global}.

\bigskip

\noindent
\textsc{Anna Siffert}\\
Universit\"at M\"unster, Mathematisches Institut,
Einsteinstr.\ 62, 48149 M\"unster, Germany\\
\texttt{asiffert@uni-muenster.de}

\end{document}